\newtheorem{thm}{Theorem}[section]
\newtheorem{lem}[thm]{Lemma}
\newtheorem{prop}[thm]{Proposition}
\newtheorem{cor}[thm]{Corollary}
\theoremstyle{definition}
\newtheorem{NN}[thm]{}
\theoremstyle{definition}
\newtheorem{df}[thm]{Definition}
\theoremstyle{definition}
\newtheorem{rem}[thm]{Remark}
\theoremstyle{definition}
\newcommand{\red}{\textcolor{red}}
\newcommand{\blue}{\color{blue}}
\renewcommand{\phi}{\varphi}
\newcommand{\aff}{\rm aff}
\newcommand{\N}{\mathbb{N}}
\newcommand{\Z}{\mathbb{Z}}
\newcommand{\Q}{\mathbb{Q}}
\newcommand{\R}{\mathbb{R}}
\newcommand{\C}{\mathbb{C}}
\numberwithin{equation}{section}
\newcommand{\Aff}{\operatorname{Aff}}
\newcommand{\Inf}{\operatorname{Inf}}
\newcommand{\id}{\operatorname{id}}
\newcommand{\cpc}{completely positive contractive linear map}
\newcommand{\morp}{contractive completely positive linear map}
\newcommand{\hm}{homomorphism}
\newcommand{\dt}{\delta}
\newcommand{\ep}{\varepsilon}
\newcommand{\et}{\eta}
\newcommand{\td}{\tilde}
\newcommand{\lr}{\longrightarrow}
\newcommand{\ld}{\lambda}
\newcommand{\cd}{\cdots}
\newcommand{\qq}{{\quad \quad}}
\newcommand{\sbs}{{\subset}}
\newcommand{\sps}{{\supset}}
\newcommand{\sbseq}{{\subseteq}}
\newcommand{\tht}{\theta}
\newcommand{\bb}{\mathfrak{b}}
\newcommand{\cc}{\mathfrak{c}}
\newcommand{\dd}{\mathfrak{d}}
\newcommand{\p}{\mathfrak{p}}
\newcommand{\q}{\mathfrak{q}}
\newcommand{\LD}{\Lambda}
\newcommand{\gm}{\gamma}
\newcommand{\sm}{\sigma}
\newcommand{\DT}{\Delta}
\newcommand{\la}{\langle}
\newcommand{\ra}{\rangle}
\newcommand{\andeqn}{\,\,\,{\rm and}\,\,\,}
\newcommand{\rforal}{\,\,\,{\rm for\,\,\,all}\,\,\,}
\newcommand{\CA}{$C^*$-algebra}
\newcommand{\SCA}{$C^*$-subalgebra}
\newcommand{\af}{{\alpha}}
\newcommand{\bt}{{\beta}}
\newcommand{\diag}{{\rm diag}}
\newcommand{\wilog}{without loss of generality}
\newcommand{\Wlog}{Without loss of generality}
\newcommand{\D}{\mathbb D}
\newcommand{\beq}{\begin{eqnarray}}
\newcommand{\eneq}{\end{eqnarray}}
\newcommand{\tforal}{\,\,\,\text{for\,\,\,all}\,\,\,}
\newcommand{\tand}{\,\,\,\text{and}\,\,\,}
\newcommand{\zo}{{\cal Z}_0}
\newcommand{\cm}{{\cal M}}
\newcommand{\LAff}{{\rm LAff}}
\newcommand{\ds}{\displaystyle}
\title{On classification  of  non-unital  amenable  simple  C*-algebras III,
the range and the reduction}
\author{Guihua Gong and  Huaxin Lin
 }
\date{
}
\begin{document}

\maketitle

\begin{abstract}
Following Elliott's earlier work, we  show that the Elliott invariant of any finite separable simple \CA\, with finite nuclear dimension
can always be described as a scaled simple ordered group pairing together with a countable abelian group
(which unifies the unital  and nonunital, as well as
stably projectionless cases).
We also show that, {{for any  given such invariant set,}} there is a finite separable simple \CA\,
whose Elliott invariant is the given set, a refinement of the  range theorem of Elliott in the stable case.
 In the stably projectionless  case,  modified
 model
\CA s are constructed {{in such a way}} that they
are of generalized tracial rank one and  have other technical features.

We also show that every stably projectionless
separable simple amenable \CA\, in the UCT class has rationally generalized tracial
rank one.

\end{abstract}

\section{Introduction}
This paper is a part of the Elliott program of classification of simple separable
\CA s with finite nuclear dimension (or, equivalently, simple separable amenable ${\cal Z}$-stable \CA s
by \cite{CETWW} and \cite{CE}).  In fact it is the first part of the research results which gives a unified classification of separable {{finite}} simple \CA s
{of  finite nuclear dimension
which satisfy the Universal Coefficient Theorem (UCT).

Briefly, a full classification theorem for a class of \CA s, say ${\cal A},$ consists
of three parts.  The first part is a description of the Elliott invariant for the \CA s in ${\cal A}$ {{(see \cite{Ellicm}).}}
The second part is the  range (or model) theorem, i.e., for any given Elliott invariant set described in the first part,
there is a model \CA\, (with certain desired properties)  in ${\cal A}$ such that its Elliott invariant set is the given one {{(\cite{point-line}).}}
The third part is the isomorphism theorem {{which asserts that}} any two \CA s in the class ${\cal A}$ are isomorphic if and only
if they have the same Elliott invariant.

Let ${\cal A}$ be the class of separable simple  amenable ${\cal Z}$-stable \CA s
in the UCT class.
This paper contains the first two parts  of the classification of \CA s in  {{${\cal A},$}}
together with a reduction theorem.
X. Jiang and H. Su constructed (\cite{JS})
 a unital infinite dimensional separable simple amenable
\CA\, ${\cal Z}$
which has
exactly the same Elliott invariant as the complex field $\C.$ It is {{known}}  (see
{{\cite[Corollary 4.12]{EGLN}}})
that the Jiang-Su algebra ${\cal Z}$ is the unique  unital infinite dimensional separable simple \CA\,  with finite nuclear dimension in the UCT class to have this property.
Let $A$ be any unital separable simple \CA\, with weakly unperforated  $K_0(A).$
Then $A\otimes {\cal Z}$ and $A$ {{have}} exactly the same Elliott invariant {{(see  Theorem 1 part (b) of \cite{GJS})}}. The invariant
set may be described as
a six-tuple $(K_0(A), K_0(A)_+, [1_A], T(A), \rho_A, K_1(A)),$ where $T(A)$ is the tracial state space
of $A$ and $\rho_A: K_0(A)\to \Aff(T(A))$ (the space of all real affine continuous functions on $T(A)$)
is a pairing.
Therefore, currently we study the  classification for simple separable ${\cal Z}$-stable \CA s.
The classification results {{for}} unital separable simple amenable ${\cal Z}$-stable \CA s in
the UCT {{class}} can be found in
 {{\cite{Pclass}, \cite{KP},}}
\cite{GLN}, \cite{GLN2}, \cite{EGLN} and \cite{TWW}.
For separable simple {{ \CA s}} $A$ with  $K_0(A)_+\not=\{0\},$
the classification can  be easily reduced to the unital case.

This paper mainly studies the case that $K_0(A)_+=\{0\}$  {{and $A$ is finite,}} i.e., the stably projectionless  case.
Note that a separable simple \CA\, $A$ {{with}} $K_0(A)_+=\{0\}$ {{could}}  still have interesting
$K_0(A).$ Moreover, one could also have a non-trivial pairing $\rho_A.$

Let $A$ be a non-unital separable simple \CA\, and  {{${\tilde T}(A)$ the}} set of
densely defined lower semi-continuous traces. Let $\td A$ be the unitization of $A$ and {{$\pi: \td A\to \C$ the}} quotient map.
Suppose that  $A$ is {{an}} algebraically simple \CA\,
and $p\in M_n(\td A)$ is a non-zero projection and $m$ is the rank of $\pi(p).$
One may define $\rho_A([p]-[1_m])(\tau)=\tau(p)-m\|\tau\|$ (for $\tau\in {\tilde T}(A)$)
which gives a pairing $\rho_A: K_0(A)\to \Aff({\tilde T}(A))$ (the set of all real  continuous affine
functions on ${\tilde T}(A)$).
On the other hand,  if  $A$ is
stable, then a naive extension
of the above pairing   may not make sense
as $\tau(p)=\infty$ (when $A$ is stably projectionless)
and $\|\tau\|=\infty.$  If one chooses a hereditary \SCA\, $B$ of $A$ which is algebraically simple,
then one may define a pairing $\rho_B: K_0(B)\to \Aff({\tilde T}(B)).$
However, one needs
to define a pairing which is independent of $B.$ A little more careful pairing is deployed.
As the first part of the classification, {{following Elliott's earlier work (see \cite{Ellicm}),}} {{we state that, combining  several previous results, including \cite{eglnkk0} and
\cite{GJS},}}  for any separable {{amenable}} simple ${\cal Z}$-stable \CA\,
($A\cong A\otimes {\cal Z}$),
 its Elliott invariant
may always be described by a scaled simple ordered group pairing together with a countable abelian group (see
Definition \ref{Dparing} and Theorem \ref{TRangeM}).   {{For the second part of the classification,
we modify  the range theorem first proved by George A. Elliott (in \cite{point-line})
in the stable case.}}

{{To be more specific,}}  in the stably projectionless case,
modifying Elliott's original construction,
we show that any Elliott invariant (including the scale) mentioned above can be realized by a model simple \CA\, which is of generalized tracial
rank one. {{Moreover,  for technical purposes, we show that these model simple \CA s have other technical properties
(see Theorem \ref{Tmodel2}, {{Remark \ref{Rmodle2}}} and Theorem \ref{TclassM1-K}).}} We also fill in some subtle details   in the Elliott construction  similar to the one  mentioned in the first line
of page 88 of \cite{point-line}.

{{Furthermore, {{following Corollary A7 of \cite{eglnkk0},}} we found that,  {{in the stably projectionless case,}} a pairing mentioned above actually gives a previously unexpected stronger feature {{of  weak unperforation}} (see Corollary \ref{Ct=0}), a feature {{that}} plays
an important technical
role in the later part of \cite{GLIII} (see, for example, Theorem \ref{TWtraceD}).}}

{{As mentioned above,}} the isomorphism theorem in \cite{GLIII} is first established for those separable simple
\CA s of finite nuclear dimension which have  rationally generalized tracial rank one.  In this paper,
with {{the}}  model theorem mentioned above, we also show that every separable simple stably projectionless
\CA\, with finite nuclear dimension in the UCT class actually has rationally generalized tracial rank one (Theorem \ref {Treduction}).

The paper is organized as follows.
Section 2 serves as preliminaries.
Section 3 discusses
the existence of ${\cal W}$-traces.  In Section 4, we first
{{construct}} a class of simple \CA s which are inductive limits
of 1-dimensional  non-commutative CW complexes with arbitrary simple pairings.
Then, together with the construction in \cite{GLII},  we construct simple \CA s  with arbitrary simple pairings
and arbitrary $K_1$-groups. These are simple \CA s
which are locally approximated
by {{sub-homogeneous}} \CA s whose spectra have {{dimension}} no more than 3.
 In Section 5,
we discuss the range {{of the invariant sets}} of stably finite separable simple ${\cal Z}$-stable \CA s,
and
show that the models constructed in Section 5  exhaust all possible {{values of}} Elliott invariant for those  separable simple \CA s.
In Section 6, we show that, in the UCT class,
all separable {{finite}} simple \CA s {{with}} 
 finite nuclear dimension  have rationally generalized tracial rank at most one (Theorem \ref{Treduction}).
 %

{{{\bf{{{Acknowledgements}}}}:    This research
began when both authors stayed
in the Research Center for Operator Algebras in East China Normal University
in the summer of 2016 and 2017.
    Both authors acknowledge the support by the Center
 which is in part supported  by NNSF of China (11531003)  and Shanghai Science and Technology
 Commission (13dz2260400),
 and  Shanghai Key Laboratory of PMMP. {{The first named author was also supported by NNSF of China (11920101001).}}
The second named author was also supported by NSF grants (DMS 1665183 and DMS 1954600).}}
The main part of this research was first reported in East Coast Operator Algebra Symposium, October, 2017.
Both authors would like to express their sincere gratitude {{to George A. Elliott}} for many  conversations
as well as for  his moral support.
Much of the work of this paper is based on \cite{eglnp} and \cite{eglnkk0}. We would also like to take the opportunity
to thank  G.A. Elliott and Z. Niu {{for the collaboration}}.

\section{Preliminaries}

\begin{df}\label{Dher}
Let $A$ be a \CA.  Denote by $A^{\bf 1}$  {{the}} unit ball of $A.$
Let  $a\in A_+.$  Denote by  ${\rm Her}(a)$ the hereditary \SCA\, $\overline{aAa}.$
If $a, b\in A_+,$ we write $a\lesssim b$ ($a$ is Cuntz smaller than $b$),
if there exists a sequence of $x_n\in A$ such that $a=\lim_{n\to\infty} x_n^*x_n$
and $x_nx_n^*\in {\rm Her}(b).$  {{If both $a\lesssim b$ and $b\lesssim a$, then we say $a$ is Cuntz equivalent to $b$.}} The Cuntz equivalence class represented by $a$ will
be denoted by $\la a\ra.$
A projection $p\in M_n(A)$ defines {{an element}} $[p]\in K_0(A)_+$.  {{We}} will also use $[p]$ to denote the Cuntz equivalence class represented by $p.$

\end{df}

\begin{df}\label{DTtilde}
Let $A$ be a \CA.  Denote by $T(A)$ the {{tracial state space}} of $A$ {{(which could be {{the}} empty set).}}
Let $\Aff(T(A))$ be the space of all real valued affine continuous functions on $T(A)$.
Let ${\tilde{T}}(A)$ be the cone of densely defined,
positive lower semi-continuous traces on $A$ equipped with the topology
of point-wise convergence on elements of the Pedersen ideal  ${\rm Ped}(A)$ of $A.$
{{
So $\td T(A)$ may be viewed as { {the}} cone {{in the}}  dual space of {{the}}
vector space ${\rm Ped}(A).$}}

Let $B$ be another \CA\, with $\td T(B)\not=\{0\}$
and let $\phi: A\to B$ be a \hm.  {{Since ${\rm Ped}(A)$ is the minimal dense ideal of $A,$
 $\phi({\rm Ped}(A))\subset {\rm Ped}(B).$}}
{{\it{In what follows we will also write  $\phi$ for $\phi\otimes \id_{M_k}: M_k(A)\to M_k(B)$
whenever it is convenient.}}}

We will write $\phi_T: \td T(B)\to \td T(A)$ for the induced continuous affine map.
Denote by $\td T^{b}(A)$ the subset of $\td T(A)$ which are bounded on $A.$
Of course $T(A)\subset \td T^b(A).$ Set $T_0(A):=\{t\in \td T(A): \|\tau\|\le 1\}.$ It is a compact convex subset of $\td T(A).$

Let $r\ge 1$ be an integer and $\tau\in {\tilde T}(A).$
We will continue to write $\tau$  on $A\otimes M_r$ for $\tau\otimes {\rm Tr},$ where ${\rm Tr}$ is the standard { {(unnormalized)}}
trace on $M_r.$
Let  $S$ be a convex subset (of a convex topological cone).
{{We assume  that {{a convex}} cone contains {{$0$, but a convex set $S$ may or may not contain $0$.}}  Denote by
$\Aff(S)$ the set of affine continuous functions on $S$ with the property that, if  $0\in S,$ then
$f(0)=0$ for all $f\in \Aff(S).$}}
Define (see \cite{Rl})
\beq
\Aff_+(S)&=&\{f: \Aff(S):  
 f(\tau)>0\,\,{\rm for}\,\,\tau\not=0\}\cup \{0\},\\
{\rm LAff}_{f,+}(S)&=&\{f:S\to [0,\infty): {{\exists\, \{f_n\},}}\,  f_n\nearrow f,\,\,
 f_n\in \Aff_+(S)\},\\
{\rm LAff}_+(S)&=&\{f:S\to [0,\infty]: {{\exists\, \{f_n\},}}\, f_n\nearrow f,\,\,
 f_n\in \Aff_+(S)\}\andeqn\\
 {{{\rm LAff}^{\sim}}}(S) &=&\{f_1-f_2: f_1\in {\rm LAff}_+(S)\andeqn f_2\in
 {\rm Aff}_+(S)\}.
 \eneq
 For the {{great}} part of this paper, {{$S={\tilde T}(A)$,}}  $S=T(A),$ or $S\subset T_0(A)$
in the above definition will be used.
Moreover, {{for $S\subset T_0(A),$ ${\rm LAff}_{b,+}(S)$}} is the subset of those bounded functions
 in ${\rm LAff}_{f,+}(S).$
Recall {{that}} $0\in {\tilde T}(A)$ and if $f\in \LAff(\td T(A)),$ then $f(0)=0.$


\end{df}

%
%

\begin{df}\label{DElliott23}
A convex topological cone  $T$ is a subset of a topological vector space such that for any $\af, \bt\in \R_+$ and $x, y\in T$, $\af x+\bt y\in T$, where $\R_+$ is the set of nonnegative real numbers.  A subset $\DT\subset T$ is called a base of $T$, if $\DT$ is convex and for any $x\in T\setminus \{0\}$, there is a unique pair ${{(\af_x, \tau_x)}}\in (\R_+\setminus \{0\})\times \DT$  
such that $x={{\af_x\tau_x}}$. {{In this article, {\it all convex topological cones are those with a metrizable Choquet {{simplex}} $\DT$ as its base.} Note also {{that}} the function from  $T\setminus \{0\}$ to $(\R_+\setminus \{0\})\times \DT$  sending $x$ to ${{(\af_x, \tau_x)}}$ is continuous.}}

A simple ordered group pairing is a triple $(G, T,  \rho),$ where
$G$ is a countable abelian group, $T$ is  {{a convex  topological cone}} with a Choquet simplex as its base
and
$\rho: G\to \Aff(T)$ {{is a \hm.}}
{\em In what follows,
for a pair of functions $f$ and $g$ on $T,$  we write $f>g$ if $f(\tau)>g(\tau)$ for all
$\tau\in T\setminus \{0\}.$}
Define $G_+=\{g\in G: \rho(g)>0\}\cup \{0\}.$ If $G_+\not=\{0\},$
then $(G, G_+)$ is an ordered group. It has the property that if $ng>0$ for some integer $n>0,$
then $g>0.$  In other words, $(G, G_+)$ is weakly unperforated. %
{{Moreover,}} if $G_+\not=\{0\},$
$(G, G_+)$ is a simple ordered group, i.e.,  every element $g\in G_+\setminus \{0\}$
is  an order unit {{(recall that the compact set $\Delta$ is a base for $T$).}}
{{In general, we  allow the case $G_+=\{0\}$ (but {{$\rho$
may not be $0$}}).}}

A scaled simple ordered group pairing is a quintuple $(G, \Sigma(G), T, s, \rho)$ such that
$(G, T, \rho)$ is a simple ordered group pairing,
where  $s\in \LAff_+(T)\setminus \{0\}$  {{and}}
\beq
\Sigma(G):=\{g\in G_+: \rho(g)<s\}\,\,{\rm or}\,\,\,
\Sigma(G):=\{g\in G_+: \rho(g)<s\}\cup\{u\},
\eneq
{{where}}  {{$u\in G_+$ and}} $\rho(u)=s.$
%
%
We allow $\Sigma(G)=\{0\}.$   Note also that $s(\tau)$ could be infinite for some $\tau\in T.$
It is called a unital scaled simple ordered group pairing, if $\Sigma(G)=\{g\in G_+: \rho(g)<s\}\cup \{u\}$
with $\rho(u)=s,$ in which case, $u$ is called the unit of $\Sigma(G).$
Note that, in this case,
$u$ is the maximum element of $\Sigma(G),$
and, one may write
$(G, u, T, \rho)$ for $(G, \Sigma(G), T, s,\rho).$
If $\Sigma(G)$ has no  unit {{(which includes the case that $\Sigma(G)$ has a maximum element $x$ but $\rho(x)<s$),}} then $\Sigma(G)$ is determined by $s.$
One may write $(G, T, s, \rho)$ for $(G, \Sigma(G), T, s, \rho)$
(see Theorem \ref{TRangeM} below).  (Note that $\Sigma(G)=\{0\}$ corresponds to {{the}} projectionless  case
and $G_+=\{0\}$ to {{the}} stably projectionless case).

Let $(G_i, \Sigma(G_i), T_i, s_i, \rho_i)$ be {{scaled}} simple ordered group pairings, {{$i=1,2.$}}
A map
$$\Gamma_0{{=(\kappa_0,\kappa_T)}}: (G_1, \Sigma(G_1), T_1, s_1, \rho_1)\to (G_2, \Sigma(G_2), T_2, s_2, \rho_2)$$
is said to be a \hm, if there is a group \hm\,
$\kappa_0: G_1\to G_2$ and a continuous cone map $\kappa_T: T_2\to T_1$
{{(preserving $0$)}}
such that
\beq
{{\rho_2}}(\kappa_0(g))(t)={{\rho_1(g)}}(\kappa_T(t))\rforal g\in G_1\andeqn t\in T_2,\andeqn\\
\kappa_0(\Sigma(G_1))\subset \Sigma(G_2{{),}} \andeqn   s_1(\kappa_T(t))\le s_2(t) \rforal t\in T_2.
\eneq
We say a \hm\, $\Gamma_0$ is an isomorphism
if $\kappa_0$ is an isomorphism, $\kappa_0(\Sigma(G_1))={{\Sigma(G_2)}},$ $\kappa_T$ is a cone
homeomorphism, and $s_1(\kappa_T(t))=s_2(t)$ for all $t\in T_2.$
\end{df}

\begin{df}\label{Dfep}
For any $\ep>0,$ define $f_\ep\in {{C([0,\infty))_+}}$ by
$f_\ep(t)=0$ if $t\in [0, \ep/2],$ $f_\ep(t)=1$ if $t\in [\ep, \infty)$ and
$f_\ep(t)$ is linear in $(\ep/2, \ep).$

Let $A$ be a \CA\, and $\tau$ be a quasitrace.
For each $a\in A_+$ define $d_\tau(a)=\lim_{\ep\to 0} {{\tau(f_\ep(a))}}.$
Note that $f_\ep(a)\in {\rm Ped}(A)$ for all $a\in A_+.$

{{Let $S$ be a  convex subset of $\tilde{T}(A)$ and $a\in M_n(A)_+.$
The function $\hat{a}(s)=s(a)$ (for $s\in S$) is an  affine function {{from $S$ to $[0,\infty]$}}.
Define $\widehat{\la a\ra }(s)=d_s(a)=\lim_{\ep\to 0}s(f_\ep(a))$ (for $s\in S$)
which is a lower semicontinuous function.
If $a\in {\rm Ped}(A)_+,$ then {{the map $\tau \mapsto \hat{a}(\tau)$}}
is in $\Aff_+(S)$
and
$\widehat{\la a\ra}\in \LAff_+(S)$ (see \ref{DTtilde}), in general. {{Note
that $\hat{a}$ is different from $\widehat{\la a\ra}.$}}
In most cases, $S$ is ${\tilde T}(A),$ $T_0(A),$ or $T(A).$}}
Note {{also}} that, there is a  canonical
 map from ${\rm Cu}(A)$ to {{${\rm LAff}_+(\tilde{T}(A))$}} {{sending}} $\la a \ra$ to $\widehat{\la a \ra}$.

\end{df}

\begin{NN}\label{range4.1}
{{If $A$ is a unital \CA\, and $T(A)\not=\emptyset,$ then
there is a {{canonical}} 
\hm\, $\rho_A: K_0(A)\to \Aff(T(A)).$}}

{{Now consider the case that $A$ is not unital.
Let $\pi_\C^A: \td A\to \C$ be the quotient map.
Suppose that $T(A)\not=\emptyset.$
Let $\tau_\C:=\tau_\C^A: \td A\to \C$ be
the tracial state which factors through $\pi_\C^A.$
Then
\beq
T(\td A) = \big\{t{{\tau_{_\C}^A}}+(1-t)\tau:~ t\in [0,1],~ \tau\in {{T(A)}}\big\}.
\eneq
The map ${{T(A)}}\hookrightarrow {{T(\td A)}}$ induces a map {{${{\Aff(T(\tilde{A}))}}\to \Aff(T(A))$}}. Then
 the  map {{$\rho_{\td A}: K_0({{\td A}})\to \Aff(T(\td A))$}} induces a \hm\,  $\rho':~ K_0(A)\to \Aff (T(A))$ by
\beq\label{April-10-2021}
{{\rho':~K_0(A)\to  K_0({{\td A}})\stackrel{\rho_{\td A}}{\longrightarrow}
\Aff(T(\td A))   \to \Aff(T(A)).}}
\eneq
However, in the case that $A\not={\rm Ped}(A),$ we will not use $\rho'$  in general, as it is possible
that $T(A)=\emptyset$ but ${\td T}(A)$ is rich (consider the case $A\cong A\otimes {\cal K}$).}}
\end{NN}

\begin{df}\label{Dparing}
Let $A$ be a \CA\, with $\td T(A)\not=\{0\}.$
If $\tau\in \td T(A)$ is bounded on $A,$ then $\tau$ can be extended naturally
to {{a trace}} on $\td A.$
Recall that $\td T^b(A)$ is the set of bounded traces on $A.$
Denote by $\rho_A^b: K_0(A)\to \Aff(\td T^b(A))$ the \hm\, defined by
$\rho_A^b([p]-[q])=\tau(p)-\tau(q)$ for all $\tau\in \td T^b(A)$
and for projections $p, q\in M_n(\td A)$ (for some integer $n\ge 1$)
{{with}} $\pi_\C^A(p)=\pi_\C^A(q).$ {{Note that}} $p-q\in  M_n(A).$
Therefore $\rho_A^b([p]-[q])$ is continuous on $\td T^b(A).$
In the case that $\td T^b(A)=\td T(A),$ for example, $A={\rm Ped}(A),$
we write $\rho_A:=\rho_A^b.$

Let $A$ be a $\sigma$-unital \CA\, with a strictly positive element $0\le e\le 1.$
Put  $e_n:=f_{1/2^n}(e).$ Then $\{e_n\}$ forms an approximate identity
for $A.$ Note  {{that}} $e_n\in {\rm Ped}(A)$ for all $n.$
Set $A_n={\rm Her}(e_n):=\overline{e_nAe_n}.$
Denote by $\iota_n:A_n\to A_{n+1}$   and $j_n: A_n\to A$ the embeddings. {{They extend}} {{to}}
$\iota_n^\sim: \td A_n\to \td A_{n+1}$ and $j_n^\sim :\td A_n\to \td A$ {{unitally.}}
Note that $e_n\in {\rm Ped}(A_{n+1}).$ Thus  $\iota_n$  and $j_n$ induce
continuous cone maps ${\iota_n}_T^b: \td T^b(A_{n+1})\to
\td T^b(A_n)$ and ${j_n}_T: \td T(A)\to \td T^b(A_n)$
(defined by ${\iota_n}_T^b(\tau)(a)=\tau(\iota_n(a))$
for $\tau\in {{\td T^b}}(A_{n+1}),$ and ${j_n}_T(\tau)(a)=\tau(j_n(a))$
for all $\tau\in \td T(A)$ and all $a\in A_n$), respectively.
Denote by $\iota_n^\sharp: \Aff(\td T^b(A_n))
\to {{\Aff(\td T^b(A_{n+1}))}}$  and $j_n^\sharp: \Aff(\td T^b(A_n))\to \Aff(\td T(A))$ the induced continuous linear maps.
Recall that $\cup_{n=1}^\infty A_n$ is dense in ${\rm Ped}(A).$
A direct computation shows that one may obtain  the following  {{inverse}} limit of {{convex  topological cones}} (with continuous cone maps):
\beq\label{Drho-trace}
\td T^b(A_1)\stackrel{{\iota_1}_T^b}
{\longleftarrow} \td T^b(A_2)\stackrel{{\iota_2}_{T}^b}{\longleftarrow} \td T^b(A_3)
\cd\longleftarrow\cd\longleftarrow \td T(A).
\eneq

{{To justify \eqref{Drho-trace}, set $T'=\lim_{\leftarrow} \td T^b(A_n)\subset \prod_{n=1}^\infty \td T^b(A_n).$
Define $\Gamma: \td T(A)\to T'$ by $\Gamma(\tau)=\{\tau_n\},$ where $\tau_n=\tau|_{A_n}.$
Let $I_n$ be the (closed two-sided) ideal of $A$ generated by $A_n.$
Let  {{$\sim_T$}} be the equivalence relation {{$\sim$}} defined in section 2 of \cite{CP}. Recall that
 $\{x\in (I_n)_+: \exists y\in {A_n}_+\, {\text{such\,\,that}}\, x\sim_T y\}$ is  a positive part of an ideal
 {{$J_n$}}
 containing $A_n$ {{(see the Remark after Proposition 4.7 of \cite{CP}{{).}}}}
 Therefore,
if $t\in \td T(A)$ and $t|_{A_n}=\tau|_{A_n},$ then $t|_{J_n}=\tau|_{J_n}.$
Put $J=\cup_{n=1}^\infty J_n.$
It follows that $t|_{{J}}=\tau|_{{J}}.$ Since $J$ is a dense ideal,
it contains ${\rm Ped}(A).$ Hence $t$ and $\tau$ are the same element in $\td T(A).$
This implies that the map $\Gamma$ is injective. It is also clear that $\Gamma$ is a continuous cone
map.  To see that $\Gamma$ is surjective, let $\{\tau_n\}\in T'.$  Recall that $\tau_n\in \td T^b(A_n)$
and ${\tau_{n+1}}|_{A_n}=\tau_n.$  Let $\td \tau_n$ be a trace in $\td T(A)$ which extends $\tau_n$
(see Lemma 4.6 of \cite{CP}). Then ${\td \tau_{n+1}}|_{J_n}={\td \tau_n}|_{J_n}$ (as argued  above).
Define $\td \tau$ on $J$ by ${{\td \tau}}|_{J_n}={\td \tau_n}|_{J_n}.$  Since $\td \tau$
is finite on $\cup_{n=1}^\infty A_n$ which is in ${\rm Ped}(A)$ and  {{dense}} in $A,$ $\td \tau$
is a lower semicontinuous densely defined trace.
Then  one  may view $\td \tau\in \td T(A),$ and note also
$\Gamma({{\td \tau}})=\{\tau_n\}.$ This shows that $\Gamma$ is surjective.}}

{{To see $\Gamma$ is open, consider $O_{a, >\bt}=\{\tau\in \td T(A): \tau(a)>\bt\},$ where $a\in {\rm Ped}(A)_+\setminus \{0\}$
and $\bt\in \R.$ Then
$O_{a,{{>}}\bt}=\cup_k\{\tau: \tau(e_kae_k)>\bt\}.$ Thus
$\Gamma(O_{a, >\bt})=\cup_k\{\{\tau_n\}\in T': \tau_k(e_kae_k)>\bt\},$ which is open in
the product topology.  {{Now consider $O_{a,<\bt}=\{\tau\in  \td T(A): \tau(a)<\bt\}.$
Since $a\in {\rm Ped}(A)\subset J,$ we may assume that $a\in J_N$ for some $N\ge 1.$
There is {{an element}} $b\in (A_{{N}})_+$ such that $b\sim_T a.$  Note that $\{\tau\in \td T^b(A_{{N}}): \tau(b)<\bt\}$ is open.
It follows that $O:=\{\{\tau_k\}\in  \prod_{k=1}^\infty \td T^b(A_{{k}}): \tau_N(b)<\bt\}$ is open
in $\prod_{k=1}^\infty \td T^b(A_n).$  Since $\Gamma(O_{a,<\bt})=O\cap T',$ it is also open in $T'.$}}
This implies that $\Gamma$ is open. }}

Note that the continuous cone map ${j_n}_T$ is  the  same as the cone map ${\iota_{\infty, n}}_T: \td T(A)\to \td T^b(A_n)$
given by the {{inverse  limit.}}  One also obtains the
induced inductive limit:
\beq
\Aff(\td T^b(A_1))\stackrel{\iota_1^\sharp}
{\longrightarrow} \Aff (\td T^b(A_2))\stackrel{{\iota_2}^\sharp}{\longrightarrow} \Aff (\td T^b(A_3))
\cd\longrightarrow\cd\longrightarrow \Aff (\td T(A)).
\eneq
Hence one also has the following commutative diagram:
\begin{displaymath}
    \xymatrix{
        K_0(A_1) \ar
        [d]_{\rho_{A_1}}\ar[r]^{\iota_{1*o}} & K_0(A_2) \ar[r]^{\iota_{2*o}} \ar
        [d]_{\rho_{A_2}}& K_0{{(A_3)}} \ar[r]
        \ar
        [d]_{\rho_{A_3}}& \cd K_0({{A}} )\\
        \Aff(\td T^b({{A}}_1))
        \ar[r]^{\iota^{\sharp}_{1,2}}
        &
         \Aff(\td T^b({{A}}_2)) \ar[r]^{\iota_2^{\sharp}}
         &
         \Aff(\td T^b({{A}}_3))
         \ar[r]
         & \cd
         \Aff(\td T({{A}})).}
\end{displaymath}
Thus one obtains a \hm\, $\rho: K_0(A)\to \Aff(\td T(A)).$
If $A$ is assumed to be simple, then each $A_n$ is simple
and $e_n$ is full in $A_n.$ Therefore, since $e_n\in {\rm Ped}(A)$ {{and}}  $A_n={\rm Ped}(A_n)$ (see 2.1 of \cite{T-0-Z}).
It follows {{that,}}  when $A$ is simple,
$\td T^b(A_n)=\td T(A_n)$ for all $n.$  {{But we do not assume that $A$ is simple in general.}}

Note {{that,}} if $\td T(A)=\td T^b(A),$ for any $n\ge 1,$ one also has the following commutative diagram:
{\small{\begin{displaymath}
    \xymatrix{
        K_0(A_n) \ar
        [d]_{\rho_{A_n}}\ar[r]^{\iota_{n*0}} & K_0(A_{n+1}) \ar[r]^{j_{n+1,*0}} \ar
        [d]_{{{\rho_{A_{n+1}}}}}& K_0{{(A)}}\ar[d]_{\rho_A}\\
        \Aff(\td T^b({{A}}_n))
        \ar[r]^{{\iota_n}^{\sharp}}
        &
         \Aff(\td T^b({{A}}_{n+1})) \ar[r]^{{j_{n+1}}^{\sharp}}
         &
         \Aff(\td T({{A}})) .}
\end{displaymath}}}
It follows that $\rho=\rho_A$ in the case that $\td T^b(A)=\td T(A).$

Let us briefly point out that the definition {{above}} does not depend on the choice of $e.$
Suppose that $0\le e'\le 1$ is another strictly positive element.
We similarly define $e_n'.$   Put $A_n'={\rm Her}(e_n').$
Note that, for any $m,$ there is $k(m)\ge n$ such that $\|e-e_{k(m)}'ee_{k(m)}'\|<1/2^m.$
By applying a result of R\o rdam (see, for example,  Lemma 3.3 of \cite{eglnp} and its proof),
one has, for each $n\ge 1,$ {{an integer $k(n) > k(n-1)\ge 1$}} {{and}} a partial isometry  $w_n\in A^{**}$ such that $w_nw_n^*e_n=e_nw_nw_n^*,$
 $w_n^*cw_n\in A_{k(n)}'$ for $c\in A_n$ and
 $\|w_ne_n-e_n\|< {{1/2^n}}.$
 {{Define $\phi_n: A_n\to {{A_{k(n)}'}}$ by $\phi_n(c)=w_{n+1}^* cw_{n+1}$ for $c\in A_n.$
 For each $m,$ let $E_{n,m}=e_n\otimes 1_{M_m}$ and $W_{n,m}=w_n\otimes 1_{M_m}.$
 Then $\|W_{n,m}E_{n,m}-E_{n,m}\|<1/2^n.$
 It follows {{that}} $\|(\phi_n\otimes \id_{M_m})(c)-c\|<{{(1/2^n)}}\|c\|$ for all $c\in M_m(A_n)$
 and $m\in \N.$}}
Moreover, for any $\tau\in \td T(A),$
$\tau(\phi_n(c))=\tau(c)$ for all $c\in A_n.$
Symmetrically, one has {{monomorphisms}}
$\psi_k: A_k'\to A_{N(k)}$ such that $\|{{(\psi_k\otimes \id_{M_m})}}(a)-a\|<(1/2^{{k}})\|a\|$
{{and $\tau(\psi_{{k}}(a))=\tau(a)$}}
{{for all $a\in M_m(A_k'),$}} $\tau\in \td T(A)$  {{and $m\in \N.$}}
Thus, by passing to  subsequences, one obtains the following  commutative diagram:
{\tiny{
\begin{displaymath}
    \xymatrix{
    K_0(A_1)\ar[rrd]^{\hspace{0.1in}\rho_{A_1}^b} \ar[r]^{\iota_{1*0}}\ar[ddd]^{\phi_{1*0}}\,\,\,& ~~~K_0(A_2)
    \ar[rrd]^{\rho^b_{A_{2}}} \ar[r]^{\iota_{2*0}}&\cdots&\lr\cdots
    & K_0(A){{\ar[ddd]^{\id_{K_0(A)}}}}\ar[rrd]^{\rho}&&\\
       && \Aff(\td T^b(A_1)) \ar[ddd]^{\phi_1^{\sharp}}\ar[r]^{\hspace{-0.25in}\iota_1^{\sharp}} & \Aff(\td T^b(A_{2})) \ar[r]^{~~~~~\iota_2^{\sharp}}~~~~
       & 
     \cdots  &\lr\cdots&  \Aff(\td T({{A}} ))\ar[ddd]^{\id_{\Aff(\td T(A))}}\\
       &&&&&&\\
     K_0(A_1')\ar[rrd]^{\rho_{A_1'}^b}\ar[r]^{\iota'_{1*0}} & ~~~K_0(A_2')\ar[uuu]_{\psi_{2*0}}\ar[rrd]^{\rho_{A_2'}^b}\ar[r]^{\iota'_{2*0}}&\cdots&\hspace{0.1in}\lr\cdots&K_0(A)\ar[rrd]^{\rho'}\ar[uuu] &&\\
       && \Aff(\td T^b({{A}}_1')) \ar[r]^{\hspace{-0.1in}{\hspace{-0.2in}\iota_1'}^{\sharp}} &
         \Aff(\td T^b({{A}}_{2}')) \ar[uuu]^{\psi^\sharp}\ar[r]^{~~~~~{\iota_2'}^{\sharp}}~~~~
         &
         \cdots&\lr\cdots&
         \Aff(\td T({{A}}))\ar[uuu],}
\end{displaymath}}}
%
%
This implies that $\rho'=\rho,$ where $\rho'$ is induced by choosing $e'$ instead of $e.$

Throughout,
we will define $\rho:=\rho_A.$
%
%
Moreover, this definition of pairing
 is consistent with {{the}} conventional definition of $\rho_A$ in the case
that {{$A$ is unital, or the case
$A
 ={\rm Ped}(A)$ (see \ref{range4.1}).}}

We {\it will write} $\pi_{\aff}^{\rho, A}: \Aff(\td T(A))\to \Aff(\td T(A))/\overline{\rho_A(K_0(A))}$
for the quotient map.   This may be simplified to $\pi_{\aff}^\rho$ if $A$ is clear.
 When $T(A)\not=\emptyset,$ we will use the same notation for
the quotient map $\Aff(T(A))\to \Aff(T(A))/\overline{\rho_A(K_0(A))}.$
In this case, we also write  $\rho_A^\sim:  K_0(\td A)\to \Aff(T(A))$ for the map
defined by $\rho_A^\sim([p])(\tau)=\tau(p)$ for projections $p\in M_l({{\td A}})$ (for all integer $l$) 
and for $\tau\in T(A).$

Suppose that $\Phi: A\to B$ is a \hm. Then $\Phi({\rm Ped}(A))\subset {\rm Ped}(B).$
Let $0\le e_A\le 1$ and $0\le e_B\le 1$ be strictly positive elements of $A$ and $B,$  {{respectively.}}
Let $e_n^A=f_{1/2^n}(e_A)$ and $e_n^B=f_{1/2^n}(e_B)$ {{be as}} defined above.
Define $A_n={\rm Her}(e_n^A)$ and $B_n={\rm Her}(e_n^B).$
Then $$\lim_{n\to\infty}\|\Phi(e_A)-e_n^B\Phi(e_A)e_n^B\|=0.$$ By passing to a subsequence,
as above (applying Lemma 3.3 of \cite{eglnp} repeatedly),
one has a sequence of \hm s {{$\Psi_n: A_n\to B_{k(n)}$}} 
 such that
{{$\|\iota_{B_{k(n)}}\circ\Psi_n(a)-\Phi\circ \iota_{A_n}(a)\|<(1/n)\|a\|$}} 
 and {{$\tau(\iota_{B_{k(n)}}\circ\Psi_n(a))=\tau(\Phi\circ \iota_{A_n}(a))$}} 
for all $a\in  {{M_m(A_n)}}$ {{(for every $m\in \N$)}} and
$\tau\in \td T(B)$ {{(recall that we {{write}} $H$ for $H\otimes \id_{M_k}$).}}  Drawing a similar diagram as above, one obtains the following commutative diagram:
{\small{\beq\label{2020-8-8-d1}
  \xymatrix{
K_0(A)~~~~\ar[rr]^{\rho_{A}} \ar[d]^{\Phi_{*0}}
 &&
~~~\Aff(\td T(A)) \ar@{->}[d]^{\Phi^\sharp}
\\
 K_0(B)~~~ \ar[rr]^{\rho_{B}}
 && ~~~\Aff(\td T(B)).}
\eneq}}
{{At least in the simple case, the construction above was pointed out by Elliott (see part (iv) of the subsection 7  of \cite{Ellicm} as well as Proposition \ref{Prhowd} below for more detail).}}
The above also works when we do not assume
that quasitraces are traces (but quasitraces would be used).
A more general definition will be left to avoid longer discussion.
\end{df}

\begin{df}\label{DElliott}
We now describe the Elliott invariant  for separable simple \CA s {{(see {{\cite{Ellicm} and}} \cite{point-line}).}}
%
%
%
Let us consider the case  ${\tilde T}\not=\{0\}.$
In this case the Elliott invariant is the six-tuple:
$$
{\rm{Ell}}(A):=((K_0(A), \Sigma(K_0(A)), {\tilde T}(A), \Sigma_A, \rho_A), K_1(A)),
$$
where $\Sigma(K_0(A))=\{x\in K_0(A): x=[p]\,\, {{{\rm for\,\, some\,\, projection}\,\, p}}\in A\},$
and $\Sigma_A$ is a function in $\LAff_+(\td T(A))$ defined by
\beq
\Sigma_A(\tau)=\sup\{\tau(a): a\in {{{\rm Ped}(A)_+,}}\,\,\|a\|\le 1\}
\eneq
(see \ref{TRangeM}). Let $e_A\in A$ be a strictly positive element.
Then $\Sigma_A(\tau)=\lim_{\ep\to 0} \tau(f_\ep(e_A))$ for all $\tau\in \td T(A),$ which
is independent of the choice of $e_A.$

Let $B$ be another separable  \CA.  A map $\Gamma: {\rm Ell}(A)\to {\rm Ell}(B)$
is a \hm\, if $\Gamma$ gives  group {{\hm s}} $\kappa_i: K_i(A)\to K_i(B)$ ($i=0,1$) {{and}}
a continuous {{cone map}}
$\gamma: \td {{T(B)\to T(A)}}$
 such
that $\rho_B(\kappa_0(x))(\tau)=\rho_A(x)(\gamma(\tau))$ for all $x\in K_0(A)$ and $\tau\in \td T(A),$
$\kappa_0(\Sigma(K_0(A)))\subset \Sigma(K_0(B))$ and $\Sigma_A(\gamma(\tau))\le \Sigma_B(\tau)$
for all $\tau\in \td T(B).$

We say {{that}} $\Gamma$ is an isomorphism,
if $\Gamma$ is a \hm,\,   $\kappa_i$ is a group
isomorphism ($i=0,1$),
 $\gamma$ is a cone homeomorphism, $\kappa_0(\Sigma(K_0(A))=\Sigma(K_0(B)),$
and  $\Sigma_A(\gamma(\tau))=\Sigma_B(\tau)$ for all $\tau\in \td T(B).$

In the case that $\rho_A(K_0(A))\cap {\rm LAff}_+(\td T(A))=\{0\},$ we often consider
the (special) reduced case that $T(A)$ is compact which gives a base for $\td T(A).$
{{Then, we}}
may write ${\rm Ell}(A)=(K_0(A), T(A), \rho_A, K_1(A)).$ Note {{that,}}  in {{this situation,}}
$\Sigma(K_0(A))=\{0\},$
$\td T(A)$ is determined by $T(A)$ and $\Sigma_A(\tau)=1$ for all $\tau\in T(A).$

\end{df}

\begin{df}{{(\cite{Rlz})}}\label{Dastablerk1}
Let $A$ be a \CA. We say $A$ has almost stable rank one,
if {{$A$}}
has the following property:
the set of  invertible elements of  {{the unitization}} $\widetilde B$ of
every {{nonzero}} hereditary \SCA\, $B$ of {{$A$}} is dense in $B.$
{{$A$ is said to stably {{have}} almost stable rank one,
if $M_n(A)$ has almost stable rank one for all integer $n\ge 1.$}}
\end{df}

\begin{df}\label{Dlambdas}
Let $A$ be a \CA\, with $T(A)\not=\emptyset.$
Suppose that $A$ has a strictly positive element $e_A\in {\rm Ped}(A)_+$ with $\|e_A\|=1.$
Then $0\not\in \overline{T(A)}^w,$ the closure of $T(A)$ in ${\tilde T}(A)$
(see Theorem 4.7  of \cite{eglnp}).
Define
\beq\nonumber
&&\lambda_s(A)=\inf\{d_\tau(e_A): \tau\in  {{\overline{T(A)}^w}}\}
=\lim_{n\to\infty}(\inf\{\tau(f_{1/n}(e_A)): \tau\in  {{T(A)}}\})>0.
\eneq
 Let $A$ be a \CA\, with $T(A)\not=\{0\}.$
There is an affine  map
$r_{\aff}: A_{s.a.}\to \Aff(T_0(A))$
defined by
$$
r_{\aff}(a)(\tau)=\hat{a}(\tau)=\tau(a)\tforal \tau\in T_0(A)
$$
and for all $a\in A_{s.a.}.$ Denote by $A_{s.a.}^q$ the space  $r_{\aff}(A_{s.a.})$ and
$A_+^q=r_{\aff}(A_+).$

\end{df}

\begin{df}\label{Dstrongaue}
Let $A$ and $B$ be two  \CA s.  A sequence of
linear maps $L_n: A\to B$ is said be approximately multiplicative
if
$$
\lim_{n\to\infty}\|L_n(a)L_n(b)-L_n(ab)\|=0\rforal a, b\in  A.
$$
Let
$\phi, \psi: A\to B$ be \hm s. We say $\phi$ and $\psi$ are asymptotically unitarily
equivalent if there is a continuous path of unitaries $\{u(t): t\in [1, \infty)\}$ in
$B$ (if $B$ is not unital, $u(t)\in \td B$) such
that
$$
\lim_{t\to\infty} u^*(t)\phi(a) u(t)=\psi(a)\rforal a\in A.
$$
We say $\phi$ and $\psi$ are
strongly  asymptotically unitarily equivalent if $u(1)\in  U_0(B)$ (or in $U_0(\td B)$).
\end{df}

\begin{df}\label{THfull}
Let $A$ and $B$ be \CA s, and let $T: A_+\setminus\{0\}\to \N\times \R_+\setminus \{0\}$  {{be}}
defined by $a\mapsto (N(a), M(a)),$ where $N(a)\in \N$ and $M(a)\in \R_+\setminus\{0\}.$
Let ${\cal H}\subset A_+\setminus \{0\}.$ A map $L: A\to B$ is said to be
$T$-${{\cal H}}$-full, if, for  any $a\in {\cal H}$ and any $b\in  B_+$ with $\|b\|\le 1,$  any $\ep>0,$ there are $x_1, x_2,...,x_N\in B$
with $N\le N(a)$ and $\|x_i\|\le M(a)$  such that
\beq
\|\sum_{j=1}^N {{x_j^*L(a)x_j}}-b\|\le \ep.
\eneq
$L$ is said to {{be}}  exactly $T$-${{\cal H}}$-full, if $\ep=0$ in the above formula.

\end{df}

 \begin{df}
 \label{DfC1}
{\rm
Let $A$ and $B$ be \CA s and $\phi_0, \phi_1: A\to B$ be \hm s.
By  {{the}}  mapping torus  $M_{\phi_0, \phi_1},$ we mean the following
\CA:
\beq\label{dmapping}
M_{\phi_0, \phi_1}=\{(f,a)\in C([0,1], B)\oplus A: f(0)=\phi_0(a)\andeqn f(1)=\phi_1(a)\}.
\eneq

One has the
short exact sequence
\begin{equation*}\label{Mtoruses}
0\to SB\stackrel{\imath}{\to}M_{{{\phi, \psi}}} \stackrel{\pi_e}{\to} A\to 0,
\end{equation*}
where $\imath: SB\to M_{{{\phi, \psi}}}$ is the embedding and $\pi_e$ is the
quotient map from $M_{{{\phi, \psi}}}$ to $A.$ { {Denote by  $\pi_t: M_{{{\phi, \psi}}}\to B$  the point evaluation at $t\in [0,1].$}}

Let $F_1$ and $F_2$ be two finite dimensional \CA s.
Suppose that there are  (not necessary unital)  \hm s
$\phi_0, \phi_1: F_1\to F_2.$
Denote the mapping torus $M_{\phi_1, \phi_2}$ by
$$
A=A(F_1, F_2,\phi_0, \phi_1)
=\{(f,g)\in  C([0,1], F_2)\oplus F_1: f(0)=\phi_0(g)\andeqn f(1)=\phi_1(g)\}.
$$

Denote by ${\cal C}$ the class of all  \CA s of the form $A=A(F_1, F_2, \phi_0, \phi_1).$
These \CA s {{are called  Elliott-Thomsen building blocks as well as
one  dimensional non-commutative CW complexes  (see  \cite{ET-PL} and  \cite{point-line}).}}

Recall that ${{\cal C}_0}$ is the class of all $A\in {\cal C}$
with $K_0(A)_+=\{0\}$ such that $K_1(A)=0$   and
$\lambda_s(A)>0,$ and ${\cal C}_0^{(0)}$ the class of all $A\in {\cal C}_0$ such that $K_0(A)=0.$  Denote by {{${\cal C}'$,}} ${\cal C}_0'$  and ${\cal C}_0^{0'}$ the class of all full hereditary \SCA s of \CA s in {{$\cal C$,}} ${\cal C}_0$ and
${\cal C}_0^{{(0)}},$ respectively.
}
\end{df}


 \begin{df}\label{DD0}{{(cf. 8.1 and 8.2 of \cite{eglnp})}}
{{Recall}} the definition of class ${\cal D}$ and ${\cal D}_0.$

 Let $A$ be a non-unital simple \CA\,  with a strictly positive element $a\in A$
with $\|a\|=1.$   Suppose that there exists
$1> \mathfrak{f}_a>0,$ for any $\ep>0,$  any
finite subset ${\cal F}\subset A$ and any $b\in A_+\setminus \{0\},$  there are ${\cal F}$-$\ep$-multiplicative \cpc s $\phi: A\to A$ and  $\psi: A\to D$  for some
\SCA\, $D\subset A$ with $D\in {\cal C}_0'$ (or ${\cal C}_0^{0'}$) {{such that}} $D\perp \phi(A),$ and
\beq\label{DNtr1div-1+++}
&&\|x-(\phi(x)+\psi(x))\|<\ep\rforal x\in {\cal F}\cup \{a\},\\\label{DNtrdiv-2}
&&c\lesssim b,\\\label{DNtrdiv-4}
&&t(f_{1/4}(\psi(a)))\ge \mathfrak{f}_a\rforal t\in T(D),
\eneq
where $c$ is a strictly positive element of $\overline{\phi(A)A\phi(A)}.$
  Then
 we say $A\in {\cal D}$ (or ${\cal D}_0$).

 {{ Note {{that,}}  by  Remark 8.11 of \cite{eglnp},
 $D$ can {\it always} be chosen to be in ${\cal C}_0$ (or ${\cal C}_0^{{(0)}}$).}}

{{When $A\in {\cal D}$ and is separable, {{then}}  $A=\mathrm{Ped}(A)$ (see 11.3  of \cite{eglnp}).
Let $a\in A_+$ with $\|a\|=1$ be a strict positive element.
Put
\beq\label{1225dd}
d=\inf\{\tau(f_{1/4}(a)): \tau\in T(A)\} {{>0}}.
\eneq
Then, for any $0<\eta<d,$ $\mathfrak{f}_a$ can be chosen to  be
$d-\eta$ (see  Remark 9.2 of \cite{eglnp}).
One may also assume that $f_{1/4}(\psi(a))$ is
{{full}} in $D.$
Furthermore,  there exists a map:  $T: A_+\setminus \{0\}\to \N\times \R_+\setminus \{0\}$
which is independent of
${\cal F}$ and $\ep$ such that, for any  finite subset ${\cal H}\subset A_+\setminus \{0\},$ we can further  require that $\psi$ is exactly $T$-${\cal H}$-full (see 8.3
and 9.2
of \cite{eglnp}).
}}
For any $n\ge 1,$ one can choose a strictly positive element $b\in A$ with $\|b\|=1$ such that
$f_{1/4}(b)\ge f_{1/n}(a).$ Therefore, if $A$
 has continuous scale, $d$ can be chosen to be {{$1$
if}} the strictly positive element is chosen accordingly.

In \cite{eglnp}, it is proved that  if $A$ a separable simple \CA\, in ${\cal D},$
then $A$ {{is stably projectionless,}}
has stable rank one  and ${\rm Cu}(A)=\LAff_+(\td T(A)),$  and,  every 2-quasitrace
on $A$ is a trace {{(see 9.3 and 11.11 of \cite{eglnp}).}}

Let $A$ be a {{nonzero}} separable stably projectionless simple \CA.  Recall that  $A$ has generalized tracial rank
one, {{written}} $gTR(A)=1,$ if there exists $e\in {\rm Ped}(A)_+$ with $\|e\|=1$ such that
$\overline{eAe}\in {\cal D}$ (see  11.6 of \cite{eglnp}).  It should be noted that, in  the definition of $D$ above,
if we assume that $A$ is unital, and replace ${\cal C}_0$ by ${\cal C},$ then $gTR(A)\le 1$ (see 9.1,
9.2 and 9.3 of \cite{GLN}). But {{the}} condition \eqref{DNtrdiv-4} and constant $\mathfrak{f}_a$ are not needed.
In  the case $K_0(A)_+\not=\{0\}$ but $A$ is not unital,  we may define $gTR(A)\le 1,$ if
for some nonzero projection {{$e\in M_k(A),$ $gTR(eM_k(A)e)\le 1$}} (see \cite{GLN}).
%
%
%
 \end{df}

 \begin{df}\label{DD1}
 Let $A\in {\cal D}$ {{be}} as defined {{in}} {{Definition}} \ref{DD0}.
 If, in  addition,
 for any integer $n,$ {{we can choose $D$ and ${{\psi: A\to D}}$ to satisfy the following condition:}}
 $D=M_n(D_1)$ for some $D_1\in {\cal C}_0$ such that
\vspace{-0.14in} \beq\label{DD1-1}
 \psi(x)=\diag(\overbrace{\psi_1(x), \psi_1(x),...,\psi_1(x)}^n)\rforal x\in {\cal F},
 \eneq
where $\psi_1: A\to D_1$ is an ${\cal F}$-$\ep$-multiplicative \cpc, then we say $A\in {\cal D}^d.$

Note that here, as in  8.3 and 9.2 of \cite{eglnp}, the map  $T$ mentioned in \ref {DD0}  is also assumed
{{to exist}} and
$\mathfrak{f}_a$ can be also
chosen as $d-\eta$ for any $\eta>0$ with $d$ as in \eqref{1225dd} for a certain strictly positive element $a.$

\end{df}

\begin{rem}\label{RDd}
  It follows  from
  {{10.4 and 10.7 of \cite{eglnp}}} that, if $A\in {\cal D}_0,$  then $A\in {\cal D}^d.$ Moreover,
  $D_1$ can be chosen in {{${\cal C}_0^{(0)},$}}
  and if $A\in {\cal D},$ then $D_1$ can be chosen in ${\cal C}_0.$ 
  If $A$ is a separable simple \CA\, in ${\cal D}$ and $A$ {{has an approximate divisible property defined in}}
  10.1 of \cite{eglnp}),
  then $A\in {\cal D}^d.$
\end{rem}

\begin{df}\label{DWtrace}
Throughout the paper,
${\cal W}$ is the separable simple \CA\, which is an inductive limit
of \CA s in ${\cal C}_0^{{(0)}}$ with a unique tracial state, which is first constructed in \cite{RzW}.
It is proved in \cite{eglnkk0} that ${\cal W}$ is the unique separable simple \CA\, with finite
nuclear dimension which is $KK$-contractible and with a unique tracial state.
Denote by $\tau_W$ the unique tracial state of ${\cal W}.$

Let $A$ be a \CA\, and let $\tau$ be a nonzero trace of $A.$
We say {{that}} $\tau$ is a ${\cal W}$-trace, if there exists a sequence of approximately multiplicative \cpc s
$\phi_n: A\to {\cal W}$ such that
\beq
\lim_{n\to\infty}\tau_W\circ \phi_n(a)=\tau(a)\rforal a\in A.
\eneq
{{Throughout, $Q$ will be the UHF-algebra with $K_0(Q)=\Q$ and $[1_Q]=1$ and with the unique tracial state ${\rm tr}.$
Recall that $T_{\rm{qd}}(A)$ is the set of those $\tau\in T(A)$ such that
there exists a sequence of approximately multiplicative \cpc s
$\phi_n: A\to Q$ such that
\beq
\lim_{n\to\infty}{\rm tr}\circ \phi_n(a)=\tau(a)\rforal a\in A.
\eneq
}}
\end{df}

\begin{df}[9.3 of \cite{GLII}]\label{DpropertyW}
%
Let $A$ be a separable \CA.  We say  {{that}} $A$ has property (W), if there is a map $T: A_+^{\bf 1}\setminus \{0\}\to \N\times \R_+\setminus \{0\}$ and
a sequence of approximately multiplicative \cpc s $\phi_n: A\to {{\cal W}}$ such
that, for any finite subset ${\mathcal H}\subset A_+^{\bf 1}\setminus \{0\},$
there exists an integer $n_0\ge 1$ such that
$\phi_n$ is exactly $T$-${\mathcal H}$-full (see 5.5   and 5.7  of \cite{eglnp}) for all $n\ge n_0.$

\end{df}

\section{W-traces}

\begin{thm}\label{Tfeature}
{{Let $\Delta$ be a compact  convex set
and let $G$ be
a countable abelian subgroup of $\Aff(\Delta).$
Suppose
that
$G\cap \Aff_+({{\DT}})=\{0\}.$
Then there exists $t\in \Delta$ such that $g(t)=0$ for all $g\in G.$}}

\end{thm}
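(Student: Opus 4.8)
The plan is to reduce to the case of a finitely generated $G$ by a compactness argument, and then to produce a common zero via Hahn--Banach separation combined with a rational-approximation trick that keeps us inside the group $G$.

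For each $g\in G$ put $Z_g=\{t\in\Delta: g(t)=0\}$, a closed subset of the compact set $\Delta$. A finite intersection $Z_{g_1}\cap\cdots\cap Z_{g_n}$ is exactly the common zero set of the subgroup $G'=\langle g_1,\dots,g_n\rangle$, and $G'\cap\Aff_+(\Delta)\subseteq G\cap\Aff_+(\Delta)=\{0\}$. So it suffices to prove the statement for finitely generated $G$: granting that, the family $\{Z_g\}_{g\in G}$ has the finite intersection property, hence by compactness of $\Delta$ a nonempty intersection, and any point in it is the desired $t$. Thus I may assume $G=\langle g_1,\dots,g_n\rangle$.

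Now suppose, for contradiction, that $g_1,\dots,g_n$ have no common zero in $\Delta$. The map $\Phi=(g_1,\dots,g_n)\colon\Delta\to\R^n$ is affine and continuous, so $K:=\Phi(\Delta)$ is a compact convex subset of $\R^n$ with $0\notin K$. By Hahn--Banach separation there are $a=(a_1,\dots,a_n)\in\R^n$ and $c>0$ with $\sum_{i=1}^n a_i g_i(t)=\langle a,\Phi(t)\rangle\ge c$ for all $t\in\Delta$. Since $\sum a_i g_i$ need not lie in $G$ (the $a_i$ are real), I would perturb: each $g_i$ is bounded on $\Delta$, say $|g_i|\le M$; pick rationals $q_i$ with $|q_i-a_i|<c/(2nM)$, so that $\sum q_i g_i(t)\ge c/2>0$ for all $t\in\Delta$; then clear denominators, taking an integer $N\ge 1$ with $Nq_i\in\Z$ for all $i$, to get $h:=\sum_{i=1}^n(Nq_i)g_i\in G$ with $h(t)\ge Nc/2>0$ for all $t\in\Delta$. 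Because $0\notin K$ forces $0\notin\Delta$ (otherwise $\Phi(0)=0\in K$), every point of $\Delta$ is nonzero, so $h$ is a nonzero element of $\Aff_+(\Delta)\cap G$, contradicting the hypothesis. This settles the finitely generated case, and with it the theorem.

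The only real content is the step turning the real separating functional into an element of $G$; I expect this to be the main obstacle, but it is handled cleanly because boundedness of each $g_i$ on the compact $\Delta$ gives uniform control on the rational perturbation, after which clearing denominators lands us in $G$ while preserving strict positivity. The remaining bookkeeping --- the finite-intersection reduction and the harmless edge case $0\in\Delta$ --- is routine.
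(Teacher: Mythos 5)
Your proof is correct, but it follows a genuinely different route from the paper's. The paper keeps everything infinite-dimensional: it shows (by the same clearing-denominators trick you use, but applied to rational convex combinations) that the convex hull of $G$ is disjoint from the open cone $S_+=\Aff_+(\Delta)\setminus\{0\}$, separates the two by Hahn--Banach in $\Aff(\Delta)$, observes that the separating functional must vanish on the group $G$, normalizes it to a state, extends it to a state of $C(\Delta)$, and then uses Krein--Milman plus compactness of $\Delta$ to identify that state with evaluation at a point $\tau\in\Delta$. You instead reduce to the finitely generated case by the finite-intersection-property argument on the closed zero sets $Z_g$, and then work entirely in $\R^n$: strict separation of $0$ from the compact convex image $K=\Phi(\Delta)$, rational perturbation of the separating vector controlled by the sup bound $M$ on the generators, and clearing denominators to land a strictly positive element $h$ inside $G$, contradicting $G\cap\Aff_+(\Delta)=\{0\}$. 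Your version is more elementary (no state extension, no Krein--Milman, and in fact neither argument uses countability of $G$), while the paper's produces the representing point directly and sits closer to the Choquet-theoretic framework used elsewhere in the paper. Two small points worth making explicit: your parenthetical ``$\Phi(0)=0$'' silently invokes the paper's convention that elements of $\Aff(\Delta)$ vanish at $0$ whenever $0\in\Delta$ (the paper uses the same convention to dispatch that case up front by taking $t=0$), and the bound $M$ could vanish only if all generators are identically zero, in which case $0\in K$ already yields the contradiction.
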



\begin{proof} Let us assume that $0\notin \DT$, otherwise we can choose $t=0$.
Let $S_+=\{f\in  \Aff(\Delta): f(x)>0\rforal x\in \DT\}={{\Aff_+(\DT)}}\setminus\{0\}.$ It is an open convex {{subset}} of $\Aff(\Delta).$
Let $G_1$ be the convex hull of $G.$
Note that, if $g_1,g_2\in G$ and $r\in \Q$  with $0<r<1,$ then
$rg_1+(1-r)g_2\not\in S_+.$ To see this, we note that there is an integer $m\ge 1$ such
that $mr$ and  $m(1-r)$ are both integers.
In other words, $m(rg_1+(1-r)g_2)\in G.$ Therefore $m(rg_1+(1-r)g_2)\not\in S_+.$
Hence $rg_1+(1-r)g_2\not\in S_+.$ Since $S_+$ is open, this implies that $G_1\cap S_+=\emptyset.$

By the Hahn-Banach separating theorem, there is a real continuous linear functional  $f$ on $\Aff(\Delta)$
and $r_0\in \R$
such that
\beq\label{Tfeature-5}
f(s)<r_0\le f(g)\rforal s\in S_+\andeqn g\in G_1.
\eneq
If $f(g)\ge r_0,$ then $f(-g)\le -r_0.$
Note that, if  $g\in G,$ {{then $mg\in G$ for any $m\in \Z$. Hence $mf(g)\geq r_0$ for all $m\in \Z$. It follows that $f(g)=0$ for all $g\in G$ and $r_0\leq 0$.}}

By  \eqref{Tfeature-5},
\beq
-f(s)>{{-r_0\geq 0}}  \rforal s\in  S_+.
\eneq
Let $f'=-f.$ Since ${{\Aff_+(\Delta)}}=S_+\cup\{0\}\subset \overline{S_+},$
\beq
f'(s)\ge 0\rforal s\in
{{\Aff(\Delta)_+=\{s\in \Aff(\Delta): s(\tau)\ge 0\}.}}
\eneq
In other words, $f'$ is {a positive linear functional} on $\Aff(\Delta).$
Let $f_1=f'/\|f'\|.$ Then $f_1(1_\DT)=1.$
Consider $\Aff(\Delta)\subset C(\Delta).$
By the Hahn-Banach extension theorem
there is a linear functional ${\bar f}_1$ on $C(\Delta)$
such that $({\bar f}_1)|_{\Aff(\Delta)}=f_1,$ and $\|\bar{f}_1\|=\|f_1\|=1$.
 Since $\|\bar{f}_1\|={\bar f}_1(1_\DT)=1,$
${\bar f}_1$ is a positive functional (see Proposition 3.1.4 of \cite{Pbook}),  and therefore it is a  state of $C(\DT).$
%
%
%
%
%
Let $S(C(\Delta))$ be the state space. Then it is compact and convex.
By {{the}} Krein-Milman Theorem, ${\bar f}_1$ is the limit of $\{\mu_n\},$
where $\mu_n=\sum_{i=1}^{m(n)} \af_{n,i} \rho_{n,i},$
$0\le \af_{n,i}\le 1$ are positive numbers with $\sum_{i=1}^{m(n)}\af_{n,i}=1,$ and
${{\rho_{n,i}}}$ are pure states of $C(\Delta).$ Note that,
for each $i$ and $n,$ there is $t_{i,n}\in \Delta$ such that
${{\rho_{n,i}}}(a)=a(t_{i,n})$ for all $a\in C(\Delta).$ Since $\Delta$ is convex,
$\tau_n=\sum_{i=1}^{m(n)}\af_{i,n}t_{i,n}\in \Delta.$ Then
$a(\tau_n)=a(\mu_n)$ {{for all $a\in \Aff(\Delta),$  {{by the following computation:}}
$$
a(\mu_n)=\sum_{i=1}^{m(n)}\af_{n,i}a(\rho_{n,i})=\sum_{i=1}^{m(n)}\af_{n,i}a(t_{i,n})=a(\sum_{i=1}^{m(n)}\af_{n,i}t_{i,n})
=a(\tau_n) \rforal a\in \Aff(\Delta)).
$$}}
 Since $\Delta$ is compact,
we conclude that there is $\tau\in \Delta$ such that
\beq
{\bar f}_1(a)=a(\tau)\rforal a\in \Aff(\Delta).
\eneq


We have just shown that
\beq
g(\tau)=0\rforal g\in G.
\eneq


\end{proof}


\begin{cor}[compare with Corollary A.7 of \cite{eglnkk0}]\label{Ct=0}
Let $A\in {\cal D}$ be a separable simple \CA\, with continuous scale.
Then, there exists $t_o\in T(A)$ such that
$\rho_A(x)(t_o)=0$ for all $x\in K_0(A).$
\end{cor}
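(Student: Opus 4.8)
The plan is to apply Theorem \ref{Tfeature} to a suitable compact convex set and countable abelian group. Since $A \in {\cal D}$ is a separable simple \CA\ with continuous scale, we know $A = {\rm Ped}(A)$ (by 11.3 of \cite{eglnp}), so $\td T^b(A) = \td T(A)$, $\rho_A$ is defined directly, and the tracial state space $T(A)$ is a nonempty compact convex subset of $\td T(A)$ forming a base for it; moreover $0 \notin T(A)$. I would take $\Delta = T(A)$, which is a (metrizable) compact convex set, and consider the image subgroup $G := \rho_A(K_0(A)) \subseteq \Aff(T(A))$, restricting the affine functions on $\td T(A)$ to $T(A)$. Since $K_0(A)$ is countable, $G$ is a countable abelian subgroup of $\Aff(T(A))$.

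The key step is to verify the hypothesis $G \cap \Aff_+(T(A)) = \{0\}$. This is exactly where I expect the main content to lie, and it is the point where Corollary A.7 of \cite{eglnkk0} (referenced in the statement) comes in: for $A \in {\cal D}$, the pairing $\rho_A$ has the property that if $x \in K_0(A)$ with $\rho_A(x)(\tau) > 0$ for all $\tau \in T(A)$, then in fact $x$ would have to be represented (stably, after tensoring appropriately) by something positive, contradicting the fact that $A$ is stably projectionless — recall from Definition \ref{DD0} that every separable simple \CA\ in ${\cal D}$ is stably projectionless, so $K_0(A)_+ = \{0\}$. Concretely: if $g = \rho_A(x) \in \Aff_+(T(A)) \setminus \{0\}$, then $g(\tau) > 0$ for all $\tau \in T(A)$; since $T(A)$ is a base for $\td T(A)$, this forces $\rho_A(x)(\tau) > 0$ for all nonzero $\tau \in \td T(A)$, i.e., $x \in K_0(A)_+ \setminus \{0\}$ in the sense of Definition \ref{DElliott23}/the stably projectionless pairing, contradicting $K_0(A)_+ = \{0\}$. (Alternatively one invokes weak unperforation / Corollary A.7 of \cite{eglnkk0} directly to rule out any such $x$.) Hence $G \cap \Aff_+(T(A)) = \{0\}$.

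Then Theorem \ref{Tfeature} applies and yields a point $t_o \in T(A)$ with $g(t_o) = 0$ for all $g \in G$, that is, $\rho_A(x)(t_o) = 0$ for all $x \in K_0(A)$, which is the conclusion. I should be a little careful about the edge case where $T(A)$ could a priori be $\{0\}$ or contain $0$; but continuous scale together with $A = {\rm Ped}(A)$ and Theorem 4.7 of \cite{eglnp} (cf.\ Definition \ref{Dlambdas}) guarantees $0 \notin \overline{T(A)}^w$, so $0 \notin T(A)$ and $T(A)$ is genuinely a nonempty compact convex set not containing the origin, exactly the situation handled in the proof of Theorem \ref{Tfeature}. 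The only real obstacle is pinning down the identity $G \cap \Aff_+(T(A)) = \{0\}$ cleanly from the stably projectionless property and the definition of the pairing; everything else is a direct citation of Theorem \ref{Tfeature}.
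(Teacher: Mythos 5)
Your reduction to Theorem \ref{Tfeature} is exactly the paper's: take $\Delta=T(A)$ (compact, not containing $0$, since $A$ has continuous scale and $A={\rm Ped}(A)$) and $G=\rho_A(K_0(A))$, then extract $t_o$. The problem is your primary justification of the key hypothesis $\rho_A(K_0(A))\cap\Aff_+(T(A))=\{0\}$. You argue: if $\rho_A(x)(\tau)>0$ for all $\tau\in T(A)$, then $x\in K_0(A)_+\setminus\{0\}$ ``in the sense of Definition \ref{DElliott23},'' contradicting stable projectionlessness. This conflates two different cones. The cone $G_+=\{g:\rho(g)>0\}\cup\{0\}$ of Definition \ref{DElliott23} is defined formally from the pairing, whereas the cone $K_0(A)_+$ that vanishes because $A$ is stably projectionless is the one generated by projections in matrix algebras over $A$. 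The assertion that a class $x$ with $\rho_A(x)$ strictly positive on all of $T(A)$ must be represented (stably) by a projection is precisely a weak-unperforation-type statement; it is not a consequence of the definitions, and indeed the paper flags exactly this identity as a ``previously unexpected stronger feature of weak unperforation'' in the stably projectionless case. So your ``concrete'' argument is circular: it assumes the very conclusion that Corollary A.7 of \cite{eglnkk0} supplies.

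Your parenthetical fallback --- invoke Corollary A.7 of \cite{eglnkk0} directly --- is the paper's actual route, but it is not a free citation: the paper first records (via 11.5 and 11.8 of \cite{eglnp}) that $A$ has stable rank one and ${\rm Cu}(A)={\rm LAff}_+(T(A))$, which are the regularity inputs under which A.7 yields $\rho_A(K_0(A))\cap\Aff_+(T(A))=\{0\}$. With that identity in hand, Theorem \ref{Tfeature} finishes the proof exactly as you say. So to repair your write-up: delete the projection-cone argument, verify the hypotheses of Corollary A.7 using 11.5 and 11.8 of \cite{eglnp}, and cite A.7 for the intersection statement.
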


\begin{proof}
By 11.5 and 11.8 of \cite{eglnp}, $A$ has stable rank one and ${\rm Cu}(A)={\rm LAff}_+(T(A)).$
It follows from Corollary A.7 of \cite{eglnkk0} {{that}} $\rho_A(K_0(A))\cap \Aff_+(T(A))=\{0\}.$
By Theorem \ref{Tfeature}, there is $t_o\in T(A)$ such that {{$\rho_A(x)(t_o)=0$}} for all $x\in K_0(A).$
\end{proof}

\begin{thm}\label{TWtraceD}
Let $A\in {\cal D}$ be a separable simple \CA\, with continuous scale. Then
$A$
has at least one ${\cal W}$-tracial state.
Moreover, $A$ has property (W), i.e., there is a map $T: A_+\setminus \{0\}\to \N\times \R_+\setminus \{0\}$ and
a sequence of approximately multiplicative \cpc s $\phi_n: A\to {\cal W}$ such that, for any
finite subset ${\cal H}\subset  A_+\setminus \{0\},$ there exists $n_0\ge 1$ such that
$\phi_n$ is exactly $T$-${\cal H}$-full {{(see \ref{THfull})}} for all $n\ge n_0,$ and  there exists a
$\tau\in T(A)$ such that
$$
\tau(a)=\lim_{n\to\infty} t_W\circ \phi_n(a) {{\tforal}} a\in A.
$$

\end{thm}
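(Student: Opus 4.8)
The plan is to use the definition of the class ${\cal D}$ to approximately factor $A$ through one‑dimensional NCCW building blocks $D_n\in{\cal C}_0$ enjoying good fullness and trace lower bounds, to map each $D_n$ into ${\cal W}$ using Theorem \ref{Tfeature} together with Robert's classification of homomorphisms out of such building blocks, and then to track both the fullness and the traces through the composition. Fix a strictly positive $e\in A_+$ with $\|e\|=1$, chosen (as permitted in Definition \ref{DD0}, since $A$ has continuous scale) so that $d:=\inf\{t(f_{1/4}(e)):t\in T(A)\}$ is as close to $1$ as we wish. Take increasing finite sets ${\cal F}_n\subseteq A^{\bf1}$ with dense union, each containing $e$ and $f_{1/4}(e)$; increasing finite sets ${\cal H}_n\subseteq A_+^{\bf1}\setminus\{0\}$ exhausting $A_+^{\bf1}\setminus\{0\}$; $\epsilon_n\downarrow0$; and $b_n\in A_+\setminus\{0\}$ with $\sup\{d_t(b_n):t\in T(A)\}\to0$ (possible since $A$ is simple). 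Applying the definition of ${\cal D}$ at stage $n$ — with the universal map $T\colon A_+\setminus\{0\}\to\N\times\R_+\setminus\{0\}$ of Definition \ref{DD0}, with $\mathfrak{f}_e>d-1/n$, and requiring $D\in{\cal C}_0$ and $\psi$ exactly $T$-${\cal H}_n$-full — yields ${\cal F}_n$-$\epsilon_n$-multiplicative \cpc s $\phi_n\colon A\to A$ and $\psi_n\colon A\to D_n$ with $D_n\in{\cal C}_0$, $D_n\perp\phi_n(A)$, $\|x-\phi_n(x)-\psi_n(x)\|<\epsilon_n$ for $x\in{\cal F}_n$, the Cuntz class of $\phi_n(A)$ dominated by $\langle b_n\rangle$, $t(f_{1/4}(\psi_n(e)))\ge\mathfrak{f}_e>d-1/n$ for all $t\in T(D_n)$, and $\psi_n$ exactly $T$-${\cal H}_n$-full into $D_n$.

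Each $D_n\in{\cal C}_0$ has $K_0(D_n)_+=\{0\}$, $K_1(D_n)=0$, stable rank one, and $0\notin\overline{T(D_n)}^w$, with $\overline{T(D_n)}^w$ compact. By Corollary A.7 of \cite{eglnkk0}, $\rho_{D_n}(K_0(D_n))\cap\Aff_+(\overline{T(D_n)}^w)=\{0\}$, so Theorem \ref{Tfeature} supplies a tracial state $\gamma_n\in T(D_n)$ with $\rho_{D_n}(x)(\gamma_n)=0$ for all $x\in K_0(D_n)$. Using ${\rm Cu}^\sim({\cal W})=\{{\bf0}\}\sqcup\R\cup\{+\infty\}$, the assignment sending every class coming from $K_0(D_n)$ to ${\bf0}$ and every non-compact $\langle c\rangle$ to $d_{\gamma_n}(c)$ is a well-defined ${\bf Cu}$-morphism $\Gamma_n\colon{\rm Cu}^\sim(D_n)\to{\rm Cu}^\sim({\cal W})$ (well-definedness uses precisely that $\gamma_n$ annihilates $\rho_{D_n}(K_0(D_n))$); by Robert's classification theorem (\cite{Rl}) there is a nonzero homomorphism $h_n\colon D_n\to{\cal W}$ with ${\rm Cu}^\sim(h_n)=\Gamma_n$. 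Then $d_{\tau_W}(h_n(c))=d_{\gamma_n}(c)$ for $c\in(D_n)_+$, whence $t_W\circ h_n=\gamma_n$ on $D_n$ via the formula $\tau(c)=\int_0^{\|c\|}d_\tau((c-s)_+)\,ds$.

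Put $\Phi_n:=h_n\circ\psi_n\colon A\to{\cal W}$; the sequence $\{\Phi_n\}$ is approximately multiplicative, and the $t_W\circ\Phi_n=\gamma_n\circ\psi_n$ are positive functionals of norm $\le1$ which are asymptotically tracial (since $t_W$ is a trace and $\psi_n$ is approximately multiplicative). Passing to a subsequence, these converge weak-$*$ to a positive trace $\tau$ of norm $\le1$ on $A$ with $\tau(a)=\lim_n t_W\circ\Phi_n(a)$ for all $a\in A$. Since $\psi_n$ is approximately multiplicative, $\|\psi_n(f_{1/4}(e))-f_{1/4}(\psi_n(e))\|\to0$, so $\tau(f_{1/4}(e))=\lim_n\gamma_n(\psi_n(f_{1/4}(e)))\ge\liminf_n\mathfrak{f}_e\ge d$; since $d$ may be taken equal to $1$, this forces $\|\tau\|=1$, i.e.\ $\tau\in T(A)$. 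Thus $\tau$ is a ${\cal W}$-tracial state, which gives the first assertion, and it is realized by the $\Phi_n$, which will give the last clause.

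It remains to see that the $\Phi_n$ witness property (W) with a single map $T'$. Applying $h_n$ to the identities expressing the fullness of $\psi_n$ shows $\Phi_n$ is exactly $T$-${\cal H}_n$-full into $h_n(D_n)$, which is full in the simple algebra ${\cal W}$ (as $h_n\ne0$). For $a\in{\cal H}_n$, taking the target $\psi_n(e)\in(D_n)_+^{\bf1}$ in the exact $T$-${\cal H}_n$-fullness of $\psi_n$ gives $d_{\gamma_n}(\psi_n(e))\le N(a)\,d_{\gamma_n}(\psi_n(a))$, hence $d_{\tau_W}(\Phi_n(a))=d_{\gamma_n}(\psi_n(a))\ge d_{\gamma_n}(\psi_n(e))/N(a)\ge(d-1/n)/N(a)$. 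Writing $\sigma_W:=\Sigma_{\cal W}(\tau_W)<\infty$, so $d_{\tau_W}(b)\le\sigma_W$ for all $b\in{\cal W}_+^{\bf1}$, and putting $N'(a):=\lceil 4\sigma_W N(a)/d\rceil+1$, one checks that for all $n$ past an $a$-independent threshold and all $b\in{\cal W}_+^{\bf1}$, $N'(a)\,d_{\tau_W}(\Phi_n(a))\ge d_{\tau_W}(b)+\sigma_W$; strict comparison in the ${\cal Z}$-stable algebra ${\cal W}$ then yields $b\lesssim\Phi_n(a)^{\oplus N'(a)}$ with a trace gap bounded below by $\sigma_W$. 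The crux, and the main obstacle, is to upgrade this to an exact fullness statement $\|\sum_{j=1}^{N'(a)}y_j^*\,\Phi_n(a)\,y_j-b\|<\varepsilon$ with $\|y_j\|\le M'(a)$ for an $M'(a)$ depending only on $a$ — i.e.\ with the reconstructing coefficients bounded uniformly in $n$, $b$ and $\varepsilon$, and with exact (not just approximate) fullness — inside ${\cal W}$, whose Cuntz structure is far less rigid than that of the $D_n$. This needs the quantitative trace bound just obtained, a lower bound on $\|\Phi_n(a)\|$ in terms of $a$ alone (which constrains the choice of $\gamma_n$ and $h_n$), strict comparison and stable rank one in ${\cal W}$, and the fullness machinery of 5.5–5.7 of \cite{eglnp}. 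Along the subsequence of the previous paragraph, $\{\Phi_n\}$ then simultaneously witnesses property (W), with $T'(a)=(N'(a),M'(a))$, and converges in trace to the tracial state $\tau$, completing the proof.
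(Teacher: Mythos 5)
Your overall route is essentially the paper's: use the ${\cal D}$-decomposition to factor $A$ approximately through blocks in ${\cal C}_0$, use Theorem \ref{Tfeature} (a trace annihilating $\rho(K_0)$) to build a ${\bf Cu}$-morphism into ${\rm Cu}^\sim({\cal W})$, realize it by a \hm\ via Theorem 1.0.1 of \cite{Rl}, compose, and take a weak-$*$ limit of the traces. The one structural difference is that you build a morphism on each ${\rm Cu}^\sim(D_n)$ separately, whereas the paper applies Corollary \ref{Ct=0} to $A$ itself (where ${\rm Cu}^\sim(A)=K_0(A)\sqcup\LAff_+^\sim(\td T(A))$, so Corollary A.7 of \cite{eglnkk0} genuinely applies) and restricts the single morphism $\gamma$ along the embeddings $C_n\hookrightarrow A$; since the chosen trace is faithful on the simple algebra $A$, this makes $\gamma\circ{\rm Cu}^\sim(\iota_n)$ nonzero on every nonzero class and the resulting maps $\psi_n:C_n\to{\cal W}$ injective for free. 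In your version the trace $\gamma_n\in T(D_n)$ from Theorem \ref{Tfeature} need not be faithful, and your citation of Corollary A.7 of \cite{eglnkk0} for $D_n\in{\cal C}_0$ is off target: as used in Corollary \ref{Ct=0} it rests on stable rank one together with ${\rm Cu}={\rm LAff}_+$, which fails for the NCCW blocks. The fact $\rho_{D_n}(K_0(D_n))\cap\Aff_+(\overline{T(D_n)}^w)=\{0\}$ is true, but it needs the order-embedding $K_0(\td D_n)\hookrightarrow K_0(F_1)$ argument, not A.7. These points are repairable, but two steps of your proposal are genuinely incomplete.

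First, the norm-one claim. Your assertion that ``$d$ may be taken equal to $1$'' is false for a fixed strictly positive element: since $A$ is stably projectionless, $f_{1/4}(e)$ cannot be a projection, so $\tau(f_{1/4}(e))<d_\tau(e)=1$ for every $\tau\in T(A)$ and hence $d<1$. With $e$ fixed at the outset, your weak-$*$ limit only satisfies $\|\tau\|\ge d<1$; it need not be a tracial state, and normalizing it destroys the required identity $\tau(a)=\lim_n t_W\circ\Phi_n(a)$. The paper avoids this by taking $\eta_n\to 0$ and running the estimate against an increasing sequence $\{e_m\}$ of positive elements with $\tau_W(\Phi_m(e_m))\to 1$ before passing to a subsequence, i.e.\ a diagonal argument over elements, not a single $e$. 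Second, property (W): you yourself identify the crux --- upgrading the comparison estimate in ${\cal W}$ to \emph{exact} $T'$-${\cal H}$-fullness with constants independent of $n$, $b$ and $\varepsilon$ --- and do not carry it out; you would also need a lower bound on $\|\Phi_n(a)\|$ depending only on $a$, which your construction does not supply (since $h_n$ need not be injective). In the paper this entire second half is obtained by applying Lemma 5.7 of \cite{eglnp} to the sequence produced in the first part; as written, your proposal defers to ``the fullness machinery of 5.5--5.7 of \cite{eglnp}'' without closing the argument, so the second assertion of the theorem is not actually proved.
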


\begin{proof}
{{Fix a strictly positive element $e\in A_+.$}}
It follows from {{9.2 of \cite{eglnp}}} that, for any $1/2>\eta>0,$ we may choose
$\mathfrak{f}_e>1-\eta$  in  {{\ref{DD0}.}} 
By  {{\ref{DD0},}} 
we {{obtain}} two sequences of  mutually orthogonal \SCA s $B_n, C_n\subset A,$
$B_n=\overline{a_nAa_n}$ for some
positive elements $a_n\in A$ with $\|a_n\|=1,$ $C_n\in {\cal C}_0,$ and two sequences
of \cpc s $\phi_{n,0}: A\to B_n$ and $\phi_{n,1}: A\to C_n$ such  that $\phi_{n,0}(A)\perp C_n,$
\beq\label{TWtraceD-5}
\lim_{n\to\infty}\|x-(\phi_{n,0}(x)+\phi_{n,1}(x))\|=0\rforal x\in A,\\\label{TWtraceD-6}
\lim_{n\to\infty}\|\phi_{n,i}(xy)-\phi_{n,i}(x)\phi_{n,i}(y)\|=0\rforal x, y\in A,\,\,\, i=0,1,\\\label{TWtraceD-7}
\lim_{n\to\infty}\sup\{d_\tau(a_n): \tau\in T(A)\}=0\andeqn\\\label{TWtraceD-8}
\tau(f_{1/2}(\phi_{n,1}(e)))>1-\eta\rforal \tau\in T(C_n).
\eneq

Let $b_n=f_{1/2}(\phi_{n,1}(e))\in C_n.$
Then $0\le {{b_n}}\le 1.$  The inequality \eqref{TWtraceD-8} implies
that
\beq\label{310}
\inf\{\tau(b_n): \tau\in T(C_n)\}>1-\eta.
\eneq
Hence $\lambda_s(C_n)>1-\eta$ {{(see \ref{Dlambdas}).}}

{{As reminded in the middle of \ref{DD0},   $A$ is stably projectionless, and has stable rank one,
${\rm Ped}(A)=A$ (see 11.3 of \cite{eglnp}),
$QT(A)=T(A)$ and ${\rm Cu}(A)=\LAff_+(\td T(A)).$  By
Theorem 7.3 of \cite{eglnp} (see also Theorem 6.2.3 of \cite{Rl} and 6.11 of \cite{RS}),
${\rm Cu}^\sim(A)=K_0(A)\sqcup \LAff_+^\sim (\td T(A)).$
By \ref{Ct=0}, there {{is}}  $t_D\in T(A)$ such that   $\rho_A(x)(t_D)=0$
for all $x\in K_0(A).$
Recall that (see Theorem 6.2.3 of \cite{Rl}) ${\rm Cu}^\sim({\cal W}) =\{0\}\sqcup (\R\cup\{\infty\}).$
Define $\gamma: {\rm Cu}^\sim(A)\to {\rm Cu}^\sim({\cal W})$ {{by}}
$\gamma|_{K_0(A)}=0$ and $\gamma(f)(t_W)=f(t_D)$ for $f\in \LAff_+^\sim(\td T(A)),$
where $t_W$ is the unique tracial state
of ${\cal W}.$   It is ready to see that $\gamma$ is  a  morphism
in ${\bf Cu}.$}}    {{Note that $\gamma(\la c\ra)\not=0$ for any $c\in A_+\setminus \{0\}$ as $A$ is simple.
Let $\iota_n: C_n\to A$ be the embedding.
By Theorem 1.0.1 of \cite{Rl}, there exists, for each $n,$
a \hm\,
$\psi_n: C_n\to {\cal W}$
such that ${\rm Cu}^\sim(\psi_n)=\gamma\circ {\rm Cu}^\sim(\iota_n).$
In particular, by \eqref{310}, $d_{t_W}(\psi_n(e_{C_n}))>1-\eta,$ where $e_{C_n}$ is a strictly positive element of $C_n.$
 Since $\iota_n$ is injective,
${\rm Cu}^\sim(\psi_n)(\la c\ra)\not=0$ for any $c\in {C_n}_+\setminus \{0\}.$ It follows that $\psi_n$ is injective.}}
%
{{Define $\Phi_n=\psi_n\circ \phi_{n,1}: A\to {\cal W}.$}}
Then $\Phi_n$ is a  {{sequence of}} \cpc s
satisfying the following:
\beq
\lim_{m\to\infty}\|\Phi_m(xy)-\Phi_m(x)\Phi_m(y)\|=0\rforal x, y\in A,\\
\lim_{m\to\infty}\|\Phi_m(x)\|=\|x\|\rforal x\in A\andeqn\\\label{TWtraceD-10}
\lim_{m\to\infty}\tau_W(f_{1/2}(\Phi_m(e)))\ge (1-\eta).
\eneq
Note that this holds for each $1/2>\eta>0.$
{{By choosing}} $\eta_n\to 0,$
{{we may}} further assume that,
there exists an increasing sequence $\{e_n\}$ of positive elements
with $0\le e_n\le 1$ such that, in the above,
{{we  have}}
\beq
\lim_{n\to\infty} \tau_W(\Phi_m(e_m))=1.
\eneq
Then, by passing to a subsequence, we may assume that there exists
$\tau\in T(A)$
such that
\beq
\lim_{n\to\infty} \tau_W(\Phi_m(a))=\tau(a)\rforal a\in A.
\eneq

The last part of the statement follows from the first part {{and Lemma 5.7}} of \cite{eglnp}.

\end{proof}
%
%
%
%
%
%
%
%


\begin{thm}\label{Tkk}
Let $C$ be a separable amenable \CA\, with the property (W),
let $A$ be a separable simple \CA\, with continuous scale which satisfies the UCT,
and has the property (W),
and let $\kappa\in KL(C, A).$
Then there exists a sequence of \cpc s $\{\phi_n\}$ from $C$ to $A\otimes M_{m(n)}$
(for some integers $m(n)$) such that
\beq
\lim_{n\to\infty}\|\phi_n(a)\phi_n(b)-\phi_n(ab)\|=0 {{\tforal}} a, b\in C,\,{{\tand}}
{[}\{\phi_n\}{]}=\kappa.
\eneq
\end{thm}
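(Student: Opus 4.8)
The plan is to realize $\kappa$ first by a sequence of approximately multiplicative \cpc s from $C$ into the stabilization $A\otimes{\cal K}$, and then to compress these maps into the matrix corners $A\otimes M_{m(n)}$. For the compression step, recall that $A\otimes{\cal K}$ is the inductive limit of the subalgebras $A\otimes M_n=M_n(A)$ along the upper-left corner embeddings, and that the inclusion $A\otimes M_n\hookrightarrow A\otimes{\cal K}$ yields the identification $KL(C,A)=KL(C,A\otimes{\cal K})$. So, granted a sequence of approximately multiplicative \cpc s $\Psi_j\colon C\to A\otimes{\cal K}$ with $[\{\Psi_j\}]=\kappa$, I would fix an increasing sequence of finite subsets ${\cal F}_1\subset{\cal F}_2\subset\cdots$ of $C$ with dense union and positive numbers $\ep_j\searrow 0$, and for each $j$ choose a finite-rank projection $e\in{\cal K}$ (of some rank $m(j)$) with $\|(1\otimes e)\Psi_j(f)(1\otimes e)-\Psi_j(f)\|<\ep_j$ for all $f\in{\cal F}_j$; this is possible because the union of the corners $(1\otimes e)(A\otimes{\cal K})(1\otimes e)$ is dense in $A\otimes{\cal K}$. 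Setting $\phi_j$ to be the compression of $\Psi_j$ by $1\otimes e$, each $\phi_j$ is a \cpc\, map $C\to A\otimes M_{m(j)}$, the sequence $\{\phi_j\}$ is approximately multiplicative, and since $\phi_j$ and $\Psi_j$ agree to within $\ep_j\to 0$ on the dense set $\bigcup_j{\cal F}_j$, they induce the same homomorphism on total $K$-theory, so $[\{\phi_j\}]=[\{\Psi_j\}]=\kappa$.

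It thus remains to produce $\Psi_j\colon C\to A\otimes{\cal K}$ with $[\{\Psi_j\}]=\kappa$. Here $A\otimes{\cal K}$ is a separable, simple, stable, amenable \CA\, in the UCT class; since $A\in{\cal D}$ it has stable rank one and ${\rm Cu}(A\otimes{\cal K})={\rm LAff}_+(\td T(A))$ (by 11.5 and 11.8 of \cite{eglnp}). Producing a sequence of approximately multiplicative \cpc s with a prescribed $KL$-invariant is a $KK$-existence statement: I would represent $\kappa\in KK(C,A\otimes{\cal K})$ by a Cuntz pair and, using the amenability of $C$ together with Kasparov's absorption machinery, approximate it by \cpc s into $A\otimes{\cal K}$. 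To recover exactly $\kappa$ (rather than a difference of two classes) and to arrange the required approximately multiplicative \cpc\, form, I would absorb into the construction a ``large'' summand factoring through ${\cal W}$: property (W) of $C$ supplies approximately multiplicative, exactly $T$-${\cal H}$-full \cpc s $C\to{\cal W}$ (Definition \ref{DWtrace}), and property (W) of $A$, together with the $KK$-contractibility of ${\cal W}$ and the fact that maps factoring through ${\cal W}$ carry the trivial $KL$-class, lets one push these into a suitable copy of ${\cal W}$ inside $A\otimes{\cal K}$ (available from the Cuntz-semigroup structure of $A$) without disturbing $\kappa$. Adding this summand to a genuine homomorphic realization of $\kappa$ then gives the desired $\{\Psi_j\}$.

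The step I expect to be the main obstacle is precisely this $KK$-realization over $A\otimes{\cal K}$, because $C$ is not assumed to satisfy the UCT, so $KL(C,A)$ cannot be read off from $\underline K(C)$ and $\underline K(A)$ directly. What makes the argument go through is that $\kappa$ already lives in $KK(C,A)$ and that $A$ lies in the UCT class (so $A\otimes{\cal K}$ is $KK$-equivalent to a tractable model and the Cuntz-picture realization is available), while property (W) of both algebras provides the fullness needed to run the absorption. The remaining points---that the corner compressions are honestly \cpc\, and approximately multiplicative, that $[\{\phi_j\}]$ is well defined and unchanged when one passes to a subsequence, and the bookkeeping with total $K$-theory---are routine, as in the construction of Theorem \ref{TWtraceD}.
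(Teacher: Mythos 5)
Your outer frame (stabilize, realize $\kappa$ by approximately multiplicative \cpc s into $A\otimes{\cal K}$, then compress into corners $A\otimes M_{m(n)}$) is reasonable, and the compression step is indeed routine. But the entire content of the theorem sits in the step you leave as a sketch: producing $\{\Psi_j\}\colon C\to A\otimes{\cal K}$ with $[\{\Psi_j\}]=\kappa$. Writing $\kappa$ as a Cuntz pair and invoking ``Kasparov's absorption machinery'' only gives you a \emph{pair} of maps whose difference is $\kappa$; to obtain a single sequence of approximately multiplicative \cpc s whose class is exactly $\kappa$ you must cancel the negative half of the pair, and that requires a stable uniqueness theorem whose hypotheses (fullness of the maps on prescribed finite sets, divisibility in the target, control of traces) are precisely what property (W) of \emph{both} $C$ and $A$ is designed to supply. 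Your remedy --- ``absorb a large summand factoring through ${\cal W}$, which has trivial $KL$-class, so $\kappa$ is undisturbed'' --- does not address this: adding a $KL$-trivial summand does not make the subtracted map in the Cuntz pair disappear; it only provides the fullness/largeness needed to run an absorption-and-cancellation argument, and that argument itself is missing from your proposal. This is exactly the content of Theorem 10.8 of \cite{GLII} (an existence theorem for algebras satisfying the property-(W) hypotheses of 9.3 of \cite{GLII}, landing in $A\otimes{\cal Z}_0\otimes M_{k(n)}$), which, composed with Theorem 12.5 of \cite{GLII} to pass from $A\otimes{\cal Z}_0\otimes M_{k(n)}$ into $A\otimes M_{l(n)}$, is how the paper proves the statement; your write-up reproduces neither the cancellation mechanism nor a substitute for it.

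Two smaller points. First, you repeatedly use ``$A\in{\cal D}$'' (stable rank one, ${\rm Cu}(A)=\LAff_+(\td T(A))$), but the theorem only assumes that $A$ is separable, simple, with continuous scale, in the UCT class, and has property (W); you are importing hypotheses from the application in Theorem \ref{TWtraceD} that are not available here. Second, in the compression step you should note explicitly that $[\{\phi_j\}]=\kappa$ is verified on each finite subset of $\underline{K}(C)$ by choosing ${\cal F}_j$ (and $\ep_j$) so that $[\phi_j]$ and $[\Psi_j]$ agree on the corresponding finite piece; closeness on a dense subset of $C$ alone is the right idea but needs this finite-set bookkeeping to be a proof.
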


\begin{proof}
Note that $C$ satisfies the  conditions   in 9.3 of \cite{GLII}. Therefore the theorem follows from
the combination of Theorem 10.8 and 12.5 of \cite{GLII}, i.e.,  {{one first applies 10.8 of \cite{GLII} to obtain
maps from  $C$ to $A\otimes \zo\otimes M_{k(n)}$ and, then  applies 12.5 of \cite{GLII} to obtain
maps from $A\otimes \zo\otimes M_{k(n)}$ to $A\otimes M_{l(n)}$ (for some integers $k(n)$ and $l(n)$).}}


\end{proof}


\section{Range and Models}

This section is a refinement of Elliott's construction of model simple \CA s.  The main
results are stated as Theorem \ref{Tmodel2} and Theorem \ref{TclassM1-K}. These refinements are needed for our purposes. 
Some subtle details in the Elliott's construction are also dealt with.

\begin{NN}\label{range4.4} Let $\DT$ be any compact metrizable Choquet simplex {{and}}  let $G$ be any countable abelian group. Let $\rho: G\to  \Aff (\DT)$ be any homomorphism satisfying the following condition
$\\$\hspace{0.2in}(*)\hspace{0.3in} for any $g\in G$, there is a $\tau\in \DT$ such that
$\rho(g)(\tau)\leq 0.$\\
In other words, $\rho(G)\cap \Aff_+(\DT)\setminus \{0\}=\emptyset$
 (recall
that $\Aff_+(\DT)$
denotes the set  of all continuous affine functions $f: \DT \to \R$ such that $f(\tau)>0$ for any $\tau\in \DT$
and  {{the}} {{zero}} function). Note that we include the case that $\rho(G)=\{0\}$.

In  the first part of this section, we will assume that
$G$ is torsion free,
 and construct a stably projectionless simple $C^*$-algebra $A$ with continuous scale such that $K_0(A)=G$, $K_1(A)={{\{0\}}}$, ${{T(A)}}=\DT$ and  $\rho_{_A}: K_0(A)\to \Aff (T(A))$ is the map $\rho: G\to \Aff(\DT)$, when one identifies $K_0(A)$ with $G$, and ${{T(A)}}$ with  $\DT$.  {{The}} $C^*$-algebra $A$ is an inductive limit of
{{ \CA s $A_n\in {\cal C}_0$}} {{(Elliott-Thomsen building blocks)}} of the form
$$
{{A_n}}=A(F_n, E_n, \bt_{n,0}, \bt_{n,1}):=\Big\{ (f, a)\in C([0,1], E_n)\oplus F_n~ | ~ \bt_{n,0}(a)=f(0),  \bt_{n,1}(a)=f(1)  \Big\},
$$
where $F_n, E_n$ are finite dimensional $C^*$-algebras, and $\bt_{n,0}, \bt_{n,1}: F_n \to E_n$ are
(not necessarily unital)  {{homomorphisms.}}

Note that if $\bt_{n,0}\oplus\bt_{n,1}: F_n \to E_n\oplus E_n$ is injective, then the element $a$ is completely determined by $f$, so we can simply write $(f,a)$ as $f$. In our construction, we will always {{be}} in this situation. 

 Since the limit algebra $A$ to be constructed is {{stably}} projectionless, in each {{step,}} the algebra $A_n$
 {{will also be}}   stably projectionless.  {{The construction presented here}}  is {{a refinement of Elliott's construction in
 \cite{point-line} and is}}
 similar to \S 13 of  \cite{GLN} which is for the unital case.

\end{NN}

\begin{NN}\label{4.4.1}
  Let us keep the notation in 13.1 and 13.2 of \cite{GLN}.  In particular, $x^{\sim k}$ means
  ${{\{\overbrace{x,x,\cd,x}^{k}\}}}$ (see 13.1 of \cite{GLN}).  Let $A$ be an Elliott-Thomsen building block. Denoted by $Sp(A)$ {{the}} set of the equivalence classes of all irreducible representations of $A$, and $RF(A)$ the set of finite
  dimensional representations of $A$.  As in 13.1 and 13.2 of \cite{GLN}, each element of $RF(A)$ can be regarded as a subset of $Sp(A)$ with multiplicities.  For any homomorphism $\phi: A\to M_k(\C)$, let
  $Sp(\phi)=\{ x\in Sp(A); ~{\rm ker}(\phi)\,\sps\, {\rm ker}(x)\}$. 


  Suppose that $\phi$ is (unitarily equivalent to) a direct sum
  of $k_1$ copies of $x_1,$ $k_2$ copies of $x_2,$ ..., and $k_i$ copies of $x_i,$ where
  $x_1, x_2,...,x_i\in Sp(A),$ then we will write $SP(\phi):=\left\{ x_1^{\sim k_1},x_2^{\sim k_2},\cd, x_i^{\sim k_i} \right\}.$
  Note that if $\phi, \psi: A\to M_k$ are two \hm s then
  $\phi$ and $\psi$ are unitarily equivalent if and only if $SP(\phi)=SP(\psi).$

  %
  %
  %
%
%
Let $C$ be a  vector space  and $x=(x_1,x_2,...,x_n),$ where $x_i\in C,$ $i=1,2,...,n.$
For each integer $k\ge 1,$ consider $k$-tuple $S=(i_1, i_2, ...,i_k),$ where $i_s\in \{1,2,...,n\}.$
We write
$$
{\rm diag}_{j\in S}(x_j):=\diag(x_{i_1}, x_{i_2},...,x_{i_k})
$$
{{for a}} diagonal element in  $M_k(C).$
 In particular, we  use $\diag_{1\leq j\leq n}(x_j)$ to denote $\diag(x_1,x_2,\cd,x_n).$

We will adopt the following convention:
  $$\diag\big(\diag(a,b), \diag(c,d,e)\big)=\diag(a,b,c,d,e),$$
$$\diag\big(\diag_{1\leq i\leq 3}(a_i), \diag_{1\leq i\leq 2}(b_i)\big)=\diag(a_1,a_2,a_3,b_1,b_2),$$
$$\diag\big(\diag(a_1^{\sim 2}, a_2^{\sim 3},a_3), \diag(b_1, b_2^{\sim3})\big)=\diag(a_1,a_1,a_2,a_2,a_2,a_3,b_1,b_2,b_2,b_2)$$
%
%
%
%

As in 13.1 and 13.2 of \cite{GLN}, for any two sub-homogeneous algebras  $A$ and $B,$ and a  homomorphism $\phi: A\to B$, if $\tht\in Sp(B)$ {{is}} represented by $\tht: B\to M_k(\C)$, then we use $\phi|_{\tht}$ to denote
$\tht\circ \phi:~A \to M_k(\C)$.

\end{NN}

\begin{NN}{\label{4.6}}
Let us fix some notation for this section.  Let $\displaystyle F_n=\bigoplus_{i=1}^{p_n}M_{[n,i]}(\C),
~E_n=\bigoplus_{i=1}^{l_n}M_{\{n,i\}}(\C)$, where $p_n, l_n, [n,i], \{n,i\}$ are positive integers which will be constructed later.  Let $\bt_{n,0}, \bt_{n,1}: F_n\to E_n$ be two (not necessarily unital) homomorphisms.
{{Put}} $$A_n=A(F_n,E_n, \bt_{n,0},\bt_{n,1})
=\big\{(f,a)\in C([0,1], E_n)\oplus F_n; ~
\bt_{n,0}(a)=f(0), \bt_{n,1}(a)=f(1)\big\}.$$
%
Write $(f,a)\in A_n$ as $(f_1,f_2,\cd,f_{l_n}; a_1,a_2,\cd, a_{p_n}),$ {{where}} $f_i\in C\big([0,1], M_{\{n,i\}}(\C)\big)$
and ${{a_j}}\in M_{{{[n,j]}}}(\C)$.
Let $\eta_{n,i}(t), \tht_{n,j}\in Sp(A_n),~ 0<t<1,~  i=1,2,\cd,l_n,~j=1,2,\cd,p_n,$ be defined as:
$$\eta_{n,i}(t)(f,a)=f_i(t)\in M_{\{n,i\}}(\C)\sbs E_n;~~
{{\tht_{n,j}}}(f,a)=a_j\in M_{[n,j]}(\C)\sbs F_n.$$
{{We}} also use the notation $\eta_{n,i}(0)$ and $\eta_{n,i}(1)$ with
$$\eta_{n,i}(0)(f,a)=f_i(0)\andeqn \eta_{n,i}(1)(f,a)=f_i(1).$$
But $\eta_{n,i}(0), \eta_{n,i}(1)\in RF(A_{{n}})$ (rather than in $Sp(A_{{n}})$).
 {{Sometimes}} we use $(f,a)(\eta_{n,i}(t))$ and $(f,a)(\tht_{n,j})$ to denote $\eta_{n,i}(t)(f,a)$ and $\tht_{n,j}(f,a)$,
 {{respectively,}}
 or simply use $f(\eta_{n,i}(t)), f(\tht_{n,j})$ without $a$, as in this paper, $a$ is completely determined by $f$.  
 Moreover, we may write
\beq\label{SpAn}
Sp(A_n)=(\sqcup_{j=1}^{l_n} \{\eta_{n,j}(t): t\in (0,1)\})\sqcup \{\tht_{n,1},\tht_{n,2},...,\tht_{n, p_n}\}.
\eneq

Let $\dt>0.$  Let $S=(\sqcup_{j=1}^{l_n} S_j)\cup\{\tht_{n,1},\tht_{n,2},...,\tht_{n, p_n}\},$
and let $S_j=\{\eta_{n,j}(t): t\in T_j\subset (0,1)\},$ where $T_j$ is a $\dt$-dense subset of $(0,1).$
Then we say {{that}} $S$ is $\dt$-dense in $Sp(A_n).$ Let ${\cal F}\subset A_n\setminus \{0\}$ be
a finite subset.   Then, for all sufficiently small $\dt,$  if $S$ is $\dt$-dense in $A_n,$
then, for each $f\in {\cal F},$ there exists $s\in S,$ $f(s)\not=0.$

Write $(\bt_{n,0})_*,(\bt_{n,1})_*: K_0(F_n)=\Z^{p_n}\to K_0(E_n)=\Z^{l_n}$ as $\bb^n_0=(b^n_{0,ij}), \bb^n_1=(b^n_{1,ij})$, with $b^n_{0,ij}\in \N, b^n_{1,ij}\in \N$. (If there is no  confusion, we
may
omit $n$ from $\bb^n_0,b^n_{1,ij} $ etc. in the notation {{above}}.)
Then
\beq\label{eta=tht}
\eta_{n,i}(0)=\left\{\tht_{n,1}^{\sim b_{0,i1}},
\tht_{n,2}^{\sim b_{0,i2}},\cd,\tht_{n,p_n}^{\sim b_{0,i{p_n}}}\right\} {{\andeqn}} \eta_{n,i}(1)=\left\{\tht_{n,1}^{\sim b_{1,i1}}
\tht_{n,2}^{\sim b_{1,i2}},\cd,\tht_{n,p_n}^{\sim b_{1,i{p_n}}}
\right\}.
\eneq
{{By {{Proposition}} 3.6 of \cite{GLN} (note that the unital condition  is not used in the proof), if $\af_0, \af_1: F_n\to E_n$ satisfy
$$(\af_0)_*=(\bt_{n,0})_*, (\af_1)_*=(\bt_{n,1})_*: K_0(F_n)\to K_0(E_n),$$
then $A(F_n,E_n, \af_0,\af_1)=A(F_n,E_n, \bt_{n,0},\bt_{n,1})=A_n$. }}

{{Let us introduce the following notation. For $1\leq j\leq l_n$, let $\pi_{E_n^j}$ be the canonical projection from $E_n=\oplus_{k=1}^{l_n}M_{\{n,k\}}(\C)$ to $E_n^j=M_{\{n,j\}}(\C)$ and let $\bt_{n,0}^j=\pi_{E_n^j}\circ \bt_{n,0}$ and $\bt_{n,1}^j=\pi_{E_n^j}\circ \bt_{n,1}$. }}

{{If $(\bt_{n,0}^j)_*([1_{F_n}])\leq (\bt_{n,1}^j)_*([1_{F_n}])$, then there is a unitary $u_j\in E_n^j$ such that
$\bt_{n,0}^j(1_{F_n})\leq {\rm{Ad}}\, u_j \circ\bt_{n,1}^j(1_{F_n}),$ and if $(\bt_{n,0}^j)_*([1_{F_n}])\geq (\bt_{n,1}^j)_*([1_{F_n}])$, then there is a unitary $u_j\in E_n^j$ such that $\bt_{n,0}^j(1_{F_n})\geq {\rm Ad}\, u_j\circ\bt_{n,1}^j(1_{F_n}).$
Therefore, for convenience, replacing  $\bt_{n,1}^j$ by ${\rm Ad}\, u_j\circ \bt_{n,1}^j,$ if necessary, we may always assume:}}

(**) {{If}} $(\bt_{n,0}^j)_*([1_{F_n}])\leq (\bt_{n,1}^j)_*([1_{F_n}])$, {{then $\bt_{n,0}^j(1_{F_n})\leq \bt_{n,1}^j(1_{F_n});$
and if $(\bt_{n,0}^j)_*([1_{F_n}])\geq (\bt_{n,1}^j)_*([1_{F_n}])$, then $\bt_{n,0}^j(1_{F_n})\geq \bt_{n,1}^j(1_{F_n})$.}}
\vspace{0.1in}

\noindent
{{If (**) holds,}} $\max(\bt_{n,0}^j(1_{F_n}), \bt_{n,1}^j(1_{F_n}))$ makes sense. Let $P_{n,j}=1_{E_n^j}-\max(\bt_{n,0}^j(1_{F_n}), \bt_{n,1}^j(1_{F_n}))$.

\end{NN}

\begin{NN}\label{range4.6.1}


Let $A=A(F,E,\bt_0, \bt_1)$ be
with $F=\oplus_{i=1}^p M_{R_i}(\C), E=\oplus_{i=1}^l M_{r_i}(\C)$ and $(\bt_0)_*, (\bt_1)_*: K_0(F)=\Z^p\to K_0(E)=\Z^l$ 
represented {{by}} matrices
$(b_{0,ij}),~(b_{1, ij})$.
 From 3.4 of \cite{GLII} (see also \ref{Dlambdas}),  {{we have}}
$$
\ld_s(A)=\min_i\left\{\frac{\sum_{j=1}^{p}b_{0,ij}R_j}{r_i},
\frac{\sum_{j=1}^{p}b_{1,ij}R_j}{r_i}
\right\}~.$$

\end{NN}

\begin{NN}\label{range4.6.2} For $C^*$-algebra $A_n=A(F_n,E_n, \bt_{n,0},\bt_{n,1})$ as in \ref{4.6}, we will fix a strictly positive element $e_n^A\in A_n$ defined by $e_n^A=(f_1,f_2, \cdots f_{l_n},a_1,a_2,\cdots a_{p_n})$ with $a_i=1_{F_n^i}\in F_n^i$ and
$$f_j(t)=(1-t)\bt_{n,0}^j(1_{F_n})+t\bt_{n,1}^j(1_{F_n})+t(1-t)P_{n,j}.$$
From the definition of $P_{n,j}$, we know that for $0<t<1$, $${\rm rank} (f_j(t))={\rm rank}(\max(\bt_{n,0}^j(1_{F_n}), \bt_{n,1}^j(1_{F_n})))+{\rm rank}(P_{n,j})={\rm rank} (E_n^j)=\{n,j\}.$$
Hence $e_n^A$ is strictly positive.
%
{{Let
$
a^A_n:=(f, g)\in A_n
$
be defined by
\beq
g=1_{F_n}\andeqn f(t)=(1-t)\bt_{n, 0}(g)+t\bt_{n,1}(g).
\eneq}}
Then $a^A_n\leq e_n^A$. Note that $a^A_n$
{{may not be}} a strictly positive element.

\end{NN}

\begin{NN}{\label{4.8}} (Order unit and $M$ large)
 Suppose that $(G, \DT, \rho)$ is as in \ref{range4.4} with the condition (*). Let us assume that $G$ is torsion free.  Choose a countable dense subgroup $G^1\subset \Aff(\DT)$ with $1_{\DT}\in G^1$.   Put
$H=G\oplus G^1$ and define   ${\tilde \rho}: H\to \Aff(\DT)$ by
${\tilde \rho}((g,f))(\tau)=\rho(g)(\tau)+f(\tau)$ for all $(g,f)\in G\oplus G^1$ and
$\tau\in \DT.$ Define {{$H_+\ni 0$ such that}}
$H_+\setminus\{0\}$ {{is}} the set of  elements
$(g,f)\in G\oplus G^1$ with ${\tilde \rho}((g,f))(\tau)>0$ for all $\tau\in \DT.$
Then $(H,H_+,1_\DT)$ is a simple ordered group with Riesz interpolation property (see 13.9 of \cite{GLN}). Following 13.9-13.12 of \cite{GLN}, write $H$ as an inductive limit of
\beq\label{H'}
H_1'\stackrel{\gm'_{12}}{\lr}H_2'\stackrel{\gm'_{23}}{\lr}\cd\lr H
\eneq
of {{direct sum of finite}} copies of ordered group $(\Z,\Z_+)$  with the following property
\beq\label{june-9-2021}
\gm'_{n,\infty} (x)\in H_+\backslash \{0\}~~~\mbox{for any}~~x\in (H'_n)_+\backslash \{0\}.
\eneq
  Let $H'_n=\left(\Z^{p'_n},\Z^{p'_n}_+\right)$ {{be}} with a $p'_n$-tuple $\tilde{u}'_n$ of positive integers as the order unit.    Furthermore, $\gm'_{n,n+1}(\tilde{u}'_n)=\tilde{u}'_{n+1}$  and $\gm'_{n,\infty}(\tilde{u}'_n)=1_{\DT}$.

We will modify the order unit $\tilde{u}_n$ to a smaller one for future use.  Since $(H, H_+)$ is a simple Riesz group, we {{may}}  assume that all  maps $\gm'_{n,n+1}$ have multiplicity at least $3,$ i.e., $a_{ij}\ge 3$ for all $i,j,$
if $\gamma_{n,n+1}'$ is represented by {{the}} matrix $\big(a_{ij}\big)_{p'_{n+1}\times p'_n},$ {{then the condition}} that for all $\gamma_{n,n+1}'$,  $a_{ij}>0$ for all $i, j$ implies that  (\ref{june-9-2021}) holds. Consequently, the order unit $\tilde{u}_n=\big(a_1,a_2,\cd, a_{p'_n}\big)\in \Z^{p'_n}$ satisfies $a_i\geq 3$ for all $i,$ provided $n\geq 2$. We {{assume}}
{{that}} this is true for all $n$, since if it is not true for $n=1$, then we simply drop the first term from the limit procedure.

Suppose $\tilde{u}'_n=\big(a_1,a_2,\cd, a_{p'_n}\big)\in \Z^{p'_n}$.
Choose ${u'_n}=\big(a_1-2,a_2,\cd, a_{p'_n}\big)\in \Z^{p'_n}$.  
The assumption that $\gm'_{n, n+1}$ has multiplicities at least $3$ implies
that $\gm'_{n, n+1}(u'_n)<u'_{n+1}$.
Now set $H'_n=\left(\Z^{p'_n},\Z^{p'_n}_+, u'_n \right)$ with the new order unit $u'_n$. Note that $\gm'_{n,\infty}(u'_n)<1_\DT$, and for any element $x\in H_+$ with $x<1_\DT$, there is an integer $n$ such that $\gm'_{n,\infty}(u'_n)>x$.

The inductive limit
$\lim \big( H'_n, u_n', \gm'_{n,m}\big)$ of the scaled ordered groups has the following property. For any $n$ and $M>0$, there is  $N$ such that if $m>N$, then $\gm'_{n,m}$ is $M$-large in the following sense: suppose {{that}} the matrix $\big(a_{ij}\big)_{p'_{m}\times p'_n}$ represents $\gm'_{n,m}$ and $u_n'=(x_1,x_2,\cd, x_{p'_n}), u'_m=(y_1,y_2,\cd,y_{p'_m})$, then
\beq\label{M-large}
a_{ij}\geq M,~ y_i-\sum_{k=1}^{p'_n}a_{ik}x_k\geq M ~~\mbox{for }~~1\leq i\leq p'_m,~1\leq j\leq p'_n~.
\eneq

We will also use the inductive limit system $\lim \big( H'_n, {\tilde u}'_n, \gm'_{n,m}\big)$ which preserves the units. Compare two inductive limit systems, writing $\lim \big( (H'_n, (H'_n)_+, u'_n), \gm'_{n,m}\big)=(H,H_+,\Sigma H)$
and \\
$\lim \big( (H'_n, (H'_n)_+, {\tilde u}'_n), \gm'_{n,m}\big)=(H,H_+,\Sigma_1 H)$, {{we have}}
\beq
\Sigma H=\{h\in H_+, h<1_\DT\}~~\mbox{and} ~~\Sigma_1 H=\{h\in H_+, h\leq1_\DT\}.
\eneq
Furthermore $\gm'_{n,\infty}(u'_n)<1_\DT$ and $\gm'_{n,\infty}({\tilde u}'_n)=1_\DT$.

For any fixed positive integer $M,$ there is a positive integer $N$ such that if $n\geq N$, and {{if}} we  write  $u'_n=(b_1, b_2, \cdots, b_{p'_n})$
and $\tilde{u}'_n=(b_1+2,b_2, \cdots, b_{p'_n})$,  then $b_1\geq 2M$ and
\beq\label{u-tilde-u}
\tilde{\rho}(\gm'_{n,\infty}(u'_n))\geq \frac{b_1}{b_1+2}\tilde{\rho}(\gm'_{n,\infty}(\tilde{u}'_n))
=\frac{b_1}{b_1+2}\cdot1_\DT
\geq (1-1/M)\cdot 1_\DT.
\eneq

Let $G_n'=(\gm'_{n,\infty})^{-1}(\gm'_{n,\infty}(H'_n)\cap G)$.  Then $G'_n\sbs H_n'$. Since $G\cap H_+=\{0\}$, by (\ref{june-9-2021}), we have
\beq\label{june-9-2021-1}
G'_n\cap (H'_n)_+=\{0\}.
\eneq
Furthermore, $G$ is the inductive limit of
\beq
G'_1~~\stackrel{\gm'_{1,2}|_{G'_1}}
{\lr}~~G_2'\stackrel{\gm'_{2,3}|_{G'_2}}
{\lr}\cd
{\lr}~~G'_n\lr\cd \lr G~.
\eneq
{{Recall {{that}} a subgroup $G\subset H$ is {{said to be}}  relatively divisible,
if  {{for any}} $g\in G,$  $m\in \N\setminus \{0\},$ and $h\in H$
{{with}}
$g=mh,$
{{there}} is $g'\in G$ such that $g=mg'$.}}  As in 13.12 of \cite{GLN}, $G_n$ is a relatively divisible subgroup of $H_n$, and $H_n/G_n$ is a  {{torsion-free}}
finitely generated abelian group. 
Write $H_n/G_n=\Z^{l_n'}$.  We have the following {{commutative}} diagram (see 13.12 of \cite{GLN}).
\begin{displaymath}
    \xymatrix{G'_{1_{\,}} \ar[r]^{{{\gm'_{{12}}|_{{G_{1}}}}}} \ar@{^{(}->}[d] & G'_{2_{\,}} \ar[r]\ar@{^{(}->}[d]&\cd \ar[r]&G_{\,} \ar@{^{(}->}[d]\\
         H'_1 \ar[r]^{\gm'_{_{12}}} \ar[d] & H'_2 \ar[r]\ar[d]&\cd \ar[r]&H \ar[d]\\
         H'_1/G'_1 \ar[r]^{\td\gm'_{_{12}}}  & H'_2/G'_2 \ar[r]&\cd \ar[r]&H/G.}
\end{displaymath}
In the process of the construction {{above,}} we will use subsequences $(\{k(n)\})$ and  obtain
the following diagram:
\begin{displaymath}\label{Gkn}
    \xymatrix{G'_{{k(1)}_{\,}} \ar[r]^{{{\gm'_{{k(1),k(2)}}|_{{G_{k(1)}}}}}} \ar@{^{(}->}[d] & G'_{k(2)_{\,}} \ar[r]\ar@{^{(}->}[d]&\cd \ar[r]&G_{\,} \ar@{^{(}->}[d]\\
         H'_{k(1)} \ar[r]^{\gm'_{_{k(1),k(2)}}} \ar[d] & H'_{k(2)} \ar[r]\ar[d]&\cd \ar[r]&H \ar[d]\\
         H'_{k(1)}/G'_{k(1)} \ar[r]^{\td \gm'_{_{k(1),k(2)}}} & H'_{k(2)}/G'_{k(2)} \ar[r]&\cd \ar[r]&H/G.}
\end{displaymath}
Rewrite the sequence  {{above}} as
\beq\label{Hn}
   \xymatrix{G_{1_{\,}} \ar[r]^{{{\gm_{{12}}|_{{G_{1}}}}}} \ar@{^{(}->}[d] & G_{2_{\,}} \ar[r]\ar@{^{(}->}[d]&\cd \ar[r]&G_{\,} \ar@{^{(}->}[d]\\
         H_1 \ar[r]^{\gm_{_{12}}} \ar[d] & H_2 \ar[r]\ar[d]&\cd \ar[r]&H \ar[d]\\
         H_1/G_1 \ar[r]^{\td\gm_{_{12}}}  & H_2/G_2 \ar[r]&\cd \ar[r]&H/G.}
\eneq
Write $H_n=\Z^{p_n}$ ($p_n=p'_{k(n)}$ as $H_n=H'_{k(n)}$) and $H_n/G_n=\Z^{l_n}$  ($l_n=l'_{k(n)}$), and write $\gm_{n,n+1}=\gm'_{k(n),k(n+1)}: \Z^{p_n}\to \Z^{p_{n+1}}$ as $\cc^{n,n+1}=\left(c_{ij}^{n,n+1}\right)_{1\leq i\leq p_{n+1}, 1\leq j\leq p_n}$, and write $\td{\gm}_{n,n+1}: \Z^{l_n}\to \Z^{l_{n+1}}$ as $\dd^{n,n+1}=\left( d_{ij}^{n,n+1} \right)_{1\leq i\leq l_{n+1}, 1\leq j\leq l_n}$.
{{In case of no}}
confusion, we will write $\cc^{n,n+1}$ as $\cc=(c_{ij})$ and $\dd^{n,n+1}$ as $\dd=(d_{ij})$ (omitting $n, n+1$). Since $G_n\varsubsetneqq H_n$, we know that $l_n\geq 1$.

Write 
\begin{displaymath}
\xymatrix{
H_n=\Z^{p_n}~~~~~~\ar[r]^{\gm_{_{n,n+1}}} \ar@{->}[d]^{\pi_n
} &
~~~~~~H_{n+1}=\Z^{p_{n+1}}  \ar@{->}[d]^{\pi_{n+1}}
\\
 H_n/G_n=\Z^{l_n}~~~~~~ \ar[r]^{\td{\gm}_{_{n,n+1}}}& ~~~~~~H_{n+1}/G_{n+1}=\Z^{l_{n+1}},
  }
\end{displaymath}
{{where $\pi_n$ is the (surjective) quotient map.}}
{{We have}}
\beq\label{cd-commut}
\pi_{n+1}\cdot \cc^{n,n+1} = \dd^{n,n+1} \cdot \pi_n.
\eneq
\end{NN}

\begin{NN}{\label{4.9}} (Construction of $A_1$)
Choose $k(1)\geq 1$ such that $ u'_{k(1)}:=([1,1], [1,2],\cd, [1,p'_{k(1)}])\in \N^{p'_{k(1)}}$ satisfies $[1,1]\geq 2\cdot 8=16$. Let $H_1:=H'_{k(1)}$, $p_1:=p'_{k(1)}$, $u_1:=u'_{k(1)}:=([1,1], [1,2],\cd, [1,p_1])\in H_1$ and $\tilde{u}_1:= ([1,1]+2, [1,2],\cd, [1,p_1])\in H_1$. Then, by (\ref{u-tilde-u}),
 $$\tilde{\rho}(\gm'_{k(1),\infty}(u_1))\geq (1-1/8)\cdot 1_\DT$$
 (that is, $\tilde{\rho}(\gm_{1,\infty}(u_1))\geq(1-1/8)\cdot 1_\DT$, after the diagram  (\ref{Hn}) is obtained). Recall {{that}} we also have $\gm'_{k(1),\infty}(\tilde{u}_1)=1_\DT$.  
Let $G_1=G'_{k(1)}$ and $\pi_1: H_1=\Z^{p_1}\to H_1/G_1=\Z^{l_1}$ be the quotient map.

For  a homomorphism $\Z^k\to \Z^l$ represented by a matrix $B=\big(b_{ij}\big)_{1\leq i\leq l, 1\leq j\leq k},$  {{we define}}
$|||B|||{{:=}}\max_{i,j}|b_{ij}|\cdot k\cdot l$.

Let ${{{\cal M}}}_1=2^4|||{{\pi_1}}|||.$
The map $\pi_1$ can be written as the difference $\bb_1^1-\bb_0^1$ of two maps
$$\bb_0^1, \bb_1^1: \Z^{p_1}\to \Z^{l_1},$$
corresponding to two $l_1\times p_1$ matrices
\beq
\bb_0^1=\big(b^1_{0,ij}\big)_{1\leq i\leq l_1, 1\leq j\leq p_1} {{\andeqn}}
\bb_1^1=\big(b^1_{1,ij}\big)_{1\leq i\leq l_1, 1\leq j\leq p_1}
\eneq
such that
\beq\label{bb1}
b^1_{0,ij}\geq \cm_1 \qq \mbox{and} \qq b^1_{1,ij}\geq \cm_1 ~.
\eneq
(Later on we will define $\cm_{n+1}$  when we construct
$A_{n+1}$.) Namely, we construct $\bb_0^1$ and $\bb_1^1$ as below.  Assume $\pi_1: \Z^{p_1}\to \Z^{l_1}$ is given by
the matrix
$\big(p^1_{ij}\big)_{1\leq i\leq l_1, 1\leq j\leq p_1}$. Choose
$\bb_0^1=\big(b^1_{0,ij}\big)$ with $b^1_{0,ij}=|p_{ij}|+\cm_1$ and
$b^1_{1,ij}=b^1_{0,ij}+p_{ij}$.  Then $\bb_0^1$ and $\bb_1^1$ satisfy (\ref{bb1}) and $\bb_1^1- \bb_0^1={{\pi_1}}: \Z^{p_1}\to \Z^{l_1}$.

Recall that the unit $u_1'\in H'_1=H_1$ can be written as
$
([1,1], [1,2],\cd, [1,p_1])\in \N^{p_1}.
$
%
Since $b^1_{0,ij}\geq \cm_1,~ b^1_{1,ij}\geq \cm_1$, we have $$|b^1_{1,ij}-b^1_{0,ij}|\leq \frac{1}{p_1l_1}  |||{{\pi_1}}|||\leq \frac{1}{16}\cdot\frac{1}{2}\cm_1\leq\frac{1}{32}\min(b^1_{0,ij},b^1_{1,ij}).$$
Consequently, $\max(b^1_{0,ij},b^1_{1,ij})\leq (1+\frac{1}{32})\min (b^1_{0,ij},b^1_{1,ij})$. Hence
\beq\nonumber(1+\frac{1}{8})\sum_{j=1}^{p_1}\max(b^1_{0,ij},b^1_{1,ij})[1,j]\leq (1+\frac{1}{8})(1+\frac{1}{32})\sum_{j=1}^{p_1}\min(b^1_{0,ij},b^1_{1,ij})[1,j]\\ \label{(1,i)+}
\leq \left((1+\frac{1}{4})\sum_{j=1}^{p_1}\min(b^1_{0,ij},b^1_{1,ij})[1,j]\right)-
1.
\eneq
By  \eqref{(1,i)+}, {{we}}  may choose an integer $\{1,i\}$  such that
\beq\label{(1,i)}
(1+\frac{1}{8})\sum_{j=1}^{p_1}\max(b^1_{0,ij},b^1_{1,ij})[1,j]\leq \{1,i\}\leq (1+\frac{1}{4})\sum_{j=1}^{p_1}\min(b^1_{0,ij},b^1_{1,ij})[1,j].\eneq

Let $F_1=\bigoplus_{i=1}^{p_1}M_{[1,i]}(\C),$ $E_1=\bigoplus_{i=1}^{{l_1}}M_{\{1,i\}}(\C)$, and $\bt_{1,0}, \bt_{1,1}:~ F_1\to E_1$ be defined by
$$\bt_{1,0}(a_1\oplus a_2\oplus\cd \oplus a_{p_1})=\oplus_{i=1}^{l_1} {\rm{diag}}\left(a_1^{\sim b^1_{0,i1}},
a_2^{\sim b^1_{0,i2}},\cd, a_{p_1}^{\sim b^1_{0,ip_1}}
\right)\andeqn$$
$$\bt_{1,1}(a_1\oplus a_2\oplus\cd \oplus a_{p_1})=\oplus_{i=1}^{l_1} {\rm diag}\left(a_1^{\sim b^1_{1,i1}},
a_2^{\sim b^1_{1,i2}},\cd, a_{p_1}^{\sim b^1_{1,ip_1}}
\right).$$
 Evidently, $\bt_{1,0}\oplus\bt_{1,1}: F_1 \to E_1\oplus E_1$ is injective.
Then
$$(\bt_{0,1})_*=\bb_0^1~~ \mbox{and } ~~ (\bt_{1,1})_*=\bb_1^1:~~~
K_0(F_1)=\Z^{p_1}\to K_0(E_1)=\Z^{l_1}.$$
Define
$$A_1=A\big(F_1,E_1,\bt_{1,0},\bt_{1,1}\big)
=\big\{ (f,a)\in C([0,1],E_1)\oplus F_1{{:}}\, \bt_{1,0}(a)=f(0),\bt_{1,1}(a)=f(1)\big\}.$$
Then $K_0(A_1)=G_1$ and $K_0(F_1)=H_1$, and the quotient map
 $\pi^A_1: A_1\to F_1$ defined by $\pi_1^A(f,a)=a$ induces $(\pi_1^A)_*: K_0(A_1)={{G_1}}\to K_0(F_1)=H_1$, which is the inclusion map. {{Since $\bb_1^1- \bb_0^1={{\pi_1}}$
 is surjective (onto $\Z^{l_1}$),   $K_1(A_1)=0,$  by Proposition 3.5 of \cite{GLN}.}} Furthermore by (\ref{june-9-2021-1}), $K_0(A_1)_+=G_1\cap (H_1)_+=\{0\}$ {{(see Proposition 3.5 of \cite{GLN})}} and thus $A_1\in {\cal C}_0.$

By (\ref{(1,i)}), we have $$\ld_s(A_1)=\min_i\left\{\frac{\sum_{j=1}^{p_1}b^1_{0,ij}\cdot[1,j]}{\{1,i\}},
\frac{\sum_{j=1}^{p_1}b^1_{1,ij}\cdot[1,j]}{\{1,i\}}\right\}\geq 1/(1+\frac{1}{4})\geq\frac{3}{4}.$$.
\end{NN}

\begin{NN}{\label{4.10}} (Inductive assumption for $A_{n}$)
Suppose that we  have constructed
$A_n=A(F_n,E_n, \bt_{n,0},\bt_{n,1}){{\in {\cal C}_0}}$ with
$$F_n=\oplus_{j=1}^{p_n} M_{[n,j]},~ E_n=\oplus_{i=1}^{l_n} M_{\{n,i\}}(\C),~\bt_{n,0}, \bt_{n,1}: F_n\to E_n$$ such that the following conditions hold.

(a)  $K_0(F_n)=H_n=H'_{k(n)},~ K_0(A_n)=G_n=G'_{k(n)}$  (for some $k(n)$) {{and}} the quotient map $\pi_n^A: A_n\to F_n$ induces the inclusion map
{{$(\pi_n^A)_*: K_0(A_n)=G_n\to K_0(F_n)=H_n,$
{{ $G_n\cap (H_n)_+=\{0\},$ and $K_1(A_n)=\{0\}.$}}}}

(b)  $(\bt_{n,0})_*=\bb^n_0=\big(b^n_{0,ij}\big)_{1\leq i\leq l_n, 1\leq j\leq p_n}$ and
$(\bt_{n,1})_*=\bb^n_1=\big(b^n_{1,ij}\big)_{1\leq i\leq l_n, 1\leq j\leq p_n}$ with $b^n_{0,ij}\geq \cm_n,~ b^n_{1,ij}\geq \cm_n$ for a pre-given positive number $\cm_n$. (Note that $(\bb^n_1)-(\bb^n_0)=\pi_n: K_0(F_n)=H_n=\Z^{p_n}\to K_0(E_n)=H_n/G_n=\Z^{l_n}$
{{which is surjective.}})

(c)
{{In $K_0(F_n),$}} ${{[1_{F_n}]}}=u_n=u'_{k(n)}:=([n,1], [n,2], \cd, [n,p_n])\in \N^{p_n}$  with
$[n,1]\geq 2\cdot 8^n$. As a consequence (see (\ref{u-tilde-u})), {{we have}}
$$\tilde{\rho}(\gm'_{k(n),\infty}(u_n))\geq (1-1/8^n)\cdot 1_\DT.$$
 (Note that $\tilde{u}_n=([n,1]+2, [n,2], \cd, [n,p_n])\in H_n$ satisfies $\gm'_{k(n),\infty}(\tilde{u}_n)=1_\DT$.)


(d) $\bt_{n,0}$ and $\bt_{n,1}$ satisfy the property (**) in the last paragraph of \ref{4.6}.

(e)  $(1+\frac{1}{8^n})\cdot \sum_{j=1}^{p_n}\max\left\{b^n_{0,ij},~b^n_{n,ij} \right\}[n,j]~
\leq \{n,i\}\leq (1+\frac{1}{4^n})\cdot \sum_{j=1}^{p_n}\min\left\{b^n_{0,ij},~b^n_{n,ij} \right\}[n,j]~.$
{{Consequently,}} $\ld_s(A_n)> 1-\frac{1}{4^n}$.

Note that the homomorphism $\pi_n^A: A_n \to F_n$ induces the following commutative diagram
\begin{displaymath}
\xymatrix{
K_0(A_n)~~~~\ar[rr]^{\rho_{A_n}} \ar[d]^{(\pi_n^A)_{*0}}
 &&
~~~~~~\Aff(T_0(A_n)) \ar@{->}[d]^{{\pi_{n}^A}^\sharp}
\\
 K_0(F_n)~~~ \ar[rr]^{\rho_{F_n}}&& ~~~\Aff(T_0(F_n)).}
\end{displaymath}
From (a),  when we identify $K_0(A_n)=G_n$ and $K_0(F_n)=H_n$, the map $(\pi_n^A)_{*0}$ is identified
with  the inclusion from $G_n$ to $H_n$, we have
\beq\label{4.10+1}
\xymatrix{
G_n~~~~\ar[rr]^{\rho_{A_n}} \ar[d]^{(\pi_n^A)_{*0}}
 &&
~~~\Aff(T_0(A_n)) \ar@{->}[d]^{{\pi^A_{n}}^\sharp}
\\
 H_n~~~ \ar[rr]^{\rho_{F_n}}&& ~~~\Aff(T_0(F_n)).}
 \eneq
%
From $b^n_{0,ij}\geq \cm_n>0$ in part (b), we know $\bt_{n,0}$ is injective, hence $\bt_{n,0}\oplus\bt_{n,1}: F_n \to E_n\oplus E_n$ is injective.

We will construct $A_{n+1}=A\big(F_{n+1},E_{n+1},\bt_{n+1,0},\bt_{n+1,1}\big)$ and a positive integer $\cm_{n+1}$, and two homomorphisms $\phi^{o}_{n,n+1}, \phi_{n,n+1}: A_n\to A_{n+1}$ as below.

\end{NN}

\begin{NN}{\label{4.11}} (The definition of $A_{n+1}$)
Let
\beq\label{defL}
{\cal L}_{n+1}=2^{4(n+1)}\cdot\max\big\{\{n,i\}:~ 1\leq i\leq l_n\big\}\cdot nl_n~.
\eneq
Recall that $H_n=H'_{k(n)}$ {{(in the inductive limit system in (\ref{H'}))}} and $H$ is a simple Riesz group. There is  $k(n+1)>k(n)$ such that the map $\gm'_{k(n),k(n+1)}$ is ${\cal L}_{n+1}$ large in the sense of (\ref{M-large}) in \ref{4.8}.  Write
$H_{n+1}=H'_{k(n+1)}=\Z^{p_{n+1}}$ and represent the map
$\gm_{n,n+1}=\gm'_{k(n),k(n+1)}:~ \Z^{p_n}\to \Z^{p_{n+1}}$
by matrix $\cc^{n,n+1}=\big(c_{ij}^{n,n+1}\big)_{p_{n+1}\times p_n}$.  We further require that the  unit\\
$u_{n+1}={{u'_{k(n+1)}:=}}([n+1,1], [n+1,2], \cd, [n+1,p_{n+1}])\in \N^{p_{n+1}}$ satisfies the condition $[n+1,1]\geq 2\cdot 8^{n+1}$ (see (c) in \ref{4.10}). Then we have
\beq\label{c-M-large}
c_{ij}^{n,n+1}\geq {\cal L}_{n+1}~~\mbox{and} ~~ [n+1,i]-\sum_{k=1}^{p_n} c_{ik}^{n,n+1}[n,k]\geq {\cal L}_{n+1}.
\eneq
It follows from $\gm'_{k(n),k(n+1)}(\tilde{u}'_{k(n)})=\tilde{u}'_{k(n+1)}$, $\tilde{u}_n=\tilde{u}'_{k(n)}=([n,1]+2,[n,2],{{\cdots, [n, p_{n}]}})$, \\ $\tilde{u}_{n+1}=\tilde{u}'_{k(n+1)}=([n+1,1]+2,[n+1,2],{{\cdots, [n+1, p_{n+1}]}})$ and (c) of \ref{4.10} that
\beq\label{n-F-(n+1)}
\sum_{j=1}^{p_n}c_{ij}^{n,n+1}[n,j] \geq (1-1/8^n)\cdot [n+1,i].
\eneq
Write $G_{n+1}=G'_{k(n+1)}\sbs H_{n+1}$ and write
$H_{n+1}/G_{n+1}=\Z^{l_{n+1}}$.  Suppose that the map ${{\gm_{n,n+1}}}:~ H_n\to H_{n+1}$ induces
$\widetilde{\gm_{n,n+1}}:~ H_n/G_n=\Z^{l_n} \to H_{n+1}/G_{n+1}=\Z^{l_{n+1}}$.  Write $\widetilde{\gm_{n,n+1}}=\dd^{n,n+1}=
\big(d_{ij}^{n,n+1}\big)_{l_{n+1}\times l_n}$, where
$d_{ij}^{n,n+1}\in \Z$.
%
%
%
 Set
\beq\label{cm}
\cm_{n+1}=2^{4(n+1)}l_n\cdot n \cdot(|||\dd^{n,n+1}|||+2)\cdot \max \big\{ \{n,k\}; 1\leq k\leq l_n\big\}\cdot |||\pi_{n+1}|||,
\eneq
where $\pi_{n+1}:~ H_{n+1}=\Z^{p_{n+1}}\to H_{n+1}/G_{n+1}=\Z^{l_{n+1}}$ is the quotient map and $|||\cdot|||$ is defined in the end of \ref{4.8}.

We will construct the algebra $A_{n+1}$ as below (see 4.9 for similar construction of $A_1$). Write the map
$\pi_{n+1}:~ H_{n+1}=\Z^{p_{n+1}}\to H_{n+1}/G_{n+1}=\Z^{l_{n+1}}$ as a difference
$\pi_{n+1}=\bb_1^{n+1}-\bb_0^{n+1}$  of two matrices
$\bb_0^{n+1}=\big(b^{n+1}_{0,ij}\big)_{1\leq i\leq l_{n+1}, 1\leq j\leq p_{n+1}}$ and $
\bb_1^{n+1}=\big(b^{n+1}_{1,ij}\big)_{1\leq i\leq l_{n+1}, 1\leq j\leq p_{n+1}}$ satisfying
\beq\label{>cm}
b^{n+1}_{0,ij}\geq \cm_{n+1} {{\andeqn}}\,\,  b^{n+1}_{1,ij}\geq \cm_{n+1} \hspace{0.6in} \mbox{ for all } {{i, j.}}
\eneq
Consequently, {{we obtain}}
\vspace{-0.1in}
\beq\label{b-b}
|b^{n+1}_{1,ij}-b^{n+1}_{0,ij}|\leq \frac{1}{2^{4(n+1)}} \min(b^{n+1}_{1,ij},b^{n+1}_{0,ij}).
\eneq
As
in  the calculation in  (\ref{(1,i)+}), we have
$$(1+\frac{1}{8^{n+1}})\sum_{j=1}^{p_{n+1}}\max(b^{n+1}_{0,ij},b^{n+1}_{1,ij})
[n+1,j]\leq \left((1+\frac{1}{4^{n+1}})\sum_{j=1}^{p_{n+1}}
\min(b^{n+1}_{0,ij},b^{n+1}_{1,ij})[n+1,j]\right)-1.$$

One may then choose an integer $\{n+1,i\}$ which satisfies
\beq\label{2020-8-7-n1}
(1+\frac{1}{8^{n+1}})\sum_{j=1}^{p_{n+1}}\max(b^{n+1}_{0,ij},b^{n+1}_{1,ij})
[n+1,j]\leq \{n+1,i\}\\ \label{(n+1,i)}\leq (1+\frac{1}{4^{n+1}})\sum_{j=1}^{p_{n+1}}
\min(b^{n+1}_{0,ij},b^{n+1}_{1,ij})[n+1,j].
\eneq
{{From}} (\ref{cd-commut}), we have
\beq\label{cd-commut1}
\left(\bb_1^{n+1}-\bb_0^{n+1}\right)
\cdot \cc^{n,n+1}=
\dd^{n,n+1}\cdot\left(\bb_1^{n}-\bb_0^{n}\right).
\eneq

Let {{$F_{n+1}=\bigoplus_{{j=1}}^{p_{n+1}}M_{[n+1,{{j}}]}(\C)$, $E_{n+1}=\bigoplus_{{i=1}}^{l_{n+1}}M_{\{n+1,{{j}}\}}(\C)$}}
and
$\bt_{n+1,0}, \bt_{n+1, 1}:~ F_{n+1}\to {{E_{n+1}}}$
 be defined by
$$\bt_{n+1,0}(a_1\oplus a_2\oplus\cd \oplus a_{p_{n+1}})=\bigoplus_{i=1}^{l_{n+1}} \diag\left(a_1^{\sim b^{n+1}_{0,i1}},
a_2^{\sim b^{n+1}_{0,i2}},\cd, a_{p_{n+1}}^{\sim b^{n+1}_{0,ip_{n+1}}}, 0^{\sim k_0}
\right)\andeqn$$
$$\bt_{n+1,1}(a_1\oplus a_2\oplus\cd \oplus a_{p_{n+1}})=\bigoplus_{i=1}^{l_{n+1}} \diag\left(a_1^{\sim b^{n+1}_{1,i1}},
a_2^{\sim b^{n+1}_{1,i2}},\cd, a_{p_{n+1}}^{\sim b^{n+1}_{1,ip_{n+1}}}, 0^{\sim k_1}
\right),$$
where $k_0=\{n+1, i\}-\sum_{j=1}^{p_{n+1}} b^{n+1}_{0,ij}[{n+1},j]$ and $k_1=\{n+1, i\}-\sum_{j=1}^{p_{n+1}} b^{n+1}_{1,ij}[{n+1},j]$.
Define $A_{n+1}=A(F_{n+1},E_{n+1},\bt_{n+1,0},\bt_{n+1,1})$. Moreover  $$(\bt_{n+1,0})_*=\bb^{n+1}_0,~(\bt_{n+1,1})_*=\bb^{n+1}_1:~
K_0(F_{n+1})=\Z^{p_{n+1}}\to K_0(E_{n+1})=\Z^{l_{n+1}}.$$
$K_0(A_{n+1})=G_{n+1}\sbseq K_0(F_{n+1})={{H_{n+1}.}}$ {{Since $\pi_{n+1}$ is surjective, by Proposition 3.5 of \cite{GLN}),
$K_1(A_{n+1})=0.$  Also
$K_0(A_{n+1})_+=G_{n+1}\cap (H_{n+1})_+=\{0\},$ which implies $A_{n+1}\in {\cal C}_0.$ Thus}}
condition (a), (b), (c), (d) and (e) (see (\ref{(n+1,i)})) in  \ref{4.10} for $n+1$
{{follow.}}  This ends the construction of
$A_{n+1}$.

{{With $A_{n+1}$ constructed above we will construct two homomorphisms $\phi^o_{n,n+1},\phi_{n,n+1}:~ A_n\to A_{n+1}$ {{in}} the next few sections, namely from \ref{4.12} to \ref{4.20}.}}

\end{NN}

\begin{NN}{\label{4.12}} (Definition of $\psi$ and $\psi^o$)
As notation introduced in 4.6, we have
$$Sp(A_{n})=\{\tht_{n,1},\cd, \tht_{n,p_n}\}\cup \{\eta_{n,1}(t),\cd, \eta_{n,l_n}(t), 0<t<1
\}.$$

To simplify the {{notation,}}  let us use $\cc=(c_{ij})$ to denote $\cc^{n,n+1}=(c^{n,n+1}_{ij})$ and $\dd=(d_{ij})$ to denote $\dd^{n,n+1}=(d^{n,n+1}_{ij})$.
Let $\psi_{n,n+1}:~ F_n \to F_{n+1}$ be  {{a}} (non-unital) homomorphism defined by
\beq\label{psi-n-n+1}
\psi_{n,n+1}(a_1\oplus a_2\oplus\cd \oplus a_{p_n})=\bigoplus_{i=1}^{p_{n+1}}\diag (a_1^{\sim c_{i1}},a_2^{\sim c_{i2}},\cd,a_{p_n}^{\sim c_{ip_n}}, 0^{\sim \sim }),\eneq
where $0^{\sim\sim}$ denotes some suitable many copies of $0$'s---to avoid introducing too many notation,
{{sometimes}} we will not indicate how many copies  {{it has}} (but it is {{usually}} easy to calculate, {{for example}} it is  $[n+1,i]-\sum_{k=1}^{p_n} c_{ik}[n,k]$ copies here).  Then $(\psi_{n,n+1})_*=\tilde{\gm}_{n,n+1}=\cc=(c_{ij}): K_0(F_n) \to K_0(F_{n+1})$.

Let $\psi^o : A_n\to F_{n+1}$ be defined by $\psi^o=\psi_{n,n+1}\circ \pi_n^A.$
Let
\beq\label{c'}
c'_{1j}=c_{1j}-(n-1)\sum_{i=1}^{l_n} b^n_{0,ij}~~ \mbox{for}~~ 1\leq j\leq p_n.
\eneq
By  \ref{4.10}, (\ref{defL}), and (\ref{c-M-large}), {{we estimate}}
\beq\nonumber
c_{1,j}> c'_{1j}{{=}}c_{1,j}-(n-1)\sum_{i=1}^{l_n} b^n_{0,ij} \ge c_{1,j}-(n-1)l_n\max\{ \{n,i\}:1\le i\le p_n\}\\ \label{c'-large}
>c_{1,j}-{{\cal L}_{n, n+1}\over{2^{4(n+1)}}}
>(1-\frac{1}{2^{4(n+1)}})c_{1j}.
\eneq
Define $\psi: A_n\to F_{n+1}$ by sending $f=(f_1,f_2\cd, f_{l_n},a_1,a_2,\cd, a_{p_n})$ to
$\psi(f):=(b_1, b_2,\cd, b_{p_{n+1}})$,
where
\beq\nonumber\label{psi=b1}
&&\hspace{-0.6in}b_1=\diag\big( f_1(\frac{1}{n}),f_1(\frac{2}{n}),\cd, f_1(\frac{n-1}{n}),~f_2(\frac{1}{n}),f_2(\frac{2}{n}),\cd f_2(\frac{n-1}{n}),\\
&&\hspace{0.1in}\cd,f_{l_n}(\frac{1}{n}),f_{l_n}(\frac{2}{n}),\cd, f_{l_n}(\frac{n-1}{n}),~a_1^{\sim c'_{11}}, a_2^{\sim c'_{12}},\cd, a_{p_n}^{\sim c'_{1p_n}},0^{\sim\sim}\big)
\eneq
(from \eqref{c-M-large} and (\ref{defL}), $\sum_{i=1}^{l_n}(n-1)\{n,i\}+\sum_{j=1}^{p_n}c'_{1j}{ {[n,j]}}<[n+1,1]$), and for $i\geq 2$,
\beq\label{psi=b2}
b_i=\diag(a_1^{\sim c_{i1}}, a_2^{\sim c_{i2}},\cd, a_{p_n}^{\sim c_{ip_n}},0^{\sim\sim}).
\eneq
One should  note that $\pi_{n+1,i}(\psi^o(f))=\pi_{n+1,i}(\psi(f))$ for $i\geq 2$, where ${ {\pi_{n+1,i}:  F_{n+1}}}\to F_{n+1}^i$ is the projection map.


For each $0\leq t\leq 1$, define $\psi_t: A_n\to F_{n+1}$ by
sending
$f=(f_1,f_2\cd, f_{l_n},a_1,a_2,\cd, a_{p_n})$ to $\psi_t(f)=(b'_1, b'_2,\cd, b'_{p_{n+1}})$ with $b'_i=b_i$ for $i\geq 2$ (see (\ref{psi=b2})) and
\beq\nonumber\label{psit}
b'_1=\diag\big(f_1(\frac{1}{n}(1-t)),\cd, f_1(\frac{n-1}{n}(1-t)),~f_2(\frac{1}{n}(1-t)),\cd, f_2(\frac{n-1}{n}(1-t)),\\
\cd,f_{l_n}(\frac{1}{n}(1-t)),\cd, f_{l_n}(\frac{n-1}{n}(1-t)),~a_1^{\sim c'_{11}}, a_2^{\sim c'_{12}},\cd, a_{p_n}^{\sim c'_{1p_n}}\big).~~~~~~~~~
\eneq
When $t=0$, {{we have}} $b'_1=b_1$ {{(see \eqref{psi=b1}),}} and therefore $\psi=\psi_0$. For $t=1$,
$$b'_1=\diag\big(f_1(0)^{\sim(n-1)}, f_2(0)^{\sim(n-1)},\cd, f_{l_n}(0)^{\sim(n-1)},~a_1^{\sim c'_{11}}, a_2^{\sim c'_{12}},\cd, a_{p_n}^{\sim c'_{1p_n}}\big).$$
By (\ref{eta=tht}) and the definition of $c'_{1,j}$, we have $SP(\psi_1)=SP(\psi^o)$.
Hence
\beq\label{KKpsi}
KK(\psi)=KK(\psi_0)=KK(\psi_1)=KK(\psi^o)\in KK(A_n, F_{n+1}).
\eneq

\end{NN}

\begin{NN}{\label{4.13}} (Definition of $\chi$ and $\mu$) Recall {{that}}
$\dd=(d_{ij})_{l_{n+1}\times l_n}$ is the matrix which represents $\tilde{\gm}_{n,n+1}: H_n/G_n=Z^{l_n} \to H_{n+1}/G_{n+1}=Z^{l_{n+1}}$. For $1\leq j\leq l_{n+1}, 1\leq k\leq l_n$, define
\beq\label{tilde-d}
\tilde{d}_{jk}=\begin{cases} |d_{jk}|&\,\,\, \mbox{if}~ ~~ d_{jk}\not=0\\
                             2&\,\,\, \mbox{if}~~~ d_{jk}=0.
                             \end{cases}
\eneq
For $1\leq j\leq l_{n+1}$,
let $L_j=\sum_{k=1}^{l_n}\tilde{d}_{jk}\{n,k\}$.
Define $\chi: A_n\to \bigoplus_{j=1}^{l_{n+1}} M_{L_j}(C[0,1])$ by sending
$f=(f_1, f_2,\cdots, f_{l_n}, a_1, a_2, \cdots, a_{p_n})$
to $\chi(f)=\bigoplus_{j=1}^{l_{n+1}}(F_{j,1}(t), F_{j,2}(t), \cdots F_{j,l_{n}}(t))$, where
\beq\label{chi}
F_{jk}(t)=\begin{cases} \{f_k(t)^{\sim |d_{jk}|}\}&\,\,\, \mbox{if}~ ~~ d_{jk}>0,\\
 \{f_k(1-t)^{\sim |d_{jk}|}\}&\,\,\, \mbox{if}~ ~~ d_{jk}<0,\\
                           \{f_k(t), f_k(1-t)\}&\,\,\, \mbox{if}~~~ d_{jk}=0.
                             \end{cases}
\eneq
For any $1\leq j\leq l_{n+1}$, let $\pi_j^E: \bigoplus_{k=1}^{l_{n+1}} M_{L_k}(C[0,1])\to  M_{L_j}(C[0,1])$ be the projection. Then{{, for any $0<t<1,$}}
\beq\label{SP-chi}
\{\eta_{n,i}(t), \eta_{n,i}(1-t): 1\leq i\leq l_n\}\subset Sp((\pi_j^E\circ\chi)|_{t})\cup Sp((\pi_j^E\circ\chi)|_{1-t}).
\eneq
This implies that $Sp(A_n)=\cup_{0\leq t\leq 1}Sp(\chi|_t)$. Hence $\chi$ is injective.

Define two subsets $J_0, J_1$ of the index set $\{1, 2, \cdots, l_{n+1}\}$ by $j\in J_0$ if and only if $b^{n+1}_{0,j1}>b^{n+1}_{1,j1}$; and $j\in J_1$ if and only if $b^{n+1}_{1,j1}>b^{n+1}_{0,j1}$. (Note that if $b^{n+1}_{0,j1}=b^{n+1}_{1,j1}$, then $j$ is  neither {{ in $J_0$ nor  in $J_1$.}})

Let $B_j=|b^{n+1}_{0,j1}-b^{n+1}_{1,j1}|$ and $K_j=(n-1)B_j\cdot \sum_{i=1}^{l_n} \{n,i\}$.

Define $\mu: A_n\to \bigoplus_{j\in J_0\cup J_1} M_{K_j}(C[0,1])$ by sending
$f=(f_1, f_2,\cdots, f_{l_n}, a_1, a_2, \cdots, a_{p_n})$
to $\mu(f)=\bigoplus_{j\in J_0\cup J_1} (G_{j,1}(t), G_{j,2}(t), \cdots G_{j,l_n}(t))$, where
\beq\label{mu}
G_{jk}(t)=\begin{cases} \diag\big(f_k(\frac{1}{n}(1-t))^{\sim B_j},f_k(\frac{2}{n}(1-t))^{\sim B_j},\cd,f_k(\frac{n-1}{n}(1-t))^{\sim B_j}\big)  &\,\,\, \mbox{if}~ ~~ j\in J_0,\\
 \diag\big(f_k(\frac{1}{n}t)^{\sim B_j},f_k(\frac{2}{n}t)^{\sim B_j},\cd,f_k(\frac{n-1}{n}t)^{\sim B_j}\big)  &\,\,\, \mbox{if}~ ~~ j\in J_1
                           \end{cases}{{.}}~~~~~
\eneq
Note that, for $j\in J_0,$
\beq\label{mu0}
\hspace{-0.16in}G_{jk}(0)=\diag\big(f_k(\frac{1}{n})^{\sim B_j},f_k(\frac{2}{n})^{\sim B_j},\cd,f_k(\frac{n-1}{n})^{\sim B_j}\big), ~G_{jk}(1)=\diag(f_k(0)^{\sim(n-1)B_j}),~~~~
\eneq
and, for $j\in J_1,$
\beq\label{mu1}
\hspace{-0.2in}G_{jk}(0)=\diag(f_k(0)^{\sim(n-1)B_j}), ~G_{jk}(1)=\diag\big(f_k(\frac{1}{n})^{\sim B_j},f_k(\frac{2}{n})^{\sim B_j},\cd,f_k(\frac{n-1}{n})^{\sim B_j}\big).~~~~
\eneq

\end{NN}

\begin{NN}\label{4.14} ({{Notation}}:  $\kappa^j_0(\tht_{n,i}),\kappa^j_1(\tht_{n,i}),{\bar\kappa}^j_0(\tht_{n,i}),
{\bar\kappa}^j_1(\tht_{n,i})$--the multiplicities of $\tht_{n,i}$ for certain homomorphisms)

Let ~$\xi^o_0,\xi^o_1,\xi_0, \xi_1: A_n \to E_{n+1}=\bigoplus_{i=1}^{l_{n+1}}M_{\{n+1,i\}}(\C)
$
be defined by
$\xi^o_0=\bt_{n+1,0}\circ \psi^o,$ $\xi^o_1=\bt_{n+1,1}\circ \psi^o,~ \xi_0=\bt_{n+1,0}\circ \psi$ and $ \xi_1=\bt_{n+1,1}\circ \psi$.

It will be convenient to introduce the following {{notation}}: for a homomorphism $\phi: A\to M_l(C[0,1])$, we use $\phi|_0, \phi|_1: A \to M_l(\C)$ to denote the map given by $\phi|_0(a):=\phi(a)(0)$ and $\phi|_1(a):=\phi(a)(1)$.

For fixed $1\leq i\leq l_{n+1}$, denote also {{by}} $\pi_i^E$  the projection from $E_{n+1}=\bigoplus_{j=1}^{l_{n+1}}
M_{\{n+1,j\}}(\C)$
to {{$i$-th}} summand
$E_{n+1}^i=M_{\{n+1,i\}}(\C)$,
from $\bigoplus_{j=1}^{l_{n+1}}C([0,1],M_{L_j}(\C))$ to  {{$i$-th}} summand $ C([0,1],M_{L_i}(\C))$, or from $\bigoplus_{j=1}^{l_{n+1}}C([0,1],M_{L_j+K_j}(\C))$ to {{$i$-th}} summand $ C([0,1],M_{L_i+K_i}(\C)).$

In the next two lemmas,  we will compare $\pi_j^E\circ\xi^o_0$  and
$\pi_j^E\circ\xi^o_1 $ with $(\pi_j^E\circ \chi)|_0$ and ${{(\pi_j^E\circ \chi)}}|_1$; and compare $\pi_j^E\circ\xi_0 $  and
$\pi_j^E\circ\xi_1 $ with $(\pi_j^E\circ (\chi\oplus \mu))|_0$ and $\pi_j^E\circ (\chi\oplus \mu))|_1$.

Let $f=(f_1, f_2, \cd,f_{l_n}, a_1, a_2,\cd, a_{p_n}).$ Up to conjugating unitaries, we may write
$$(\pi_j^E\circ \chi)|_0(f)=\diag(a_1^{{\sim{\kappa_0^j(\tht_{n,1})}}}, a_2^{{\sim{\kappa_0^j(\tht_{n,2})}}},\cd, a_{p_n}^{{\sim{\kappa_0^j(\tht_{n,p_n})}}},0^{\sim\sim}), $$
$$(\pi_j^E\circ \chi)|_1(f)=\diag(a_1^{{\sim{\kappa_1^j(\tht_{n,1})}}}, a_2^{{\sim{\kappa_1^j(\tht_{n,2})}}},\cd, a_{p_n}^{{\sim{\kappa_1^j(\tht_{n,p_n})}}},0^{\sim\sim}), $$
$$\pi_j^E\circ \xi^o_0(f)=\diag(a_1^{{\sim{\bar{\kappa}_0^j(\tht_{n,1})}}}, a_2^{{\sim{\bar{\kappa}_0^j(\tht_{n,2})}}},\cd, a_{p_n}^{{\sim{\bar{\kappa}_0^j(\tht_{n,p_n})}}},0^{\sim\sim}),~~ \mbox{and} $$
$$\pi_j^E\circ \xi^o_1(f)=\diag(a_1^{{\sim{\bar{\kappa}_1^j(\tht_{n,1})}}}, a_2^{{\sim{\bar{\kappa}_1^j(\tht_{n,2})}}},\cd, a_{p_n}^{{\sim{\bar{\kappa}_1^j(\tht_{n,p_n})}}},0^{\sim\sim}). $$
(Here {{again}} we use $0^{\sim\sim}$ to denote some $0$'s without specifying  how many copies {{there are.}})  The motivation of the above notation is that $\tht_{n,j}$ is the representation of $A_n$ by sending $f$ to $a_j$.

\end{NN}

\newcommand{\kp}{\kappa}
\newcommand{\ts}{\textstyle}

\begin{lem}\label{4.15} For any $1\leq i\leq p_n$, we have
\beq\label{415-f}
\bar{\kappa}_0^j(\tht_{n,i})-\kappa_0^j(\tht_{n,i})=
\bar{\kappa}_1^j(\tht_{n,i})-\kappa_1^j(\tht_{n,i})
\geq (1-\frac{1}{2^{4(n+1)}})\bar{\kappa}_0^j(\tht_{n,i}).
\eneq
\end{lem}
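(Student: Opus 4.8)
The statement compares the multiplicities of the point-evaluation $\tht_{n,i}$ appearing in $\xi^o_0=\bt_{n+1,0}\circ\psi^o$ and $\xi^o_1=\bt_{n+1,1}\circ\psi^o$ against those appearing in $\chi|_0$ and $\chi|_1$. The first step is to write all four multiplicities $\bar\kappa_0^j(\tht_{n,i})$, $\bar\kappa_1^j(\tht_{n,i})$, $\kappa_0^j(\tht_{n,i})$, $\kappa_1^j(\tht_{n,i})$ explicitly in terms of the structure constants. Here $\psi^o=\psi_{n,n+1}\circ\pi_n^A$, so $\psi^o$ has multiplicity matrix $\cc=(c_{rs})$, and $\xi^o_\varepsilon=\bt_{n+1,\varepsilon}\circ\psi^o$ has multiplicity matrix $\bb_\varepsilon^{n+1}\cdot\cc$ for $\varepsilon=0,1$ (on the level of $Sp$ over $F_n$, with $\tht_{n,i}$ being the $i$-th block). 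Thus for the $j$-th summand $E_{n+1}^j$, one gets $\bar\kappa_\varepsilon^j(\tht_{n,i})=\sum_{r=1}^{p_{n+1}} b^{n+1}_{\varepsilon,jr}c_{ri}$. On the other hand, $\chi$ was defined via the matrix $\dd=(d_{jk})$ acting on the spectrum of $E_n$: recalling $(\pi_j^E\circ\chi)|_0$ and $(\pi_j^E\circ\chi)|_1$ take $f_k$ to $f_k(0)$ or $f_k(1)$, and using \eqref{eta=tht} which expands $\eta_{n,k}(0)$ (resp. $\eta_{n,k}(1)$) as $\{\tht_{n,1}^{\sim b^n_{0,k1}},\dots\}$ (resp. with $b^n_{1,k\cdot}$), one obtains $\kappa_0^j(\tht_{n,i})=\sum_{k:\,d_{jk}>0} d_{jk}\,b^n_{0,ki} + \sum_{k:\,d_{jk}<0}|d_{jk}|\,b^n_{1,ki} + \sum_{k:\,d_{jk}=0}(b^n_{0,ki}+b^n_{1,ki})$, with the roles of $0$ and $1$ in the superscripts swapped to get $\kappa_1^j(\tht_{n,i})$ (only the $d_{jk}\neq 0$ terms swap; the $d_{jk}=0$ terms are symmetric).

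The second step is to establish the equality $\bar\kappa_0^j(\tht_{n,i})-\kappa_0^j(\tht_{n,i})=\bar\kappa_1^j(\tht_{n,i})-\kappa_1^j(\tht_{n,i})$. For this I would subtract the $\varepsilon=0$ and $\varepsilon=1$ formulas and use the commuting square \eqref{cd-commut1}, i.e. $(\bb_1^{n+1}-\bb_0^{n+1})\cc = \dd(\bb_1^n-\bb_0^n)$, which is exactly the $K$-theory compatibility $\pi_{n+1}\cc=\dd\pi_n$. Reading off the $(j,i)$-entry of this matrix identity gives $\sum_r(b^{n+1}_{1,jr}-b^{n+1}_{0,jr})c_{ri}=\sum_k d_{jk}(b^n_{1,ki}-b^n_{0,ki})$, which is precisely $\bar\kappa_1^j(\tht_{n,i})-\bar\kappa_0^j(\tht_{n,i})=\kappa_1^j(\tht_{n,i})-\kappa_0^j(\tht_{n,i})$ once one checks that $\sum_k d_{jk}(b^n_{1,ki}-b^n_{0,ki})$ equals $\kappa_1^j(\tht_{n,i})-\kappa_0^j(\tht_{n,i})$ — the $d_{jk}=0$ contributions cancel in the difference, and for $d_{jk}>0$ the contribution is $+d_{jk}(b^n_{1,ki}-b^n_{0,ki})$ while for $d_{jk}<0$ it is $|d_{jk}|(b^n_{0,ki}-b^n_{1,ki})=d_{jk}(b^n_{1,ki}-b^n_{0,ki})$, matching. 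Rearranging gives the claimed first equality.

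The third step is the inequality $\bar\kappa_0^j(\tht_{n,i})-\kappa_0^j(\tht_{n,i})\geq (1-2^{-4(n+1)})\bar\kappa_0^j(\tht_{n,i})$, equivalently $\kappa_0^j(\tht_{n,i})\leq 2^{-4(n+1)}\bar\kappa_0^j(\tht_{n,i})$. Here I would bound the numerator crudely: $\kappa_0^j(\tht_{n,i})\leq \sum_k \tilde d_{jk}(b^n_{0,ki}+b^n_{1,ki})\leq |||\dd|||\cdot 2\max_k\{n,k\}$ roughly, using $\tilde d_{jk}=\max(|d_{jk}|,2)$ and $b^n_{\varepsilon,ki}\leq\{n,k\}$ (or rather $\leq\max\{n,k\}$; one can also just bound $b^n_{\varepsilon,ki}$ by the relevant dimension). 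For the denominator, $\bar\kappa_0^j(\tht_{n,i})=\sum_r b^{n+1}_{0,jr}c_{ri}\geq b^{n+1}_{0,j1}c_{1i}$, and crucially $b^{n+1}_{0,jr}\geq\cm_{n+1}$ by \eqref{>cm}, where $\cm_{n+1}=2^{4(n+1)}l_n\cdot n\cdot(|||\dd^{n,n+1}|||+2)\cdot\max_k\{n,k\}\cdot|||\pi_{n+1}|||$ from \eqref{cm}. Since $c_{ri}\geq 1$ (indeed $c_{ri}\geq{\cal L}_{n+1}$ by \eqref{c-M-large}, but $\geq 1$ suffices), one gets $\bar\kappa_0^j(\tht_{n,i})\geq\cm_{n+1}\geq 2^{4(n+1)}\cdot(\text{the bound on }\kappa_0^j(\tht_{n,i}))$, which rearranges to the desired estimate. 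The main obstacle I anticipate is purely bookkeeping: getting the multiplicity formulas for $\kappa_\varepsilon^j$ and $\bar\kappa_\varepsilon^j$ exactly right (in particular the careful handling of the three cases $d_{jk}>0$, $d_{jk}<0$, $d_{jk}=0$ in the definition \eqref{chi} of $\chi$ and making sure the $0$-superscript pads do not interfere), and then confirming that the constant $\cm_{n+1}$ was chosen large enough — which it evidently was, by design — so that the final inequality has the slack $2^{4(n+1)}$ built in. The conceptual content is entirely in \eqref{cd-commut1}; everything else is estimation against the pre-arranged constants.
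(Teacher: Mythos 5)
Your proposal is correct and follows essentially the same route as the paper: the equality is read off the $(j,i)$-entry of the commuting identity $(\bb_1^{n+1}-\bb_0^{n+1})\cc^{n,n+1}=\dd^{n,n+1}(\bb_1^{n}-\bb_0^{n})$ after writing $\bar\kappa_\varepsilon^j(\tht_{n,i})=\sum_k b^{n+1}_{\varepsilon,jk}c_{ki}$ and computing $\kappa_\varepsilon^j$ by the three sign-cases of $d_{jk}$, and the inequality comes from bounding $\kappa_0^j(\tht_{n,i})\le\sum_k(|d_{jk}|+2)\{n,k\}\le 2^{-4(n+1)}\cm_{n+1}\le 2^{-4(n+1)}b^{n+1}_{0,j1}\le 2^{-4(n+1)}\bar\kappa_0^j(\tht_{n,i})$, exactly as in the paper (your $\tilde d_{jk}=\max(|d_{jk}|,2)$ slightly misquotes \eqref{tilde-d} but is harmless as an upper bound).
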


\begin{proof}
The equality will {{follow}} from \eqref{cd-commut1}, i.e.,
$\left(\bb_1^{n+1}-\bb_0^{n+1}\right)
\cdot \cc^{n,n+1}=
\dd^{n,n+1}\cdot\left(\bb_1^{n}-\bb_0^{n}\right).
$
To see this, we first note  {{that}}
$$(\pi_j^E\circ \chi)|_0(f)=\hspace{5in}$$
$$ \diag\Big(\diag_{\{k; d_{jk}>0\}}(f_k(0)^{\sim d_{jk}}),\diag_{\{k; d_{jk}<0\}}(f_k(1)^{\sim |d_{jk}|}), \diag_{\{k; d_{jk}=0\}}(f_k(0), f_k(1))\Big)\andeqn$$
$$(\pi_j^E\circ \chi)|_1(f)=\hspace{5in}$$
$$ \diag\Big(\diag_{\{k; d_{jk}>0\}}(f_k(1)^{\sim d_{jk}}),\diag_{\{k; d_{jk}<0\}}(f_k(0)^{\sim |d_{jk}|}), \diag_{\{k; d_{jk}=0\}}(f_k(1), f_k(0))\Big).$$

Note that $\kp_0^j(\tht_{n,i})$ (and $\kp_1^j(\tht_{n,i})$ resp.) is  the multiplicity of $\tht_{n,i}$ in $SP((\pi_j^E\circ \chi)|_0)$ (and $SP((\pi_j^E\circ \chi)|_1)$ {{resp.}}).  Also note that the homomorphism $(f_1,f_2,\cd, f_{l_n}, a_1,a_2,\cd, a_{p_n})\to f_k(0)$ defines $\et_{n,k}(0)\in SP(A_n)$ 
{{(see \ref{4.6})}}.
Hence
\beq\nonumber
SP((\pi_j^E\circ \chi)|_0)=\{{{\eta_{n,k}(0)^{\sim d_{jk}}:\,}} d_{jk}>0\} {{\cup}}
\{\eta_{n,k}(1)^{\sim |d_{jk}|}: d_{jk}<0\} {{\cup}} \{\eta_{n,k}(0),\eta_{n,k}(1): d_{jk}=0\}.
\eneq
 By (\ref{eta=tht}), we have
$$\kp_0^j(\tht_{n,i})=\sum_{\{k;~d_{jk}>0\}}b^n_{0,ki}d_{jk}
+\sum_{\{k;~d_{jk}<0\}}b^n_{1,ki}|d_{jk}|
+\sum_{\{k;~d_{jk}=0\}}(b^n_{0,ki}+b^n_{1,ki}).$$
Similarly, {{we have}}
$$\kp_1^j(\tht_{n,i})=\sum_{\{k;~d_{jk}>0\}}b^n_{1,ki}d_{jk}
+\sum_{\{k;~d_{jk}<0\}}b^n_{0,ki}|d_{jk}|
+\sum_{\{k;~d_{jk}=0\}}(b^n_{1,ki}+b^n_{0,ki}).$$
Hence $\kp_1^j(\tht_{n,i})-\kp_0^j(\tht_{n,i})=
\sum_{k=1}^{l_n}(b^n_{1,ki}-b^n_{0,ki})d_{jk}$.
On the other hand,
\beq\label{74-10}
\bar{\kp}_0^j(\tht_{n,i})=\sum_{k=1}^{p_{n+1}}b^{n+1}_{0,jk}c_{ki}
\qq \mbox{and} \qq
\bar{\kp}_1^j(\tht_{n,i})=\sum_{k=1}^{p_{n+1}}b^{n+1}_{1,jk}c_{ki}.
\eneq
Hence
$$\bar{\kp}_1^j(\tht_{n,i})-\bar{\kp}_0^j(\tht_{n,i})
=\sum_{k=1}^{p_{n+1}}(b^{n+1}_{1,jk}-b^{n+1}_{0,jk})c_{ki}.$$
That is, $\kp_1^j(\tht_{n,i})-\kp_0^j(\tht_{n,i})$
is the $ji$-th entry of the matrix $\dd^{n,n+1}(\bb^n_1-\bb^n_0)$; and
$\bar{\kp}_1^j(\tht_{n,i})-\bar{\kp}_0^j(\tht_{n,i})$
is the $ji$-th entry of the matrix $(\bb^{n+1}_1-\bb^{n+1}_0)\cc^{n,n+1}$.
By (\ref{cd-commut1}), we have
$$\kp_1^j(\tht_{n,i})-\kp_0^j(\tht_{n,i})
=\bar{\kp}_1^j(\tht_{n,i})-\bar{\kp}_0^j(\tht_{n,i})\,\,\,\,\,\text{as\,\, desired.}$$

Furthermore, it follows from $b^{n+1}_{0,jk}\geq {\cm}_{n+1}$ that
\beq
\kappa_0^j(\tht_{n,i})<\sum_{k=1}^{l_n}(|d_{jk}|+2)\{n,k\}\leq \frac{1}{2^{4(n+1)}}\cm_{n+1}\leq \frac{1}{2^{4(n+1)}} b^{n+1}_{0,j1}\leq
\frac{1}{2^{4(n+1)}}\bar{\kp}_0^j(\tht_{n,i}).
\eneq
Hence the inequality in \eqref{415-f} also follows.
\end{proof}

\begin{NN}\label{4.16} (Notation: $\sm_{k}^j(\eta_{n,i}),{\bar{\sm}}_{k}^j(\eta_{n,i}), \ld_{k}^j(\tht_{n,i}), {\bar\ld}_{k}^j(\tht_{n,i})$ for $k=0,1$---the multiplicities of $\eta_{n,i}(l/n)$ and $\tht_{n,i}$ for certain homomorphisms)

Let $f=(f_1,f_2,\cd,f_{l_n},a_1,a_2,\cd,a_{p_n})$. Then, for each fixed $j,$ one may write
$$
\ts
(\pi_j^E\circ (\chi\oplus\mu))|_0(f)=\diag\Big(f_1(\frac{1}{n})^{\sim \sm^j_0(\et_{n,1})},f_1(\frac{2}{n})^{\sim \sm^j_0(\et_{n,1})},\cd,f_1(\frac{n-1}{n})^{\sim \sm^j_0(\et_{n,1})}, \hspace{2in}$$
$$
f_2(\ts\frac{1}{n})^{\sim \sm^j_0(\et_{n,2})},\cd,f_2(\frac{n-1}{n})^{\sim \sm^j_0(\et_{n,2})};\cd,
f_{l_n}(\frac{1}{n})^{\sim \sm^j_0(\et_{n,l_n})},\cd,f_{l_n}(\frac{n-1}{n})^{\sim \sm^j_0(\et_{n,l_n})},$$
$$
\ts
a_1^{\sim \ld^j_0(\tht_{n,1})},a_2^{\sim \ld^j_0(\tht_{n,2})},\cd,
a_{p_n}^{\sim \ld^j_0(\tht_{n,p_n})}, 0^{\sim\sim}
\Big){{,}}
$$
$$
\ts
(\pi_j^E\circ (\chi\oplus\mu))|_1(f)=\diag\Big(f_1(\frac{1}{n})^{\sim \sm^j_1(\et_{n,1})},f_1(\frac{2}{n})^{\sim \sm^j_1(\et_{n,1})},\cd,f_1(\frac{n-1}{n})^{\sim \sm^j_1(\et_{n,1})},\hspace{2in}$$
$$
f_2(\ts\frac{1}{n})^{\sim \sm^j_1(\et_{n,2})},\cd,f_2(\frac{n-1}{n})^{\sim \sm^j_1(\et_{n,2})},\cd,
f_{l_n}(\frac{1}{n})^{\sim \sm^j_1(\et_{n,l_n})},\cd,f_{l_n}(\frac{n-1}{n})^{\sim \sm^j_1(\et_{n,l_n})},$$
$$
\ts
a_1^{\sim \ld^j_1(\tht_{n,1})},a_2^{\sim \ld^j_1(\tht_{n,2})},\cd,
a_{p_n}^{\sim \ld^j_1(\tht_{n,p_n})},0^{\sim\sim}
\Big){{,}}
$$
$$
\ts
(\pi_j^E\circ \xi_0)(f)=\diag\Big(f_1(\frac{1}{n})^{\sim \bar{\sm}^j_0(\et_{n,1})},f_1(\frac{2}{n})^{\sim \bar{\sm}^j_0(\et_{n,1})},\cd,f_1(\frac{n-1}{n})^{\sim \bar{\sm}^j_0(\et_{n,1})},\hspace{2in}$$
$$
f_2(\ts\frac{1}{n})^{\sim \bar{\sm}^j_0(\et_{n,2})},\cd, {{f_2(\frac{n-1}{n})^{\sim \bar{\sm}^j_0(\et_{n,2})},}}\cd,
f_{l_n}(\frac{1}{n})^{\sim \bar{\sm}^j_0(\et_{n,l_n})},\cd,f_{l_n}(\frac{n-1}{n})^{\sim \bar{\sm}^j_0(\et_{n,l_n})},$$
$$
\ts
a_1^{\sim \bar{\ld}^j_0(\tht_{n,1})},a_2^{\sim \bar{\ld}^j_0(\tht_{n,2})},\cd,
a_{p_n}^{\sim \bar{\ld}^j_0(\tht_{n,p_n})}, 0^{\sim\sim}
\Big){{,\,\,\text{and},}}
$$
$$
\ts
(\pi_j^E\circ \xi_1)(f)=\diag\Big(f_1(\frac{1}{n})^{\sim \bar{\sm}^j_1(\et_{n,1})},f_1(\frac{2}{n})^{\sim \bar{\sm}^j_1(\et_{n,1})},\cd,f_1(\frac{n-1}{n})^{\sim \bar{\sm}^j_1(\et_{n,1})}, \hspace{2in}$$
$$
f_2(\ts\frac{1}{n})^{\sim \bar{\sm}^j_1(\et_{n,2})},\cd,{{f_2(\frac{n-1}{n})^{\sim \bar{\sm}^j_1(\et_{n,2})},}}\cd,
f_{l_n}(\frac{1}{n})^{\sim \bar{\sm}^j_1(\et_{n,l_n})},\cd,f_{l_n}(\frac{n-1}{n})^{\sim \bar{\sm}^j_1(\et_{n,l_n})},$$
$$
\ts
a_1^{\sim \bar{\ld}^j_1(\tht_{n,1})},a_2^{\sim \bar{\ld}^j_1(\tht_{n,2})},\cd,
a_{p_n}^{\sim \bar{\ld}^j_1(\tht_{n,p_n})}, 0^{\sim\sim}
\Big) {{.}}
$$
In particular, the multiplicities of $f_i(\frac{1}{n}), f_i(\frac{2}{n}),\cd,f_i(\frac{n-1}{n})$, in each of
{{the four homomorphisms above,}} are the same --- this is why we use notation $\sm^j_0(\et_{n,i})$ instead of $\sm^j_0(\et_{n,i}(\frac{l}{n}))$.

\end{NN}

\begin{lem}\label{4.17} For all $1\leq i\leq l_n$,
\beq\label{417-1}
\bar{\sm}^j_0(\eta_{n,i})-\sm^j_0(\eta_{n,i})=\bar{\sm}^j_1(\eta_{n,i})
-\sm^j_1(\eta_{n,i})\leq \min(b^{n+1}_{0,j1}, b^{n+1}_{1,j1});
\eneq
and for all $1\leq i\leq p_n$,
$$\bar{\ld}^j_0(\tht_{n,i})-\ld^j_0(\tht_{n,i})=\bar{\ld}^j_1(\tht_{n,i})
-\ld^j_1(\tht_{n,i})\geq  (1-\frac{2}{2^{4(n+1)}})\bar{\kappa}_0^j(\tht_{n,i}).$$

\end{lem}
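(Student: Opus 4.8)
The plan is to read off all four multiplicities directly from the formulas for $\psi$ (see \eqref{psi=b1}--\eqref{psi=b2}), for $\bt_{n+1,0},\bt_{n+1,1}$, and for $\chi$ and $\mu$, and then to reduce both statements to Lemma \ref{4.15}.

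For the first identity the key observation is that the interior point $\et_{n,i}(l/n)$, $0<l<n$, occurs among the blocks of $\psi(f)$ only in the first summand $b_1$, whereas at the endpoints $t=0,1$ the map $\chi$ produces no interior point at all (see \eqref{chi}). Hence $\et_{n,i}(l/n)$ can enter $\pi_j^E\circ\xi_0$ and $\pi_j^E\circ\xi_1$ only through $b_1$, and it can enter $(\pi_j^E\circ(\chi\oplus\mu))|_0$ and $(\pi_j^E\circ(\chi\oplus\mu))|_1$ only through $\mu$. Since $\pi_j^E\circ\bt_{n+1,0}$ (resp.\ $\pi_j^E\circ\bt_{n+1,1}$) repeats $b_1$ exactly $b^{n+1}_{0,j1}$ (resp.\ $b^{n+1}_{1,j1}$) times, this gives $\bar\sm^j_0(\et_{n,i})=b^{n+1}_{0,j1}$ and $\bar\sm^j_1(\et_{n,i})=b^{n+1}_{1,j1}$ (both independent of $i$), while from \eqref{mu0}--\eqref{mu1} one reads off $\sm^j_0(\et_{n,i})=B_j$ if $j\in J_0$ and $=0$ otherwise, and $\sm^j_1(\et_{n,i})=B_j$ if $j\in J_1$ and $=0$ otherwise. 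A short case check against the definitions of $J_0$, $J_1$ and $B_j=|b^{n+1}_{0,j1}-b^{n+1}_{1,j1}|$ then shows that, in every case, both differences $\bar\sm^j_k(\et_{n,i})-\sm^j_k(\et_{n,i})$ equal $\min(b^{n+1}_{0,j1},b^{n+1}_{1,j1})$, which proves the first assertion.

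For the $\tht$-part I would express each multiplicity as the corresponding quantity of \ref{4.15} plus a correction. The map $\psi$ differs from $\psi^o$ only in that the first block of $\psi$ carries $c'_{1i}=c_{1i}-(n-1)\sum_m b^n_{0,mi}$ copies of $a_i$ rather than $c_{1i}$ (see \eqref{c'}), so \eqref{74-10} gives $\bar\ld^j_k(\tht_{n,i})=\bar{\kp}_k^j(\tht_{n,i})-b^{n+1}_{k,j1}(n-1)\sum_m b^n_{0,mi}$ for $k=0,1$. On the other side, $(\pi_j^E\circ\chi)|_k$ contributes exactly $\kp_k^j(\tht_{n,i})$ copies of $\tht_{n,i}$ (this is how $\kp_k^j$ is defined in \ref{4.14}, and its value is computed in the proof of \ref{4.15}), while $\mu|_k$ contributes to the $\tht_{n,i}$-multiplicity only through the occurrences of $f_m(0)=\et_{n,m}(0)$ in \eqref{mu0}--\eqref{mu1}, i.e.\ $(n-1)B_j\sum_m b^n_{0,mi}$ at the endpoint where $j\in J_1$ (for $k=0$), resp.\ $j\in J_0$ (for $k=1$). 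Subtracting, using $\bar{\kp}_0^j(\tht_{n,i})-\kp_0^j(\tht_{n,i})=\bar{\kp}_1^j(\tht_{n,i})-\kp_1^j(\tht_{n,i})$ from \ref{4.15}, and checking the trivial identity $b^{n+1}_{0,j1}+B_j[j\in J_1]=b^{n+1}_{1,j1}+B_j[j\in J_0]=\max(b^{n+1}_{0,j1},b^{n+1}_{1,j1})$, one finds that each of $\bar\ld^j_0(\tht_{n,i})-\ld^j_0(\tht_{n,i})$ and $\bar\ld^j_1(\tht_{n,i})-\ld^j_1(\tht_{n,i})$ equals $\bar{\kp}_0^j(\tht_{n,i})-\kp_0^j(\tht_{n,i})-(n-1)\big(\sum_m b^n_{0,mi}\big)\max(b^{n+1}_{0,j1},b^{n+1}_{1,j1})$; in particular the two are equal.

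It then remains to bound this quantity below by $(1-\frac{2}{2^{4(n+1)}})\bar{\kp}_0^j(\tht_{n,i})$. For this I would use $\kp_0^j(\tht_{n,i})\le 2^{-4(n+1)}\bar{\kp}_0^j(\tht_{n,i})$ from \ref{4.15}, and for the remaining subtracted term combine $\bar{\kp}_0^j(\tht_{n,i})\ge b^{n+1}_{0,j1}c_{1i}$ with the ${\cal L}_{n+1}$-largeness of $\gm_{n,n+1}$ (so that $c_{1i}\ge{\cal L}_{n+1}=2^{4(n+1)}\max_k\{n,k\}\cdot nl_n$), the bound $\sum_m b^n_{0,mi}\le l_n\max_k\{n,k\}$ (a consequence of condition (e) of \ref{4.10}), and $\max(b^{n+1}_{0,j1},b^{n+1}_{1,j1})\le(1+2^{-4(n+1)})b^{n+1}_{0,j1}$ (from \eqref{b-b}) to see that it too is at most $2^{-4(n+1)}\bar{\kp}_0^j(\tht_{n,i})$; adding the two estimates yields the stated bound. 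The only genuine obstacle is this last estimate: one has to keep straight which of the inflated constants ${\cal L}_{n+1}$ (governing the $c_{ij}$) or $\cm_{n+1}$ (governing the $b^{n+1}_{k,ij}$) dominates each piece, but the choices in \ref{4.11} were made precisely so that a factor of roughly $2^{-4(n+1)}$ of slack remains on each side.
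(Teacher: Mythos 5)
Your proposal is correct and follows essentially the same route as the paper: you compute $\bar\sm^j_k(\eta_{n,i})=b^{n+1}_{k,j1}$ and $\bar\ld^j_k(\tht_{n,i})=\bar\kp^j_k(\tht_{n,i})-(n-1)b^{n+1}_{k,j1}\sum_m b^n_{0,mi}$ from $\psi$ and $\bt_{n+1,k}$, read off the $\mu$-contributions at the endpoints, and reduce everything to Lemma \ref{4.15} plus the ${\cal L}_{n+1}$/$\cm_{n+1}$ largeness estimate, exactly as in the paper's proof (which organizes the same computation as the three cases $j\notin J_0\cup J_1$, $j\in J_0$, $j\in J_1$, while you fold them into the identity $b^{n+1}_{0,j1}+B_j[j\in J_1]=b^{n+1}_{1,j1}+B_j[j\in J_0]=\max(b^{n+1}_{0,j1},b^{n+1}_{1,j1})$). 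The final numerical estimate you sketch is the paper's inequality (\ref{+4.21}) in slightly different clothing, and the constants do check out.
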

\begin{proof}
By (e) of \ref{4.10} (and $[n,i]> 2$),
 (\ref{defL}) and (\ref{c-M-large}) as well as \eqref{74-10}, for any $1\leq i\leq p_n$,
we obtain
\beq\label{+4.21}\nonumber
(n-1)\max(b^{n+1}_{0,j1},b^{n+1}_{0,j1})(\sum_{k=1}^{l_n}b^n_{0,ki})
\leq \max(b^{n+1}_{0,j1},b^{n+1}_{0,j1})(\frac{1}{2}(n-1)
\sum_{k=1}^{l_n}\{n,k\})\\
\leq \min(b^{n+1}_{0,j1},b^{n+1}_{0,j1})\cdot c_{11}\cdot\frac{1}{2^{4(n+1)}}\leq \frac{1}{2^{4(n+1)}} \min(\bar{\kappa}_0^j(\tht_{n,i}), \bar{\kappa}_1^j(\tht_{n,i})).
\eneq
From the definition of $\xi_0,\xi_1$ (see the first paragraph {{of}} \ref{4.14}) and
(\ref{psi=b1}) and  (\ref{psi=b2})),
we have
$$
\ts
(\pi_j^E\circ \xi_0)(f)=\diag\Big(f_1(\frac{1}{n})^{\sim b^{n+1}_{0,j1}},f_1(\frac{2}{n})^{\sim b^{n+1}_{0,j1}},\cd,f_1(\frac{n-1}{n})^{\sim b^{n+1}_{0,j1}},\hspace{2in}$$
$$
f_2(\ts\frac{1}{n})^{\sim b^{n+1}_{0,j1}},\cd,f_2(\frac{n-1}{n})^{\sim b^{n+1}_{0,j1}},\cd,
f_{l_n}(\frac{1}{n})^{\sim b^{n+1}_{0,j1}},\cd,f_{l_n}(\frac{n-1}{n})^{\sim b^{n+1}_{0,j1}},\hspace{1.2in}
$$
\beq\label{pi-j-chi}
\ts
\hspace{0.2in}a_1^{\sim \left(c'_{11}b^{n+1}_{0,j1}+
\sum_{k=2}^{p_{n+1}}c_{k1}b^{n+1}_{0,jk}\right)},a_2^{\sim \left(c'_{12}b^{n+1}_{0,j1}+
\sum_{k=2}^{p_{n+1}}c_{k2}b^{n+1}_{0,jk}\right)},\cd,
a_{p_n}^{\sim \left(c'_{1p_n}b^{n+1}_{0,j1}+
\sum_{k=2}^{p_{n+1}}c_{kp_n}b^{n+1}_{0,jk}\right)}
\Big){{.}}\hspace{.3in}
\eneq
Hence we always have $\bar{\sm}^j_0(\eta_{n,i})=b^{n+1}_{0,j1}$. Similarly, we have $\bar{\sm}^j_1(\eta_{n,i})=b^{n+1}_{1,j1}$.
It follows {{that}}
$\bar{\sm}^j_0(\eta_{n,i})-\sm^j_0(\eta_{n,i})\leq b^{n+1}_{0,j1}$ and $\bar{\sm}^j_1(\eta_{n,i})-\sm^j_1(\eta_{n,i})\leq b^{n+1}_{1,j1}$.
So, for \eqref{417-1}, it remains to show {{that}}
$\bar{\sm}^j_0(\eta_{n,i})-\sm^j_0(\eta_{n,i})=
\bar{\sm}^j_1(\eta_{n,i})-\sm^j_1(\eta_{n,i}).$

Combining (\ref{pi-j-chi}),  (\ref{c'}),  and \eqref{74-10}, we calculate (see  also \ref{4.16})
\beq\nonumber\label{bar-ld-0}
\bar{\ld}^j_0(\tht_{n,i})= c'_{1i}b^{n+1}_{0,j1}+\sum_{k=2}^{p_{n+1}}c_{ki}b^{n+1}_{0,jk}=
(c_{1i}-\sum_{k=1}^{l_n}b^n_{0,ki})b^{n+1}_{0,j1}+
\sum_{k=2}^{p_{n+1}}c_{ki}b^{n+1}_{0,jk}\\
=\sum_{k=1}^{p_{n+1}}c_{ki}b^{n+1}_{0,jk}- (n-1)b^{n+1}_{0,j1}\big(\sum_{k=1}^{l_n}b^n_{0,ki}\big)
=\bar{\kappa}_0^j(\tht_{n,i})-
(n-1)b^{n+1}_{0,j1}\big(\sum_{k=1}^{l_n}b^n_{0,ki}\big).
\eneq
Similarly,
\beq\label{bar-ld-1}
\bar{\ld}^j_1(\tht_{n,i})= c'_{1i}b^{n+1}_{1,j1}+\sum_{k=2}^{p_{n+1}}c_{ki}b^{n+1}_{1,jk}=
\bar{\kappa}_1^j(\tht_{n,i})-
(n-1)b^{n+1}_{1,j1}\big(\sum_{k=1}^{l_n}b^n_{0,ki}\big).
\eneq

We will divide the proof into three cases: $j\notin J_0\cup J_1$, $j\in J_0$ and $j\in J_1$.

Case 1: $j\notin J_0\cup J_1$. In this case,  $b^{n+1}_{0,j1}=b^{n+1}_{1,j1}$, $K_j=0$ and $\pi_j^E\circ(\chi\oplus \mu)=\pi_j^E\circ \chi$. Consequently
\beq\label{bar-Id-n1}
\sm^j_0(\eta_{n,i})=0=\sm^j_1(\eta_{n,i}),~~
\ld^j_0(\tht_{n,i})=\kappa_0^j(\tht_{n,i})~~\mbox{and}~~
\ld^j_1(\tht_{n,i})=\kappa_1^j(\tht_{n,i}).
\eneq
Hence $\bar{\sm}^j_0(\eta_{n,i})-\sm^j_0(\eta_{n,i})=b^{n+1}_{0,j1}=b^{n+1}_{1,j1}=
\bar{\sm}^j_1(\eta_{n,i})
-\sm^j_1(\eta_{n,i}).$

It follows from  \eqref{bar-Id-n1},
$b^{n+1}_{0,j1}=b^{n+1}_{1,j1}$, (\ref{bar-ld-0}), (\ref{bar-ld-1}), Lemma \ref{4.15} and (\ref{+4.21}) that
\beq\nonumber
&&\hspace{-0.6in}\bar{\ld}^j_1(\tht_{n,i})-\ld^j_1(\tht_{n,i})=\bar{\kappa}_0^j(\tht_{n,i})-
(n-1)b^{n+1}_{0,j1}\big(\sum_{k=1}^{l_n}b^n_{0,ki}\big)-\kappa_1^j(\tht_{n,i})\\
&&=\bar{\ld}^j_0(\tht_{n,i})-\ld^j_0(\tht_{n,i})\hspace{2in} \\
&&=
\bar{\kappa}_0^j(\tht_{n,i})-\kappa_0^j(\tht_{n,i})-
(n-1)b^{n+1}_{0,j1}\big(\sum_{k=1}^{l_n}b^n_{0,ki}\big)
\geq(1-\frac{2}{2^{4(n+1)}})\bar{\kappa}_0^j(\tht_{n,i}).
\eneq

Case 2:  $j\in J_0$. Let $i\in\{1,2,\cd,l_n\}$. By \eqref{mu0},
\beq\label{Case2}
\sm^j_1(\eta_{n,i})=0~~\mbox{and}~~\sm^j_0(\eta_{n,i})=B_j=
b^{n+1}_{0,j1}-b^{n+1}_{1,j1}.
\eneq
Recall that we have computed  above that $\bar{\sm}^j_0(\eta_{n,i})=b^{n+1}_{0,j1}$ and  $\bar{\sm}^j_1(\eta_{n,i})=b^{n+1}_{1,j1}.$
Thus (by \eqref{Case2})
$$\bar{\sm}^j_0(\eta_{n,i})-\sm^j_0(\eta_{n,i})=
b^{n+1}_{0,j1}-(b^{n+1}_{0,j1}-b^{n+1}_{1,j1})=b^{n+1}_{1,j1}
=\bar{\sm}^j_1(\eta_{n,i})-\sm^j_1(\eta_{n,i}).$$
Now let $i\in\{1,2,\cd, p_n\}$. We will calculate $\ld^j_0(\tht_{n,i}),\ld^j_0(\tht_{n,i}), \bar{\ld}^j_0(\tht_{n,i})$ and $\bar{\ld}^j_1(\tht_{n,i})$. 
From (\ref{mu0}) (and (\ref{eta=tht})), we have  that
$$SP((\pi_j^E\circ\mu)|_1)=\bigcup\{\eta_{n,k}(0)^{\sim (n-1)B_j}: 1\leq k\leq l_n\}=\{\tht_{n,i}^{{\sim{(n-1)B_j\sum_{k=1}^{l_n} b^n_{0,ki}}}}: 1\leq i\leq p_n\},$$
and that $SP((\pi_j^E\circ\mu)|_0)$ does not contain any $\tht_{n,i}$.

Recall that $\ld^j_1(\tht_{n,i})$ is the multiplicity of $\tht_{n,i}$ of the spectrum of $(\pi_j^E\circ(\chi\oplus \mu))|_1.$
Hence
\beq\label{ld-kappa}
\ld^j_1(\tht_{n,i})=\kappa_1^j(\tht_{n,i})+(n-1)B_j\sum_{k=1}^{l_n} b^n_{0,ki}~~\mbox{and}~~\ld^j_0(\tht_{n,i})=\kappa_0^j(\tht_{n,i}).
\eneq
By (\ref{bar-ld-0}), (\ref{bar-ld-1})  and (\ref{ld-kappa}) as well as \ref{4.15}, we have
\beq\nonumber
\bar{\ld}^j_0(\tht_{n,i})-\ld^j_0(\tht_{n,i})
=\bar{\kappa}_0^j(\tht_{n,i})-\kappa_0^j(\tht_{n,i})
-(n-1)b^{n+1}_{0,j1}\big(\sum_{k=1}^{l_n}b^n_{0,ki}\big)\hspace{0.8in}
 \\ \nonumber
=\bar{\kappa}_1^j(\tht_{n,i})-\kappa_1^j(\tht_{n,i})
-(n-1)b^{n+1}_{0,j1}\big(\sum_{k=1}^{l_n}b^n_{0,ki}\big)\hspace{1.8in}\\
\nonumber
={ {\bar{\kappa}_1^j}}(\tht_{n,i})-\kappa_1^j(\tht_{n,i})
-(n-1)(B_j+b^{n+1}_{1,j1})\big(\sum_{k=1}^{l_n}b^n_{0,ki}\big)
=\bar{\ld}^j_1(\tht_{n,i})-\ld^j_1(\tht_{n,i}).
\eneq

{{Also,}} by (\ref{+4.21}) and Lemma \ref{4.15},  {{we  estimate}}
$$\bar{\ld}^j_0(\tht_{n,i})-\ld^j_0(\tht_{n,i})
=\bar{\kappa}_0^j(\tht_{n,i})-\kappa_0^j(\tht_{n,i})-
(n-1)b^{n+1}_{0,j1}\big(\sum_{k=1}^{l_n}b^n_{0,ki}\big)
\geq (1-\frac{2}{2^{4(n+1)}})\bar{\kappa}_0^j(\tht_{n,i}).$$

Case 3: $j\in J_1$. This case is  proved  exactly the same as the case 2 but
replacing (\ref{mu0}) {{with}} (\ref{mu1}).

\end{proof}






\begin{NN}\label{4.18} (Definition of $\phi^o,\phi$)~~ 
Set
$$
\kappa^j(i)=\bar{\kappa}^j_0(\tht_{n,i})-\kappa^j_0(\tht_{n,i})=
\bar{\kappa}^j_1(\tht_{n,i})-\kappa^j_1(\tht_{n,i}),$$
$$\sm^j(i)=\bar{\sm}^j_0(\eta_{n,i})-\sm^j_0(\eta_{n,i})=
\bar{\sm}^j_1(\eta_{n,i})-\sm^j_1(\eta_{n,i}), \andeqn$$
$$\ld^j(i)=\bar{\ld}^j_0(\tht_{n,i})-\ld^j_0(\tht_{n,i})=
\bar{\ld}^j_1(\tht_{n,i})-\ld^j_1(\tht_{n,i}).
$$
Let $\phi^o: A_n\to C([0,1],\bigoplus_{j=1}^{l_{n+1}}M_{o(j)}(\C))$ be the homomorphism defined by sending\\ $f=(f_1,f_2,\cd,f_{l_n}, a_1,a_2,\cd, a_{p_n})$ to
\beq\label{phi^o}
\phi^o(f)=\bigoplus_{j=1}^{l_{n+1}}\diag(\pi_j^E\circ\chi (f), a_1^{\sim \kappa^j(1)}, a_2^{\sim \kappa^j(2)}, \cd a_{p_n}^{\sim \kappa^j(p_n)}), \eneq
where $o(j)=L_j+\sum_{i=1}^{p_n}\kappa^j(i)[n,i].$
Let $\phi: A_n\to C([0,1], \bigoplus_{j=1}^{l_{n+1}}M_{o'(j)}(\C))$ be the homomorphism
defined by sending $f=(f_1,f_2,\cd,f_{l_n}, a_1,a_2,\cd, a_{p_n})$ to
\beq\nonumber
\ts
\phi(f)=\bigoplus_{j=1}^{l_{n+1}}\diag\Big((\pi_j^E\circ (\chi\oplus\mu))(f),f_1(\frac{1}{n})^{\sim \sm^j(1)},f_1(\frac{2}{n})^{\sim \sm^j(1)},\cd,f_1(\frac{n-1}{n})^{\sim \sm^j(1)}, \hspace{2in}\\ \nonumber
f_2(\ts\frac{1}{n})^{\sim \sm^j(2)},\cd,f_2(\frac{n-1}{n})^{\sim \sm^j(2)};\cd,
f_{l_n}(\frac{1}{n})^{\sim \sm^j(l_n)},\cd,f_{l_n}(\frac{n-1}{n})^{\sim \sm^j(l_n)},\hspace{1.8in}
\eneq
\beq\label{phi}
\ts
\hspace{1in} a_1^{\sim \ld^j(1)},a_2^{\sim \ld^j(2)},\cd,
a_{p_n}^{\sim \ld^j(p_n)}
\Big),
\eneq
where $o'(j)=L_j+K_j+(n-1)\sum_{k=1}^{l_n}\sm^j(k)\{n,k\}+
\sum_{i=1}^{p_n}\ld^j(i)[n,i].$

With the following lemma,  both maps $\phi^o$ and $\phi$ can be regarded as  maps from $A_n$ to $C([0,1], E_{n+1})$ by adding
suitably many copies of $0$'s in the equations  (\ref{phi^o}) and (\ref{phi}).
\end{NN}


\begin{lem}\label{4.21} We have the inequalities  $o(j)\leq \{n+1, j\}$ and $o'(j)\leq \{n+1,j\}$
(see \ref{4.18}).  Furthermore, we have
\beq\label{ld-j-i}
\ld^j(i)\geq (1-\frac{2}{2^{4(n+1)}})\sum_{k=1}^{p_{n+1}}b^{n+1}_{0,jk}c_{ki}\,\,\, and\,\,
\sum_{i=1}^{p_n}\ld^j(i)[n,i]>(1-\frac{2}{4^{n+1}})\{n+1,j\}.
\eneq
\end{lem}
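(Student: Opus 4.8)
The plan is to verify the two size inequalities $o(j)\le \{n+1,j\}$ and $o'(j)\le\{n+1,j\}$ by direct bookkeeping of multiplicities, and then to establish the two lower bounds in \eqref{ld-j-i} using Lemma \ref{4.15}, Lemma \ref{4.17}, and the largeness estimates from \ref{4.11}. First I would recall that $\{n+1,j\}$ was chosen in \eqref{2020-8-7-n1}--\eqref{(n+1,i)} to dominate $(1+\tfrac{1}{8^{n+1}})\sum_k\max(b^{n+1}_{0,jk},b^{n+1}_{1,jk})[n+1,k]$, and that $\xi_0=\bt_{n+1,0}\circ\psi$, $\xi_1=\bt_{n+1,1}\circ\psi$ are honest homomorphisms into $E_{n+1}$, so the total matrix size appearing in $\pi_j^E\circ\xi_0$ is exactly $\{n+1,j\}$ and similarly for $\xi_1$. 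The key observation is that, by construction (see \ref{4.14}, \ref{4.16}), $\phi^o(f)$ restricted to the $j$-th block is a ``sub-summand'' of $\pi_j^E\circ\xi^o_0$ (equivalently $\xi^o_1$) up to $0$'s: indeed $o(j)=L_j+\sum_i\kappa^j(i)[n,i]$ and the decompositions in \ref{4.14} show $\bar\kappa^j_0(\tht_{n,i})=\kappa^j_0(\tht_{n,i})+\kappa^j(i)$, while $\sum_i \bar\kappa^j_0(\tht_{n,i})[n,i]\le\{n+1,j\}$ because $\xi^o_0$ lands in $M_{\{n+1,j\}}$. Hence $o(j)=L_j+\sum_i\kappa^j(i)[n,i]\le \sum_i\bar\kappa^j_0(\tht_{n,i})[n,i]+(\text{the }L_j\text{ part, which is absorbed})\le\{n+1,j\}$; the point is simply to check that $L_j$ plus the leftover $\kappa^j_0$-parts still fit, which follows from the definition of $L_j=\sum_k\tilde d_{jk}\{n,k\}$ being comparable to the multiplicities in $\chi$ and the generous slack built into $\{n+1,j\}$ via $\cm_{n+1}$.

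For $o'(j)$ the same idea applies with $\xi_0,\xi_1$ and $\chi\oplus\mu$ in place of $\xi^o_0,\xi^o_1$ and $\chi$: by Lemma \ref{4.17} one has $\bar\sm^j_0(\eta_{n,i})=\sm^j_0(\eta_{n,i})+\sm^j(i)$ and $\bar\ld^j_0(\tht_{n,i})=\ld^j_0(\tht_{n,i})+\ld^j(i)$, and the total size of $\pi_j^E\circ\xi_0$ is exactly $\{n+1,j\}$ (since $\xi_0$ is a homomorphism $A_n\to E_{n+1}$). The defining formula $o'(j)=L_j+K_j+(n-1)\sum_k\sm^j(k)\{n,k\}+\sum_i\ld^j(i)[n,i]$ should be recognized as exactly the size of $\pi_j^E\circ(\chi\oplus\mu)(f)$ (which is $L_j+K_j$) plus the size of the ``difference'' block added in \eqref{phi}; comparing term by term with the decomposition of $\pi_j^E\circ\xi_0$ in \ref{4.16} gives $o'(j)=(\text{size of }\pi_j^E\circ\xi_0)=\{n+1,j\}$, or at worst $\le\{n+1,j\}$ after accounting for the $0^{\sim\sim}$ padding. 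So I would phrase this as: the difference $\xi_0 - (\chi\oplus\mu)$ in multiplicities is precisely what $\phi$ adds on, and since $\xi_0$ has size $\{n+1,j\}$ in block $j$, $o'(j)\le\{n+1,j\}$.

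The two lower bounds in \eqref{ld-j-i} are where the real estimates live. For $\ld^j(i)\ge(1-\tfrac{2}{2^{4(n+1)}})\sum_k b^{n+1}_{0,jk}c_{ki}=(1-\tfrac{2}{2^{4(n+1)}})\bar\kappa^j_0(\tht_{n,i})$: this is essentially the content of Lemma \ref{4.17} (the displayed inequality $\bar\ld^j_0(\tht_{n,i})-\ld^j_0(\tht_{n,i})\ge(1-\tfrac{2}{2^{4(n+1)}})\bar\kappa^j_0(\tht_{n,i})$), so I would just invoke it, noting $\ld^j(i)=\bar\ld^j_0(\tht_{n,i})-\ld^j_0(\tht_{n,i})$ and $\bar\kappa^j_0(\tht_{n,i})=\sum_k b^{n+1}_{0,jk}c_{ki}$ from \eqref{74-10}. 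For the second inequality $\sum_i\ld^j(i)[n,i]>(1-\tfrac{2}{4^{n+1}})\{n+1,j\}$, I would combine: (i) from the just-proven pointwise bound, $\sum_i\ld^j(i)[n,i]\ge(1-\tfrac{2}{2^{4(n+1)}})\sum_i\sum_k b^{n+1}_{0,jk}c_{ki}[n,i]$; (ii) from \eqref{n-F-(n+1)}, $\sum_i c_{ki}[n,i]\ge(1-1/8^n)[n+1,k]$, hence $\sum_i\sum_k b^{n+1}_{0,jk}c_{ki}[n,i]\ge(1-1/8^n)\sum_k b^{n+1}_{0,jk}[n+1,k]$; (iii) from \eqref{2020-8-7-n1}--\eqref{(n+1,i)}, $\sum_k b^{n+1}_{0,jk}[n+1,k]\ge\frac{1}{1+1/8^{n+1}}\{n+1,j\}$. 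Multiplying the three slack factors $(1-\tfrac{2}{2^{4(n+1)}})(1-1/8^n)\frac{1}{1+1/8^{n+1}}$ and checking this exceeds $1-\tfrac{2}{4^{n+1}}$ is an elementary numerical estimate. The main obstacle I anticipate is not conceptual but combinatorial bookkeeping: one must be scrupulous about which $0^{\sim\sim}$ padding is being absorbed where, and make sure that the ``sub-summand of $\xi_0$'' claim for $\phi$ (resp. of $\xi^o_0$ for $\phi^o$) is literally correct block by block — i.e.\ that no multiplicity of $\phi$ in block $j$ exceeds the corresponding multiplicity of $\xi_0$; this is guaranteed by Lemmas \ref{4.15} and \ref{4.17} (the quantities $\kappa^j(i),\sm^j(i),\ld^j(i)$ are exactly the common differences) but writing it cleanly requires care.
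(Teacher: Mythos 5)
Your handling of \eqref{ld-j-i} is essentially the paper's argument: the first inequality is exactly Lemma \ref{4.17} combined with \eqref{74-10}, and the second is the same three-step chain (the pointwise bound, then \eqref{n-F-(n+1)}, then the choice of $\{n+1,j\}$). One correction in your step (iii): the sandwich \eqref{2020-8-7-n1}--\eqref{(n+1,i)} yields $\sum_k b^{n+1}_{0,jk}[n+1,k]\geq \sum_k\min(b^{n+1}_{0,jk},b^{n+1}_{1,jk})[n+1,k]\geq \{n+1,j\}/(1+\frac{1}{4^{n+1}})$, not $\{n+1,j\}/(1+\frac{1}{8^{n+1}})$; the $\frac{1}{8^{n+1}}$ side of the sandwich bounds $\{n+1,j\}$ from below in terms of the $\max$'s and goes the wrong way for what you want. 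With the factor $1-\frac{1}{4^{n+1}}$ your product estimate is precisely the one carried out in the paper.

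The genuine gap is in the first two inequalities. It is not true that $o'(j)$ equals (or is automatically at most) the size of the $j$-th block of $\xi_0$ ``after accounting for the $0^{\sim\sim}$ padding.'' Lemma \ref{4.19} identifies spectra only at the endpoints $t=0,1$; at interior $t$ the coordinates $f_k(t)$ appearing in $\chi\oplus\mu$ (and in the extra summands of $\phi$) occupy full $\{n,k\}\times\{n,k\}$ blocks, whereas inside $\xi_0$ these coordinates enter only through the possibly degenerate endpoint values $f_k(0),f_k(1)$. Consequently $o(j)$ may exceed the non-degenerate dimension $\sum_i\bar{\kappa}^j_0(\tht_{n,i})[n,i]$ of $\pi_j^E\circ\xi^o_0$ by as much as $L_j$, and $o'(j)$ may exceed that of $\pi_j^E\circ\xi_0$ by as much as $L_j+K_j$; the zero padding in $\bt_{n+1,0}$ is the slack that must absorb these excesses, it does not absorb them for free. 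Your ``sub-summand'' reduction therefore only gives $o(j)\leq L_j+\{n+1,j\}$ and $o'(j)\leq L_j+K_j+\{n+1,j\}$, and the actual content of the lemma is the quantitative verification you defer to ``generous slack'': using $\kappa^j(i),\ld^j(i)\leq\bar{\kappa}^j_0(\tht_{n,i})$, $\sm^j(k)\leq\min(b^{n+1}_{0,j1},b^{n+1}_{1,j1})$ (Lemma \ref{4.17}), together with \eqref{defL}, \eqref{c-M-large} and \eqref{cm} (which give \eqref{L+K} and \eqref{sigm<}, i.e.\ $L_j+K_j\leq \cm_{n+1}/2^{4(n+1)}$ and $(n-1)\sum_k\sm^j(k)\{n,k\}\leq \frac{1}{2^{4(n+1)}}[n+1,1]\min(b^{n+1}_{0,j1},b^{n+1}_{1,j1})$), one obtains $\max(o(j),o'(j))\leq(1+\frac{3}{2^{4(n+1)}})\sum_k\min(b^{n+1}_{0,jk},b^{n+1}_{1,jk})[n+1,k]\leq\{n+1,j\}$ by \eqref{2020-8-7-n1}. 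You name the right ingredient ($\cm_{n+1}$), but the false intermediate identity and the missing comparison with \eqref{2020-8-7-n1} are exactly the steps that have to be written out; this is how the paper's proof proceeds.
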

\begin{proof}
Recall (see \ref{4.18}) that $\kappa^j(i)~\leq ~ \bar{\kappa}^j_0(\tht_{n,i}), \ld^j(i)~\leq ~ \bar{\ld}^j_0(\tht_{n,i})~\leq~  \bar{\kappa}^j_0(\tht_{n,i})$. Also from Lemma \ref{4.17}, {{we have}}  $\sm^j(k)\leq \min(b^{n+1}_{0,j1}, b^{n+1}_{1,j1})$. By (\ref{defL}) and (\ref{c-M-large}), we have
\beq\label{sigm<}\nonumber
n\sum_{k=1}^{l_n}\sm^j(k)\{n,k\}\leq n\sum_{k=1}^{l_n}\min(b^{n+1}_{0,j1}, b^{n+1}_{1,j1})
\leq \frac{1}{2^{4(n+1)}}{\cal L}_{n+1}\min(b^{n+1}_{0,j1}, b^{n+1}_{1,j1})\\
\leq \frac{1}{2^{4(n+1)}}c_{11}\min(b^{n+1}_{0,j1}, b^{n+1}_{1,j1})
\leq \frac{1}{2^{4(n+1)}}[n+1,1]\min(b^{n+1}_{0,j1}, b^{n+1}_{1,j1}).
\eneq
From the definition of ${\cal M}_{n+1}$ (see (\ref{cm})),
we know that
\beq\label{L+K}
\sum_{j=1}^{l_{n+1}}(L_j+K_j)\leq \frac{1}{2^{4(n+1)}}\cm_{n+1}.
\eneq
Combining with \eqref{74-10},  \eqref{cm},  \eqref{>cm}, and \eqref{b-b},
(and recall $L_j=\sum_{k=1}^{l_n}\tilde{d}_{jk}\{n,k\}$),
 we have
\beq\nonumber
&&\hspace{-0.4in}  o(j)=L_j+\sum_{i=1}^{p_n}\kappa^j(i)[n,i]
\leq L_j+\sum_{i=1}^{p_n}\bar{\kappa}^j_0(\tht_{n,i})[n,i]\hspace{0.8in}\\ \nonumber
&&= L_j+\sum_{i=1}^{p_n}\sum_{k=1}^{p_{n+1}}b^{n+1}_{0,jk}c_{ki}[n,i]
=\sum_{k=1}^{l_n}\tilde{d}_{jk}\{n,k\}+
\sum_{k=1}^{p_{n+1}}[n+1,k]b^{n+1}_{0,jk}\\ \nonumber
&&\le {\cal M}_{n+1}/2^{4(n+1)}+\sum_{k=1}^{p_{n+1}}[n+1,k]b^{n+1}_{0,jk}
\leq(1+\frac{2}{2^{4(n+1)}})\sum_{k=1}^{p_{n+1}}\min(b^{n+1}_{0,jk}, b^{n+1}_{1,jk})[n+1,k].
\eneq
Combining  \eqref{L+K}, \eqref{cm}, and (\ref{sigm<}), {{we obtain}}
\beq\nonumber
o'(j)=L_j+K_j+(n-1)\sum_{k=1}^{l_n}\sm^j(k)\{n,k\}+
\sum_{i=1}^{p_n}\ld^j(i)[n,i]\hspace{1in} \\ \nonumber \leq(1+\frac{2}{2^{4(n+1)}})\sum_{k=1}^{p_{n+1}}\min(b^{n+1}_{0,jk}, b^{n+1}_{1,jk})[n+1,k]+(n-1)\sum_{k=1}^{l_n}\sm^j(k)\{n,k\}\\ \nonumber
\leq(1+\frac{3}{2^{4(n+1)}})\sum_{k=1}^{p_{n+1}}\min(b^{n+1}_{0,jk}, b^{n+1}_{1,jk})[n+1,k].
\eneq
In summary, we conclude from (\ref{(n+1,i)}) {{that}}
\beq\nonumber
\max(o(j),o'(j))\leq (1+\frac{3}{2^{4(n+1)}})\sum_{k=1}^{p_{n+1}}\min(b^{n+1}_{0,jk}, b^{n+1}_{1,jk})[n+1,k]\leq \{n+1,j\}.
\eneq
The first inequality of (\ref{ld-j-i}) follows from Lemma \ref{4.17} and (\ref{74-10}).  Using (\ref{n-F-(n+1)}) and (e) in \ref{4.10} (with $n+1$ in place of $n$), we calculate
\beq\nonumber
\sum_{i=1}^{p_n}\ld^j(i)[n,i]\geq (1-\frac{2}{2^{4(n+1)}})\sum_{k=1}^{p_{n+1}} \sum_{i=1}^{p_n}b^{n+1}_{0,jk}c_{ki}[n,i]\hspace{1.4in}\\ \nonumber
\geq (1-\frac{2}{2^{4(n+1)}})(1-\frac{1}{8^n})
\sum_{k=1}^{p_{n+1}}b^{n+1}_{0,jk}[n+1,k]\hspace{1.8in}\\ \nonumber
\geq (1-\frac{2}{2^{4(n+1)}})(1-\frac{1}{8^n})(1-\frac{1}{4^{n+1}})\{n+1,j\}>
(1-\frac{2}{4^{n+1}})\{n+1,j\}.
 \eneq
\end{proof}


From definition of $\phi^o$ and $\phi$, we have {{the following lemma.}}

\begin{lem}\label{4.19} $SP(\phi^o|_0)=SP(\xi^o_0)$, $SP(\phi^o|_1)=SP(\xi^o_1)$, $SP(\phi|_0)=SP(\xi_0)$ and\\  $SP(\phi|_1)={{SP(\xi_1).}}$
\end{lem}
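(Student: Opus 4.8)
The plan is to prove all four identities by the same elementary mechanism: each map ($\phi^o$ or $\phi$) is, at an endpoint $0$ or $1$, a direct sum (up to unitary conjugation) of point-evaluations $\tht_{n,i}$ (and, for $\phi$, also $\eta_{n,i}(l/n)$), and the statement is simply that the multiplicity with which each such irreducible representation occurs in $SP(\phi^o|_k)$ agrees with its multiplicity in $SP(\xi^o_k)$ (resp.\ $SP(\phi|_k)$ versus $SP(\xi_k)$). Since two homomorphisms into a fixed matrix algebra with the same spectral multiplicities are unitarily equivalent, and since $SP$ records exactly that data, it suffices to compare the relevant multiplicities summand by summand over $j\in\{1,\dots,l_{n+1}\}$.

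First I would fix $j$ and treat $\phi^o$. By definition (see \eqref{phi^o}), for $f=(f_1,\dots,f_{l_n},a_1,\dots,a_{p_n})$,
$$\pi_j^E\circ\phi^o(f)=\diag\big((\pi_j^E\circ\chi)(f),\,a_1^{\sim\kappa^j(1)},\dots,a_{p_n}^{\sim\kappa^j(p_n)}\big),$$
so evaluating at $0$ and using the notation of \ref{4.14}, the multiplicity of $\tht_{n,i}$ in $SP(\pi_j^E\circ\phi^o|_0)$ is $\kappa^j_0(\tht_{n,i})+\kappa^j(i)$. But by the very definition of $\kappa^j(i)$ in \ref{4.18}, $\kappa^j_0(\tht_{n,i})+\kappa^j(i)=\bar\kappa^j_0(\tht_{n,i})$, which by \ref{4.14} is precisely the multiplicity of $\tht_{n,i}$ in $SP(\pi_j^E\circ\xi^o_0)$. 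Since $\chi|_0$ contributes no $0$-summand issue (the ``$0^{\sim\sim}$'' padding is matched on both sides by Lemma \ref{4.21}, which guarantees $o(j)\le\{n+1,j\}=\mathrm{rank}(E_{n+1}^j)$), this gives $SP(\phi^o|_0)=SP(\xi^o_0)$. The case of evaluation at $1$ is identical with subscript $1$ replacing $0$, again using $\kappa^j_1(\tht_{n,i})+\kappa^j(i)=\bar\kappa^j_1(\tht_{n,i})$.

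Next, for $\phi$ I would run the same bookkeeping but now tracking two families of multiplicities: those of $\eta_{n,i}(l/n)$ (for $l=1,\dots,n-1$, $i=1,\dots,l_n$) and those of $\tht_{n,i}$. From \eqref{phi}, the multiplicity of $\eta_{n,i}(l/n)$ in $SP(\pi_j^E\circ\phi|_0)$ is $\sm^j_0(\eta_{n,i})+\sm^j(i)$, which equals $\bar\sm^j_0(\eta_{n,i})$ by the definition of $\sm^j(i)$ in \ref{4.18}, i.e.\ the multiplicity of $\eta_{n,i}(l/n)$ in $SP(\pi_j^E\circ\xi_0)$; and the multiplicity of $\tht_{n,i}$ in $SP(\pi_j^E\circ\phi|_0)$ is $\ld^j_0(\tht_{n,i})+\ld^j(i)=\bar\ld^j_0(\tht_{n,i})$, matching $SP(\pi_j^E\circ\xi_0)$. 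Evaluation at $1$ is the same with subscript $1$. Again one must observe that the padding by $0$'s is consistent, which is exactly the content of $o'(j)\le\{n+1,j\}$ from Lemma \ref{4.21}. Summing over $j$ yields $SP(\phi|_0)=SP(\xi_0)$ and $SP(\phi|_1)=SP(\xi_1)$.

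The only genuine subtlety — and hence the ``main obstacle'', though it is mild — is that all the equalities $\bar\kappa_0^j-\kappa_0^j=\bar\kappa_1^j-\kappa_1^j$, $\bar\sm_0^j-\sm_0^j=\bar\sm_1^j-\sm_1^j$, and $\bar\ld_0^j-\ld_0^j=\bar\ld_1^j-\ld_1^j$ (which make $\kappa^j(i)$, $\sm^j(i)$, $\ld^j(i)$ well defined and independent of the endpoint) have already been established in Lemmas \ref{4.15} and \ref{4.17}; so here I would simply invoke those, together with the definitions of $\phi^o$, $\phi$, $\xi^o_k$, $\xi_k$ in \ref{4.18} and \ref{4.14}, and the rank bounds of Lemma \ref{4.21}. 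Thus the proof is essentially a one-line matching of multiplicities in each coordinate $j$, and I would write it exactly that way.
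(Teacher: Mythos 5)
Your proposal is correct and follows essentially the same route as the paper: the paper's proof is exactly the observation that $SP(\pi_j^E\circ\phi^o|_0)$ consists of $SP((\pi_j^E\circ\chi)|_0)$ together with $\tht_{n,i}^{\sim\kappa^j(i)}$, and since $\kappa^j(i)=\bar\kappa^j_0(\tht_{n,i})-\kappa^j_0(\tht_{n,i})$ this coincides with $SP(\pi_j^E\circ\xi^o_0)$, the other three cases being analogous multiplicity matchings with $\sm^j(i)$ and $\ld^j(i)$. Your additional remarks on the well-definedness of $\kappa^j(i)$, $\sm^j(i)$, $\ld^j(i)$ (Lemmas \ref{4.15}, \ref{4.17}) and the size bounds from Lemma \ref{4.21} are exactly the ingredients the paper relies on in Definition \ref{4.18}, so nothing is missing.
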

\begin{proof} Note that $SP(\pi_j^E\circ\phi^o|_0)=\left\{SP((\pi_j^E\circ \chi)|_0), \tht_{n,1}^{\sim \kappa^j(1)}, \tht_{n,2}^{\sim \kappa^j(2)},\cdots,\tht_{n,p_n}^{\sim \kappa^j(p_n)}\right\}=SP(\pi_j^E\circ \xi^o_0)$, since $\kappa^j(i)=\bar{\kappa}^j_0(\tht_{n,i})-\kappa^j_0(\tht_{n,i})$. The {{proofs}} of other three parts are similar.

\end{proof}

\begin{NN}\label{4.20} (Definition of $\phi_{n,n+1}^o$ and $\phi_{n,n+1}$) By the lemma above, there are unitaries $V_0, V_1, U_0, U_1\in E_{n+1}$ such that
$\mbox{Ad} V_0\circ (\phi^o|_0)=\xi^o_0$, $\mbox{Ad} V_1\circ (\phi^o|_1)=\xi^o_1$, $\mbox{Ad} U_0\circ (\phi|_0)=\xi_0$ and $\mbox{Ad} U_1\circ (\phi|_1)=\xi_1$. Since the unitary group of $E_{n+1}$ is path connected, there are two
continuous paths of unitaries ${{V:=}}V(t),{{U:=}}U(t), 0\leq t\leq 1$  such that $V(0)=V_0$, $V(1)=V_1$, $U(0)=U_0$ and  $U(1)=U_1$.

Now we define two \hm s
$\phi^o_{n,n+1},\phi_{n,n+1}: A_n\to A_{n+1}$ by
\beq\label{Dphinn+1}
\phi^o_{n,n+1}(f)=(\mbox{Ad} {{V}} \circ \phi^o)(f)\oplus \psi^o(f)\in A_{n+1} \subset C([0,1], E_{n+1})\oplus F_{n+1}~~~\mbox{and}\\\label{Dphinn+2}
\phi_{n,n+1}(f)=(\mbox{Ad} {{U}} \circ \phi)(f)\oplus \psi(f)\in A_{n+1} \subset C([0,1], E_{n+1})\oplus F_{n+1}.
\eneq
Note that $(\mbox{Ad} {{V}}\circ \phi^o)(f)(0)=(\mbox{Ad} V_0\circ (\phi^o|_0))(f)=\xi^o_0(f)=\bt_{n+1,0}(\psi^o(f))$ and \\$(\mbox{Ad} {{V}} \circ \phi^o)(f)(1)=(\mbox{Ad} V_1\circ (\phi^o|_1))(f)=\xi^o_1(f)=\bt_{n+1,1}(\psi^o(f))$.
{{In other words,}} the {{element}}  $\phi^o_{n,n+1}(f)$ is in $A_{n+1}$ rather than $C([0,1], E_{n+1})\oplus F_{n+1}$.  Similarly, $\phi_{n,n+1}$
is also a  \hm\,  from $A_n$ to
 $A_{n+1}$.

 So the construction is completed.
 We obtain two inductive systems $A^o=\lim(A_n,\phi^o_{n,n+1})$ and $A=\lim(A_n, \phi_{n,n+1}).$ We will  summarize the properties of $\phi_{n,n+1}^o$ and $\phi_{n,n+1}$ in \ref{4.23n+}, \ref{4.24} and Lemma \ref{4.25} {{below,}} and use these properties to prove that $A$ is a simple $C^*$-algebra which satisfies the desired properties in
\ref{range4.4}.

Before we present other properties of $\phi_{n,n+1}^o$ and $\phi_{n,n+1}$, let us {{point}} out that both of them are injective, since $\chi$ is injective. Also let us rewrite {{the}} part of property (c) in \ref{4.10} as\\
(c') $\tilde{\rho}(\gm_{n,\infty}(u_n))\geq (1-1/8^n)\cdot 1_\DT$ (since  $\gm_{n,\infty}=\gm'_{k(n),\infty}$).

\end{NN}

\begin{NN}\label{4.22} ($K$-theory of $\phi_{n,n+1}$)
Let $I_n=C_0\big((0,1),E_n\big)\sbs A_n$ and $I_{n+1}=C_0\big((0,1),E_{n+1}\big)\sbs A_{n+1}$ be the  ideals. From the definition of $\phi^o_{n,n+1},$ we have the following {{commutative}} diagram
\begin{displaymath}
\xymatrix{
A_n~~~~~~\ar[r]^{\phi^o_{_{n,n+1}}} \ar@{->}[d]^{\pi_n^A
} &
~~~~~~A_{n+1}  \ar@{->}[d]^{\pi^A_{n+1}}
\\
 A_n/I_n=F_n~~~ \ar[r]^{\psi_{_{n,n+1}}}& ~~~A_{n+1}/I_{n+1}=F_{n+1}.
  }
\end{displaymath}
Note that $K_0(A_n)=G_n$, $K_0(A_{n+1})=G_{n+1}$, $K_0(F_n)=H_n$, $K_0(F_{n+1})=H_{n+1}$, and the maps
$$(\pi^A_n)_*:K_0(A_n)\to K_0(F_n)=H_n\,\,\, \mbox{and}\,\,\,
(\pi^A_{n+1})_*:K_0(A_{n+1})\to K_0(F_{n+1})=H_{n+1}
$$
are inclusions.  Also, recall that $(\psi_{n,n+1})_*=\gm_{n,n+1}: H_n\to H_{n+1}$.  Consequently,
$(\phi^o_{n,n+1})_*=\gm_{n,n+1}|_{_{G_n}}:~ K_0(A_n)\to K_0(A_{n+1})$.
The {{the commutative diagram}}
\begin{displaymath}
\xymatrix{
A_1\ar[r]^{\phi^o_{_{12}}} \ar@{->}[d] &
A_2 \ar[r]^{\phi^o_{_{23}}} \ar@{->}[d]& \cd &\ar[r] & A^o \ar@{->}[d]
\\
 F_1 \ar[r]^{\psi_{_{12}}}& F_{2}\ar[r]^{\psi_{_{23}}} & \cd &\ar[r] & F
  }
\end{displaymath}
induces the {{following commutative diagram}}
\begin{displaymath}
\xymatrix{
K_0(A_1)=G_1\ar[r]^{\gm_{12}|_{_{G_1}}} \ar@{->}[d] &
K_0(A_2)=G_2 \ar[r]^{~~~~~\gm_{23}|_{_{G_2}}} \ar@{->}[d]& \cd &\ar[r] & K_0(A^o)=G \ar@{->}[d]
\\
 K_0(F_1)=H_1 \ar[r]^{\gm_{_{12}}}& K_0(F_2)=H_2\ar[r]^{~~~~\gm_{_{23}}} & \cd &\ar[r] & K_0(F)=H\,.
  }
\end{displaymath}
Hence $K_0(A^o)=G$ as a subgroup of $K_0(F)=H.$

On the other hand, $\pi_{n+1}^A\circ \phi^o_{n,n+1}=\psi^o$ is homotopy equivalent to $\pi_{n+1}^A\circ \phi_{n,n+1}=\psi$ (see (\ref{KKpsi})). Thus we know that\\
$\big(\pi_{n+1}^A\circ \phi_{n,n+1}\big)_*
=\big(\pi_{n+1}^A\circ \phi^o_{n,n+1}\big)_*:~K_0(A_n)=G_n\to K_0(F_{n+1})=H_{n+1}.$
Consequently, $(\pi_{n+1}^A)_*\circ (\phi_{n,n+1})_* =(\pi_{n+1}^A)_*\circ \gm_{n,n+1}|_{_{G_n}}$.
Since $(\pi_{n+1}^A)_*$ is an inclusion, we have $(\phi_{n,n+1})_*=\gm_{n,n+1}|_{_{G_n}}$.  Therefore, the inductive limit
$$A_1\stackrel{\phi_{12}}{\lr}A_2\stackrel{\phi_{23}}{\lr}A_3\lr \cd ~~~~~\lr A$$
also induces the K-theory maps
$$K_0(A_1)=G_1 \stackrel{\gm_{12}|_{_{G_1}}}{\lr}K_0(A_2)\stackrel{\gm_{23}|_{_{G_2}}}{\lr} {{K_0(A_3)}}\lr \cd ~~~~~\lr K_0(A) ~~( \mbox{or} ~ K_0(A^o)).$$
Hence $K_0(A)=G=K_0(A^o).$  {{Since $A_n\in {\cal C}_0,$ $K_1(A)=\{0\}.$}}
\end{NN}

\begin{lem}\label{4.23n+} $A$ is a simple $C^*$-algebra.
\end{lem}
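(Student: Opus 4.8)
\textbf{Proof plan for Lemma \ref{4.23n+}.}

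The plan is to show that $A = \lim(A_n, \phi_{n,n+1})$ is simple by the standard criterion for inductive limits: it suffices to prove that for every $n$ and every nonzero element $a \in A_n$, there exists $m > n$ such that $\phi_{n,m}(a)$ is \emph{full} in $A_m$ (i.e., not contained in any proper closed two-sided ideal of $A_m$), with full-ness witnessed in a way that survives further connecting maps. In fact, following the pattern in \S 13 of \cite{GLN}, I would prove the sharper statement that for each $n$ there is an $m(n) > n$ so that $\phi_{n,m}$ is injective on spectra in the following sense: for every $x \in \mathrm{Sp}(A_{m})$, the composition $x \circ \phi_{n,m}: A_n \to M_k(\C)$ has $\mathrm{Sp}(x \circ \phi_{n,m}) = \mathrm{Sp}(A_n)$, or at least is ``$\delta$-dense'' enough that $x\circ\phi_{n,m}(a)\neq 0$ whenever $a\neq 0$. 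This immediately gives that $\phi_{n,m}(a)$ is full in $A_m$, because a nonzero element of an Elliott--Thomsen building block that is not killed by any irreducible representation generates the whole algebra as an ideal.

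The key steps, in order: First, recall from \ref{4.20} that each $\phi_{n,n+1}$ is injective because it contains $\chi$ as a direct summand (up to a unitary and adding zeros), and from \ref{4.13} (equation \eqref{SP-chi}) that $\mathrm{Sp}(A_n) = \bigcup_{0 \le t \le 1} \mathrm{Sp}(\chi|_t)$. Second, I would unwind what $\mathrm{Sp}(\eta_{n+1,j}(t) \circ \phi_{n,n+1})$ is for a point $t \in (0,1)$: by the definitions in \ref{4.12}, \ref{4.13}, \ref{4.18}, the map $\phi$ evaluated at an interior point $t$ of the $j$-th interval of $E_{n+1}$ contains, as summands, the point evaluations $f_k(\tfrac{l}{n})$ for $l = 1, \dots, n-1$ and all $k$ (coming from the $\psi$-part, via the multiplicities $\sigma^j(k)$ and $\lambda^j(i)$), as well as the values $f_k(t)$, $f_k(1-t)$ (from the $\chi$-part). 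Hence as $t$ ranges over $(0,1)$ and $n$ grows, the set of points $\{\eta_{n,k}(s)\}$ appearing in $\mathrm{Sp}(\phi_{n,n+1}|_{\eta_{n+1,j}(t)})$ becomes dense in $\mathrm{Sp}(A_n)$; a finite composition $\phi_{n,m}$ then picks up, for every $x \in \mathrm{Sp}(A_m)$, a set of points of $A_n$ that is $\delta$-dense for any prescribed $\delta$, provided $m$ is large (here one uses that $n \to \infty$ along the tower and the lattice $\{l/n\}$ refines). Third, I would note that the vertex representations $\theta_{n,i}$ are also captured: $\mathrm{Sp}(\phi_{n,n+1}|_{\theta_{n+1,j}}) = \mathrm{Sp}(\psi_{n,n+1}|_{\theta_{n+1,j}})$ contains every $\theta_{n,i}$ with multiplicity $c_{ji} \ge {\cal L}_{n+1} > 0$ by \eqref{c-M-large}, so no vertex point of $A_n$ is dropped. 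Fourth, combine: given a finite subset (e.g. a single nonzero $a \in A_n$), choose $\delta$ so small that $\delta$-density of a subset $S \subset \mathrm{Sp}(A_n)$ forces $a(s) \ne 0$ for some $s \in S$ (possible by the remark in \ref{4.6} following \eqref{SpAn}), then choose $m$ large enough that $\mathrm{Sp}(\phi_{n,m}|_x)$ is $\delta$-dense for every $x \in \mathrm{Sp}(A_m)$; conclude $x(\phi_{n,m}(a)) \ne 0$ for all $x$, hence $\phi_{n,m}(a)$ is full, hence $A$ is simple.

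The main obstacle is the bookkeeping in the second step: one must verify carefully that the interior point evaluations $f_k(l/n)$ that get inserted by the $\psi$-component of $\phi_{n,n+1}$ genuinely accumulate to give density on the intervals $\mathrm{Sp}(\eta_{n,k}) = (0,1)$, and that this density is uniform over $j$ and propagates correctly through compositions $\phi_{n,n+1} \circ \phi_{n+1,n+2} \circ \cdots$ — in particular that the refinement $\{l/n : 1 \le l \le n-1\}$ as $n$ increases, together with the $\chi$-part contributing $f_k(t)$ at the running parameter $t$, is enough to reach every point of $(0,1)$ within $\delta$. The multiplicity lower bounds from \ref{4.10}(b), \eqref{c-M-large}, \eqref{defL}, and Lemma \ref{4.21} (which guarantee $\sigma^j(i)$, $\lambda^j(i)$, $c_{ij}$ are all positive and in fact large) are exactly what make all the relevant multiplicities nonzero, so the spectral patterns do not collapse; assembling these into a clean ``$\phi_{n,m}$ is $\delta$-dense on spectra'' statement, in the spirit of the simplicity argument of \cite[\S 13]{GLN}, is the crux.
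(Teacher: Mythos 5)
Your plan follows essentially the same route as the paper's proof: one establishes that $\mathrm{Sp}(\phi_{n,m+2}|_x)$ contains a $\delta$-dense subset of $\mathrm{Sp}(A_n)$ for every $x\in \mathrm{Sp}(A_{m+2})$ (in the paper, the $\frac{1}{m}$-lattice $\{\eta_{n,i}(k/m)\}\cup\{\theta_{n,i}\}$, obtained by routing every $x$ through $\theta_{m+1,1}$, which sees the whole lattice of $\mathrm{Sp}(A_m)$ by \eqref{Sp-tht-1}, combined with the propagation statements \eqref{Sp-tht-m} and \eqref{Sp-(n,m)}), and then deduces that $\phi_{n,m+2}(a)$ is full (Proposition 6.3 of \cite{eglnp}) and runs the standard simplicity argument as in \cite{DNNP}. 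The positivity of the multiplicities $\lambda^j(i)$, $\sigma^j(k)$, $c_{ij}$ that you flag is exactly the input the paper uses in \eqref{tht-eta}, \eqref{Sp-tht}, \eqref{Sp-tht-1} and \eqref{Sp-(n,n+1)}, so your proposal matches the paper's argument in substance.
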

\begin{proof}
Note  {{that,}} by \ref{4.17} and \ref{4.18}, $\lambda^j(i)>0.$ From  \eqref{Dphinn+2} and \eqref{phi}, it follows that
\beq\label{tht-eta}\{\tht_{n,i}: 1\leq i\leq p_n\}\subset Sp(\phi_{n,n+1}|_{\eta_{n+1,j}(t)})~~\mbox{for any}~~1\leq j\leq l_{n+1},0<t<1.\eneq
Note that from 
(\ref{Dphinn+2}), for any $\tht\in Sp(F_{n+1})$, $SP((\phi_{n,n+1}|)_\tht)=SP(\psi|_\tht)$ (see \ref{4.4.1}). It follows from definition of $\psi$ in \ref{4.12} (see (\ref{psi=b1}) and (\ref{psi=b2})) {{that}} we have
\beq\label{Sp-tht}
&&\{\tht_{n,i},1\leq i\leq p_n\}\subset Sp(\phi_{n,n+1}|_{\tht_{n+1,j}})~\mbox{ for all}~~ 1\leq j\leq p_{n+1}
\andeqn\\
\label{Sp-tht-1}
&&\{\eta_{n,j}({k\over{n}}), \theta_{n,i}:~ 1\leq i\leq p_n,1\leq j\leq l_n, 1\leq k\leq n-1\}\subset {\rm Sp}({\phi_{n,n+1}}|_{\tht_{n+1,1}})
\eneq
{{(see also  (\ref{psi=b1})).}}

From (\ref{phi}) (which {{tells}} us $\chi$ is a part of $\phi$), \eqref{Dphinn+2} (which {{tells}} us,  for any $\eta_{n+1,j}(t):=t\in (0,1)_j \subset C([0,1], M_{\{n+1,j\}}(\C))),$
 that $SP((\phi_{n,n+1})_{\eta_{n+1,j}(t)})=SP(\phi|_{\eta_{n+1,j}(t)})$), and (\ref{SP-chi}),  we know that for all $1\leq j\leq l_{n+1}$,
\beq\label{Sp-(n,n+1)}
\{\eta_{n,i}(t), \eta_{n,i}(1-t): 1\leq i\leq l_n\}\subset Sp(\phi_{n,n+1}|_{\eta_{n+1,j}(t)})\cup Sp(\phi_{n,n+1}|_{\eta_{n+1,j}(1-t)})~.
\eneq

 For the composition of two homomorphisms $\phi:A\to B$ and $\psi: B\to C$ among three sub-homogeneous algebras, it is
{{well known}} and easy to see that, for any $x\in Sp(C)$, one has $Sp(\psi\circ \phi)|_x=\bigcup_{y\in Sp(\psi|_x)} Sp(\phi|_y)$.  We will repeatedly use this fact.

From  (\ref{Sp-tht}), we have
\beq\label{Sp-tht-m}\{\tht_{n,i},1\leq i\leq p_n\}\subset Sp(\phi_{n,m}|_{\tht_{m,j}})~\mbox{ for all}~~ 1\leq j\leq p_{m}.\eneq
 From (\ref{Sp-(n,n+1)}), we know that, for any $m\geq n+1$ and  $1\leq j\leq l_{m}$,
\beq\label{Sp-(n,m)}
\{\eta_{n,i}(t), \eta_{n,i}(1-t): 1\leq i\leq l_n\}\subset  {{Sp}}(\phi_{n,m}|_{\eta_{m,j}(t)})\cup Sp(\phi_{n,m}|_{\eta_{m,j}(1-t)})~.
\eneq

{{Fix}}  $m+2>m\geq n$. Let $Z:=\{\eta_{m,j}({k\over{m}}), \theta_{m,i}:~ 1\leq i\leq p_m,1\leq j\leq l_m, 1\leq k\leq m-1\}$. It follows from (\ref{Sp-(n,m)}) and (\ref{Sp-tht-m}) {{that}}
$$\{\eta_{n,j}({k\over{m}}), \theta_{n,i}:~ 1\leq i\leq p_n,1\leq j\leq l_n, 1\leq k\leq m-1\}\subset \bigcup_{z\in Z} Sp(\phi_{n,m}|_z).$$
(When $n=m$, we use the convention $\phi_{n,n}=\id$.) {{Applying}} (\ref{Sp-tht-1}) with $m$ in place of $n$, we know that $Z\subset Sp(\phi_{m,m+1}|_{\tht_{m+1,1}})$.
For any $x\in Sp(A_{m+2})$,  from (\ref{Sp-tht-1}) with $m+1$ in place of $n$, we have  $\tht_{m+1,1}\subset Sp(\phi_{m+1, m+2}|_x)$. Hence
\beq\label{2020-917-1}
Sp(\phi_{n,m+2}|_x)\sps \left\{\et_{n,i}{{\left({\ts\frac{k}{m}}\right):}}~
   ~_{{1\leq k\leq m-1};~~
   { 1\leq i\leq l_n}}
\right\}
\bigcup
\Big\{{{\tht_{n,i}:}}~
    1\leq i\leq p_n
\Big\}.
\eneq
The latter is
{{$\frac{1}{m}$-dense}} in $Sp(A_n)$ (see \eqref{SpAn} and lines below that).

It is standard to show {{that}} $A$ is simple (see \cite{DNNP}). To see this, let $a, b\in A_+^{\bf 1}{{\setminus\{0\}}}.$  It suffices to show that $b$ is in the (closed) ideal
generated by $a.$  Let $1/2>\ep>0.$ There {{are}} $n\ge 1$ and $a_0\in (A_n)_+^{\bf 1}$ such that $\|\phi_{n,\infty}(a_0)-a\|<\ep/4.$
It follows from Lemma 3.1 of \cite{eglnp} that there is $r_0\in A$  such that
\beq\label{2020-917-2}
0\not=(\phi_{n, \infty}(a_0)-\ep/4)_+=r_0^*ar_0.
\eneq
Put $a_1:=(a_0-\ep/4)_+\in A_n.$  Choose an integer $m'>n$ and $b_1\in {A_{m'}}_+$ such that
$\|\phi_{m', \infty}(b_1)-b\|<\ep/4.$   By \eqref{SpAn} and lines below that, we may assume
that,  for some { {$\dt>0$}} 
and for any { {$\dt$-dense}} 
subset  $S$ of $Sp(A_n),$
there is $s\in S$ such that $a_1(s)>0.$  Choose     $m>n,$ such that $1/m<\dt$. {{Then}}
by what has been proved above (see \eqref{2020-917-1}), {{we have
}}
\beq
\phi_{n, m+2}(a_1)(x)>0\rforal x\in Sp(A_{m+2}).
\eneq
By choosing a large $m, $ we may assume that $m>m'.$
It follows from Proposition 6.3 of \cite{eglnp} that $\phi_{n,m+2}(a_1)$ is full in $A_{m+2}.$
Therefore, there are $x_1, x_2,...,x_K\in A_{m+2}$ such that
\beq
\|\sum_{i=1}^K x_i^* \phi_{n, m+2}(a_1)x_i-\phi_{m', m+2}(b_1)\|<\ep/4.
\eneq
This implies that (see \eqref{2020-917-2})
\beq
\|\sum_{i=1}^K\phi_{m+2, \infty}(x_i)^* r_0^*ar_0\phi_{m+2, \infty}(x_i)-b\|<\ep.
\eneq
This shows that $b$ is in the closed ideal generated by $a,$ whence $A$ is simple.
%
\end{proof}


\begin{NN}\label{4.23}  Let $\partial_e(T(A))$ denote the {{set}} of {{extremal}} tracial states {{of $A.$}}  It is well known (see Lemma 2.2 of \cite{Thm-K-theory}) that there is {{a}} one to one correspondence between $Sp(A_n)$ and $\partial_e(T(A_n))$ given by sending the irreducible representation
$\tht: A_n \to M_l(\C)$ to the {{extremal}} trace $\tau_\tht$ defined by $\tau_\tht(a)={\rm tr} (\tht(a))$, where ${\rm tr}$ is the normalized  trace on $M_l(\C)$.
Using the calculation in 3.8 of \cite{GLN} (see \cite{Thm-K-theory} also), we know that {{$\Aff(T_0(A_n))$ (for the definition of $T_0(A_n),$ see
  \ref{DTtilde})}} consists of elements $(g_1,g_2,\cd, g_{l_n}, x_1,x_2,\cd, x_n)\in C([0,1], \R^{l_n})\oplus \R^{p_n}$ with the following condition
\beq\label{Aff-bd}
g_j(0)=\frac{1}{\{n,i\}}\sum_{i=1}^{p_n}b^n_{0,ji}x_i[n,i]~~
\mbox{and}~~
g_j(1)=\frac{1}{\{n,i\}}\sum_{i=1}^{p_n}b^n_{1,ji}x_i[n,i].
\eneq
Note that the  norm on $\Aff({ {T_0}}(A_n))$ is given by
$$\|(g_1,g_2,\cd, g_{l_n}, x_1,x_2,\cd, x_n)\|=\max\{\sup_{1\leq t\leq 1}|g_j(t)|, |x_i|; 1\leq j\le l_n,1\leq i\leq p_n\}.$$

Let $\psi_{n,n+1}^{T}: T_0(F_{n+1})\to T_0(F_n)$ and  $\psi_{n,n+1}^{\sharp}: \Aff(T_0(F_n))\to \Aff (T_0(F_{n+1}))$ be
the affine {{maps}} induced by $\psi_{n,n+1}: F_n\to F_{n+1}$, and, $\phi_{n,n+1}^{T}: T_0(A_{n+1})\to T_0(A_n)$ {{and}}  $\phi_{n,n+1}^{\sharp}: \Aff(T_0(A_n))\to \Aff (T_0(A_{n+1}))$ be induced by $\phi_{n,n+1}: A_n\to A_{n+1}$. Note that $F_n$ is unital. There is a unique element in $\Aff(T_0(F_n))$, denoted by $1_{T(F_n)}$ such that $1_{T(F_n)}(\tau) =1$ for all $\tau\in T(F_n)$. Even though $\Aff(T_0(F_n))$ and $\Aff(T_0(F_{n+1}))$ have  {{units $1_{T(F_n)}$ and $1_{T(F_{n+1})},$}} respectively, $\psi_{n,n+1}^{\sharp}$ does not preserve the units, since $\psi_{n,n+1}$ is not unital.
\end{NN}

\begin{lem}\label{4.23.1} (a)  $\psi_{n,n+1}^{\sharp} (1_{T(F_n)})\geq (1-\frac{1}{8^n})\cdot 1_{T(F_{n+1})}.$ (Equivalently, for any $\tau \in T(F_{n+1})$, $\|\psi_{n,n+1}^{T}(\tau)\|\geq (1-\frac{1}{8^n})$.)
 Consequently,
$\psi_{n,m}^{\sharp} (1_{T(F_n)})\geq \left(\prod_{i=n}^{m-1}(1-\frac{1}{8^i})\right)\cdot 1_{T(F_{m})}$ {{for}} any $m>n$.

(b) Suppose that  $f\in \Aff (T_0(A_n))$ with $\|f\|\leq 1$  satisfying  $\pi_n^{A\,\sharp}(f)\geq \af\cdot 1_{T(F_n)}\in \Aff (T_0(F_n))$ for {{some}} $\af\in (0,1]$, {{where $\pi_n^{A\,\sharp}: \Aff (T_0(A_n))\to \Aff(T_0(F_n))$ is induced by $\pi_n^A: A_n\to F_n$.}}  Then, for any $\tau \in T(A_{n+1})$,
$\phi_{n,n+1}^{\sharp}(f)(\tau)\geq (1-\frac{2}{4^{n+1}})\af$.
Consequently, for   $a^A_n, e_n^A\in (A_n)_+$ (see \ref{range4.6.1}), we have
\beq\nonumber
\phi_{n,n+1}^{\sharp}({{\widehat{e_n^A})}}(\tau)\geq \phi_{n,n+1}^{\sharp}({{\widehat{a_n^A}}})(\tau)\geq 1-\frac{2}{4^{n+1}} ~~\mbox{for any}~~ \tau\in T(A_{n+1}),~~~\mbox{and}\\
\label{phi-sharp}
\phi_{n,m}^{\sharp}({{\widehat{e_n^A}}})(\tau)\geq \phi_{n,m}^{\sharp}({{\widehat{a_n^A}}})(\tau)\geq \left(\prod_{i=n}^{m-2}(1-\frac{1}{8^i})\right) (1-\frac{2}{4^{m}})~~\mbox{for any}~~m>n+1,
\eneq
where ${{\widehat{e_n^A}}}$ and ${{\widehat{a_n^A}}}$ are  the elements in $\Aff(T_0(A_n))$ corresponding to $e_n^A$ and $a^A_n,$ respectively. (Also,   for any $\tau \in {{\tilde{T}}}(A_{n+1})$, $\|\phi_{n,n+1}^{T}(\tau)\|\geq (1-\frac{2}{4^{n+1}})$.) Furthermore, for any $\tau\in T(A),$
\beq\label{phi-sharp-infty}
\phi_{n,\infty}^{\sharp}({{\widehat{e_n^A}}})(\tau)\geq \phi_{n,\infty}^{\sharp}({{\widehat{a_n^A}}})(\tau)\geq \prod_{i=n}^{\infty}(1-\frac{1}{8^i})\geq (1-\frac{1}{4^n}).
\eneq
\end{lem}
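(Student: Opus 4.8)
\textbf{Proof plan for Lemma \ref{4.23.1}.}

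The plan is to extract everything from the explicit matrix descriptions of $\psi_{n,n+1}$, $\phi_{n,n+1}$ and the description of $\Aff(T_0(A_n))$ in \ref{4.23}, combined with the multiplicity estimates in Lemmas \ref{4.15}, \ref{4.17} and \ref{4.21}. For part (a), I would first compute $\psi_{n,n+1}^\sharp(1_{T(F_n)})$ coordinate-wise: since $\psi_{n,n+1}$ sends $1_{F_n}$ to a projection in $F_{n+1}$ of rank $\sum_{k}c_{ik}[n,k]$ inside the $i$-th block $M_{[n+1,i]}(\C)$, the value of $\psi_{n,n+1}^\sharp(1_{T(F_n)})$ at the extremal trace indexed by the $i$-th summand of $F_{n+1}$ is exactly $\frac{1}{[n+1,i]}\sum_{k=1}^{p_n}c_{ik}[n,k]$. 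By \eqref{n-F-(n+1)} this is $\geq 1 - 1/8^n$, which is precisely the asserted inequality since $\Aff(T_0(F_{n+1}))$ is, by the finite-dimensionality of $F_{n+1}$, determined by these block values and affine in them. The ``consequently'' statement then follows by composing: $\psi_{n,m}^\sharp = \psi_{m-1,m}^\sharp\circ\cdots\circ\psi_{n,n+1}^\sharp$, and each factor multiplies down by at least $(1-1/8^i)$ while being positive and order-preserving, so one telescopes to $\prod_{i=n}^{m-1}(1-1/8^i)\cdot 1_{T(F_m)}$.

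For part (b), the key is that $\phi_{n,n+1}$ is built from the pieces $\chi$, $\mu$ and the ``$\theta_{n,i}$-multiplicity'' blocks $a_i^{\sim\lambda^j(i)}$, plus $\psi$ on the $F_{n+1}$-coordinate, so that for a trace $\tau$ on $A_{n+1}$ corresponding either to a point $\eta_{n+1,j}(t)$ or to $\theta_{n+1,j}$, the value $\phi_{n,n+1}^\sharp(f)(\tau)$ is a convex combination (with weights given by the relevant multiplicities divided by $\{n+1,j\}$ or $[n+1,j]$) of values of $f$ at points of $Sp(A_n)$, among which the $\theta_{n,i}$'s appear with total relative weight controlled from below. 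The hypothesis $\pi_n^{A\,\sharp}(f)\geq \alpha\cdot 1_{T(F_n)}$ says exactly that $f$ evaluated at each $\theta_{n,i}$ is $\geq\alpha$, while $f$ is bounded below by $-1$ everywhere (since $\|f\|\leq 1$). Using Lemma \ref{4.21}, namely $\sum_{i}\lambda^j(i)[n,i] > (1-\tfrac{2}{4^{n+1}})\{n+1,j\}$ (and the analogous statement $\sum_i \lambda^j(i)[n,i] \ge (1-\tfrac{2}{4^{n+1}})[n+1,j]$ for $\theta$-type traces, which comes the same way from \eqref{74-10} and \eqref{n-F-(n+1)}), the mass of $\phi_{n,n+1}^\sharp(f)(\tau)$ concentrated on the $\theta_{n,i}$'s is at least $(1-\tfrac{2}{4^{n+1}})$, contributing $\geq (1-\tfrac{2}{4^{n+1}})\alpha$, and the remaining mass is at most $\tfrac{2}{4^{n+1}}$, contributing at worst $-\tfrac{2}{4^{n+1}}$; but since we only want the clean bound $(1-\tfrac{2}{4^{n+1}})\alpha$ and $\alpha\le 1$, a slightly more careful bookkeeping (isolating that the non-$\theta$ part of $\phi$ still comes from $\chi\oplus\mu$ applied to $f$, whose values are themselves averages of $f$ over $Sp(A_n)$, hence $\geq -1$, and then absorbing) gives the claim; the precise constant is arranged exactly so this works. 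Then $a^A_n \le e^A_n$ and the fact that $\widehat{a^A_n}$ corresponds under $\pi_n^{A\,\sharp}$ to $1_{T(F_n)}$ (by the definition of $a^A_n$ in \ref{range4.6.2}, whose $F_n$-part is $1_{F_n}$) let me apply the just-proved estimate with $\alpha=1$, giving $\phi_{n,n+1}^\sharp(\widehat{e^A_n}) \ge \phi_{n,n+1}^\sharp(\widehat{a^A_n}) \ge 1 - \tfrac{2}{4^{n+1}}$.

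The ``consequently'' chain and the infinite-product statement \eqref{phi-sharp-infty} then follow by iterating: writing $\phi_{n,m}^\sharp = \phi_{m-1,m}^\sharp\circ\cdots\circ\phi_{n,n+1}^\sharp$, after the first step I have an element $g:=\phi_{n,n+1}^\sharp(\widehat{a^A_n})\in\Aff(T_0(A_{n+1}))$ with $\|g\|\le 1$ (since $\widehat{a^A_n}\le\widehat{e^A_n}\le 1$ and $\phi$ is a homomorphism, norms don't increase) and $\pi_{n+1}^{A\,\sharp}(g) = \psi_{n,n+1}^\sharp(1_{T(F_n)})\ge (1-1/8^n)1_{T(F_{n+1})}$ by the commuting square $\pi_{n+1}^A\circ\phi_{n,n+1} = \psi_{n,n+1}\circ\pi_n^A$ and part (a); so I can reapply part (b) with $\alpha = 1-1/8^n$, then $\alpha = (1-1/8^n)(1-1/8^{n+1})$, etc., picking up one factor $(1-1/8^i)$ per step from the $F$-coordinate and one factor $(1-2/4^{m})$ from the final step, which is exactly \eqref{phi-sharp}. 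Passing to the limit $m\to\infty$ and using $\prod_{i=n}^\infty(1-1/8^i)\ge 1-\sum_{i=n}^\infty 1/8^i = 1 - \tfrac{1}{8^{n}}\cdot\tfrac{8}{7} \ge 1-1/4^n$ (a crude comparison, valid for all $n\ge 1$) gives \eqref{phi-sharp-infty}. The main obstacle I anticipate is the bookkeeping in part (b): one must verify that the ``leftover'' multiplicity mass (the $\chi\oplus\mu$ part and the $\sigma^j$-blocks) really is bounded by $\tfrac{2}{4^{n+1}}$ relative to $\{n+1,j\}$ uniformly in $j$ and in the type of extremal trace, and that the estimate survives when $f$ is not nonnegative — but Lemmas \ref{4.17} and \ref{4.21} are designed precisely to supply these bounds, so it is a matter of assembling them carefully rather than proving anything new.
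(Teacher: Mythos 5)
Your overall route is the same as the paper's: part (a) is exactly the computation of $\psi_{n,n+1}^{T}(\tau_{\theta_{n+1,i}})(1_{F_n})=\frac{1}{[n+1,i]}\sum_{j}c_{ij}[n,j]$ together with \eqref{n-F-(n+1)}, and part (b) is the same case analysis over the extremal traces $\tau_{\theta_{n+1,i}}$ and $\tau_{\eta_{n+1,j}(t)}$, with the weight carried by the $\theta_{n,i}$'s bounded below via $c'_{1j}$ (see \eqref{c'-large}) and Lemma \ref{4.21} (inequality \eqref{ld-j-i}). Two steps in your write-up are, however, not right as stated. First, the commuting square you invoke for the iteration, $\pi_{n+1}^A\circ\phi_{n,n+1}=\psi_{n,n+1}\circ\pi_n^A$, is false: by \ref{4.20} the $F_{n+1}$-coordinate of $\phi_{n,n+1}$ is $\psi$, not $\psi^o=\psi_{n,n+1}\circ\pi_n^A$ (that identity holds for $\phi^o_{n,n+1}$; see \ref{4.12} and \ref{4.22}), and $\psi$ differs from $\psi^o$ on the first block of $F_{n+1}$, where the point-evaluations $f_k(\frac{j}{n})$ replace part of the multiplicity $c_{1j}$. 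So your claimed equality $\pi_{n+1}^{A\,\sharp}(g)=\psi_{n,n+1}^{\sharp}(1_{T(F_n)})$ is not available; what one does have, for the positive functions $\widehat{a^A_n},\widehat{e^A_n}$ and their images, is an inequality obtained by dropping the inserted (nonnegative) terms and using \eqref{c'} and \eqref{c'-large}, with an extra loss of order $2^{-4(k+1)}$ per step, which is harmless for \eqref{phi-sharp-infty} because of the slack in $\prod_{i\ge n}(1-\frac{1}{8^i})\ge 1-\frac{1}{4^n}$, but it is not the identity you assert.

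Second, your treatment of possibly negative values of $f$ does not close. If $f$ only satisfies $\|f\|\le 1$ and $\pi_n^{A\,\sharp}(f)\ge\af\cdot 1_{T(F_n)}$, the interior values $g_k(t)$ can be close to $-1$ (the description of $\Aff(T_0(A_n))$ in \ref{4.23} constrains only the endpoint values), and then the non-$\theta$ mass, which is at most $\frac{2}{4^{n+1}}$ of the total by Lemma \ref{4.21} but genuinely consists of evaluations of $f$ through $\chi\oplus\mu$ and the $\sigma^j$-blocks, contributes negatively; no ``absorbing'' recovers the clean bound $(1-\frac{2}{4^{n+1}})\af$, only something like $(1-\frac{2}{4^{n+1}})\af-\frac{2}{4^{n+1}}$. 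The paper's proof simply drops these contributions as nonnegative, i.e.\ it uses (implicitly) that the function to which the estimate is applied is positive on the $\eta$-part, which is true in every use of the lemma: $\widehat{a^A_n}$, $\widehat{e^A_n}$, and their images $\phi_{n,k}^{\sharp}(\widehat{a^A_n})$ under the positive maps $\phi_{n,k}^{\sharp}$. You should either carry positivity of $f$ as a standing assumption when you run the argument, or note explicitly that in each application the dropped terms are nonnegative; as written, the sentence ``the precise constant is arranged exactly so this works'' is where your proof has a gap.
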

\begin{proof}(a) We only need to show {{that}} $\psi_{n,n+1}^{\sharp} (1_{T(F_n)}) (\tau)=\psi_{n,n+1}^{T}(\tau) (1_{F_n}){{\geq 1-1/8^n}}$ for $\tau\in \partial_e(T(F_{n+1}))$. Let $\tau= \tau_{\tht_{n+1,i}}$ be defined by
$\tau (a_1,a_2,\cdots, a_{p_{n+1}})={\rm tr}(a_i)$, where ${\rm tr}$ is
the normalized trace of $F_{n+1}^i=M_{[n+1,i]}(\C)$. Let ${\rm tr}_j$ denote the normalized trace on $F_n^j=M_{[n,j]}(\C)$. Then, for $b=(b_1,b_2,\cdots, b_{p_n})$, we have {{(recall $c_{ij}=c^{n, n+1}_{ij}$)}}
$$\psi_{n,n+1}^{T}(\tau) (b)=\frac{1}{[n+1,i]}\sum_{j=1}^{p_n} {\rm tr}_j (b_j) {{c_{ij}}}[n,j].$$
In particular, if $b=1_{F_n}$, then by (\ref{n-F-(n+1)}) (note that $\hat{1}_{F_n}=1_{T(F_n)}$), {{we have}}
$$\psi_{n,n+1}^{T}(\tau) (1_{F_n})=\frac{1}{[n+1,i]}\sum_{j=1}^{p_n} {\rm tr}_j (b_j) {{c_{ij}}}[n,j]\geq (1-1/8^n).$$

(b) Keep the notation from the proof of part (a). Again we only need to calculate  $\phi_{n,n+1}^{\sharp}(f)(\tau)$ for  $\tau \in \partial_e(T(F_{n+1}))$. First suppose that $\tau= \tau_{\tht_{n+1,i}}$ defined by
$\tau (f_1, f_2,\cdots, f_{l_n}, a_1,a_2,\cdots, a_{p_{n+1}})={\rm tr}(a_i)$. Writing $f=(g_1,g_2,\cdots,g_{l_n}, b_1,b_2, \cdots, b_{p_n})\in \Aff (T_0(A_n))$, we have $b_i\geq\af \in \R$ for all $i$.    Then, by (\ref{Dphinn+2}),(\ref{psi=b1}), (\ref{psi=b2}) and (\ref{c'-large}),
{{we have}}
\beq\nonumber
  \big( \phi_{n,n+1}^{\sharp}(f)\big)(\tau_{\tht_{n+1,i}})=
  \big( \psi^{\sharp}(f)\big)(\tau_{\tht_{n+1,i}})\geq \frac{1}{[n+1,i]}\left({\rm tr}_1(b_1){{c'_{i1}}}[n,1]+\sum_{j=2}^{p_n} {\rm tr}_j (b_j) {{c_{ij}}}[n,j]\right)\hspace{1.5in}\\ \nonumber
  \geq {{\frac{1}{[n+1,i]}}}(1-\frac{1}{2^{4(n+1)}})\sum_{j=1}^{p_n} {\rm tr}_j (b_j) {{c_{ij}}}[n,j]=(1-\frac{1}{2^{4(n+1)}}){{\frac{1}{[n+1,i]}}}\sum_{j=1}^{p_n} \af {{c_{ij}}}[n,j]\geq (1-\frac{2}{8^n})\af.\hspace{1in}
\eneq
Now suppose  {{that}} $\tau= \tau_{\eta_{n+1,j}(t)}$ (for $0<t<1$) is defined by
$\tau (f_1, f_2,\cdots, f_{l_n}, a_1,a_2,\cdots, a_{p_{n+1}})={\rm tr}(f_j(t))$, where ${\rm tr} $ is normalized trace on $E_{n+1}^j=M_{\{n+1,j\}}(\C)$. By (\ref{Dphinn+2}), (\ref{phi}) and (\ref{ld-j-i}), {{we have}}
\beq\nonumber
  \big( \phi_{n,n+1}^{\sharp}(f)\big)(\tau_{\eta_{n+1,j}(t)})=
  \big( \phi^{\sharp}(f)\big)(\tau_{\eta_{n+1,j}(t)})\geq \frac{1}{\{n+1,j\}}\left(\sum_{i=1}^{p_n} {\rm tr}_i (b_i) \ld^j(i) [n,i]\right)\\ \nonumber
  =\frac{1}{\{n+1,j\}}\left(\sum_{i=1}^{p_n} \af \ld^j(i) [n,i]\right)\geq (1-\frac{2}{4^{n+1}})\af.\hspace{2in}
\eneq
Other parts of (b) {{follow}} from $\pi_n^{A}(e^A)=\pi_n^A(a^A)=1_{F_n}$ and {{$e^A\ge a^A$}}.

\end{proof}

\begin{NN}\label{4.23.2}
Note that {{the map}} $\pi_n^{A\,\sharp}: \Aff (T_0(A_n))\to \Aff(T_0(F_n))$ induced by $\pi_n^A: A_n\to F_n$ is given by $$\pi_n^{A\,\sharp}(g_1,g_2,\cd, g_{l_n}, x_1,x_2,\cd, x_n)=(x_1,x_2,\cd, x_{p_n}).$$
Define $\xi_n: \Aff(T_0(F_n))\to \Aff(T_0(A_n))$ by
\beq\nonumber
&&\hspace{-0.5in}\xi_n(x_1,x_2,\cd,x_{p_n})=(g_1,g_2,\cd, g_{l_n}, x_1,x_2,\cd, x_{p_n}),\\
\label{xi-n}
&&\hspace{-0.4in}{\text{where}}\,\,\,\hspace{0.2in}\left(
  \begin{array}{c}
    \{n,1\}g_1(t) \\
    \{n,2\}g_2(t) \\
    \vdots \\
    \{n,l_n\}g_{l_n}(t) \\
  \end{array}
\right)=\big(\bb_0^{n}+t(\bb_1^{n}-\bb_0^{n})\big)
\left(
  \begin{array}{c}
    ~[n,1]x_1 \\
    ~[n,2]x_2  \\
    \vdots \\
    ~[n,p_n]x_{p_n}  \\
  \end{array}
\right).
\eneq
Then $\pi_n^{A\,\sharp}\circ\xi_n=\id|_{\Aff(T(F_n))}.$ Let $\xi_{n,n+1}: \Aff (T (A_n))\to \Aff (T (A_{n+1}))$ be defined by $\xi_{n,n+1}= \xi_{n+1}\circ \psi_{n,n+1}^{\sharp}\circ \pi_n^{A\,\sharp}$.
Note that if $\psi^{\sharp}_{n,n+1}(x_1,x_2,\cd, x_{p_n})=(y_1,y_2,\cd,y_{l_n})$, then
\beq\label{AffT-psi}
\left(
  \begin{array}{c}
    ~[n+1,1]y_1 \\
    ~[n+1,2]y_2  \\
    \vdots \\
    ~[n+1,p_{n+1}]y_{p_{n+1}} \\
  \end{array}
\right)=\cc^{n,n+1}
\left(
  \begin{array}{c}
    ~[n,1]x_1 \\
    ~[n,2]x_2  \\
    \vdots \\
    ~[n,p_n]x_{p_n}  \\
  \end{array}
\right).
\eneq
If $\xi_{n,n+1}(g_1,g_2,\cd, g_{l_n}, x_1,x_2,\cd, x_{p_n})=
(h_1,h_2,\cd, h_{l_{n+1}},y_1,y_2,\cd,y_{p_{n+1}})$, then (\ref{AffT-psi}) holds
\beq\label{AffT-psi1}
\hspace{-0,2in}\andeqn\hspace{0.1in} \left(
  \begin{array}{c}
    \{n+1,1\}h_1(t) \\
    \{n+1,2\}h_2(t) \\
    \vdots \\
    \{n+1,l_{n+1}\}h_{l_{n+1}}(t) \\
  \end{array}
\right)=\big(\bb_0^{n+1}+t(\bb_1^{n+1}-\bb_0^{n+1})\big)
\left(
  \begin{array}{c}
    ~[n+1,1]y_1 \\
    ~[n+1,2]y_2  \\
    \vdots \\
    ~[n+1,p_{n+1}]y_{p_{n+1}}  \\
  \end{array}
\right).
\eneq

\end{NN}

\begin{lem}\label{4.24} The following
estimate
holds:
\beq\label{4.24+1}
\|\phi_{n,n+1}^\sharp-\xi_{n,n+1}\|<\frac{1}{2^{n+1}}.
\eneq
\end{lem}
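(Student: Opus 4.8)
\textbf{Proof proposal for Lemma \ref{4.24}.}

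The plan is to estimate $\phi_{n,n+1}^\sharp - \xi_{n,n+1}$ by comparing, tracial state by tracial state, the two maps $\Aff(T_0(A_n)) \to \Aff(T_0(A_{n+1}))$. Since the norm on $\Aff(T_0(A_{n+1}))$ is the sup over $\partial_e(T(A_{n+1}))$ (which by \ref{4.23} is indexed by $Sp(A_{n+1})$: the point-evaluation traces $\tau_{\tht_{n+1,i}}$ at $\tht_{n+1,i}$ and the traces $\tau_{\eta_{n+1,j}(t)}$ along the intervals), it suffices to bound $|(\phi_{n,n+1}^\sharp(f) - \xi_{n,n+1}(f))(\tau)|$ for each such extremal $\tau$, uniformly over $f \in \Aff(T_0(A_n))$ with $\|f\| \le 1$. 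Write $f = (g_1,\dots,g_{l_n},x_1,\dots,x_{p_n})$, so $|g_j(s)| \le 1$ and $|x_i| \le 1$.

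First I would compute $\xi_{n,n+1}(f) = \xi_{n+1}\circ \psi_{n,n+1}^\sharp \circ \pi_n^{A\,\sharp}(f)$ explicitly: by \ref{4.23.2}, $\pi_n^{A\,\sharp}(f) = (x_1,\dots,x_{p_n})$, then $\psi_{n,n+1}^\sharp$ acts via the matrix $\cc^{n,n+1}$ as in \eqref{AffT-psi}, and $\xi_{n+1}$ fills in the $E_{n+1}$-coordinates via \eqref{AffT-psi1}. Next I would compute $\phi_{n,n+1}^\sharp(f)(\tau)$ for the two types of extremal trace using the formulas \eqref{Dphinn+2}, \eqref{psi=b1}, \eqref{psi=b2} for the $F_{n+1}$-part (the map $\psi$) and \eqref{phi} together with the multiplicity bookkeeping of \ref{4.16}, \ref{4.17}, \ref{4.18}, \ref{4.21} for the $E_{n+1}$-part (the map $\phi = \mathrm{Ad}\,U \circ (\chi\oplus\mu) \oplus \cdots$). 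For $\tau = \tau_{\tht_{n+1,i}}$: $\phi_{n,n+1}^\sharp(f)(\tau) = \psi^\sharp(f)(\tau_{\tht_{n+1,i}})$, which is $\frac{1}{[n+1,i]}\big({\rm tr}_1(b_1)c'_{i1}[n,1] + \sum_{j\ge 2}{\rm tr}_j(b_j)c_{ij}[n,j]\big)$ plus the contribution of the point-evaluations $f_k(\ell/n)$ with multiplicities $\sm^j$-type --- wait, at $\tht_{n+1,i}$ only the $a_j$-coordinates enter, so the difference from $\xi_{n,n+1}$ is exactly $\frac{1}{[n+1,i]}{\rm tr}_1(b_1)(c_{i1}-c'_{i1})[n,1] = \frac{(n-1)[n,1]}{[n+1,i]}{\rm tr}_1(b_1)\sum_k b^n_{0,ki}$, which by \eqref{c'}, \eqref{c'-large}, \eqref{c-M-large}, \eqref{defL} and the largeness estimate $[n+1,1] \ge 2\cdot 8^{n+1}$ (and $\{n,i\}$-largeness) is bounded by $\frac{1}{2^{4(n+1)}}$, hence well below $\frac{1}{2^{n+1}}$. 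For $\tau = \tau_{\eta_{n+1,j}(t)}$: here $\phi_{n,n+1}^\sharp(f)(\tau) = \phi^\sharp(f)(\tau_{\eta_{n+1,j}(t)})$ picks up the $\chi\oplus\mu$-contribution plus the extra $f_k(\ell/n)^{\sim\sm^j(k)}$ and $a_i^{\sim\ld^j(i)}$ blocks, while $\xi_{n,n+1}(f)(\tau)$ is the "ideal" value $\frac{1}{\{n+1,j\}}\big(\bb_0^{n+1} + t(\bb_1^{n+1}-\bb_0^{n+1})\big)$ applied to $([n+1,k]y_k)_k$; using $\bar\sm^j_0 = b^{n+1}_{0,j1}$, $\bar\ld^j_0(\tht_{n,i}) = \bar\kappa^j_0(\tht_{n,i}) - (n-1)b^{n+1}_{0,j1}\sum_k b^n_{0,ki}$ from \ref{4.17}, and \eqref{74-10}, one sees the $\chi\oplus\mu$-part plus the $\sm^j,\ld^j$-blocks reassemble exactly (up to multiplicity $L_j,K_j$ discrepancies) into the ideal value, with error controlled by $\sum_j(L_j+K_j)/\{n+1,j\} \le \frac{1}{2^{4(n+1)}}$ via \eqref{L+K} and part (e) of \ref{4.10} --- here I would also need the $t$-dependence: since $\chi$ and $\mu$ use $f_k(t)$, $f_k(1-t)$, $f_k(\frac{\ell}{n}(1-t))$, etc., whereas the ideal map uses linear interpolation $b_0 + t(b_1-b_0)$, the discrepancy is the difference between sampling $g_k$ at the $(n{-}1)/n$ interior mesh points scaled by $(1-t)$ or $t$ and the endpoint linear interpolation; but because the multiplicities $\sm^j(k) \le \min(b^{n+1}_{0,j1},b^{n+1}_{1,j1})$ are small relative to $\{n+1,j\}$ by \eqref{sigm<}, this whole block contributes at most $\frac{n\sum_k\sm^j(k)\{n,k\}}{\{n+1,j\}} \le \frac{1}{2^{4(n+1)}}$.

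Assembling these: for every extremal $\tau$ of $T(A_{n+1})$ and every $\|f\|\le 1$, the bound is a sum of finitely many terms each $\le \frac{C}{2^{4(n+1)}}$ with $C$ an absolute small constant (from the largeness choices $\cm_{n+1}$, $\mathcal{L}_{n+1}$ deliberately built in \ref{4.11}, \ref{4.12} to dominate $l_n$, $p_n$, $n$, $|||\dd|||$, $|||\pi_{n+1}|||$), so the total is $< \frac{1}{2^{n+1}}$. I expect the main obstacle to be the $t$-dependent bookkeeping in the $\eta_{n+1,j}(t)$ case: one must carefully track how the blocks $\chi$, $\mu$, and the interpolation paths in \eqref{psit}, \eqref{mu} combine so that the sampled values at $f_k(\tfrac{\ell}{n}(1-t))$ and $f_k(\tfrac{\ell}{n}t)$ interact correctly with the linear interpolation in the definition of $\xi_{n+1}$ in \eqref{AffT-psi1}, and to verify that the $\chi\oplus\mu$ "core" exactly realizes $\bb_0^{n+1} + t(\bb_1^{n+1}-\bb_0^{n+1})$ modulo the small-multiplicity leftover; this is where \ref{4.15}, \ref{4.17} and the commutation identity \eqref{cd-commut1} do the real work, and the rest is a matter of collecting the explicit constants.
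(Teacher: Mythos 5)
Your overall route is the same as the paper's: bound $\|\phi_{n,n+1}^\sharp(f)-\xi_{n,n+1}(f)\|$ coordinatewise over the extreme traces of $A_{n+1}$ and let the deliberately large constants ${\cal L}_{n+1}$ and $\cm_{n+1}$ force each discrepancy below $1/2^{4(n+1)}$. Two points in your sketch, however, do not stand as written, although both are repaired by estimates already built into the construction. First, at $\tau_{\tht_{n+1,1}}$ your ``wait, only the $a_j$-coordinates enter'' is incorrect: by \eqref{psi=b1} the first $F_{n+1}$-block of $\psi(f)$ contains the point evaluations $f_k(\ell/n)$, so the difference $y_1-z_1$ is not ``exactly'' the $(c_{11}-c'_{11})$-term from \eqref{c'}; it also carries the sampled term $\frac{1}{[n+1,1]}\sum_{i,k}g_i(k/n)\{n,i\}$. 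That extra term is again at most $1/2^{4(n+1)}$ by \eqref{defL} and \eqref{c-M-large}, which is exactly how the paper arrives at $|y_1-y_1'|\le 2/2^{4(n+1)}$ (and note that $c'_{ij}$ differs from $c_{ij}$ only for $i=1$, so for $i\ge 2$ there is no discrepancy at all in the $F$-part).

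Second, for the interval coordinates $\tau_{\eta_{n+1,j}(t)}$ the ingredient your plan is missing is \eqref{b-b}: the ideal function $f_j(t)$ defined through \eqref{AffT-psi1} is linear in $t$ with increment governed by $\bb_1^{n+1}-\bb_0^{n+1}$, so by \eqref{b-b} together with (e) of \ref{4.10} it varies by at most $1/2^{4(n+1)}$ on $[0,1]$; attributing the $t$-discrepancy to the smallness of the $\sm^j(k)$-blocks, as you do, does not account for this, and without it your claim that the blocks ``reassemble exactly into the ideal value'' cannot close for intermediate $t$, since the actual $h_j(t)$ is not a linear interpolation. The paper sidesteps the $t$-bookkeeping you flagged as the main obstacle: it shows both sides are nearly constant in $t$ --- $f_j$ by \eqref{b-b}, and $h_j$ because $\phi$ splits as $(\chi\oplus\mu)\oplus\phi'$ with $\phi'$ constant in $t$ and the $t$-dependent block of relative size $(L_j+K_j)/\{n+1,j\}\le 1/2^{4(n+1)}$ (see \eqref{L+K} and \eqref{2020-8-7-n2}) --- and then matches the two sides at $t=0$ via the boundary formula \eqref{Aff-bd}, where only the already-controlled comparison of the $F_{n+1}$-coordinates $y$ and $z$ enters. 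If you reorganize your interval case this way, you never need to verify that the $\chi\oplus\mu$ core realizes $\bb_0^{n+1}+t(\bb_1^{n+1}-\bb_0^{n+1})$, which, as you suspected, it does not do exactly; with these two corrections your argument coincides with the paper's proof.
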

\begin{proof} {{Let $g=(g_1,g_2,\cd,g_{l_n},x_1,x_2,\cd,x_{p_n})\in {{\Aff(T(A_n))}}$ with $\|g\|\leq 1$. Without lose of generality, we assume that}}  $0\leq g_j(t)\leq 1$ and $0\leq x_i\leq 1$ for all $i, j, t$.  Write
\beq\nonumber
&&{\phi^o_{n,n+1}}^\sharp (g)=(h'_1,h'_2,\cd,h'_{l_{n+1}},y'_1,
y'_2,\cd, y'_{p_{n+1}})\in \Aff (T(A_{n+1})),\\\nonumber
&&\phi_{n,n+1}^{\sharp} (g)=(h_1,h_2,\cd,h_{l_{n+1}},y_1,
y_2,\cd, y_{p_{n+1}})\in \Aff (T(A_{n+1})),{{\andeqn}}\\\nonumber
&&\xi_{n,n+1} (g)=(f_1,f_2,\cd,f_{l_{n+1}},z_1,
z_2,\cd, z_{p_{n+1}})\in \Aff (T(A_{n+1})).
\eneq
Recall that $\pi_{n+1}^A\circ \phi^o_{n,n+1}=\psi^o=\psi_{n,n+1}\circ \pi_n^A: A_n\to F_{n+1}$ and $\pi^A_{n+1}\circ \phi_{n,n+1}=\psi: A_n\to F_{n+1}$.

Note that $$\pi_{n+1}^{A\,\sharp}\circ \xi_{n,n+1}=\pi_{n+1}^{\sharp}\circ\xi_{n+1}\circ (\psi_{n,n+1}\circ \pi_n^A)^{\,\sharp}=(\psi_{n,n+1}\circ \pi_n^A)^{\sharp}=\pi_{n+1}^{\sharp}\circ {\phi^o_{n,n+1}}^\sharp.$$
Hence we have $z_i=y_i'$ for all $1\leq i\leq p_{n+1}$.

Using (\ref{psi-n-n+1}), (\ref{psi=b2}) and (\ref{psi=b1}), we calculate that
$y_i=y'_i=z_i$ for $i\geq 2$, \\
$y'_1=\frac{1}{[n+1,1]}\sum_{i=1}^{p_n}c_{1,i}x_i[n,i]$ and
$y_1=\frac{1}{[n+1,1]}\big(\sum_{i=1}^{l_n}
\sum_{k=1}^{n-1}g_i(\frac{k}{n})\{n,i\}+
\sum_{i=1}^{p_n}c'_{1,i}x_i[n,i]\big)$. Since $\|{\phi^o_{n,n+1}}^\sharp\|\le 1,$ $|y_1'|\le 1.$
\,%
By (\ref{defL}) and (\ref{c-M-large}), we have
\beq\nonumber
|\sum_{i=1}^{l_n}
\sum_{k=1}^{n-1}g_i(\frac{k}{n})\{n,i\}|
\leq (n-1)l_n\max{{\{\{n,i\}; 1\leq i\leq p_n\}}}\\
\leq \frac{1}{2^{4(n+1)}}{\cal L}_{n+1}\leq \frac{1}{2^{4(n+1)}} c_{11}< \frac{1}{2^{4(n+1)}} [n+1,1].
\eneq
Hence $\frac{1}{[n+1,1]}|\sum_{i=1}^{l_n}
\sum_{k=1}^{n-1}g_i(\frac{k}{n})\{n,i\}|\leq \frac{1}{2^{4(n+1)}}.$ Combining with (\ref{c'-large})
{{(recall $c_{ij}=c_{ij}^{n,n+1}$),}}
we obtain
\beq\nonumber
&&\hspace{-0.4in}|y_1-y'_1|\leq \frac{1}{[n+1,1]}\sum_{i=1}^{p_n}(c_{1i}-{c'_{1,i}})x_i[n,i]
+\frac{1}{2^{4(n+1)}}\\ \nonumber
&&\hspace{0.2in}\leq \frac{1}{2^{4(n+1)}}\big(\frac{1}{[n+1,1]}
\sum_{i=1}^{p_n}c_{1i}x_i[n,i]\big)+\frac{1}{2^{4(n+1)}}
=\frac{1}{2^{4(n+1)}} y'_1+\frac{1}{2^{4(n+1)}}\leq \frac{2}{2^{4(n+1).}}
\eneq
By (\ref{b-b}), (\ref{(n+1,i)}) and (\ref{AffT-psi1}) (note that $z_k\in [0,1]$), we know that
\beq\nonumber
&&\hspace{-0.9in}|f_i(t)-f_i(0)|\leq \frac{1}{\{n+1,i\}}\sum_{k=1}^{p_{n+1}}
|b^{n+1}_{1,ik}-b^{n+1}_{0,ik}|[n+1,k]z_k\\ \nonumber
&&\leq\frac{1}{\{n+1,i\}}\frac{1}{2^{4(n+1)}}\sum_{k=1}^{p_{n+1}}
\max\{b^{n+1}_{1,ik},b^{n+1}_{0,ik}\}[n+1,k]\leq\frac{1}{2^{4(n+1)}}\eneq for any $1\leq i\leq l_{n+1}$ and $0\leq t\leq 1$.

Note that, by \eqref{L+K} and  \eqref{>cm} as well as \eqref{2020-8-7-n1},
\beq\label{2020-8-7-n2}
L_j+K_j\leq \frac{1}{2^{4(n+1)}}\cm_{n+1}\leq \frac{1}{2^{4(n+1)}} \{n+1,j\}.
\eneq
 It is easy to see from the definition
of  $\phi_{n,n+1}$ (see \eqref {Dphinn+2}
   and \eqref{phi})
 that all the functions {{$h'_j(t)$}} and $h_j(t)$ are approximately constant {{within}} $\frac{1}{2^{4(n+1)}}$.
 To be more precise,
 we  may regard $\phi_{n,n+1}$ as a homomorphism from $A_n$ to $C([0,1], E_{n+1})$ which is unitarily equivalent to $\phi$ (see (\ref{Dphinn+2})). Hence $(h_1,h_2,\cdots, h_{l_{n+1}})=\phi^{\sharp}(g)\in \Aff (T(C([0,1], E_{n+1})))$. On the other hand, {{by (\ref{phi}),}}  {{$\phi=\oplus_{j=1}^{l_{n+1}} \phi_j$}} can be written as {{$(\chi\oplus \mu)\oplus \phi'=\oplus_{j=1}^{l_{n+1}} (\chi\oplus \mu)_j\oplus \phi'_j$}}, where {{$(\chi\oplus \mu)=\oplus_{j=1}^{l_{n+1}}(\chi\oplus \mu)_j: A_n\to \bigoplus_{j=1}^{{l_{n+1}}}M_{L_j+K_j}(C[0,1])$}} is defined in \ref{4.13} and  ${{\phi'=\bigoplus_{j=1}^{{l_{n+1}}}\phi'_j:}}
  A_n\to \bigoplus_{j=1}^{l_{n+1}}M_{\{n+1,j\}-(L_j+K_j)}(C[0,1])$
 sends $f=(f_1,f_2,\cdots f_{l_n}, a_1,a_2,\cdots, a_{p_n})$ to
\beq\nonumber
\ts
\phi'(f)=\bigoplus_{j=1}^{l_{n+1}}\diag\Big( f_1(\frac{1}{n})^{\sim \sm^j(1)},f_1(\frac{2}{n})^{\sim \sm^j(1)},\cd,f_1(\frac{n-1}{n})^{\sim \sm^j(1)}, \hspace{2in}\\ \nonumber
f_2(\ts\frac{1}{n})^{\sim \sm^j(2)},\cd,{{f_2(\frac{n-1}{n})^{\sim \sm^j(2)},}}\cd,
f_{l_n}(\frac{1}{n})^{\sim \sm^j(l_n)},\cd,f_{l_n}(\frac{n-1}{n})^{\sim \sm^j(l_n)},
\hspace{1.5in}\\
a_1^{\sim \ld^j(1)},a_2^{\sim \ld^j(2)},\cd
a_{p_n}^{\sim \ld^j(p_n)}, 0^{\sim\sim}
\Big).\hspace{0.6in}
\eneq   In particular, $(\phi')^{\sharp}(g)$ is constant (that is $(\phi'_i)^{\sharp}(g)(t)=(\phi'_i)^{\sharp}(g)(0)$ for any $i\in\{1,2,\cdots, l_{n+1}\}$ and $t\in [0,1]$). Consequently, for any $1\leq i\leq l_{n+1}$ and {{$0\leq t\leq 1,$  we obtain}}
\beq\nonumber
|h_i(t)-h_i(0)|{{=|\phi_i^{\sharp}(g)(t)-\phi_i^{\sharp}(g)(0)|}}\hspace{4in}\\ \nonumber
={{{\big{\vert}}}}{\frac{(K_j+L_j)\big((\chi\oplus \mu)_j^{\sharp}(g)(t)-(\chi\oplus \mu)_j^{\sharp}(g)(0)\big)+(\{n+1,j\}-K_j-L_j)\big((\phi'_i)^{\sharp}(g)(t)-
(\phi'_i)^{\sharp}(g)(0)\big)}{\{n+1,j\}}}{{{\big{\vert}}}}\\ \nonumber
\leq \frac{K_j+L_j}{\{n+1,j\}}\leq \frac{1}{2^{4(n+1)}}.\hspace{5in}
\eneq
 Note that  $y_i=y'_i=z_i$ for all $i\geq 2$, $y'_1=z_1$ and $|y'_1-y_1|\leq \frac{1}{2^{4(n+1)}}$.   By the formulae (\ref{Aff-bd}), we have $h'_i(0)=f_i(0)$ and
$|h_i(0)-f_i(0)|<\frac{1}{2^{4(n+1)}}$ (as $\bt_{n+1,0}$ is a \hm).
Consequently
$$|h_i(t)-f_i(t)|\leq \frac{2}{2^{4(n+1)}}<\frac{1}{2^{n+1}}.$$

\end{proof}

\begin{NN}\label{4.24.1}
%
%
%
%
{{For a separable \CA\, $A,$ one has a standard  metric {{on}} $T_0(A)$ {{(see \ref{DTtilde}),}} namely,
$d(t_1,t_2):=\sum_{n=1}^\infty(1/2^{n+1}) |t_1(a_n)-t_2(a_n)|$ for all $t_1, t_2\in T_0(A),$ where
$\{a_n\}$ is a fixed dense sequence of $A^{\bf 1}_{s.a.}.$ In the following proof, we will use
this metric.}}
\end{NN}

\begin{thm}\label{4.25} The $C^*$-algebra $A=\lim(A_n, \phi_{n,m})$ satisfies the following {{conditions}}:

(a) A is simple,

(b) $K_0(A)=G$, $K_1(A)=0$, $T(A)=\DT$ and  $\rho_A: K_0(A)\to \Aff(T(A))$ is the  map
$\rho$ from $G$ to $\Aff \DT$ by identifying $K_0(A)$ with $G$ and $T(A)$ with $\DT$.


Moreover, $A\in {\cal D}$ {{(see \ref{DD0})}} {{and has}} continuous scale.

\end{thm}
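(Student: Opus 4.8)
The plan is as follows. Part~(a) is precisely Lemma~\ref{4.23n+}, and the assertions $K_0(A)=G$ and $K_1(A)=0$ are contained in \ref{4.22}; so what remains is to identify $T(A)$ together with $\rho_A$, and to check that $A$ has continuous scale and lies in ${\cal D}$.

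For the trace space, I would first argue that $\td T(A)=\varprojlim(\td T(A_n),\phi^T_{n,n+1})$: each $A_n\in{\cal C}_0$ is its own Pedersen ideal, so every densely defined lower semicontinuous trace on $A_n$ is bounded, and a compatible sequence of traces glues, on the dense subalgebra $\bigcup_n\phi_{n,\infty}(A_n)$, to such a trace on $A$. Dually $\Aff(T_0(A))$ is the Banach inductive limit $\varinjlim(\Aff(T_0(A_n)),\phi^\sharp_{n,n+1})$. By Lemma~\ref{4.24} one has $\|\phi^\sharp_{n,n+1}-\xi_{n,n+1}\|<2^{-n-1}$, so a routine approximate intertwining (the tails of the error bounds are summable) yields an isometric isomorphism $\varinjlim(\Aff(T_0(A_n)),\phi^\sharp_{n,n+1})\cong\varinjlim(\Aff(T_0(A_n)),\xi_{n,n+1})$. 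Using $\xi_{n,n+1}=\xi_{n+1}\circ\psi^\sharp_{n,n+1}\circ\pi^{A\,\sharp}_n$ and $\pi^{A\,\sharp}_n\circ\xi_n=\id$ (see \ref{4.23.2}), the families $\{\pi^{A\,\sharp}_n\}$ and $\{\xi_n\}$ intertwine the system $(\Aff(T_0(A_n)),\xi_{n,n+1})$ with $(\Aff(T_0(F_n)),\psi^\sharp_{n,n+1})$ and identify their limits. Finally, in the coordinates on $\Aff(T_0(F_n))\cong\R^{p_n}$ dual to $K_0(F_n)=H_n$, formula \eqref{AffT-psi} carries $\psi^\sharp_{n,n+1}$ to $\gamma_{n,n+1}$ and $\rho_{F_n}$ to the natural embedding $H_n\hookrightarrow\Aff(T_0(F_n))$; combining this with \eqref{n-F-(n+1)} and \eqref{c-M-large} (which make the connecting maps contractions that are asymptotically isometric on their images), the limit $\varinjlim(\Aff(T_0(F_n)),\psi^\sharp_{n,n+1})$ is the completion of $\tilde\rho(H)\subset\Aff(\DT)$. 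Since $G^1\subset H$ was chosen dense in $\Aff(\DT)$ with $1_\DT\in G^1$ (see \ref{4.8}), that completion is $\Aff(\DT)$ itself, hence $\td T(A)\cong\R_+\cdot\DT$; chasing the same identifications through diagram \eqref{4.10+1} and the compatibility $\psi^\sharp_{n,n+1}\circ\rho_{F_n}=\rho_{F_{n+1}}\circ\gamma_{n,n+1}$ gives $\rho_A=\rho$ under $K_0(A)=G$, $T(A)=\DT$.

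For continuous scale I would take $e_A:=\sum_n 2^{-n}\phi_{n,\infty}(e_n^A)$ as a strictly positive element of $A$. Since $a_n^A\le e_n^A$ and $\pi^A_n(a_n^A)=\pi^A_n(e_n^A)=1_{F_n}$, estimate \eqref{phi-sharp-infty} gives $\widehat{\la\phi_{n,\infty}(a_n^A)\ra}\ge(1-1/4^n)\,1_{T(A)}$ for every $n$; together with the identification of the previous paragraph this forces $\Sigma_A=\widehat{\la e_A\ra}$ to be the constant function $1$ on $T(A)$, i.e.\ $T(A)$ is compact and $\Sigma_A$ continuous. As $A$ also has stable rank one (being an inductive limit of algebras in ${\cal C}_0$, each of which has stable rank one --- cf.\ Proposition~3.3 of \cite{GLN}) and is simple, this is continuous scale. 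For $A\in{\cal D}$: given $\varepsilon>0$, a finite ${\cal F}\subset A$ and $b\in A_+\setminus\{0\}$, pick $n$ with ${\cal F}$ contained in $\phi_{n,\infty}(A_n)$ up to $\varepsilon$; using simplicity pick $a_0\in A_+\setminus\{0\}$ with $a_0\lesssim b$, and, since $A$ is a simple stably projectionless inductive limit of ${\cal C}_0$-algebras with continuous scale, split off inside $A$ a full hereditary subalgebra carrying $\phi$ (its strictly positive element Cuntz-below $b$) together with a complementary corner $D\in{\cal C}_0'$ carrying the corresponding cut-down $\psi$ of $\phi_{n,\infty}$, with $D\perp\phi(A)$ and $\|x-(\phi(x)+\psi(x))\|<\varepsilon$ on ${\cal F}\cup\{e_A\}$; the estimate $t(f_{1/4}(\psi(e_A)))\ge\mathfrak{f}_{e_A}$ on $T(D)$, with $\mathfrak{f}_{e_A}$ as close to $1$ as desired, follows from $\lambda_s(A_n)>1-1/4^n$ (see (e) of \ref{4.10}) and \eqref{phi-sharp-infty}; by Remark~8.11 of \cite{eglnp}, $D$ may be taken in ${\cal C}_0$.

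The hard part is the trace identification in the second paragraph. One must justify carefully that the $2^{-n-1}$-approximate intertwining of Lemma~\ref{4.24} upgrades to a genuine isomorphism of the completed inductive limits, and, more delicately, that $\varinjlim(\Aff(T_0(F_n)),\psi^\sharp_{n,n+1})$ is all of $\Aff(\DT)$ --- not a proper dense subspace, and with no collapse. This is exactly where the rapidly growing constants ${\cal L}_n,{\cal M}_n$ and the estimates \eqref{c-M-large}, \eqref{n-F-(n+1)} are used, to make the connecting maps asymptotically isometric contractions so the completed limit has the correct norm, and where the density built into $G^1$ at \ref{4.8} enters. The associated scale and order-unit bookkeeping --- getting $\Sigma_A\equiv1$, $T(A)$ compact, and keeping straight the roles of $u_n$ versus $\tilde u_n$ --- is the subtlety flagged in the introduction (analogous to the point on the first line of page~88 of \cite{point-line}).
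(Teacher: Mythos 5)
The first part of your plan is essentially the paper's: (a) is Lemma \ref{4.23n+}, the $K$-theory statements are \ref{4.22}, and your identification of $\Aff(T_0(A))$ by the $2^{-n-1}$-approximate intertwining of Lemma \ref{4.24} with the $F_n$-system is exactly how the paper proceeds (the paper identifies the limit of the $F_n$-side as $\Aff(T_0(F))$ for the simple AF algebra $F$ with dimension group $(H,H_+)$, so that $T(F)=\DT$ and $\rho_F=\tilde\rho$ come from the construction in \ref{4.8}, rather than via your density-of-$G^1$ argument; it also runs the dual diagram on the trace cones, using $g_n=\phi_{n,\infty}^\sharp(\widehat{e_n^A})\to 1$ from \eqref{phi-sharp-infty}, to get compactness of $T(A)$ and the homeomorphism $T(A)\cong\DT$). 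That portion is fine.

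The genuine gap is in the last third. First, you deduce ``continuous scale'' from simplicity, stable rank one and $\Sigma_A\equiv 1$; that implication is not available. Continuity of the scale function yields continuous scale only in the presence of comparison-type hypotheses (this is precisely how Theorem 5.4 of \cite{eglnp} is used), and at that point of your argument you have not established strict comparison. The paper obtains strict comparison by \emph{first} proving that the hereditary subalgebra $B=\overline{\phi_{n_0,\infty}(e)A\phi_{n_0,\infty}(e)}$, with $e$ strictly positive in $A_{n_0}$ and $\lambda_s(A_n)\ge 63/64$ for $n\ge n_0$, lies in ${\cal D}$, then using $B=\mathrm{Ped}(B)$ and 9.4 of \cite{eglnp} together with Brown's theorem \cite{Br} to transfer strict comparison to $A$, and only then invoking Theorem 5.4 of \cite{eglnp}. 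Second, your verification of $A\in{\cal D}$ rests on ``splitting off a full hereditary subalgebra carrying $\phi$ together with a complementary corner $D\in{\cal C}_0'$''; no mechanism for such a splitting is given, and producing it is essentially the content of membership in ${\cal D}$, so the step is circular. No splitting is needed: in Definition \ref{DD0} one takes $\phi=0$ and lets $\psi$ be an approximate projection of $B$ onto $B_n=\overline{\phi_{n_0,n}(e)A_n\phi_{n_0,n}(e)}\in{\cal C}_0'$. Even then the tracial condition $t(f_{1/4}(\psi(a)))\ge\mathfrak{f}_a$ requires choosing $a=\phi_{n_0,\infty}(e^{1/k})$ with $k$ large enough that $f_{1/4}(e^{1/k})\ge(e-\delta)_+$; your strictly positive element $e_A=\sum_n 2^{-n}\phi_{n,\infty}(e_n^A)$ gives no control of $f_{1/4}(e_A)$, so the claim that $\mathfrak{f}_{e_A}$ can be taken close to $1$ does not follow from \eqref{phi-sharp-infty}. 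Finally, passing from $B\in{\cal D}$ to $A\in{\cal D}$ is itself not automatic: the paper needs continuous scale (hence $A=\mathrm{Ped}(A)$) and Proposition 11.7 of \cite{eglnp} for that step, so your ordering (continuous scale asserted before, and independently of, any comparison or ${\cal D}$-type input) cannot be repaired by simply rearranging the sentences.
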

\begin{proof}
From \ref{4.22} and Lemma \ref{4.23n+}, $A$ is simple, and $K_0(A)=G$.
Consider  the {{projective limit}}
\beq\label{hat-T-A}
T_0(A_1)\stackrel{\phi^{T}_{1,2}}
{\longleftarrow}T_0(A_2)\stackrel{\phi^{T}_{1,2}}{\longleftarrow}T_0(A_3)
\cd\longleftarrow\cd\longleftarrow T_0(A),
\eneq
where $\phi^T_{i,i+1}: T_0(A_{i+1})\to T_0(A_i)$ is the affine continuous map
induced by $\phi_{i,i+1}.$
Suppose that $\tau\in \overline{T(A)}^w$ is written as  $\tau=\lim_{k\to \infty} \tau_k$, where $\tau_k\in T(A)$.
Then,
 for any fixed $n$, by (\ref{phi-sharp-infty}),
$$\tau(\phi_{n,\infty}(e_n^A))=\lim_{k\to \infty}\tau_k(\phi_{n,\infty}(e_n^A))=\lim_{k\to \infty}\big(\phi^{\sharp}_{n,\infty}({{\widehat{e^A_n}}})\big)(\tau_k)\geq (1-\frac{1}{4^n}).$$
Hence $\|\tau\|\geq (1-\frac{1}{4^n})$. Since $n$ is  arbitrary, $\|\tau\|=1$ and $\tau\in T(A)$. That is,  $T(A)$ is  compact.
Note that $F$ is a non-unital simple AF algebra with
$$(K_0(F), K_0(F)_+, \Sigma (F))=(H, H_+, \{x\in H_+;~~ \tilde{\rho}(x)(\tau)<1 ~\forall~ \tau \in \DT\}).$$
{{Therefore $T(F)=\DT$, with $\rho_F: K_0(F) \to \Aff (T(F))$ identified with $\tilde{\rho}: H\to \Aff (\DT)$.}}
%
%
%

On the other hand, by Lemma \ref{4.24}, we have the following approximately commuting diagram
\begin{displaymath}
    \xymatrix{
       \Aff(T_0(A_1)) \ar[d]_{{\pi^A_1}^{\sharp}}\ar[r]^{\phi^{\sharp}_{1,2}} & \Aff(T_0(A_2)) \ar[r]^{\phi^{\sharp}_{2,3}} \ar[d]_{{\pi^A_2}{\sharp}}& \Aff(T_0(A_3)) \ar[r] \ar[d]_{{\pi^A}_3^{\sharp}}& \cd \ar[r] & \Aff(T_0({{A}} ))\\
        \Aff(T_0({{F}}_1)) \ar[r]^{\psi^{\sharp}_{1,2}}\ar[ur]^{\xi_2\circ \psi^{\sharp}_{1,2}}
        &
         \Aff(T_0({{F}}_2)) \ar[r]^{\psi^{\sharp}_{2,3}}\ar[ur]^{\xi_2\circ \psi^{\sharp}_{2,3}}&
         \Aff(T_0({{F}}_3)) \ar[r]^{{~~~~~\psi^{\sharp}_{3,4}}}\ar[ur]^{{\xi_3\circ \psi^{\sharp}_{3,4}}}& \cd \ar[r]&
         \Aff(T_0({{F}})) }
\end{displaymath}
\noindent
{{of}} real Banach spaces {{(recall $\xi_{n, n+1}=\xi_{n+1}\circ {{\psi^{\sharp}_{n,n+1}}}\circ {\pi^A_n}^\sharp$
and $\pi_n^{A\,\sharp}\circ\xi_n={{\id}}|_{\Aff(T(F_n))}$).}}
Let $\Pi^\sharp: \Aff(T_0(A))\to \Aff(T_0(F))$ be the  continuous linear isomorphism
induced by the above approximately {{commutative}} diagram.
Note that we also have the following projective limit:
\beq\label{hat-T-F}
T_0(F_1)\stackrel{\psi^{T}_{1,2}}
{\longleftarrow}T_0(F_2)\stackrel{\psi^{T}_{1,2}}{\longleftarrow}T_0(F_3)
\cd\longleftarrow\cd\longleftarrow T_0(F).
\eneq
%
%

 Together with \eqref{hat-T-A}, {{by}} applying the above approximately commutative diagram,  we obtain the following approximately commutative digram:
\begin{displaymath}
   \xymatrix{
       T_0(A_1)
       &
        \ar[l]^{\phi^{T}_{1,2}} T(A_2)
       & T_0(A_3) \ar[l]^{\phi^T_{2,3} }
       & \ar[l]\cd T_0({{A}} )\\
        T_0({{F}}_1) \ar[u]_{{\pi_1^A}^T} &\ar[l]^{\psi^{T}_{1,2}}
         T_0({{F}}_2) \ar[u]_{{\pi_2^A}^T}&\ar[l]^{\psi^{T}_{2,3}}
        T_0({{F}}_3) \ar[u]_{{\pi_3^A}^T}& \cd\ar[l]
         T_0({{F}}) }
     \end{displaymath}
as compact convex sets with {{the}} metric mentioned in \ref{4.24.1}, which gives an affine continuous map $\Pi^T: T_0(F)\to T_0(A).$
 Combining {{the}} two approximately commutative diagrams above,  we have  $\Pi^\sharp(f)(t)=f(\Pi^T(t))$ for all $t\in T(F).$
Consider the functions $g_n:=\phi_{n, \infty}^\sharp(e_n^A)\in \Aff(T_0(A)).$
By \eqref{phi-sharp-infty},  on $T(A)\subset T_0(A),$ $g_n$ converges uniformly to the affine function
$g_A$ with $g_A(\tau)=1$ for all $\tau\in T(A)$ ($g_A(0)=0$).
Since $g_n(r\tau)=r\tau$ for all $\tau\in T(A)$ and $0\le r\le 1,$
 and $T(A)$ is {{compact,}} $g_n$ converges {{to}}  $g_A$ uniformly on $T_0(A).$
 Note that  $\Pi^\sharp(g_n)=\psi_{n,\infty}^\sharp(\pi_n^A(e_n^A)).$
 It follows  from \eqref{phi-sharp}  that $\Pi^\sharp(g_n)$ converges to 1 uniformly on $T(F).$ Then, by
 the first approximately commutative diagram {{above,}}
 $\Pi^\sharp(g_A)=1$ on $T(F).$
 Since $\Pi^\sharp(f)(t)=f(\Pi^T(t))$ for all $t\in T(F),$
  $\Pi^T$ maps $T(F)$ to the compact set $\{t\in T_0(A): g_A(t)=1\}.$
 The fact that $\Pi^\sharp$ is an isomorphism implies that $\Pi^T$ is an affine homeomorphism.
 Since $T(A)=\{t\in T_0(A): g_A(t)=1\},$ this implies that $\Pi^T$ maps $\DT=T(F)$ onto
 $T(A).$

Recall from (\ref{4.10+1}) that we have, for each $n,$  the following commutative diagram:
\begin{displaymath}
\xymatrix{
G_n~~~~\ar[rr]^{\rho_{A_n}} \ar[d]^{(\pi_n^A)_{*0}}
 &&
~~~\Aff(T_0(A_n)) \ar@{->}[d]^{{\pi_{n}^A}^\sharp}
\\
 H_n~~~ \ar[rr]^{\rho_{F_n}}&& ~~~\Aff(T_0(F_n)),}
\end{displaymath}
where $\rho_{A_n}: G_n=K_0(A_n)\to \Aff(T(A_n))$ is induced by
$\rho_{\td A_n}: K_0(\td A_n)\to \Aff(T(\td A_n))$ (see (\ref{April-10-2021}) in \ref{range4.1}).
Let  $\rho_A: K_0(A)\to \Aff(T(A))$ be the map  given by $\rho_{\td A}: K_0(\td A)\to \Aff(T(\td A))$ (see \ref{range4.1}).
Then  $\rho_A=\lim_{n\to\infty}\rho_{A_n}.$
%
We  obtain the following approximately commutative diagram:
{\small{
\begin{displaymath}
    \xymatrix{
    G_n\ar[rrd]^{\rho_{A_n}} \ar[r]^{\phi_{n,n+1,*0}}\ar[ddd]^{\pi^A_{n*0}}\,\,\,& ~~~G_{n+1}\ar[ddd]^{\pi^A_{n+1,*0}}\ar[rrd]^{\rho_{A_{n+1}}} \ar[r]^{\phi_{n+1,n+2,*0}}&\cdots&\lr\cdots
    & G{\ar[ddd]^{\iota}}\ar[rrd]^{\rho_{A}}&&\\
       && \Aff(T_0(A_n)) \ar[ddd]^{{\pi_n^A}^{\sharp}}\ar[r]^{\phi^{\sharp}_{n,n+1}} & \Aff(T_0(A_{n+1})) \ar[r]^{~~~~~\phi^{\sharp}_{n+1,n+2}}~~~~ \ar[ddd]^{{\pi^A_{n+1}}^{\sharp}}& 
     \cdots  &\lr\cdots&  \Aff(T_0({{A}} ))\ar@{-->}[ddd]^{\Pi^\sharp}\\
       &&&&&&\\
     H_n\ar[rrd]^{\rho_{F_n}}\ar[r]^{\psi_{n,n+1*0}} & ~~~H_{n+1}\ar[rrd]^{\rho_{F_{n+1}}} \ar[r]^{\psi_{n+1,n+2*0}}&\cdots&\lr\cdots&H\ar[rrd]^{\rho_{F}} &&\\
       && \Aff(T_0({{F}}_n)) \ar[r]^{\psi^{\sharp}_{n,n+1}} &
         \Aff(T_0({{F}}_{n+1})) \ar[r]^{~~~~~\psi^{\sharp}_{n+1,n+2}}~~~~
         &
         \cdots&\lr\cdots&
         \Aff(T_0({{F}})),}
\end{displaymath}}}
\noindent
\noindent
\hspace{-0.05in}where the top, bottom  and the back diagrams  are commutative,
and the front plane is approximately commutative.
%
Thus,  we obtain the following
commutative diagram:
\begin{displaymath}
\xymatrix{
~~G~~\ar[rr]^{\rho_{A}} \ar[d]^{\iota}
 &&
~~\Aff(T_0(A)) \ar@{->}[d]^{\Pi^\sharp}
\\
 ~~H~~\ar[rr]^{\rho_{F}}&& ~\Aff(T_0(F)),}
\end{displaymath}
where the map from $G$ to $H$ is given by \ref{4.22}. Since $\rho_F=\td \rho,$ we
obtain $\rho_A=\rho: G\to \Aff(\DT)$ as desired. Note that, by the end of \ref{Dparing}, this is consistent with the definition
of \ref{Dparing} as $A$ has continuous scale.

To see $A\in {\cal D},$
choose $n_0\ge 1$ such that  $\lambda_s(A_n)\ge 63/64$ for all $n\ge n_0$ (see (e) of \ref{4.10}).
Let $e\in A_{n_0}$ be a strictly positive element. By \eqref{phi-sharp},
we may assume that
\beq\label{2020-429-n2}
\tau(\phi_{n_0, n}(e))\ge 63/64\rforal \tau\in T(A_n)\andeqn t(\phi_{n_0, \infty}(e))\ge 63/64\rforal t\in T(A).
\eneq
In particular,
$\phi_{n_0,n}(e)$ is full in $A_n$ for all $n\ge 1.$
Choose $\dt>0$ such that
\beq\nonumber
\tau(\phi_{n_0, n}((e-\dt)_+)\ge 31/32\rforal \tau\in T(A_n)\andeqn t(\phi_{n_0, \infty}(e-\dt)_+)\ge 31/32\rforal t\in T(A).
\eneq
Choose $k\ge 1$ such that
\beq\label{2020-sec4-n1}
f_{1/4}(e^{1/k})\ge (e-\dt)_+.
\eneq
Let  
$B=\overline{\phi_{n_0, \infty}(e)A\phi_{n_0, \infty}(e)}.$  Then $B$ is a hereditary \SCA\, of $A.$ Let us first show  $B\in{\cal D}$.
Put $a=\phi_{n_0, \infty}(e^{1/k})$ and choose  $\mathfrak{f}_a=1/2.$
Let $B_n=\overline{\phi_{{n_0,n}}(e)A_n\phi_{n_0,n}(e)}.$
Note that $B_n\in {\cal C}_0'$ (recall \ref{DfC1}).

Now fix a finite subset ${\cal F}\subset B$ and $0<\ep<1/16.$
We may assume that ${\cal F}\subset \phi_{n, \infty}({{B_n}})$ for some
$n>n_0.$   Choose $0<\eta<\ep$ such that, if
$a_1, a_2\in B_+$ with $0\le a_1, a_2
\le 1$ and $\|a_1-a_2\|<\eta,$ then $\|f_{1/4}(a_1)-f_{1/4}(a_2)\|<\ep/8.$

Choose (see 2.3.13 of \cite{Linbook}, for example) a \cpc ~$\psi: B\to B_n\cong \phi_{n,\infty}(B_n)$
such that
\beq\label{2020-724-n10}
\|\psi(b)-b\|<\ep/2\rforal b\in {\cal F}\cup \{a\}.
\eneq
Then
\beq
\|f_{1/4}(\psi(a))-f_{1/4}(a)\|<\ep/8.
\eneq
It follows, for all $\tau\in T(B_n),$ {{by}} identifying $\phi_{n,\infty}(B_n)$ with $B_n,$ {{that}}
\beq\label{2020-724-n11}
\hspace{-0.2in}\tau(f_{1/4}(\psi(a)))\ge \tau(f_{1/4}(\phi_{n_0, n}(e^{1/k})))-\ep/8\ge \tau((\phi_{n_0,n}(e-\dt)_+))\ge 31/32-\ep/8> \mathfrak{f}_a.
\eneq
Define $\phi=0.$
By  \eqref{2020-724-n10}, {{\eqref{2020-724-n11}}} and \ref{DD0}, $B\in {\cal D}.$
By Corollary 11.3 of \cite{eglnp}, $B={\rm Ped}(B),$ and by 9.4 of \cite{eglnp},
$B$ has strict comparison for positive elements.  It follows from
\cite{Br} {{that}} $A\otimes {\cal K}\cong B\otimes {\cal K}.$ Therefore $A$ is isomorphic
to a hereditary \SCA\, of $B\otimes {\cal K}.$ It follows that $A$ has strict comparison
for positive elements.  Let $e_A$ be a strictly positive element.
Since $T(A)$ is compact, $d_\tau(e_A)=1$
for all $\tau\in T(A).$ It follows that ${{\widehat{\la e_A \ra}}}$ is continuous on ${\td T}(A).$
By Theorem 5.4 of \cite{eglnp}, $A$ has continuous scale. Therefore $A={\rm Ped}(A)$ (see
Theorem 3.3 of \cite{Lincs1}). Hence $a\in {\rm Ped}(A).$
It follows from Proposition 11.7 of \cite{eglnp} (see also 11.6 of \cite{eglnp}) that $A\in {\cal D}.$

\end{proof}


%
%
%

\begin{NN}\label{4.26}

 Now let $G$ and $K$ be any countable abelian groups, $\DT$  {{a compact}} Choquet simplex and $\rho: G\to \Aff \DT$  a  homomorphism with following condition: for any $g\in G$, there is a $\tau\in \DT$ such that
$\rho(g)(\tau)\leq 0,$
(see condition (*) in \ref{range4.4}).

We will construct a simple stably projectionless, stably finite $C^*$-algebra $A$ such that $K_0(A)\cong G$, $K_1(A)\cong K$, $T(A)\cong \DT$, and the map $\rho_A: K_0(A) \to \Aff(T(A))$ is the  map $\rho$ when one identifies $K_0(A)$ with $G$ and $T(A)$ with $\DT$.

Note that if $K=0$ and $G$ is torsion free,
then {{the}} algebra satisfying the condition is already constructed (see Theorem \ref{4.25} above).

Note
${\rm Tor}(G)\subset {\rm ker}\rho.$
Write $G_T=G/{\rm ker}\rho$
which may be viewed as a subgroup of $\Aff \DT$,
{{and write $G_f={\rm ker}\rho/{\rm Tor}(G),$ which is  {{a}} torsion free group.}}
%
\end{NN}

\begin{lem}\label{G-ind} {{$G$ can be written as {{an}} inductive limit  of finitely generated subgroups\\
($G_n=G_{n, T}\oplus G_{n,f}\oplus G_{n,tor}, \gm_{n,m})$ with ${\rm Tor}(G_n)=G_{n, tor}$ such that:\\
(a)~ According to the decomposition $G_n=G_{n, T}\oplus G_{n,f}\oplus G_{n,tor}$ and $G_{n+1}=G_{n+1, T}\oplus G_{n+1,f}\oplus G_{n+1,tor}$, the map $\gm_{n,n+1}$ may be written  as
$$\left(
  \begin{array}{ccc}
    \gm^{n,n+1}_{T,T} & 0&0 \\
    \gm^{n,n+1}_{T,f} &\gm^{n,n+1}_{f,f} &0\\
    \gm^{n,n+1}_{T,tor} &\gm^{n,n+1}_{f,tor} &\gm^{n,n+1}_{tor,tor}
  \end{array}
\right){{,}}$$
that is, the components of $\gm_{n,n+1}$ from $G_{n,tor}$ to $G_{n+1,T}\oplus G_{n+1,f}$ and from $G_{n, f}$ to $G_{n,T}$ are zero maps. In particular, $\gm_{n,n+1}(G_{n,tor})\subset G_{n+1,tor}$ and $\gm_{n,n+1}(G_{n,\Inf})\subset G_{n+1,\Inf}$, where $G_{n,\Inf}=G_{n,f}\oplus G_{n,tor}$.\\
(b)~ ${\rm ker}\rho=\lim(G_{n,\Inf}, \gm_{n,m}|_{G_{n,\Inf}})$ and ${\rm Tor}(G)=\lim(G_{n,tor}, \gm_{n,m}|_{G_{n,tor}})$.\\
(c)~ Let $$\tilde{\gm}_{n,n+1}= \gm^{n,n+1}_{T,T}: G_{n, T}= G_n/G_{n,\Inf}\to G_{n+1, T}= G_{n+1}/G_{n+1,\Inf}~~\mbox{ and} ~$$
$$\tilde{\tilde{\gm}}_{n,n+1}=\left(
  \begin{array}{cc}
    \gm^{n,n+1}_{T,T} & 0 \\
    \gm^{n,n+1}_{T,f} &\gm^{n,n+1}_{f,f} \\
    \end{array}\right): G_{n, T}\oplus G_{n,f}= G_n/G_{n,tor}\to G_{n+1, T}\oplus G_{n+1,f}= G_{n+1}/G_{n+1,tor}$$ be the quotient maps induced by $\gm_{n,n+1}$. Then $G_T=G/{\rm ker}\rho
    =\lim(G_{n,T}, \tilde{\gm}_{n,m})$ and $G/tor(G)=\lim(G_{n,T}\oplus G_{n,f}, {{\tilde{\tilde{\gm}}_{n,m}).}}$\\
  (d)~$\gm^{n,n+1}_{T,T},$ $\gm^{n,n+1}_{f,f}$ and $\gm^{n,n+1}_{tor,tor}$ are injective. Consequently, $\gm_{n,n+1}, \tilde{\gm}_{n,n+1}$ and $\tilde{\tilde{\gm}}_{n,n+1}$ are injective}}.


 \end{lem}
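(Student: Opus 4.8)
\textbf{Proof plan for Lemma \ref{G-ind}.}
The plan is to build the inductive system by hand, exploiting that $G$ is countable and that the two distinguished subgroups $\mathrm{Tor}(G)\subseteq \ker\rho\subseteq G$ are functorially given. First I would fix an exhaustion $G=\bigcup_n H_n$ by finitely generated subgroups with $H_n\subseteq H_{n+1}$; after passing to a subsequence I may assume each $H_n$ is chosen large enough that the torsion subgroup of $H_n$ ``stabilizes relative to'' the next stage, i.e. $\mathrm{Tor}(H_{n+1})\cap H_n=\mathrm{Tor}(H_n)$ and similarly $(\ker\rho\cap H_{n+1})\cap H_n=\ker\rho\cap H_n$. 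Set $G_{n,tor}:=\mathrm{Tor}(H_n)$ and $G_{n,\Inf}:=\ker\rho\cap H_n$; these are finitely generated (subgroups of finitely generated abelian groups), $G_{n,\Inf}$ is the preimage in $H_n$ of a subgroup of the torsion-free group $G/\ker\rho\subseteq\Aff(\DT)$, and $\mathrm{Tor}(G_{n,\Inf})=G_{n,tor}$. By the structure theorem for finitely generated abelian groups, $G_{n,\Inf}/G_{n,tor}$ is free; choose a splitting to get a complement $G_{n,f}$ of $G_{n,tor}$ inside $G_{n,\Inf}$, so $G_{n,\Inf}=G_{n,f}\oplus G_{n,tor}$ with $G_{n,f}$ free. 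Likewise $H_n/G_{n,\Inf}$ embeds in the torsion-free group $G/\ker\rho$, hence is finitely generated free; choosing a splitting of $H_n\to H_n/G_{n,\Inf}$ produces a free complement $G_{n,T}$, giving $G_n:=H_n=G_{n,T}\oplus G_{n,f}\oplus G_{n,tor}$. By construction $\mathrm{Tor}(G_n)=G_{n,tor}$ and $\ker\rho\cap G_n=G_{n,\Inf}=G_{n,f}\oplus G_{n,tor}$.

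Next I would verify the block-triangular shape (a) of the connecting maps $\gm_{n,n+1}:=$ inclusion $G_n\hookrightarrow G_{n+1}$. Since $\gm_{n,n+1}$ is injective (it is an inclusion) it maps $\mathrm{Tor}(G_n)$ into $\mathrm{Tor}(G_{n+1})$, i.e. $\gm_{n,n+1}(G_{n,tor})\subseteq G_{n+1,tor}$: that kills the components of $\gm_{n,n+1}$ out of $G_{n,tor}$ landing in the torsion-free summands $G_{n+1,T}\oplus G_{n+1,f}$. Similarly, $\rho(\gm_{n,n+1}(G_{n,f}))=0$ forces $\gm_{n,n+1}(G_{n,f})\subseteq \ker\rho\cap G_{n+1}=G_{n+1,\Inf}=G_{n+1,f}\oplus G_{n+1,tor}$, which is exactly the vanishing of the $G_{n,f}\to G_{n+1,T}$ component. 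Writing out $\gm_{n,n+1}$ in the chosen bases then gives precisely the displayed matrix with the three diagonal blocks $\gm^{n,n+1}_{T,T},\gm^{n,n+1}_{f,f},\gm^{n,n+1}_{tor,tor}$. Injectivity of each diagonal block (d) is immediate: $\gm_{n,n+1}$ is injective and block-triangular, so each diagonal block is injective; consequently $\gm_{n,n+1}$, and the induced quotient maps $\tilde\gm_{n,n+1}$ (on $G_{n}/G_{n,\Inf}=G_{n,T}$) and $\tilde{\tilde\gm}_{n,n+1}$ (on $G_n/G_{n,tor}=G_{n,T}\oplus G_{n,f}$) are injective.

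Then I would check the limit identifications (b) and (c). Since $\{G_n\}$ exhausts $G$ via inclusions, $\lim G_n=G$. For (b): $\ker\rho=\bigcup_n(\ker\rho\cap G_n)=\bigcup_n G_{n,\Inf}=\lim(G_{n,\Inf},\gm_{n,m}|_{G_{n,\Inf}})$, using that $\mathrm{Tor}(G)\subseteq\ker\rho$ and $\ker\rho$ is the union of its finitely generated pieces; likewise $\mathrm{Tor}(G)=\bigcup_n\mathrm{Tor}(G_n)=\bigcup_n G_{n,tor}$. For (c): quotienting the exhausting system by the compatible subsystems $\{G_{n,\Inf}\}$ and $\{G_{n,tor}\}$ and using exactness of direct limits gives $G/\ker\rho=\lim(G_{n,T},\tilde\gm_{n,m})$ and $G/\mathrm{Tor}(G)=\lim(G_{n,T}\oplus G_{n,f},\tilde{\tilde\gm}_{n,m})$; note $G/\ker\rho\cong G_T$ is the subgroup $\rho(G)$ of $\Aff(\DT)$, consistent with \ref{4.26}.

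The main obstacle, and the only genuinely delicate point, is the compatibility of the chosen splittings across stages: the complements $G_{n,f}\subseteq G_{n,\Inf}$ and $G_{n,T}\subseteq G_n$ are not canonical, so a priori $\gm_{n,n+1}$ need not respect them, and one could worry about getting a well-defined inductive system of triples. This is handled by the observation that one does \emph{not} need the splittings to be compatible --- the inductive limit is taken of the groups $G_n$ with the intrinsic inclusion maps, and the decompositions $G_n=G_{n,T}\oplus G_{n,f}\oplus G_{n,tor}$ are only used to \emph{name} the blocks of $\gm_{n,n+1}$; the block-triangular vanishing statements in (a) are consequences of the subgroup inclusions $\gm_{n,n+1}(G_{n,tor})\subseteq G_{n+1,tor}$ and $\gm_{n,n+1}(G_{n,f})\subseteq G_{n+1,\Inf}$, which hold regardless of how the complementary summands were chosen. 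The only place care is needed is the preliminary subsequence argument ensuring $\mathrm{Tor}(G_n)$ and $\ker\rho\cap G_n$ are ``saturated'' so that $\mathrm{Tor}(G_n)=\mathrm{Tor}(G)\cap G_n$ and $G_{n,\Inf}=\ker\rho\cap G_n$ genuinely exhaust; this is a routine diagonalization since $G$ is countable.
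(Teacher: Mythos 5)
Your proposal is correct, but it runs in the opposite direction from the paper's own argument, and the comparison is worth recording. The paper works from the quotients down: it fixes increasing finitely generated subgroups of $G_T=G/\ker\rho$, of $G_f:=\ker\rho/{\rm Tor}(G)$, and of ${\rm Tor}(G)$ separately, lifts each finitely generated piece of the torsion-free quotients into $G$ (finitely generated subgroups of a torsion-free quotient are free, the ``pure extension'' remark), and defines the off-diagonal blocks $\gm^{n,n+1}_{f,tor}$ (and similarly the $T$-row corrections) as the discrepancy $\xi_n-\xi_{n+1}$ between successive lifts, passing to a subsequence so that these discrepancies land in the finite stages $G_{n+1,tor}$, resp.\ $G_{n+1,\Inf}$. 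In that construction the diagonal blocks are honest inclusions inside fixed exhaustions of $G_T$, $G_f$, ${\rm Tor}(G)$, so (b), (c), (d) come out by construction, and one keeps $G_{n,T}$ literally inside $G_T\subset \Aff(\DT)$, which is convenient later. You instead work from $G$ up: exhaust $G$ by finitely generated subgroups $H_n$, set $G_{n,tor}={\rm Tor}(H_n)$ and $G_{n,\Inf}=\ker\rho\cap H_n$, and split internally using freeness of the finitely generated torsion-free quotients; the connecting maps are the inclusions, the block-triangular shape in (a) follows from functoriality of ${\rm Tor}$ and of $\ker\rho$, and (b), (c) follow from exactness of direct limits. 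This is more elementary: there is no lifting bookkeeping and, in fact, no subsequence is needed at all, since ${\rm Tor}(H_{n+1})\cap H_n={\rm Tor}(H_n)$ and $(\ker\rho\cap H_{n+1})\cap H_n=\ker\rho\cap H_n$ hold automatically, so your ``saturation'' worry is vacuous; moreover your $G_n$ are genuinely subgroups of $G$, matching the wording of the lemma. What you give up is that $\tilde\gm_{n,n+1}$ and the other diagonal blocks are no longer literal inclusions of fixed subgroups of $G_T$, $G_f$, ${\rm Tor}(G)$, though the limit identifications in (b) and (c) are unchanged.

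One justification in your write-up is too glib, though easily repaired: ``$\gm_{n,n+1}$ is injective and block-triangular, so each diagonal block is injective'' is not a valid general principle (e.g.\ $\Z\to\Z\oplus\Z$, $x\mapsto(0,x)$, is injective and lower-triangular with zero diagonal block). In your setting each diagonal block is injective for concrete reasons: $\gm^{n,n+1}_{tor,tor}$ is the restriction of the inclusion to the summand on which the other components vanish; if $x\in G_{n,f}$ had $\gm^{n,n+1}_{f,f}(x)=0$ then $x\in G_{n+1,tor}$ would be torsion while a nonzero element of the free group $G_{n,f}$ has infinite order; and if $x\in G_{n,T}$ had $\gm^{n,n+1}_{T,T}(x)=0$ then $x\in G_{n+1,\Inf}=\ker\rho\cap H_{n+1}$, hence $x\in\ker\rho\cap H_n=G_{n,\Inf}$, contradicting $G_{n,T}\cap G_{n,\Inf}=\{0\}$. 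With these three lines inserted, your argument is complete.
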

\begin{proof}
Let $G_f={\rm ker}\rho/tor(G)$. Write $G_T=\cup_{n=1}^{\infty} G_{n, T}$ with $G_{1,T}\subset G_{2,T}\subset\cdots G_{n,T}\subset\cdots {{\subset G_T}},$ $G_f=\cup_{n=1}^{\infty} G_{n, f}$ with $G_{1,f}\subset G_{2,f}\subset\cdots {{\subset G_{n,f}}}\subset\cdots G_f,$ and ${\rm Tor}(G)=\cup_{n=1}^{\infty} G_{n, tor}$ with $G_{1,tor}\subset G_{2,tor}\subset\cdots G_{n,tor}\subset\cdots
\subset {\rm Tor}(G),$ where each $G_{n,T},$ $G_{n,f}$ and $G_{n,tor}$ are finitely generated.
Denote by {{$\iota_{G_{n,f}}: G_{n,f}\to G_f$}} the embedding.

Since $G_f$ is torsion free,  the extension $0\to {\rm Tor}(G)
 {{\to}}  {\rm ker}\rho\to G_f\to 0$ is pure, i.e., every
{{finitely}} generated subgroup lifts.   Thus, for each $n,$ there is a \hm\, $\xi_n: G_{n,f}\to {\rm ker}\rho$ such that
the following diagram commutes:
$$
\xymatrix{
~& G_{n,f}\ar[d]^{{\iota_{G_{n,f}}}} \ar@{-->}[dl]_{\xi_n}  \\
{\rm ker}\rho\ar[r]^{\pi} & {{G_f}}
}
$$
%
($\pi\circ \xi_n=\iota_{G_{n,f}},$ $n=1,2,...$).
 Define $\gm'_{n,f}: G_{n,f} \to {\rm Tor}(G)$ by
$\gm'_{n,f}(h)=\xi_n(h)-\xi_{n+1}(h)\in {\rm Tor}(G)$. Since $G_{n,f}$ is finitely generated, there is an  {{integer }}$m>n$ such that $\gm'_{n,f}(G_{n,f})\subset G_{m,tor}$. By passing to a subsequence, we may  assume $\gm'_{n,f}(G_{n,f})\subset G_{n+1,tor}$. Define $\gm^{n,n+1}_{f,tor}=\gm'_{n,f}$ and \\ $\chi_{n,n+1}: G_{n,\Inf}=G_{n,f}\oplus G_{n,tor} \to G_{n+1,\Inf}=G_{n+1,f}\oplus G_{n+1,tor}$ by
$$\chi_{n,n+1}=\left(
  \begin{array}{cc}
    \gm^{n,n+1}_{f,f} &0\\
     \gm^{n,n+1}_{f,tor} &\gm^{n,n+1}_{tor,tor}  \\
  \end{array}
\right),$$
where $\gm^{n,n+1}_{f,f}: G_{n,f} \to G_{n+1,f}$ and $\gm^{n,n+1}_{tor,tor}: G_{n,tor} \to G_{n+1,tor}$ are the inclusion
{{maps.}}
Then ${\rm ker}\rho=\lim(G_{n,f}\oplus G_{n,tor}, \chi_{n,n+1})$ and $\chi_{n,n+1}$ are injective.
{{Repeating}} this procedure with $G_T$ in place of $G_f$ and ${\rm ker}\rho$ in place of ${\rm Tor}(G)$, and of course
{{passing}} to a subsequence again, we obtain the other parts of the map $\gm_{n,n+1}$ as desired.


 \end{proof}


\begin{NN}\label{DYT}

Let us recall Theorem 7.11 of \cite{GLII}. Let $G_0$, $G_1$ be any {{countable}} abelian groups and  $T$ be any compact
metrizable Choquet simplex. There is a simple ${\cal Z}$-stable $C^*$-algebra $B_T\in {\cal D}_0$ with continuous scale such that $K_0(B_T)=\mathrm{ker}(\rho_{B_T})=G_0$, $K_1(B_T)=G_1$ and $T{{(B_T)}}=T$. Moreover $B_T$  is locally approximated by
sub-homogenous \CA s with spectrum having
dimension no more than 3 (see 7.7 of \cite{GLII}).
More precisely, ${{B_T}}=\lim_{k\to\infty}(E_{n(k)}\oplus W_k, \Phi_{{{k,k+1}}}),$
where $E_n=M_{(n!)^2}(A(W, \af_n))$ and $W_k$  is in ${\cal C}_0$ with $K_0(W_k)=\{0\},$
and $\Phi_{k,k+1}$ is injective
(see the constructions in 7.2  of \cite{GLII}; also  {{the}}  notation in 11.3 of \cite{GLII}
 and the discussions in 7.3--7.10 of \cite{GLII}{{).}} Note that, by Proposition 7.7 of \cite{GLII},  $B_T$ is locally approximated by
sub-homogenous \CA s with spectrum having
dimension no more than 3.


{{If $\DT$ is a compact metrizable Choquet simplex, then $\Aff (\DT)$ can be regarded as {{a subset}} of ${\rm LAff}_+(\tilde{\DT })$ (here ${\td \Delta}$ is the cone generated by $\DT$ and $0$)  by regarding $f:\DT \to \R$ as $\tilde{f}: {\td \Delta}\to \R$ defined by $\tilde{f}(\ld\tau)=\ld f(\tau)$ for $\ld\in [0,\infty)$ and $\tau\in\DT$. In particular $1_{\DT}\in {\rm LAff}_+(\tilde{\DT })$. For $\af>0$,
{{denote by}}
$\af\DT$  {{the subset}}
 $\{\af\tau:~\tau\in \DT\}$
 of  ${\td \Delta}$. Note that when $\DT$ is compact, we  may identify $\Aff (\DT)$ with $\Aff ({\td \Delta})$ (recall $f(0)=0$ and see the end
of \ref{DTtilde})
---that is,
for $f\in \Aff (\DT)$, we assume $f$ is extended to $f\in  \Aff ({\td \Delta})$ defined by $f(\af\tau)=\af f(\tau)$ for any $\af\in \R_+$ and $\tau\in \DT$. }}

\end{NN}

%

%

\begin{thm}\label{Tmodel2}
Let $\DT$ be a metrizable Choquet simplex, {{$G_0$}}  a countable abelian group,
$\rho: G_0\to \Aff(\DT)$ a \hm\, such that $\rho(G_0)\cap \Aff_+(\DT)=\{0\},$
and {{$G_1$}}  a countable abelian group.
Then there is a simple \CA\, $A=\lim_{n\to\infty} (B_n\oplus C_n\oplus D_n, \phi_{n,m}),$
where $C_n$  and $D_n$ are  {{simple \CA s}} in Theorem  \ref{4.25}
and $B_n$ is in \ref{DYT} ({{as $B_T$}}--see
Theorem  7.11  (and 7.2) of \cite{GLII})
such that $\phi_n$ maps strictly positive elements to strictly positive {{elements,}}
\beq
((K_0(A), \Sigma(K_0(A)), T(A), \rho_A, \Sigma_A), K_1(A))=((G_0,  \{0\}, \DT, \rho, 1_\DT), G_1),
\eneq
{{$\phi_{n, \infty}(K_0(C_n))\cap {\rm ker}\rho_A=\{0\}$,}}
${\rm ker}\rho_{C_n}=\{0\},$ ${\rm ker}\rho_{D_n}=K_0(D_n),$ and
$K_0(B_n)$ is torsion.
Moreover, $A$ has continuous scale, is in ${\cal D}$ and
$\lim_{n\to\infty}\inf\{d_\tau(\phi_{n, \infty}({{x_n}})):\tau\in A\}=0,$
where ${{x_n}}\in B_n\oplus D_n$ is any strictly positive element.

Moreover, one may require that ${{\phi_n}_{*i}}|_{K_i(B_n)},$ ${{\phi_n}_{*i}}|_{K_i(C_n)}$
and ${{\phi_n}_{*i}}|_{K_i(D_n)}$ are all injective,  and $K_i(B_n), K_i(C_n)$ and $K_i(D_n)$ are finitely generated.

\end{thm}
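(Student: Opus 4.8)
\textbf{Proof proposal for Theorem \ref{Tmodel2}.}

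The plan is to assemble the algebra $A$ by a careful three-channel inductive limit, where the three channels carry, respectively: (i) the torsion part of $K_0$ together with all of $K_1$ and the cone $\DT$ (realized by the algebras $B_n$ of type $B_T$ from Theorem 7.11 of \cite{GLII}), (ii) the ``trace-detecting'' part of $K_0$, i.e.\ $G_0/\ker\rho$ together with the pairing $\rho$ (realized by the algebras $C_n$ from Theorem \ref{4.25} with the $\ker\rho_{C_n}=\{0\}$ property), and (iii) the torsion-free kernel part $G_f=\ker\rho/\mathrm{Tor}(G_0)$ (realized by algebras $D_n$ from Theorem \ref{4.25} with $\ker\rho_{D_n}=K_0(D_n)$, i.e.\ $K_0$ vanishing on traces). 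First I would apply Lemma \ref{G-ind} to write $G_0=\lim(G_{n,T}\oplus G_{n,f}\oplus G_{n,tor},\gamma_{n,m})$ with the stated lower-triangular compatibility, and simultaneously write $G_1=\lim G_{1,n}$ and $\mathrm{Tor}(G_0)=\lim G_{n,tor}$ as inductive limits of finitely generated groups with injective connecting maps. Using Theorem \ref{4.25}, choose $C_n\in {\cal D}$ (continuous scale) with $K_0(C_n)=G_{n,T}$, $K_1(C_n)=0$, $T(C_n)=\DT$ and $\rho_{C_n}$ the restriction of $\rho$; choose $D_n\in {\cal D}$ with $K_0(D_n)=G_{n,f}$, $\rho_{D_n}=0$ (so $\ker\rho_{D_n}=K_0(D_n)$), $K_1(D_n)=0$; and choose $B_n=B_T$ as in \ref{DYT} with $K_0(B_n)=G_{n,tor}$ (torsion), $K_1(B_n)=G_{1,n}$, $T(B_n)=\DT$, and $\rho_{B_n}=0$.

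The key step is to build the connecting maps $\phi_{n,n+1}\colon B_n\oplus C_n\oplus D_n\to B_{n+1}\oplus C_{n+1}\oplus D_{n+1}$ realizing the prescribed $K$-theory and trace data, and arranged so that $\phi_n$ sends strictly positive elements to strictly positive elements and so that the ``small'' channels shrink in trace. I would use Theorem \ref{Tkk}: since each of $B_n, C_n, D_n$ lies in ${\cal D}$ with continuous scale (hence has property (W) by Theorem \ref{TWtraceD}) and is in the UCT class, and $B_{n+1}\oplus C_{n+1}\oplus D_{n+1}$ is a finite direct sum of such (also with property (W)), any prescribed $\kappa\in KL$ between them that is ``positively oriented'' on the relevant pieces can be realized by an approximately multiplicative sequence, hence (after a standard Elliott intertwining, using weak semiprojectivity / the classification of the building blocks or the existence-and-uniqueness package in \cite{GLII}) by an actual homomorphism. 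Concretely, on the $C$-channel I route $G_{n,T}\to G_{n+1,T}$ by $\gamma^{n,n+1}_{T,T}$; on the $D$-channel I route $G_{n,f}\to G_{n+1,f}$ by $\gamma^{n,n+1}_{f,f}$; on the $B$-channel I route $G_{n,tor}\to G_{n+1,tor}$ by $\gamma^{n,n+1}_{tor,tor}$ and $G_{1,n}\to G_{1,n+1}$; the off-diagonal pieces $\gamma^{n,n+1}_{T,f}$ (from $C$ into $D$) and $\gamma^{n,n+1}_{f,tor}$, $\gamma^{n,n+1}_{T,tor}$ (from $C,D$ into $B$) are realized as partial maps into the $D$- and $B$-summands of the target. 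The lower-triangularity from Lemma \ref{G-ind} is exactly what makes these partial maps compose consistently, and the injectivity of $\gamma^{n,n+1}_{T,T},\gamma^{n,n+1}_{f,f},\gamma^{n,n+1}_{tor,tor}$ gives the required injectivity of ${\phi_n}_{*i}$ on each $K_i(B_n)$, $K_i(C_n)$, $K_i(D_n)$. To get $\lim_n \inf\{d_\tau(\phi_{n,\infty}(x_n)):\tau\in T(A)\}=0$ for $x_n$ a strictly positive element of $B_n\oplus D_n$, I use that $B_n, D_n\in{\cal D}_0$ (their $K_0$ is trivial on traces) together with the approximate-divisibility/absorption input behind Definition \ref{DD0}: at stage $n+1$ I may feed $\phi_{n,n+1}$ through a map whose image of the $B_n\oplus D_n$ part has arbitrarily small tracial size in $B_{n+1}\oplus C_{n+1}\oplus D_{n+1}$ (this is the usual ``$\psi(a)$ small, $\phi(A)$ full'' mechanism), so that $\sup_{\tau}d_\tau(\phi_{n,n+1}(x_n))<2^{-n}$; since $C$ carries a strictly positive element mapping to full elements, overall strict positivity is preserved.

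With the tower in place I would verify the conclusions. Continuity of $K$-theory and the trace functor gives $K_i(A)=\lim K_i(B_n\oplus C_n\oplus D_n)=G_i$ and $T(A)=\varprojlim \DT=\DT$; the commuting square relating $\rho_{A_n}$ to $\rho$ (as in the proof of Theorem \ref{4.25}, now summed over the three channels, with $\rho_{B_n}=\rho_{D_n}=0$) identifies $\rho_A$ with $\rho$ under $K_0(A)\cong G_0$, $T(A)\cong\DT$. Since each building block has trivial ordered $K_0$ ($K_0(\cdot)_+=\{0\}$), $A$ is stably projectionless, so $\Sigma(K_0(A))=\{0\}$; and $\Sigma_A=1_\DT$ because $A$ has continuous scale. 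That $A\in{\cal D}$ and has continuous scale follows exactly as in the last part of the proof of Theorem \ref{4.25}: $A$ is an inductive limit of algebras in ${\cal C}_0$ (for the $C,D$ channels) and of $B_T$'s which are themselves in ${\cal D}_0\subset{\cal D}$, and with continuous-scale connecting maps one checks $\lambda_s$-estimates and applies Theorem 5.4, Corollary 11.3 and Proposition 11.7 of \cite{eglnp}; alternatively, one invokes that ${\cal D}$ is closed under the relevant inductive limits with full connecting maps. Finally $\phi_{n,\infty}(K_0(C_n))\cap\ker\rho_A=\{0\}$ because $\rho$ is injective on $G_T=\lim G_{n,T}$, and $\ker\rho_{C_n}=\{0\}$, $\ker\rho_{D_n}=K_0(D_n)$, $K_0(B_n)$ torsion are built in.

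\textbf{Main obstacle.} The delicate point is not the bookkeeping of $K$-theory but realizing the prescribed maps as honest homomorphisms while simultaneously (a) preserving strict positivity, (b) making the $B\oplus D$ channel's Cuntz class trace-vanish in the limit, and (c) keeping the three channels' connecting maps mutually compatible with the lower-triangular structure of $\gamma_{n,n+1}$. This requires combining the existence theorem (Theorem \ref{Tkk}) with a uniqueness/intertwining argument adapted to direct sums and to maps that are full on one summand but tracially small on others; the careful choice of intermediate finite subsets and tolerances so that the Elliott two-sided approximate intertwining converges, and so that at each stage the ``small'' summand can be absorbed, is where the real work lies. I expect to lean on the approximate-divisibility machinery of \S10 of \cite{eglnp} and \S10--12 of \cite{GLII} to supply the needed ``$\phi$ full, $\psi$ small'' decompositions inside the target algebras.
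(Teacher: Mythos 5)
There is a genuine gap at the heart of your construction: the step where the prescribed $K$-theoretic data are turned into honest connecting homomorphisms. You invoke Theorem \ref{Tkk} plus ``a standard Elliott intertwining, using weak semiprojectivity / the classification of the building blocks or the existence-and-uniqueness package in \cite{GLII}.'' But Theorem \ref{Tkk} only produces sequences of approximately multiplicative c.p.c.\ maps into matrix amplifications, carrying no trace data, no positivity, and no control of strictly positive elements; to upgrade these to homomorphisms by a two-sided intertwining you would need a uniqueness theorem for such maps between the simple algebras $B_n, C_n, D_n$ and their direct sums, and no such theorem is available at this stage (the $C_n, D_n$ are simple inductive limits, not semiprojective, and the uniqueness machinery is precisely what the sequel \cite{GLIII} develops -- relying on the models built here, so your route is essentially circular). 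The paper avoids this entirely: since all the blocks and targets lie in ${\cal D}$, have stable rank one, and have ${\rm Cu}^\sim = K_0 \sqcup {\rm LAff}^\sim_+(\td T(\cdot))$, it constructs explicit morphisms in ${\bf Cu}$ and then applies Robert's existence theorem (Theorem 1.0.1 of \cite{Rl}), together with Theorem 12.8 of \cite{GLII} for the $B$-to-$B$ maps and factorizations through $W_n$ for the $B$-to-$C$, $B$-to-$D$ maps; this yields genuine homomorphisms that automatically respect traces and send strictly positive elements to strictly positive elements.

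A second, related omission: you never say how the off-diagonal maps out of $C_n$ (into $B_{n+1}$ and $D_{n+1}$, where $\rho=0$) are compatible with the trace data. Since $\rho_{C_n}$ is injective and nontrivial while $\rho_{B_{n+1}}=\rho_{D_{n+1}}=0$, any such map compatible with $\gamma^{n,n+1}_{T,tor}$ and $\gamma^{n,n+1}_{T,f}$ forces the induced trace on $C_n$ to annihilate $\rho_{C_n}(K_0(C_n))$; the existence of such a trace is exactly Theorem \ref{Tfeature} (Hahn--Banach separation), which the paper uses to define the Cu-morphisms $\xi_{c,b}$ and $\phi_{n,c,d}$, and which your argument needs but does not invoke. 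Your mechanism for shrinking the $B\oplus D$ channel (routing it into tracially small corners) is in the right spirit -- the paper does this by cutting the targets by positive elements $c_{n,b},c_{n,c},c_{n,d}$ etc.\ with $d_\tau$-values $1/2^{n+2}$, $1-1/2^{n+1}$ and using Theorem 1.2 of \cite{Rlz} -- but as written it again presupposes the ``$\phi$ full, $\psi$ small'' decompositions being realized by homomorphisms, which returns you to the missing existence step above.
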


\begin{proof}  {{For convenience, we will write $G_0=G$ and $G_1=K$.
Choose {{finitely}} generated subgroups $K_1\subset K_2\subset \cdots K_n\subset\cdots\subset K$ such that $K=\cup_{n=1}^{\infty} K_n$.
{{Write}} {{$G=\lim(G_{n,T}\oplus G_{n,f} \oplus G_{n,tor}, \gm_{n,m})$}} as in Lemma \ref{G-ind}. We adopt the notations $\gm_{n,n+1}$ {{and}} $\gm^{n,n+1}_{a, b}$, where $a,b= T,$ {{$f$ and  $tor$}}  from Lemma \ref{G-ind}. }}

It follows from Theorem 7.11 and Proposition 7.8 of \cite{GLII} that there
is a simple \CA\, ${{B_n}}\in {\cal B}_T$ such that
$((K_0(B_n), \Sigma(K_0(B_n)), T(B_n), \rho_{B_n}), K_1(B_n))=((G_{n,tor}, \{0\},  \DT_0, 0), K_n),$
{{where $\DT_0$ is a single point.}} {{Here we mean $\rho_{{B_n}}=0.$}}
Note that ${{B_n}}\in {\cal D}_0\subset {\cal D}$ {{is}} a ${\cal Z}$-stable simple \CA\, with continuous scale
({{see}} 7.7 of \cite{GLII}).

By Theorem  \ref{4.25},
there is also
a simple \CA\, $C_n'$  with continuous scale and with the form in \ref{4.25}
such that
$(K_0(C_n'), T(C_n'), \rho_{C_n'})=({{G_{n,T}, \DT, \xi_{T, n, \infty}}}),$
where ${{\xi_{T, n,\infty}}}$ is the  \hm\, {{of the form}} {{$G_{n, T}\stackrel{\tilde{\gm}_{n,\infty}}{\lr} G/{\rm ker}\rho=G_T\subset \Aff(\DT)$}} induced by
the inductive limit $G_T=\lim_{n\to\infty}(G_{n,T}, \tilde{\gm}_{n,m}).$
Note that, by 6.2.3 of \cite{Rl},
\beq
{\rm Cu}^\sim(C_n')=K_0(C_n')\sqcup{\rm LAff}_+(\td\DT),
\eneq
where $\td \DT$ is the cone generated by $\DT$ and $\{0\}.$
Note that ${{\tilde{\gm}_{n,n+1}}}$ and ${\rm id}_{\DT}$ induce
a morphism ${{\xi^{cu}_{T,n}}}: {\rm Cu}^\sim(C_n')\to {\rm Cu}^\sim (C_{n+1}').$
By Theorem 1.0.1 of \cite{Rl}, there is a \hm\, $\psi_n: C_n'\to C_{n+1}'$
which sends strictly positive elements to strictly positive elements and
${\rm Cu}^\sim(\psi_n)={{\xi^{cu}_{T,n}}}.$ In particular $\psi_{n*0}={{\xi^{cu}_{T,n}}}|_{K_0(C_n')}{{=\tilde{\gm}_{n,n+1}}}.$
We will continue to {{write}} $\psi_n$ for the extension $\psi_n\otimes \id_{M_3}: M_3(C_n')\to M_3(C_{n+1}').$
Note {{that,}} since ${{C_1'}}\in {\cal D},$ by Proposition 11.8  of \cite{eglnp}, one may
choose ${{c_1}}\in (C_1')_+$ 
such that $d_\tau({{c_1}})=1/2$ for all $\tau\in T({{C_1'}}).$
{{Let $C_1=\overline{c_1C_1'c_1}$.}}
{{Let $n\geq 2$.}} Choose
$c_{n,0}, c_{n,b}, c_{n,c}, c_{n,d} \in (C_n')_+$
{{such that}} 
$d_\tau(c_{n,0})=1/2^{n+1},$  and $d_\tau(c_{n,b})=d_\tau(c_{n,d})=1/2^{n+2}$
(and $d_\tau(c_{n,c})=1-1/2^n$)
for all $\tau\in T(C_n'),$ $n=2,3,....$
Put $c_n:=c_{n,b}\oplus c_{n,c}\oplus c_{n,d}\in M_3(C_n')$ and  $C_n=\overline{c_nM_3(C_n')c_n}.$   {{Note that $c_n\in M_3(C_n')$ satisfies that $\tau(c_n)=(1-1/2^n)+1/2^{n+2}+1/2^{n+2}=1-1/2^{n+1}$ and defines {{$\widehat{\la c_n\ra}=(1-1/2^{n+1})\cdot 1_\DT\in {\rm LAff}_+(\td\DT)\subset {\rm Cu}^\sim(C_n')$}}.  Similarly, $c_{n+1,c}$ also defines  {{$\widehat{\la c_{n+1,c}\ra}=(1-1/2^{n+1})\cdot 1_\DT\in {\rm LAff}_+(\td\DT)\subset {\rm Cu}^\sim(C_{n+1}')$}}.  Consequently, $\la  c_{n+1,c}\ra=\la \psi_n(c_{n})\ra$ in ${\rm Cu}(C_{n+1}')$.   }}
{{By}} Theorem 1.2 of \cite{Rlz},  {{one has}} ${\rm Her}(\psi_n(C_n))
\cong \overline{c_{n+1,c}C_{n+1}'c_{n+1,c}}.$ Therefore there is a \hm\,
 $\phi_{n,c,c}: C_n\to C_{n+1}$
such that ${\rm Cu}^\sim(\phi_{n,c,c})={\rm Cu}^\sim({\psi_n}|_{C_n})$ and $\la \phi_{n,c,c}(c_n)\ra =\la c_{n+1,c}\ra.$
{{Note the following fact which will be used later: when we identify both $T(C_n)$ and $T(C_{n+1})$ with $\DT$, the map $\phi_{n,c,c}^{\sharp}: \Aff (T(C_n))=\Aff (\DT)\to \Aff (T(C_{n+1}))=\Aff (\DT)$ is given by
\beq\label{august3-2020}
\phi_{n,c,c}^{\sharp}(f)=\frac{1-1/2^{n+1}}{1-1/2^{n+2}}f {{\rforal f\in \Aff(T(C_n)).}}
\eneq}}

Denote by $C_{n,\p}=\overline{c_{n,\p}C_nc_{n,\p}},$ where $\p=b,c,d.$

By \ref{4.25}, there is a simple \CA\,  $D_n$ {{of}}  the form in \ref{4.25} such that $K_0(D_n)={{G_{n,f}}}$  and ${\rm ker}\rho_{D_n}=K_0(D_n),$
and $T(D_n)=\DT_0,$ the single point.
As in the previous case, there is $\psi_{d,n}: D_n\to D_{n+1}$
such that 
{{${\psi_{d,n}}_{*0}=\gm^{n,n+1}_{f,f}: G_{n,f}\to G_{n+1,f}$ (see Lemma \ref{G-ind}),}} and $\psi_{n,d}$ sends strictly positive elements
to strictly positive elements (using again 1.0.1 of \cite{Rl}).
Choose  $d_{n,b}, d_{n,c}, d_{n,d}\in {{(D_n)_+}}$ such
that $d_\tau(d_{n,b})=d_\tau(d_{n,d})=1/2^{n+2},$ and $d_\tau(d_{n,c})=1-1/2^{n+1}$ for all $\tau\in T(D_n).$
Define $D_n'=\overline{(d_{n,b}\oplus d_{n,c}\oplus d_{b, d})M_3(D_n)(d_{n,b}\oplus d_{n,c}\oplus { {d_{n, d}}})}.$
Then, by \cite{Rlz}, $D_n'\cong D_n.$ \Wlog, {{we}} may assume
that $D_n=D_n'$ and $d_{n,b}, d_{n,c}$ and $d_{n,d}$ are mutually orthogonal in $D_n.$
Put $D_{n,\p}={\rm Her}(d_{n,\p}),$ $\p=b,c,d.$

Choose  $b_{n,b}, b_{n,c}, b_{n,d}\in {{(B_n)_+}}$ such that
$d_\tau(b_{n,b})=d_\tau(b_{n,d})={{1/2^{n+2}}}$ and $d_\tau(b_{n, c})=1-1/2^{n+1}$
 for all $\tau\in {{T(B_n)}}.$
Define ${{B'_n}}:=\overline{(b_{n,b}\oplus b_{n,c}\oplus b_{n,d})M_2(B)(b_{n,b}\oplus b_{n,c}\oplus b_{n,d}}).$
Since {{$B_n$}} is stably projectionless and ${\cal Z}$-stable (see \cite{GLII}), by Theorem 1.2 of \cite{Rlz}, ${{B'_n}}\cong B_n.$
We may assume that $b_{n,b}, b_{n,c}$ and $b_{n,d}$ are  mutually orthogonal in $B_n$ and ${{B'_n}}=B_n.$
Define
$B_{n,\p}=\overline{b_{n,\p}Bb_{n,\p}},$  where $\p=b,c,d.$

Denote by  $\iota_{n,\p}^b: B_{n,\p}\to B_n$ the embedding ($\p=b,c,d$).
Note that, for $\p=b,c,d,$
\beq\label{Tmodel2-5}
((K_0(B_{n,\p}), \td T(B_{n,\p}) ,0), K_1(B_{n,\p}))\cong {{((K_0(B_n),\td T(B_n), 0),  K_1(B_n))=((G_{n,tor},{{\tilde{\DT}_0}}, 0), K_n)}}.\hspace{0.3in}
\eneq
%
 By Theorem 12.8  of \cite{GLII}, there is a \hm\, $\phi_{n,b,b}': B_n\to B_{n+1,b}\subset B_{n+1}$
which sends {{strictly}} positive elements to strictly positive elements such that {{${\phi_{n,b,b}}_{*0}=\gm^{n,n+1}_{tor,tor}: K_0(B_n)=G_{n,tor}\to K_0(B_{n+1})=G_{n+1,tor}$ and ${\phi_{n,b,b}}_{*1}=\iota: K_1(B_n)=K_n\hookrightarrow K_1(B_{n+1})=K_{n+1}$.}}
Let $\phi_{n,b,b}:={{\iota_{n+1,b}^b\circ \phi_{n,b,b}'}}.$
Let $W_n$ be a simple \CA\, which is an inductive limit of \CA s in ${\cal C}_0$
such that $K_0(W_n)=K_1(W_n)=0$ and $T(W_n)=\DT_0,$ $n=1,2,....$
It follows from 12.8  of \cite{GLII} again that there is $h_{n,b,w}: B_n\to W_{n}$
which sends strictly positive elements to strictly positive elements and
${h_{n,b,w}}_T$ gives {{the}} identity on $\DT_0.$
Note that, for {{$\p=b,c, d,$}}
\beq\label{Tmondel2-6}
((K_0(C_{n,\p}),\Sigma(K_0(C_{n,\p})), \tilde{T}(C_{n,\p}), \rho_{C_{n,\p}}), K_1(C_{n,\p}))\cong ((K_0(C_n'), \{0\}, \,{{\tilde{\DT},}} \rho_{C_n'}), \{0\}{{).}}
\eneq
By Theorem 1.0.1 of \cite{Rl}, there is a \hm\, $h_{n,w,c}: W_n\to C_{n+1,b}$
which maps strictly positive elements to strictly positive elements.
Define $\phi_{n, b,c}: B_n\to C_{n+1,b}\subset C_{n+1}$
by
$\phi_{n,b,c}:=h_{n,w,c}\circ h_{n,b,w}.$
Similarly,  one obtains a \hm\, $\phi_{n, b,d}: B_n\to D_{n+1,b}\subset D_{n+1}$
which factors through $W_n$
{{and sends}} strictly positive elements to strictly positive elements.

Note that, by 11.5  of \cite{eglnp}, {{$B_{n+1}$}} has {{stable}} rank one, and by 6.2.3 of \cite{Rl} (see also 7.3
 of 11. 3 and 11.8 of  \cite{eglnp}),
\beq
{\rm Cu}^\sim(B_{n+1,\p})=K_0(B_{n+1})\sqcup {\rm LAff}^\sim(T(B_{n+1}){{)}}\,\,\,\,\,(\p=c,b,d).
\eneq
Let $\tau_{w,0}\in T(C_n)$ {{be}}
such that $\rho_{C_n}(x)(\tau_{w, 0})=0$ for all $x\in K_0(C_n)$
given by Theorem \ref{Tfeature}.
Define $\xi_{c,b}: {\rm Cu}^\sim(C_n)\to {\rm Cu}^\sim ({{B_{n+1,c}}})$
by 
{{$\xi_{c,b}|_{K_0(C_n)}:=\gm^{n,n+1}_{T,tor}:K_0(C_n)=G_{n,T}\to K_0(B_{n+1,c})=K_0(B_{n+1})=G_{n+1,tor}$}}
and $\xi_{c,b}|_{{\rm LAff}^\sim(T(C_n))}$
{{by}}
%
${ {\xi_{c,b}}}(f)(t)=f(\tau_{w,0})$
for all $f\in {\rm LAff}^\sim(T(C_n))$ {{and $t\in T(B_{n+1,c})$.}}
Note that $\rho_{B_n}=0.$ One then checks that $\xi_{c,b}$ is
a morphism in ${\bf Cu}.$
Since $B_{n+1,c}$ has stable rank one, by applying Theorem 1.0.1 of \cite{Rl} again, one obtains a \hm\,
$\phi_{n,c,b}': C_n\to B_{n+1,c}$  such that ${\rm Cu}(\phi_{n,c,b}')=\xi_{c,b}$
which sends strictly positive elements to strictly positive elements.
Define $\phi_{n,c,b}:=\iota_{n+1,c}^b\circ \phi_{n,c,b}',$  where $\iota_{n+1,c}^b: B_{n+1,c}\to B_{n+1}$ is the embedding.

Using $\tau_{w,0}$ and Theorem 1.0.1 of \cite{Rl} again, one obtains a \hm\, $\phi_{n,c,d}: C_n\to D_{n+1,c}\subset D_{n+1}$
such that 
${\phi_{n,c,d}}_{*0}={{\gm^{n,n+1}_{T,f}: G_{n, T}\to G_{n+1, f}}}$
 {{ which sends}} strictly positive {{elements}} of $C_n$ to strictly positive {{elements}} of $ D_{n+1,c}$.

Denote {{by}} $\iota_{n,\p}^d: D_{n,\p}\to D_n$ the embedding ($\p=b,c,d$).
As above, applying Theorem 1.0.1 of \cite{Rl}, as $B_{n+1}$ has stable rank one,
one obtains a \hm\, $\phi_{n,d,b}: D_n\to B_{n+1, d}\subset B_{n+1}$ such
that 
${\phi_{n,b,d}}_{*0}={{\gm^{n,n+1}_{f,tor}:K_0(D_n)=G_{n,f}\to K_0(B_{n+1, d})=G_{n+1,tor}}}.$  By factoring
{{through}} $W_n$ again,
one obtains a \hm\, $\phi_{n,d,c}: D_n\to C_{n+1,d}\subset  C_{n+1}$, {{which sends {{strictly}} positive {{elements}} of $D_n$ to strictly positive {{elements}} of $C_{n+1,d}$.}}
By applying Theorem 1.0.1 of \cite{Rl} again (recall $\rho_{D_n}=0$ for all $n$),
one also has a \hm\, $\phi_{n, d,d}: D_n\to D_{n+1,d}\subset D_{n+1}$
such that 
${\phi_{n, d,d}}_{*0}={{\gm^{n,n+1}_{f,f}:K_0(D_n)=G_{n,f}\to K_0(D_{n+1, d})=G_{n+1,f}}}$,
which sends strictly positive elements to
strictly positive elements.

Now define $\phi_{n, n+1}: B_n\oplus C_n\oplus D_n \to B_{n+1}\oplus C_{n+1}\oplus D_{n+1}$
by $\phi_{n,n+1}|_{B_n}=\phi_{n,b,b}\oplus \phi_{n,b,c}\oplus \phi_{n,b,d},$
$\phi_{n,n+1}|_{C_n}=\phi_{n,c,b}\oplus \phi_{n,c,c}\oplus \phi_{n,c,d},$
and $\phi_{n,n+1}|_{D_n}=\phi_{n, d,b}\oplus \phi_{n, d, c}\oplus \phi_{n, d,d}.$
Put $A_n=B_n\oplus C_n\oplus D_n.$ Then 
{{$K_0(A_n)=G_n=G_{T,n}\oplus G_{n,f}\oplus  G_{n,tor}$ and $K_1(A_n)=K_n$.}}
It is clear that $\phi_{n,n+1}$ sends strictly positive elements
to strictly positive elements as constructed above.
Moreover,
\beq
{{{\phi_{n,n+1}}_{*0}:=\gm_{n,n+1}:G_n\to G_{n+1} \andeqn
{\phi_{n,n+1}}_{*1}=\iota:K_n\hookrightarrow {{K_{n+1}.}}}}
\eneq
Denote by $\iota_n: C_n\to C_n'$ the embedding  and by ${\iota_n}_T:
T(C_n')\to T(C_n)$ the induced affine homeomorphism defined by
${\iota_n}_T(\tau)(c)=\frac{1}{1-1/2^{n+1}}\tau(\iota_n(c))$ for all $c\in C_n$
(recall $T(C_n')=\DT$).
Then,  for any {{$(f, g_f, g_t)\in G_{n,T}\oplus G_{n,f}\oplus G_{n, tor}=G_n,$ we have}}
\beq\label{Tmodel2-9}
\rho_{A_n}(f\oplus g_f\oplus  g_t)(\tau)&=&
\rho(\gm_{n,\infty}(f))
\big({\iota_{n}}_T^{-1}(\tau)\big)
\rforal \tau\in T(C_n),\\
\rho_{A_n}(f\oplus g_f\oplus g_t)(\tau)&=&0\rforal \tau\in T(B_n)\andeqn\\
\rho_{A_n}(f\oplus g_f\oplus g_t)(\tau)&=&0\rforal \tau\in T(D_n)
\eneq
{{(recall $\rho: G\to \Aff (\DT)=\Aff (\tilde{\DT})$)}}. Define $A:=\lim_{n\to\infty}(A_n, \phi_{n,n+1}).$ Then,
$K_0(A){{=G}}=G_0$ and $K_1(A){{=K}}=G_1.$   Since $B_n, \,C_n$  and $D_n$ are simple,
and all maps { {$\phi_{n,\p,\q}$ (both $\p$ and $\q$ are among $b,c,$ and $d$)}} 
are non-zero,
$\phi_{n,n+1}$ maps any nonzero element of $B_n\oplus C_n\oplus D_n$ to a full element in $B_{n+1}\oplus C_{n+1}\oplus D_{n+1}.$
It follows that $A$ is simple.
Note that, for any $b\in B_n\oplus D_n$ and any $\tau\in T(A_{n+1}),$
\beq\label{Tmodel2-n10}
|\tau(\phi_{n,n+1}(b))|<(1/2^n)\|b\|.
\eneq

{{Let}}  $q_{n,a,b}: A_n\to B_n,$   $q_{n,a,c}: A_n\to C_n$  and $q_{n,a,d}: A_n\to D_n$
be the projection maps.
Denote by  $j_{n,c,a}:
C_n\to A_n$ the \hm\, defined by $j_{n,c,a}(c)=0\oplus c\oplus 0$ for all $c\in C_n.$
Identify $T(C_n)=\DT$ as above. {{Let}} $\lambda_n: \Aff(T(C_n))\to \Aff({{(T(C_{n+1}))}}$
{{be defined as $\id: \Aff(\DT)\to \Aff(\DT)$, {{where}} we identify both $T(C_n)$ and $T(C_{n+1})$ with $\DT$. {{Then, by}} (\ref{august3-2020})}},
\beq\label{Tmodel2-n11}
\|\lambda_n(f)-\phi_{n,c,c}^\sharp(f)\|
{{=\|(1-\frac{1-1/2^{n+1}}{1-1/2^{n+2}})f\|}}<(1/2^n)\|f\|.
\eneq

For each $f\in \Aff(T(A_n)),$ we may
write $f=f_b\oplus f_c\oplus f_d,$ where  $f_b=q_{n,a, b}(f),$ {{$f_c=q_{n,a,c}(f)$}} and
{{$f_d=q_{n,a,d}(f).$}}
By \eqref{Tmodel2-n10} and \eqref{Tmodel2-n11},  {{we have}}
\beq
&&\hspace{-0.3in}\|{{j_{n+1,c,a}^\sharp}}\circ \lambda_n\circ q_{n,a,c}^\sharp(f)-\phi_{n,n+1}^\sharp(f)\|\\\label{2020-808-n1}
&&\le \|\lambda_n(f_c)-\phi_{n, c,c}^\sharp (f_c)\|
+\|{{\phi_{n, n+1}^{\sharp}}}(f_b\oplus 0\oplus f_d)\|<(2/2^n)\|f\|.
\eneq
Let $\tilde{\ld}_n=\lambda_n\circ q_{n,a,c}^\sharp$. Then we have
\beq\label{August 10,2020}
\|j_{n+1,c,a}^\sharp\circ \tilde{\ld}_n(f)-\phi_{n,n+1}^\sharp(f)\|<(2/2^n)\|f\|~~\mbox{and}~~
\tilde{\ld}_n\circ j_{n,c,a}^\sharp=\ld_n.
\eneq
%
Recall that $T(A_n)$ and $T(C_n)=\DT$ are all compact as constructed above.
Moreover,  by \eqref{August 10,2020}, 
 we obtain the following approximately commutative diagram:
\beq\label{August11-2020-di}
    \xymatrix{
        \Aff(T(A_1)) \ar[r]^{\phi_{1,2}^{{\sharp}}} \ar[dr]^{\tilde{\ld}_1}& \Aff(T(A_2)) \ar[r]^{\phi_{2,3}^{\sharp}}
        \ar[dr]^{\tilde{\ld}_2}& \Aff(T(A_3)) \ar[r] & \cd \Aff(T(A))\\
        \Aff(T({{C}}_1)) \ar[r]^{\lambda_1}\ar[u]_{j^\sharp_{1,c,a}}&
         \Aff(T({{C}}_2)) \ar[r]^{\lambda_2}\ar[u]_{j^\sharp_{2,c,a}}&
        \Aff(T({{C}}_3)) \ar[u]_{j^\sharp_{3,c,a}} \ar[r]& \cd
         \Aff(\DT). }
         \eneq
Note that, since $\phi_{n,n+1}$  maps strictly positive elements
to strictly positive elements, $\phi_{n,n+1}^\sharp: \Aff(T(A_n))\to \Aff(T(A_{n+1}))$
and $\lambda_n: \Aff(T(C_n))\to
\Aff(T(C_{n+1}))$  {{are identities when we identify all $T(C_n)$ with $\DT$}}.
Therefore   there is an affine homeomorphism $\LD: \DT\to T(A)$ which induces the diagram above.
In other words, $T(A)=\DT.$
Moreover, by 
\eqref{Tmodel2-9}, {{identifying $K_0(A_n)$ with $G_n$,}}
we obtain the following {{commutative}} diagram:
\begin{displaymath}
    \xymatrix{
        \Aff(T(A_1)) \ar[r]^{\phi_{1,2}^{{\sharp}}} & \Aff(T(A_2)) \ar[r]^{\phi_{2,3}^{\sharp}}
        & \Aff(T(A_3)) \ar[r] & \cd \Aff(T(A))\\
        G_1 \ar[r]^{{\phi}_{1,2*0}}\ar[u]_{\rho_{A_1}} &
         G_2\ar[r]^{{\phi}_{2,3*0}}\ar[u]_{\rho_{A_2}}&
         G_3 \ar[r] \ar[u]_{\rho_{A_3}}& \cd
         G\ar[u]_{\rho_A}. }
\end{displaymath}

\noindent
{{Combining with (\ref{August11-2020-di}), we obtain}}
\beq
((K_0(A), T(A), \rho_A), K_1(A))=((G_0,\DT,\rho), G_1).
\eneq
Since ${{\gm^{n,n+1}_{T,T}}}$ is injective, ${\phi_{n, n+1}}_{*0}({{C_n}})\cap {\rm ker}\rho_A=\{0\}.$
By \eqref{Tmodel2-n10},\\ {{$\lim_{n\to\infty}\inf\{d_\tau(\phi_{n, \infty}({{x_n}})):\tau\in A\}=0,$}}
where ${{x_n}}\in B_n\oplus D_n$ is any strictly positive element.
{{It follows from Lemma \ref{G-ind} that ${{\phi_n}_{*i}}|_{K_i(B_n)},$ ${{\phi_n}_{*i}}|_{K_i(C_n)}$
and ${{\phi_n}_{*i}}|_{K_i(D_n)}$ are all injective, and $K_i(B_n), K_i(C_n)$ and $K_i(D_n)$ are finitely generated. }}
It remains to show that $A\in {\cal D}.$  However, this follows from the fact that $B_n,\,C_n$  and $D_n$ are in ${\cal D}$
{{and have continuous scales}}
(and $A$ is simple). {{By Theorem 9.4 of \cite{GLII}, $A$ has strict comparison.
Since $T(A)=\Delta$ is compact, by Theorem 5.3 of \cite{GLII}, $A$ has continuous scale.}}
\end{proof}
%
%
%
%
%
%

\begin{rem}\label{Rmodle2}
With  the last part of  \ref{G-ind} in mind and with  some obvious modification, one  may also have {{following forms of inductive limit:}}

{{(1)}} $A=\lim_{n\to\infty}(B_n\oplus C_n, \phi_{n,n+1}),$
where $B_n\in {\cal B}_T,$ $K_0(B_n)={\rm Tor}(K_0(A)),$ $K_1(B_n)=K_1(A),$
$T(B_n)={{\DT}},$ and  {{$C_n$ is a simple \CA\, as in \ref{4.25},}} $K_0(C_n)=G_{n, T}\oplus G_{n,f}$ and $T(C_n)=T(A).$

{{(2) $A=\lim_{n\to\infty}(B_n\oplus C_n, \phi_{n,n+1}),$
where $B_n\in {\cal B}_T,$ $K_0(B_n)={\rm Inf}(K_0(A)),$ $K_1(B_n)=K_1(A),$
$T(B_n)={{\DT}},$ and {{$C_n$ is a simple \CA\, as in \ref{4.25},}} $K_0(C_n)=G_{n, T}$ and $T(C_n)=T(A)$, ${\rm ker}\rho_{C_n}=0$. }}

   In particular, in this modified
construction, $B_n=B_1$ for all $n\ge 1.$  However, while $K_0(C_n)$ is finitely generated,
$K_i(B_n)$ is not.
{{Thus, one also have the following form}}

{{(3) $A=\lim_{n\to\infty}(B_n\oplus C_n\oplus D_n)$ as in \ref{Tmodel2}  except that $B_n=B_1$ for all
$n\ge 1$ (thus $K_i(B_n)$ may not be finitely generated).}}

\end{rem}

\begin{df}\label{DM1}
Let ${\cal M}_1$ denote the class of stably projectionless simple \CA s  with continuous scale
 constructed in Theorem \ref{Tmodel2}, {{or  in Remark \ref {Rmodle2}.}}

{{By, \ref{4.25}, there is a  simple \CA\, $A$ which is {{an}} inductive limit of \CA s in ${\cal C}_0$ such that $A$ has a unique
tracial state,
$K_0(A)=\Z,$ ${\rm ker}\rho_A=K_0(A)=\Z,$ and $K_1(A)=\{0\}.$
By  Corollary {{15.7}} of \cite{GLII}, $A\cong \zo,$ the unique stably finite separable simple {{\CA\,}} with finite
nuclear dimension in the UCT class.}}
\end{df}


For the future usage, let us state the following corollary.

\begin{thm}\label{TclassM1-K}
{Let $\DT$ be a metrizable Choquet simplex, {{$G_0$}} a countable abelian group,
$\rho: G_0\to \Aff(\DT)$ {{a \hm\,}} such that $\rho(G_0)\cap \Aff_+(\DT)=\{0\},$
and {{$G_1$}} a countable abelian group.}
%
%
%
%
%
%
%
%
%
%
%
 There is  a simple \CA\, $A\in {\cal  M}_1$ with continuous scale  such that
 \beq\label{TM1-K-0}
((K_0(A), T(A), \rho_A), {{K_1(A)}})=((G_0, \DT, \rho), G_1).
\eneq
{{Moreover, if $A$ is as constructed in \ref{Tmodel2} or as (3) in Remark \ref{Rmodle2}, then
it}} also satisfies the following conditions:
for any finitely generated { {group}} 
$G_{0,T}\oplus G_{f,inf}\oplus G_{0,tor}\oplus G_r\subset \underline{K}(A),$
where  $G_r\cap K_0(A)=\{0\},$
$G_{0,T}\subset K_0(A)$ is a free subgroup with
$G_{0, T}\cap {\rm ker}\rho_A=\{0\},$
$G_{f,inf}\subset {\rm ker}\rho_A$ is a free subgroup,
$G_{0,tor}\subset {\rm Tor}(K_0(A)),$
any $\ep>0,$
{{any}} finite subset ${\cal F}\subset A,$  and
any $\sigma>0,$ there are mutually orthogonal \SCA s  ${{E_n}}$
{{as defined in 7.2 and 7.7  of \cite{GLII} (see \ref{DYT})}} with strictly positive element $a_{e,n}$
and
$C_n,\, D_n\in {\cal C}_0$ with strictly positive elements $a_{c, n}$  and $a_{d,n},$ respectively, satisfying the following:
\beq\label{TM1-K-10}
&&\hspace{-0.2in} {{G_{0,T}\oplus G_{0,tor}\oplus  G_{0,inf}\oplus G_r\subset [\iota_n](E_n\oplus C_n\oplus D_n),\,\,
K_0(E_n)={\rm Tor}(K_0(E_n)),}}
\\\label{TM1-K-11}
&&G_{0, T}\subset {\iota_n}_{*0}(K_0(C_n)),   G_{0,\inf}\subset {\iota_n}_{*0}(K_0(D_n)), G_{0, tor}\subset {\iota_n}_{*0}(E_n),\\
&&{\iota_{n}}_{*0}(K_0(C_n))\cap {\rm ker}\rho_A=\{0\},  {\iota_n}_{*0}(K_0(D_n))\cap {\rm Tor}(K_0(A))=\{0\},\andeqn\\
&&{\iota_n}_{*0}(K_0(D_n))\subset {\rm ker}\rho_A,
\eneq
where $\iota_n: E_n\oplus C_n\oplus D_n\to A$
 is the embedding,
\beq\label{TM1-K-3}
a\approx_{\ep} \phi_{e,n}(a)\oplus \phi_{c,n}(a)\oplus { {\phi_{d,n}(a)}}\tforal a\in {\cal F},
\eneq
where $\phi_{e,n}: A\to E_n$,  $\phi_{c,n}: A\to C_n$  and $\phi_{d,n}: A\to D_n$
are \cpc s which are ${\cal F}$-$\ep/2$-multiplicative,
\beq\label{TM1-K-4}
d_\tau(a_{e,n})+d_\tau(a_{d,n})<\sigma\andeqn d_\tau(a_{c,n})>1-\sigma \rforal \tau\in T(A),\andeqn
\eneq
\beq\label{TM1-K-5}
\lambda_s(C_n)>1-\sigma, \andeqn\lambda_s(D_n)>1-\sigma
\eneq
\end{thm}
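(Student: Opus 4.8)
\textbf{Proof plan for Theorem \ref{TclassM1-K}.}

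The plan is to deduce everything from the explicit inductive limit construction already carried out in Theorem \ref{Tmodel2} (and its variant (3) in Remark \ref{Rmodle2}), so the work is essentially that of reading off properties of the building blocks $A_n = B_n \oplus C_n \oplus D_n$ and their connecting maps. First I would invoke Theorem \ref{Tmodel2} directly: given $(\DT, G_0, \rho, G_1)$ with $\rho(G_0)\cap\Aff_+(\DT)=\{0\}$, it produces a simple $A\in{\cal M}_1$ with continuous scale, $A\in{\cal D}$, and $((K_0(A),T(A),\rho_A),K_1(A))=((G_0,\DT,\rho),G_1)$, which is exactly \eqref{TM1-K-0}. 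The point remaining is the ``moreover'' clause, i.e.\ the local approximation statement \eqref{TM1-K-10}--\eqref{TM1-K-5} compatible with a prescribed finitely generated subgroup $G_{0,T}\oplus G_{f,\inf}\oplus G_{0,tor}\oplus G_r\subset\underline{K}(A)$.

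The key steps, in order, would be: (i) Using that $\underline{K}(A)=\lim_n \underline{K}(A_n)$ and that each $\underline{K}(A_n)$ is finitely generated (by the last sentence of Theorem \ref{Tmodel2}, $K_i(B_n),K_i(C_n),K_i(D_n)$ are finitely generated and ${\phi_n}_{*i}$ are injective on each summand), choose $n$ large enough that the given finitely generated subgroup of $\underline{K}(A)$ lifts into $[\iota_n](\underline{K}(A_n))$ and so that the torsion part $G_{0,tor}$, the free part $G_{0,T}$ meeting $\ker\rho_A$ trivially, and the free part $G_{f,\inf}\subset\ker\rho_A$ each sit, respectively, in $[\iota_n](\underline{K}(B_n))$, $\iota_{n*0}(K_0(C_n))$ and $\iota_{n*0}(K_0(D_n))$; this is where I would use the block decomposition $K_0(A_n)=G_{n,T}\oplus G_{n,f}\oplus G_{n,tor}$ of Lemma \ref{G-ind} together with the injectivity of ${\phi_n}_{*0}$ on each block, and the facts recorded in \eqref{Tmodel2-9} that $\rho_{A_n}$ vanishes on the $B_n$ and $D_n$ summands and is faithful-modulo-$\ker\rho$ on $C_n$. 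That yields \eqref{TM1-K-10}, \eqref{TM1-K-11} and the three displayed containments about $\ker\rho_A$ and $\mathrm{Tor}(K_0(A))$. (ii) For \eqref{TM1-K-3}: since $A=\overline{\bigcup_n\phi_{n,\infty}(A_n)}$, for any finite ${\cal F}\subset A$ and $\ep>0$ pass to $n$ large with ${\cal F}\subset_{\ep/4}\phi_{n,\infty}(A_n)$, use a \cpc\ near-inverse $\psi:A\to A_n$ (2.3.13 of \cite{Linbook}) and post-compose with the coordinate projections $q_{n,a,b},q_{n,a,c},q_{n,a,d}$ onto $B_n,C_n,D_n$ to get the ${\cal F}$-$\ep/2$-multiplicative maps $\phi_{e,n},\phi_{c,n},\phi_{d,n}$; identify $E_n$ with the summand $E_{n(k)}\subset B_n$ from the structure of ${\cal B}_T$ recalled in \ref{DYT} (so $K_0(E_n)=\mathrm{Tor}(K_0(E_n))$, as ${\cal B}_T$-blocks have torsion $K_0$). (iii) For \eqref{TM1-K-4}: this is exactly the trace-estimate \eqref{Tmodel2-n10} from the proof of Theorem \ref{Tmodel2}, namely $|\tau(\phi_{n,n+1}(b))|<(1/2^n)\|b\|$ for $b$ in the $B_n\oplus D_n$ part, together with $\lim_n\inf\{d_\tau(\phi_{n,\infty}(x_n))\}=0$ for strictly positive $x_n\in B_n\oplus D_n$; choose $n$ with $1/2^{n}<\sigma$ and normalize the strictly positive elements $a_{e,n}\in E_n\subset B_n$, $a_{d,n}\in D_n$, $a_{c,n}\in C_n$, using that $T(A)$ is compact so $d_\tau$ of a strictly positive element of $A$ is $1$. (iv) For \eqref{TM1-K-5}: $C_n,D_n$ are built in \ref{4.25}, where by condition (e) of \ref{4.10} one has $\lambda_s$ of the building blocks tending to $1$; choosing $n$ large makes $\lambda_s(C_n),\lambda_s(D_n)>1-\sigma$ (this may require first passing to a hereditary subalgebra, exactly as in the last paragraph of the proof of Theorem \ref{4.25} where $\lambda_s(A_n)\ge 63/64$ is arranged).

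The main obstacle I anticipate is the bookkeeping in step (i): one must match the abstractly prescribed decomposition $G_{0,T}\oplus G_{f,\inf}\oplus G_{0,tor}\oplus G_r$ of a finitely generated subgroup of $\underline{K}(A)$ with the internal block structure of the $A_n$ coming from Lemma \ref{G-ind}, and simultaneously track the Bockstein/coefficient groups in $\underline{K}$ (the $G_r$ part, disjoint from $K_0(A)$) through the inductive limit while preserving injectivity of the connecting maps on each piece. The traces side (steps iii, iv) and the near-multiplicative decomposition (step ii) are routine given the estimates already proved in Theorem \ref{Tmodel2} and the properties of ${\cal C}_0$ and ${\cal B}_T$ blocks; the only real care there is choosing a single $n$ that works for all four requirements at once, which is possible since each is an ``eventually'' statement and there are finitely many of them.
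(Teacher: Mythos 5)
Your step (i) and the deduction of \eqref{TM1-K-0} from Theorem \ref{Tmodel2} follow the paper's route, and your trace estimates in step (iii) are in the right spirit. But as written there is a genuine gap in steps (ii)--(iv): your maps $\phi_{e,n},\phi_{c,n},\phi_{d,n}$ are obtained by projecting onto the summands $B_n, C_n, D_n$ of the inductive limit in Theorem \ref{Tmodel2}, which are \emph{simple} C*-algebras (inductive limits of building blocks), whereas the theorem requires the targets to be finite building blocks: $C_n, D_n\in{\cal C}_0$ and $E_n$ a single block $E_k=M_{(k!)^2}(A(W,\af_k))$ from 7.2/7.7 of \cite{GLII}. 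The actual content of the ``moreover'' clause is therefore a \emph{second} approximation inside each simple summand: having fixed $m$ with ${\cal F}\subset_{\ep/10}B_m\oplus C'_m\oplus D'_m$, the K-theory containments, and $d_\tau(b+d)<\sigma/2$, $d_\tau(c)>1-\sigma/4$, one must choose a large finite stage $C_{m,n}\subset C'_m$, $D_{m,n}\subset D'_m$ (both in ${\cal C}_0$) and $E_{k(n)}\oplus W_n\subset B_m$ so that ${\cal F}$ is still approximated, the finitely generated groups $G_{0,T}$, $G_{0,\inf}$ are captured in $\iota_{*0}(K_0(C_{m,n}))$, $\iota_{*0}(K_0(D_{m,n}))$, the strictly positive element of $C_{m,n}$ still satisfies $d_\tau>1-\sigma/4$, and $\lambda_s(C_{m,n}),\lambda_s(D_{m,n}),\lambda_s(W_n),\lambda_s(E_{k(n)})>1-\sigma$ (the last two using that $B_m$ has continuous scale). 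Your proposal gestures at parts of this (the mention of $E_{n(k)}$ and of condition (e) of \ref{4.10}) but never lays out this second approximation, and the concrete choices you do make would not satisfy the statement.

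The most visible casualty is the $W_n$-block. The local structure of $B_m$ is $E_{k(n)}\oplus W_n$ with $W_n\in{\cal C}_0$, $K_0(W_n)=\{0\}$; elements of $B_m$ are only close to $E_{k(n)}\oplus W_n$, not to $E_{k(n)}$ alone. If, as in your step (ii), you ``identify $E_n$ with the summand $E_{n(k)}\subset B_n$'' and discard the rest, the approximation \eqref{TM1-K-3} fails for any $a\in{\cal F}$ whose $B_m$-component is not essentially supported in $E_{k(n)}$. The paper resolves this by setting $D_n:=D_{m,n}\oplus W_n$, which is legitimate precisely because $K_0(W_n)=\{0\}$ (so the conditions ${\iota_n}_{*0}(K_0(D_n))\subset\ker\rho_A$ and ${\iota_n}_{*0}(K_0(D_n))\cap{\rm Tor}(K_0(A))=\{0\}$ are unaffected) and because $\lambda_s(D_{m,n}\oplus W_n)=\min\{\lambda_s(D_{m,n}),\lambda_s(W_n)\}>1-\sigma$ once $n$ is large. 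Incorporating this two-stage choice of $m$ and then $n$, with the $W_n$-block absorbed into $D_n$, is what is needed to close the argument; the rest of your outline then goes through.
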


\begin{proof}
Let $A=\lim(A_n=B_n\oplus C'_n\oplus D'_n, \phi_{n,m})$ {{be}}
as in Theorem \ref{Tmodel2} such that (\ref{TM1-K-0}) holds {{(or $A$ is as in (3) of Remark \ref{Rmodle2}).}}
Note that $\phi_{n,n+1}$ are injective, so we can regard $A_n$ as {{a subalgebra}} of $A$.  Choose $m$  with the strictly positive elements $b\in B_m$, $c\in C'_m$ and $d\in D'_m$ such that\\
(i) ${\cal F}\subset_{\ep/10} A_m=B_m\oplus C'_m\oplus D'_m$,\\
(ii) $G_{0,T}\oplus G_{0,tor}\oplus  G_{0,inf}\oplus G_r\subset [\iota](A_m)$, and  $G_{0, T}\subset {\iota}_{*0}(K_0(C'_m)),   G_{0,\inf}\subset {\iota}_{*0}(K_0(D'_m)), G_{0, tor}\subset {\iota}_{*0}(B_m)$ and \\
(iii)$d_{\tau}(b+d)<\sigma/2$ and $d_{\tau}(c)>1-\sigma/4$ for all $\tau\in T(A)$.

Note that {{$D'_m$ and $C_m'$}}
can be written, {{respectively,}}  as inductive limits of $D_{m,n}$ and $C_{m,n}$ with $D_{m,n}, C_{m,n}\in {\cal C}_0$. {{Also from \ref{DYT},   $B_m=\lim_{n\to\infty}(E_{k(n)}\oplus W_n, \Phi_{n,n+1}),$
where $E_k=M_{(k!)^2}(A(W, \af_k))$ and $W_n$  is in ${\cal C}_0$ with $K_0(W_k)=\{0\}$. Since $B_m$ has continuous scale
{{(see \ref{DYT}),}}  for $n$ large enough, we have $\ld_s(E_{k(n)})>1-\sigma$ and $\ld_s(W_n)>1-\sigma$ (see 5.3 of \cite{EGLN}). }} One can choose $n$ large enough such that \\
(iv) ${\cal F}\subset_{\ep/5} {{E_{k(n)}\oplus W_n}}\oplus C_{m,n}\oplus D_{m,n}$ \\
(v) $G_{0,T}\oplus G_{0,tor}\oplus  G_{0,inf}\oplus G_r\subset [\iota]({{E_{k(n)}\oplus W_n}}\oplus C_{m,n}\oplus D_{m,n})$, and  $G_{0, T}\subset {\iota}_{*0}(K_0(C_{m,n})), \\  G_{0,\inf}\subset {\iota}_{*0}(K_0(D_{m,n}))$\\
(vi) for the strictly positive element ${ {e^A_n}}\in C_{m,n}$, $d_\tau({ {e^A_n}})>1-\sigma/4$ for all $\tau \in T(\tilde{C}_m)$, and\\
(vii) $\ld_s(C_{m,n})>1-\sigma$, {{$\ld_s(D_{m,n})>1-\sigma$ and $\ld_s(W_n)>1-\sigma$.}}

{{Let}} $C_n=C_{m,n}$,  $D_n=D_{m,n}\oplus W_n$, $E_n=E_{k(n)}$ and  {{let}} $a_{c,n}={ {e^A_n}}\in C_n$, $a_{e,n}\in E_n$, $a_{d,n}\in D_n$ be  strictly positive elements. Then (\ref{TM1-K-4}) follows from (iii) and (vi). It is standard to construct \cpc s $\phi_{e,n}: A\to E_n$,  $\phi_{c,n}: A\to C_n$  and $\phi_{d,n}: A\to D_n$ to finish the proof.
\end{proof}


\begin{rem}\label{ReModelK}
In the statement  of Corollary  \ref{TclassM1-K}
we may replace $B_n$ by a simple \CA\,
of the form $B_T$ as {{in}} Theorem 7.11 of \cite{GLII}  with continuous scale, and $C_n$  and $D_n$ are   simple
\CA s  which are  constructed in \ref{4.25} with continuous scale and retain \eqref{TM1-K-10} and \eqref{TM1-K-11}.
Moreover, we may assume that $a_{e,n}+a_{c,n}+a_{d,n}$ is a strictly positive element.
Note that all  {{ $K_i(B_n)$, $K_i(C_n)$ and  $K_i(D_n)$ }}  are finitely generated ($i=0,1$).
\end{rem}


%
%
%

%

%
%
%

%
%
%

%
%
%
%
%
%




%
%
%


%
%
%
%




%
%
%

\section{Range of the Elliott invariant}

The following statement {{implies}} that, in the case that $A$ is simple,  {{the}} pairing does not depend {{on}}
where the unitization occurs.
\begin{prop}\label{Prhowd}
Let $A$ be a $\sigma$-unital simple \CA\, with $a\in {\rm Ped}(A)_+\setminus \{0\}.$
Suppose that $\iota: B:={\rm Her}(a)\to A$ is the embedding.
Then  $\iota_{*0}$ is an isomorphism and $\rho_{B}(x)(\iota_T(\tau))=\rho_A(\iota_{*0}(x))(\tau)$  for all $x\in {{K_0(B)}}$ and
for all $\tau\in \td T(A),$ where $\iota_T: \td T(A)\to \td T(B)$ is the map induced by $\iota.$
\end{prop}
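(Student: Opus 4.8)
The plan is to reduce everything to the constructions already set up in Definition~\ref{Dparing}. First I would recall that $B={\rm Her}(a)$ with $a\in {\rm Ped}(A)_+\setminus\{0\}$ is a full hereditary \SCA\ of the simple \CA\ $A$; fullness is automatic by simplicity. By Brown's theorem (combined with $\sigma$-unitality), $B\otimes {\cal K}\cong A\otimes {\cal K}$, and the embedding $\iota: B\to A$ induces an isomorphism $\iota_{*0}: K_0(B)\to K_0(A)$ (and indeed on all $K$-groups); this is the first assertion. Moreover, since $a\in {\rm Ped}(A)$, we have $B={\rm Her}(a)={\rm Ped}(B)$ (by 2.1 of \cite{T-0-Z} in the simple case), so $\td T^b(B)=\td T(B)$ and the pairing $\rho_B=\rho_B^b$ is the unitized one: for projections $p,q\in M_n(\td B)$ with $\pi^B_\C(p)=\pi^B_\C(q)$, $\rho_B([p]-[q])(t)=t(p)-t(q)$ for $t\in\td T(B)$.

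Next I would make the tracial map precise. The embedding $\iota$ sends ${\rm Ped}(B)=B$ into ${\rm Ped}(A)$ (in fact $B\subset {\rm Ped}(A)$ since $a\in {\rm Ped}(A)$), so restriction gives a continuous cone map $\iota_T:\td T(A)\to \td T(B)$, $\iota_T(\tau)=\tau|_B$. The key point is that every $\tau\in\td T(A)$ restricts to a \emph{bounded} (hence, since $B={\rm Ped}(B)$, densely defined and everywhere-finite) trace on $B$: indeed $\tau(a)<\infty$ because $a\in{\rm Ped}(A)$, and $B=\overline{aAa}$ so $\tau$ is bounded on $B$. Thus $\iota_T$ is well defined into $\td T(B)=\td T^b(B)$, and one checks it is continuous for the weak topologies coming from the respective Pedersen ideals (since $\iota({\rm Ped}(B))\subset{\rm Ped}(A)$). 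Conversely I would note, though it is not strictly needed, that $\iota_T$ is surjective because a densely defined trace on $B$ extends to one on $A$ by the argument already used in Definition~\ref{Dparing} (Lemma~4.6 of \cite{CP}).

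Finally I would verify the compatibility formula $\rho_B(x)(\iota_T(\tau))=\rho_A(\iota_{*0}(x))(\tau)$. Since $\iota_{*0}$ is an isomorphism, write $x=[p]-[q]$ with $p,q\in M_n(\td B)$ and $\pi^B_\C(p)=\pi^B_\C(q)$, so that $p-q\in M_n(B)\subset M_n(A)$. On the left, by the description of $\rho_B$ above, $\rho_B(x)(\iota_T(\tau))=\iota_T(\tau)(p-q)=\tau(p-q)$, where we view $\tau$ as extended to $M_n(\td A)$ (agreeing on $M_n(B)$ with its restriction). On the right, $\iota_{*0}(x)$ is represented, after the identification $K_0(B)\cong K_0(A)$, by the same formal difference of projections over $\td A$, and the construction of $\rho_A$ in Definition~\ref{Dparing} via the inductive system $\{A_n={\rm Her}(e_n^A)\}$ of algebraically simple hereditary subalgebras is, by the independence-of-$e$ argument already carried out there, compatible with the embedding $\iota$; concretely, $B$ can be taken as one of (or cofinally comparable to) the $A_n$, and the diagram \eqref{2020-8-8-d1} applied to $\Phi=\iota$ gives precisely $\rho_A\circ\iota_{*0}=\iota^\sharp\circ\rho_B$, i.e. $\rho_A(\iota_{*0}(x))(\tau)=\rho_B(x)(\tau|_B)=\tau(p-q)$. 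The two sides agree.

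The main obstacle is purely bookkeeping: making sure the two \emph{a priori} different recipes for the pairing — the unitized formula $\tau\mapsto\tau(p)-\tau(q)$ available for $B$ because $B={\rm Ped}(B)$, versus the inductive-limit definition of $\rho_A$ for the possibly-stable algebra $A$ — actually coincide under $\iota_{*0}$. This is handled by invoking the commuting square \eqref{2020-8-8-d1} of Definition~\ref{Dparing} for the homomorphism $\iota$, together with the fact established there that $\rho=\rho_A^b$ whenever $\td T^b=\td T$; no new analysis is required beyond checking that $\iota$ carries Pedersen ideals into Pedersen ideals and that restrictions of traces land in $\td T^b(B)$, both of which follow from $a\in{\rm Ped}(A)$ and simplicity of $A$.
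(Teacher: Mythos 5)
Your proposal is correct and follows essentially the same route as the paper: the $K_0$-isomorphism comes from Brown's theorem on full hereditary subalgebras (the paper merely reconstructs Brown's linking-algebra argument in $M_2(A)$ where you cite it directly), and the pairing identity is then exactly the commutative diagram \eqref{2020-8-8-d1} of Definition \ref{Dparing} applied to $\Phi=\iota$. Your aside that $B$ "can be taken as one of (or cofinally comparable to) the $A_n$" is unnecessary and not quite accurate, but since you ultimately invoke \eqref{2020-8-8-d1} for the homomorphism $\iota$ itself, the argument stands as in the paper.
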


\begin{proof}
Claim: in general, without assuming $A$ is simple,  if $B$ is a $\sigma$-unital full hereditary \SCA\, of $A,$ then
$\iota_{*i}: K_i(B)\to K_i(A)$ is an isomorphism ($i=0,1$).  This is a reconstruction of Brown's argument in \cite{Br}.

If $B$ is a full corner of $A,$ i.e., $B=pAp$ for some projection $p\in M(A),$  then the claim follows from Corollary 2.6 of \cite{Br}.
In general, let $C$ be the \SCA\, of $A\otimes M_2$
consisting of the sum $\sum a_{ij}\otimes e_{ij}$ such that
$a_{11}\in B,$ $a_{12}\in \overline{BA},$ $a_{21}\in \overline{AB}$ and
$a_{22}\in A,$ where {{$\{e_{ij}\}_{1\le i,j\le 2}$}} is a system of matrix units.
We identify $B$ with the full corner $B\otimes e_{11}$ and
$A$ with the full corner $A\otimes e_{22}$ of $C$ (see the proof of Theorem 2.8 of \cite{Br}).
Let $j_A: A\to C$ and $j_B: B\to C$ be
embeddings.   Then  $(j_A)_{*i}$ and $(j_B)_{*i}$ are isomorphisms ($i=0,1$).

On the other hand,  $j_B(B)$ and $j_A\circ \iota(B)$ may be identified with $B\otimes e_{11}$ and
$B\otimes e_{22}$ in $M_2(B)\subset C.$  Denote by  $j: M_2(B)\to C$ the embedding.
Then $j\circ j_B=j_B$ and ${j\circ j_A}|_{B\otimes e_{22}}={j_A}|_{B\otimes e_{22}}.$
 Since  $j_{*i}=(j\circ j_B)_{*i}=({j\circ j_A}|_{B\otimes e_{22}})_{*i}: K_i(B)\to K_i(C),$
one has  that $(j_A)_{*i}\circ \iota_{*i}=(j_A\circ \iota)_{*}=(j\circ j_B)_{*i}=(j_B)_{*i}$  is also
an isomorphism.  Since $(j_A)_{*i}$ is an isomorphism, so is $\iota_{*i}$ ($i=0,1$). This proves the claim.

The lemma  follows from  the claim and the commutative  diagram  (\ref{2020-8-8-d1}) immediately.
%
\end{proof}

Let us state the following result (see \ref{DElliott}) which also holds if we replace
the condition that $A$ has finite nuclear dimension by that  {{$A$}} is ${\cal Z}$-stable (i.e., $A\cong A\otimes {\cal Z}$) and all 2-quasitraces are traces.

\begin{thm}\label{TRangeM}
Let  $A$ be a separable finite simple
\CA\,
with finite nuclear dimension.
Then
$(K_0(A), \Sigma(K_0(A)), {\tilde T}(A), {{\widehat{\la e_A\ra}}}, \rho_A)$
is a scaled simple ordered group pairing (see \ref{DElliott23} and \ref{Dparing}).
\end{thm}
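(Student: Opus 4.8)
The plan is to verify, one by one, the defining properties of a scaled simple ordered group pairing listed in Definition \ref{DElliott23}, using the structural results on finite separable simple \CA s with finite nuclear dimension together with the definitions of $\rho_A$ and $\Sigma(K_0(A))$ recalled in Definition \ref{Dparing} and Definition \ref{DElliott}. First I would record the standing facts: since $A$ is separable, finite, simple, with finite nuclear dimension, $A$ is $\mathcal Z$-stable, has stable rank one, and all $2$-quasitraces on $A$ are traces, so $\td T(A)$ is a nonempty convex topological cone; moreover $\td T(A)$ has a metrizable Choquet simplex as its base whenever $A$ has (or can be cut down to have) continuous scale, and in general one can realize the base via $T_0(A)$ or via a full hereditary \SCA\, $B={\rm Her}(a)$ with $a\in {\rm Ped}(A)_+$ for which $A\cong A\otimes\mathcal K$-stably $B$ has compact tracial state space. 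By Proposition \ref{Prhowd} the pairing $\rho_A$ is independent of the choice of such $B$, so I may pass freely between $A$ and its algebraically simple hereditary \SCA s when checking cone-theoretic properties; this reduces the base question to the compact-base case where $\overline{T(B)}^w$ is a metrizable Choquet simplex (using Theorem 4.7 of \cite{eglnp} to see $0\notin\overline{T(B)}^w$).

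Next I would check that $\rho_A\colon K_0(A)\to\Aff(\td T(A))$ is a well-defined group homomorphism: this is exactly the content of the inverse/direct limit construction in Definition \ref{Dparing}, where one writes $A$ as the increasing union of ${\rm Her}(e_n)$ with $e_n\in{\rm Ped}(A)$, uses $\rho_{A_n}=\rho_{A_n}^b$ on each piece (legitimate since each $A_n={\rm Ped}(A_n)$ by simplicity), and passes to the limit; the independence of the choice of strictly positive element was already verified there via the R\o rdam-type approximation argument. Then comes the order-theoretic core. I would show that, with $G_+=\{g\in K_0(A):\rho_A(g)>0\}\cup\{0\}$ in the sense $\rho_A(g)(\tau)>0$ for all $\tau\in\td T(A)\setminus\{0\}$, the pair $(K_0(A),G_+)$ coincides with the usual positive cone coming from projections in matrix algebras over $\td A$ (or is $\{0\}$ in the stably projectionless case). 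The key input here is the strict comparison / weak unperforation properties: by $\mathcal Z$-stability $K_0(A)$ is weakly unperforated, i.e.\ $ng\in G_+\setminus\{0\}$ implies $g\in G_+$, and simplicity gives that every nonzero element of $G_+$ is an order unit once one restricts to the compact base $\Delta$ — this is where I would invoke that $A$ has strict comparison of positive elements (so that a projection $p$ with $\tau(p)>0$ on all traces is full) together with the fact that a full projection's class is an order unit. Finally $\widehat{\la e_A\ra}\in\LAff_+(\td T(A))\setminus\{0\}$ is immediate from Definition \ref{Dfep} (it is the increasing pointwise limit of the $\widehat{\la f_\ep(e_A)\ra}$, each in $\Aff_+$), and the identity $\Sigma_A=\widehat{\la e_A\ra}$ together with $\Sigma(K_0(A))=\{g\in G_+:\rho_A(g)<\Sigma_A\}$ or that set with a maximal unit adjoined (the unital case) is precisely the description of the scale in Definition \ref{DElliott23}; the ``$\cup\{u\}$'' alternative occurs exactly when $A$ is unital, with $u=[1_A]$ and $\rho_A(u)=\Sigma_A\equiv 1$.

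I expect the main obstacle to be the verification that $(K_0(A),G_+)$ with $G_+$ defined purely trace-theoretically is genuinely a \emph{simple} ordered group with the stated weak unperforation — more precisely, matching the trace-defined positivity $\rho_A(g)>0$ with actual positivity (representability by a projection, after adding the unit) and showing every nonzero positive element is an order unit. In the stably projectionless case $G_+=\{0\}$ and there is nothing to prove there, but one must still show $\rho_A$ need not be zero and that $\Sigma(K_0(A))=\{0\}$ is consistent; in the case $K_0(A)_+\neq\{0\}$ one reduces to a unital hereditary \SCA\, via a full projection $p\in M_n(\td A)$ with $p-1_m\in M_n(A)$, where the statement becomes the familiar fact that for unital finite simple $\mathcal Z$-stable \CA s the Elliott invariant gives a weakly unperforated simple ordered group — this is available from the unital theory cited in the introduction (\cite{GLN}, \cite{EGLN}). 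The remaining bookkeeping — that $\kappa_T$ and $\rho$ fit the homomorphism/isomorphism format of Definition \ref{DElliott23}, and that the base is metrizable Choquet — is routine given the cited structure theorems, so the write-up would spend most of its space on this positivity-versus-trace-positivity identification and on correctly handling the three cases (stably projectionless, projectionless with $K_0(A)_+\neq\{0\}$ but non-unital, and unital).
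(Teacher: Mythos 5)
Your proposal follows essentially the same route as the paper's proof (pass to $\mathcal Z$-stability, handle the unital case by the known weak unperforation of $K_0$ as in \cite{GJS}, reduce the non-unital case with $K_0(A)_+\neq\{0\}$ to a unital corner $pM_r(A)p$ of finite nuclear dimension and transport by stable isomorphism and Proposition \ref{Prhowd}, obtain the Choquet simplex base by cutting to a continuous-scale hereditary subalgebra, and identify $\Sigma(K_0(A))$ with $\{g\in G_+:\rho_A(g)<\widehat{\la e_A\ra}\}$ via the strict inequality $\tau(p)<d_\tau(e_A)$ coming from simplicity). However, there is a genuine gap in your treatment of the stably projectionless case, which you dismiss with ``$G_+=\{0\}$ and there is nothing to prove there.'' In Definition \ref{DElliott23} the cone $G_+$ is defined trace-theoretically, $G_+=\{g\in K_0(A):\rho_A(g)(\tau)>0 \mbox{ for all } \tau\in \tilde T(A)\setminus\{0\}\}\cup\{0\}$, whereas ``stably projectionless'' only says that there are no nonzero projections, i.e.\ $K_0(A)_+=\{0\}$. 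The theorem requires $\Sigma(K_0(A))=\{g\in G_+:\rho_A(g)<\widehat{\la e_A\ra}\}$, and since the left-hand side is $\{0\}$ you must rule out a nonzero $x\in K_0(A)$ with $\rho_A(x)$ strictly positive on $\tilde T(A)\setminus\{0\}$ (note that when $A$ is stable the constraint $\rho_A(x)<s$ is vacuous, so one genuinely needs $G_+=\{0\}$). This is exactly the nontrivial point of this case: the paper invokes Corollary A.7 of \cite{eglnkk0} (applied to a hereditary subalgebra $A_1=\overline{aAa}$ with continuous scale, then transported back using Proposition \ref{Prhowd}), and the introduction explicitly flags this as a ``previously unexpected'' weak unperforation feature (compare Corollary \ref{Ct=0}, which combines it with Theorem \ref{Tfeature}). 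Your appeal to weak unperforation of $K_0(A)$ and strict comparison does not substitute for it: the projection-defined order on $K_0(A)$ is trivial here, and the natural comparison argument in $\tilde A$ breaks down because $\tilde A$ is not simple — writing $x=[p]-[1_m]$ with $p\in M_n(\tilde A)$, the scalar trace $\tau_\C$ gives $d_{\tau_\C}(p)=d_{\tau_\C}(1_m)=m$, so strict comparison cannot yield $1_m\lesssim p$, and hence trace positivity of $\rho_A(x)$ cannot be converted into a projection representing $x$ (whose existence would contradict stable projectionlessness) by routine means.

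So the stably projectionless case needs the input from \cite{eglnkk0} (or an argument of comparable depth), not ``nothing.'' The rest of your outline — well-definedness of $\rho_A$ from Definition \ref{Dparing}, the corner reduction via 2.8 of \cite{WZ-ndim}, the base of $\tilde T(A)$, the membership $\widehat{\la e_A\ra}\in\LAff_+(\tilde T(A))\setminus\{0\}$, and the two inclusions identifying the scale set — is sound at the level of detail given and matches the paper's argument.
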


\begin{proof}
It follows from \cite{Winter-Z-stable-02} and \cite{T-0-Z} that $A$ is ${\cal Z}$-stable.
By Corollary 5.1 of \cite{Rrzstable},
$\td T(A)\not=\{0\}.$
Then, if $A$ is unital, $(K_0(A), K_0(A)_+, [1_A])$ is
a weakly unperforated simple ordered group {{(see \cite{GJS})}} with the scale determined by
the order unit $[1_A],$ and
$g\in K_0(A)_+\setminus \{0\}$ if and only if $\rho_A(g)(\tau)>0$ for all $\tau\in T(A).$
So the unital case follows.
Suppose that  $A$ is not unital and $K_0(A)_+\not=\{0\}.$  Let $x\in K_0(A)_+{{\setminus\{0\}}}.$
Then there is a  nonzero projection $p\in M_r(A)$ for some integer $r\ge 1$ such that $[p]=x.$
If follows that $A_1:=pAp$ is a unital simple \CA\,  with finite nuclear dimension (see 2.8 of \cite{WZ-ndim}).
Therefore  $(K_0(A_1), K_0(A_1)_+)$ is a weakly unperforated simple ordered group
such that $g\in K_0(A_1)_+\setminus \{0\}$ if and only if $\rho_{A_1}(g)(\tau)>0$
for all $\tau\in T(A_1).$  Note that $A\otimes {\cal K}\cong A_1\otimes {\cal K}.$
It follows that $(K_0(A), {\tilde T}, \rho)$ is a simple ordered group pairing.

Let $e_A\in A$ be a strictly positive element with $\|e_A\|=1$ and let $s(\tau)=d_\tau(e_A)$
for all $\tau\in {\tilde T}(A).$
{{If $p\in A$ is a projection, then, $f_{1/n}(e_A)p\approx_{1/2} p$ for some integer
$n\ge 1.$ It follows that there is a projection
$q\in {\rm Her}(f_{1/n}(e_A))$  such that $[q]=[p].$ It follows that $f_{1/2n}(e_A)q=q$
and $(1-q)f_{1/2n}(e_A)(1-q)\not=0.$
Since $A$ is simple, this implies that $\tau(p)=\tau(q)<d_\tau(e_A).$}}
Then
$$
\Sigma(K_0(A))=\{g\in G_+: g=[p]\,\, {\rm for\,\, some \,\, projection}\,\, p\in A\}=\{g\in G_+: \rho(g)< s\}.
$$
It follows that $(K_0(A), \Sigma(K_0(A)), {\tilde T}(A), s, \rho_A)$ is a scaled simple ordered group
pairing. {{(Even though $K_0(A)_+\not=\{0\}$, it is still possible {{that}} ${{\Sigma (K_0(A))}}=\{0\}$.)}}

Now assume that $K_0(A)_+=\{0\}.$ Therefore $A$ is stably projectionless.
It follows from 5.2 of \cite{eglnp},  for example, that one may choose $a\in A_+\setminus \{0\}$ such that $A_1:=\overline{aAa}$
has continuous scale.  {{It follows from (1) of 5.3 of \cite{eglnp} that $T(A_1)$ is compact.
By 5.2.2 of \cite{Pbook}, every tracial state $\tau\in T(A_1)$ extends to a lower semi-continuous trace on $A$
which is finite on ${\rm Ped}(A)$ as $A$ is simple.
Again, since $A$ is simple, the extension is unique.
It follows that $T(A_1)$ is a
base for the cone ${\tilde T}(A).$ Since ${ {\Aff({\tilde T}(A))}}$ is a lattice (see  Corollary 3.3 of \cite{PdMI}
and Theorem 3.1 of \cite{PdMIII}),
${\tilde T}(A)$ is a {{convex}} topological cone with
a Choquet simplex as its base.}}
 For any $x\in K_0(A_1),$ by A.7 of \cite{eglnkk0}, $\rho_{A_1}(x)(\tau)=0$ for some
$\tau\in T(A_1).$   Since $A\otimes {\cal K}\cong A_1\otimes {\cal K},$
this implies that
$(K_0(A), \{0\}, {\tilde T}(A), {{\widehat{\la e_A\ra}}}, \rho_A)$ is a  scaled simple ordered group pairing (see \ref{Prhowd}).
\end{proof}

{{The following range theorem was given by Elliott in \cite{point-line} in the stable case.}}

\begin{thm}\label{TrangeGG}
Let $(G_0, \Sigma(G_0), T, s, \rho)$ be a scaled simple ordered group pairing and
$G_1$ be a countable abelian group.
Then, there is {{a}} simple separable  amenable \CA\, $A$  which satisfies the UCT such that
\beq
{{((K_0(A), \Sigma(K_0(A)), {\tilde T}(A), {{\widehat{\la e_A\ra}}}, \rho_A), K_1(A))=((G_0, \Sigma(G_0), T, s, \rho), G_1).}}
\eneq
$A$  is unital if and only if
$\Sigma(G_0)$ has a {{unit}} $u.$ {{(This means that $u$ is the non-zero maximum in $\Sigma(G_0)$ and $\rho(u)=s$. See Definition \ref{DElliott23}.)}} If  $\rho(G_0)\cap \Aff_+(T)\not=\{0\},$
 then
$A$ can be chosen to have  rationally generalized tracial rank at most one and {{be}}
an inductive limit
of sub-homogeneous  \CA s of spectra with dimension no more than 3.
If ${{\rho(G_0)\cap \Aff_+(T)=\{0\}}},$ then $A$ is stably projectionless {{and}}
$A$ can be chosen to have  generalized tracial rank one and {{be}}
 locally approximated by sub-homogeneous \CA s with {{the spectra having}} dimension no more than {{3}}.

If $G_1=\{0\}$ and $G_0$ is torsion free,  then $A$ can be chosen to be an inductive limit of 1-dimensional NCCW complexes
{{in ${\cal C}_0.$}}

\end{thm}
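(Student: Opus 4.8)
\textbf{Plan of proof for Theorem \ref{TrangeGG}.}
The plan is to assemble the theorem from the four regimes already prepared in Sections 4 and 5. First I would dispose of the easy structural dichotomy: $A$ is unital exactly when the scale $\Sigma(G_0)$ has a maximum element $u$ with $\rho(u)=s$, since by Theorem \ref{TRangeM} the scaled simple ordered group pairing of a simple \CA\, is unital (in the sense of Definition \ref{DElliott23}) precisely when the \CA\, is unital; so in the unital case I invoke the unital range results cited in the introduction (\cite{GLN}, \cite{GLN2}, \cite{EGLN}), and in the non-unital case I proceed with the models of this paper. Throughout, one must check the invariant claimed in the statement uses $\widehat{\la e_A\ra}$ for the scale-function $s$, which is consistent with \ref{DElliott} and \ref{Dparing}.

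Next, the main body: when $\rho(G_0)\cap \Aff_+(T)=\{0\}$ (so by Theorem \ref{Tfeature} there is a trace annihilating $\rho(G_0)$, and $A$ will be stably projectionless), I would first reduce to the continuous-scale case. Given a general $s\in \LAff_+(T)\setminus\{0\}$, choose a model $A_0\in {\cal M}_1$ with continuous scale realizing $((G_0,T,\rho),G_1)$ via Theorem \ref{TclassM1-K}; then cut down by a positive element of ${\rm Ped}(A_0\otimes{\cal K})$ whose rank function is $s$ (using that ${\rm Cu}(A_0)=\LAff_+(\td T(A_0))$, which holds since $A_0\in{\cal D}$ by \ref{DD0}, together with Proposition \ref{Prhowd} to see that cutting down does not change $K_0$ or the pairing). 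The resulting hereditary \SCA\, $A$ has the prescribed $\Sigma_A=s$ and, being a hereditary \SCA\, of a \CA\, in ${\cal D}$ with $gTR=1$, still has generalized tracial rank one; and $A_0$, hence $A$, is locally approximated by sub-homogeneous \CA s of spectrum-dimension at most $3$ because the building blocks $B_n$ (of the form $B_T$ from \cite{GLII}, see \ref{DYT}) have this property and $C_n,D_n\in{\cal C}_0$ are one-dimensional. When in addition $G_1=\{0\}$ and $G_0$ is torsion free, Theorem \ref{4.25} already gives $A_0$ as an inductive limit of $1$-dimensional NCCW complexes in ${\cal C}_0$, and the cut-down preserves this (a hereditary \SCA\, of an inductive limit of ${\cal C}_0$-algebras is again such an inductive limit after telescoping, by \ref{DfC1} and the approximation arguments of \cite{eglnp}).

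For the remaining regime $\rho(G_0)\cap\Aff_+(T)\not=\{0\}$ (where $A$ will have a projection), the reduction to the unital case in the introduction applies: pick $g\in G_0$ with $\rho(g)>0$, realize the ``unitized'' invariant by a unital separable simple amenable \CA\, $B$ in the UCT class with finite nuclear dimension via the unital range theorem (e.g.\ \cite{GLN}, giving $B$ as an ASH-algebra of topological dimension at most $3$), and then cut down $B\otimes{\cal K}$ by a positive element of rank $s$ to produce $A$ with the prescribed scale; $A$ is then stably isomorphic to $B$, so has rationally generalized tracial rank at most one and is locally (indeed, after telescoping, globally) an inductive limit of sub-homogeneous \CA s of spectra of dimension at most $3$. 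The UCT and amenability are inherited under stable isomorphism and hereditary \SCA s, and simplicity is clear.

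The main obstacle I anticipate is the bookkeeping in the cut-down step: one must verify that a positive element $a\in{\rm Ped}(M_\infty(A_0))_+$ can be chosen with $\widehat{\la a\ra}=s$ \emph{and} with $[\,\overline{aM_\infty(A_0)a}\,]$ identified so that the induced maps on $K_0$, on $K_1$, and on the pairing $\rho$ are exactly the identity under the given identifications — this uses the explicit description of ${\rm Cu}^\sim(A_0)=K_0(A_0)\sqcup\LAff_+^\sim(\td T(A_0))$ (Theorem 7.3 of \cite{eglnp}, invoked also in the proof of \ref{TWtraceD}), the classification of hereditary \SCA s up to Cuntz equivalence of strictly positive elements (\cite{Rlz}), and Proposition \ref{Prhowd} to transport $\rho$. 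A secondary technical point is confirming that ``generalized tracial rank one'' and the local-approximation-by-low-dimensional-sub-homogeneous-algebras property both pass to hereditary \SCA s and survive telescoping; for $gTR$ this is essentially \ref{DD0}–\ref{RDd} together with \cite{eglnp}, and for the approximation property it is routine but needs to be stated. Everything else is a matter of citing the already-constructed models in Theorems \ref{4.25}, \ref{Tmodel2}, \ref{TclassM1-K} and Remark \ref{Rmodle2}, and the unital range theorem.
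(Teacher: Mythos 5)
Your proposal is correct and follows essentially the same route as the paper: the same case split (unital scale via the unital range theorem of \cite{GLN}; a positive element in $\rho(G_0)$ but no unit, handled by unitizing the scale, applying the unital case, and cutting down by a hereditary \SCA\, with $\widehat{\la e_A\ra}=s$; and the stably projectionless case, handled by the continuous-scale models of Section 4 followed by the same cut-down). Your extra remarks on transporting the invariant under the cut-down (via ${\rm Cu}^\sim$, \cite{Rlz}, and Proposition \ref{Prhowd}) and on the torsion-free, $G_1=\{0\}$ case simply make explicit what the paper leaves to a brief ``one then checks.''
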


\begin{proof}
Let us first consider the case that $\Sigma(G_0)$ has a {{unit}} $u.$
Let $\DT:=\{t\in T: \rho(u){{(t)}}=1\}.$  Then $\DT$ is a base for the  cone $T.$
Recall that $\rho(G)\subset \Aff(T).$  Therefore $\DT$ is a compact convex subset.
Moreover it is a base for $T.$ Since $T$ has a metrizable Choquet simplex as a base,
{ {$\Aff(T)=\Aff(\DT)$}} is a lattice. Therefore
$\DT$ is a Choquet simplex.
Let ${G_0}_+=\{g\in G: \rho(g)>0\}\cup \{0\}.$
It follows from Theorem {{13.50}} of \cite{GLN} that there is a unital simple \CA\, $A$
which  has rationally  generalized tracial rank at most one
{{and}} is an inductive limit of sub-homogeneous \CA s of spectra  with dimension no more than 3 such
that
$$
(K_0(A), K_0(A)_+, [1_A], K_1(A), T(A), \rho_A), K_1(A)) =((G_0, {G_0}_+, u, G_1, \DT, \rho), G_1).
$$
Put ${\tilde T}(A):=\{r\tau: r\in \R_+, \tau\in T(A)\}.$
Then ${\tilde T}(A)=T.$ Moreover
$\Sigma(K_0(A))=\Sigma(G_0).$ This proves the case that $\Sigma(G_0)$ has  a unit.

Consider the case $\rho(G_0)\cap \Aff_+(T)\not=\{0\}$ and $\Sigma(G_0)$ has no unit.
 Choose $v\in G_0$ such that $\rho(v)\in \Aff_+(T)\setminus \{0\}.$
Put $\Sigma_1(G)=\{g\in G_+: \rho(g)<v\}\cup\{v\}.$
Since $\rho(v)\in \Aff_+(T),$  as above, $\DT:=\{t\in T: {\rho(v)(t)}=1\}$ is a Choquet simplex.
Then, by what has been  shown,
there is a \CA\, $A_1$ which has  rationally tracial rank at most one such that
\beq
((K_0(A_1), \Sigma(K_0(A)), T(A_1), \rho_A([1_{A_1}]), \rho_{A_1}), K_1(A_1))
=((G_0, \Sigma_1(G_0), T_1, \rho'), G_1),
\eneq
where $T_1:=\{r \xi: r\in \R_+, \xi\in \DT\}=T$ and
$\rho': G\to \Aff(T_1)$
is defined to be {{the}} same as $\rho$, when we identifying $T_1$ with $T.$
Choose an element $e_A\in A_1\otimes {\cal K}$ such
that 
${{\widehat{\la e_A\ra}}}(\tau)=s(\tau)$ for all $\tau\in {\tilde T}(A_1)=T_1=T$
{{(see Theorem 5.5 of \cite{BPT}).}}
Define $A=\overline{e_A(A_1\otimes {\cal K})e_A}.$
One then checks {{that}}
\beq
((K_0(A), \Sigma(K_0(A)), {\tilde T}(A), {{\widehat{\la e_A\ra}}}, \rho_A), K_1(A))\cong ((G_0, \Sigma(G_0), T, s, \rho), {{G_1}}).
\eneq

Now we consider the case that ${{\rho(G)\cap \Aff_+(T)=\{0\}.}}$
Let $\DT$ be a base of $T$ which is a Choquet simplex.
Define $\rho': G\to \Aff(\DT)$ {{to be the same map as $\rho$ by restricting a function in $\Aff(T)$ to $\Aff(\DT)$.}}
By Theorem \ref{Tmodel2}, there is a simple \CA\, {{$A_1\in {\cal D}$}} with continuous scale which is  an inductive limit
of $B_n\oplus C_n\oplus D_n,$ where $B_n$ is locally approximated by
sub-homogenous \CA s with {{spectra}} having
dimension no more than 3 (see \ref{DYT}), 
and $C_n\oplus D_n$ is an inductive limit of \CA s in ${\cal C}_0$
such that
\beq
(K_0(A_1),T(A_1), \rho_{A_1}, K_1(A_1))=(G_0, G_1, \DT, \rho', G_1).
\eneq
Choose $e_A\in (A_1\otimes {\cal K})_+\setminus \{0\}$ such
that 
${{\widehat{\la e_A\ra}(t)=\lim_{n\to \infty}t(e_A^{1/n})=s(t)}}$  for all $t\in \DT.$
Define $A:=\overline{e_A(A_1\otimes {\cal K})e_A}.$  {{Then $A$ has generalized tracial rank one (see \ref{DD0}).}}
One then checks {{that}}
\beq
{{((K_0(A), \Sigma(K_0(A)), {\tilde T}(A), {{\widehat{\la e_A\ra}}}, \rho_A), K_1(A))=
((G_1, (G_0, \{0\}, T, s, \rho), G_1).}}
\eneq
The last part of the lemma follows immediately from
\ref{4.25}.

\end{proof}

%
%
%
%
%
%

%

\begin{cor}\label{smallmap}
Let {{$A_1$}} be a simple separable C*-algebra in {{$\mathcal D$ with continuous scale,  {{$U$}} be an infinite dimensional UHF algebra {{and}} $A=A_1\otimes U$.}}

 There exists an
  inductive limit algebra $B$ as constructed in Theorem {{\ref{Tmodel2}}} such that ${{A=A_1}}\otimes U$
  and $B$ have the same  Elliott invariant{.}
%
   Moreover, the C*-algebra $B$ {{has}} the following properties:

Let $G_0$ be a finitely generated subgroup of $K_0(B)$ with decomposition $G_0=G_{00}\oplus G_{01}$, where $G_{00}$  vanishes under all states of ${{K_0(B)}}$. Suppose $\mathcal P\subset \underline{K}(B)$ is a finite subset which generates a subgroup $G$ such that $G_0\subset G\cap K_0(B)$.

Then, for any $\epsilon>0$, any finite subset $\mathcal F\subset B$, any $1>r>0$, and any positive integer $K$, there is an
$\mathcal F$-$\epsilon$-multiplicative map $L:B\to B$ such that:
\begin{enumerate}
\item $[L]|_{\mathcal P}$ is well defined.
\item $[L]$ induces the identity maps on 
    {{$G_{00}$,}} $G\cap K_1(B)$,
      $G\cap K_0(B,\mathbb Z/k\mathbb Z)$ and $G\cap K_1(B, \mathbb Z/k\mathbb Z)$
      for $k=1, 2, ...$, and $i=0,1$.
\item $\|\rho_B\circ[L](g)\|\leq r\|\rho_B(g)\|$ for all $g\in
      G\cap K_0(B)$, where $\rho_B$ is the canonical positive homomorphism from $K_0(B)$ to
      $\Aff(T(B)).$
\item For any
element $g\in G_{01}$, we have $g-[L](g)=Kf$ for some $f\in K_0(B)$.
\item $d_\tau(e_0)<{{r}}\rforal \tau\in T(B),$
\end{enumerate}
where $e_0$ is a strictly positive element of $\overline{L(B)BL(B)}.$
\end{cor}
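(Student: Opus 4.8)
The plan is to first realize the model algebra $B$ and then construct the map $L$ as an approximate multiplicative "small perturbation of the zero map concentrated on one building block summand". The existence of $B$ with the prescribed Elliott invariant is immediate: since $A_1\in\mathcal D$ has continuous scale, $A=A_1\otimes U$ is a separable simple $C^*$-algebra with $\tilde T(A)=T(A)$ compact (so $\Sigma_A=1_{T(A)}$), $K_0(A)_+=\{0\}$ (stably projectionless), and by Corollary \ref{Ct=0} or Corollary A.7 of \cite{eglnkk0} the pairing $\rho_A$ satisfies $\rho_A(K_0(A))\cap\Aff_+(T(A))=\{0\}$. Hence $(K_0(A),\{0\},T(A),1_{T(A)},\rho_A)$ together with $K_1(A)$ is a scaled simple ordered group pairing, and Theorem \ref{Tmodel2} (with the form in Remark \ref{Rmodle2}) produces a simple $C^*$-algebra $B=\lim_{n\to\infty}(B_n\oplus C_n\oplus D_n,\phi_{n,m})\in{\mathcal M}_1$ with continuous scale whose Elliott invariant equals that of $A$; since $A$ and $B$ are both $\mathcal Z$-stable (indeed $U$-stable) separable simple amenable UCT algebras with the same Elliott invariant, one could alternatively invoke the isomorphism theorem, but for the present statement we only need $B$ as constructed.

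Next, for the construction of $L$: fix $\epsilon$, $\mathcal F$, $r$, $K$, and the finite set $\mathcal P$. The key point is Theorem \ref{TclassM1-K}: for any $\sigma>0$, $B$ admits, up to $\epsilon/2$, a decomposition $a\approx_{\epsilon/2}\phi_{e,n}(a)\oplus\phi_{c,n}(a)\oplus\phi_{d,n}(a)$ with respect to mutually orthogonal subalgebras $E_n\oplus C_n\oplus D_n\hookrightarrow B$, where $d_\tau(a_{e,n})+d_\tau(a_{d,n})<\sigma$ for all $\tau\in T(B)$, and where the embedding $\iota_n$ recovers the relevant finitely generated piece of $\underline K(B)$, with $G_{0,T}\subset{\iota_n}_{*0}(K_0(C_n))$, $G_{0,\inf}\subset{\iota_n}_{*0}(K_0(D_n))$, $G_{0,tor}\subset{\iota_n}_{*0}(E_n)$, and the torsion/infinitesimal/"$G_r$" parts of $\mathcal P$ carried by $E_n\oplus D_n$. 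The idea is to define $L$ to agree with $B\to E_n\oplus D_n\hookrightarrow B$ modified so that on $C_n$ it becomes a "multiplication-by-$(1/M)$" type map — precisely, I would use a map $C_n\to M_M(C_n)\subset C_n$ (possible since $C_n\in\mathcal C_0$ has $\lambda_s>1-\sigma$ and the model maps $\phi_{n,n+1}$ have large multiplicities, so such "small copies" are available up to a compression inside $B$) composed with an endomorphism inducing the identity on $K_1$ and on all $K$-theory with coefficients but scaling $\rho_B$ on $K_0$ by roughly $1/M$. Choosing $M>2/r$ and $\sigma<r/2$ gives (3): on $G_{01}$ (the part of $G_0$ not killed by states, which is carried by $C_n$) the norm of $\rho_B\circ[L](g)$ is at most $(1/M+\sigma)\|\rho_B(g)\|\le r\|\rho_B(g)\|$. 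For (2) and (4): on $G_{00}$, on $G\cap K_1(B)$ and on $G\cap K_i(B,\mathbb Z/k\mathbb Z)$, the piece is carried by $E_n\oplus D_n$ where $L$ is (approximately) the identity embedding, so $[L]$ is the identity there; and on $G_{01}\subset K_0(C_n)$ one arranges — using that $C_n$ can be routed, via the $W_n$ factors and Theorem 1.0.1 of \cite{Rl}, through a building block where the inclusion $K_0(C_n)\to K_0(B)$ is divisible by $K$ after passing far enough along the inductive system — that $g-[L](g)$ lands in $K\cdot K_0(B)$; concretely, one replaces the "identity on $C_n$" by "identity composed with multiplication by an integer $\equiv 1\pmod K$" using the large-multiplicity maps, so $g-[L](g)$ is $K$-divisible. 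For (1), one just chooses $\mathcal F$ large and $\epsilon$ small enough (depending on $\mathcal P$, as in Definition \ref{DfC1}-style remarks and 2.12 of \cite{GLN}) that $[L]|_{\mathcal P}$ is well defined. Finally (5): $\overline{L(B)BL(B)}$ is contained (up to the $\epsilon$-error, and after a standard cutting-down argument using that $B={\rm Ped}(B)$) in a hereditary subalgebra generated by $a_{e,n}\oplus(\text{small compression of }C_n)\oplus a_{d,n}$, whose $d_\tau$ is at most $d_\tau(a_{e,n})+1/M+d_\tau(a_{d,n})<\sigma+1/M<r$ for all $\tau$; choosing the strictly positive element $e_0$ of $\overline{L(B)BL(B)}$ accordingly gives $d_\tau(e_0)<r$.

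The main obstacle I expect is simultaneously arranging conditions (3) and (4) with the same map $L$ while keeping $[L]$ the identity on all the coefficient $K$-groups in (2): scaling $\rho_B$ down on $K_0$ and making $g-[L](g)$ divisible by $K$ on $G_{01}$ both require compressing the $C_n$-summand, but that compression must be compatible with inducing the identity on $K_1(B)$, on $G_{00}$, and on all $K_i(B,\mathbb Z/k\mathbb Z)$, which live (in the relevant finitely generated range) on the $E_n\oplus D_n$ part. The resolution is precisely the orthogonal splitting $E_n\oplus C_n\oplus D_n$ of Theorem \ref{TclassM1-K}, which segregates the torsion-and-infinitesimal $K$-theory (identity behaviour) from the "free, non-infinitesimal" $K_0$-part carried by $C_n$ (where all the scaling and divisibility manipulation happens), together with the flexibility of maps into $\mathcal C_0$-building blocks granted by Theorem 1.0.1 of \cite{Rl} and the large-multiplicity inductive system; one then has to check that the $K_0(C_n)\to K_0(B)$ map becomes $K$-divisible after going far enough along the system, which follows from the structure of the limit groups in Lemma \ref{G-ind} and the fact that $K_0(B)$ carries no order obstruction ($K_0(B)_+=\{0\}$).
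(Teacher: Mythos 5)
Your first half (realizing $B$ with ${\rm Ell}(B)={\rm Ell}(A)$ via Corollary \ref{Ct=0} and Theorem \ref{Tmodel2}) is fine and matches the paper, and your general strategy -- keep the trace-small, $\rho$-killed summand intact and compress the trace-large summand $C_n$ -- is the right shape. But the core of the statement is the simultaneous validity of (2) on the coefficient groups, (3), and (4), and the mechanism you propose for this does not work. If on the part of $K_0$ carried by $C_n$ the map $[L]$ acts as multiplication by an integer $N\equiv 1\ (\mathrm{mod}\ K)$, then $\rho_B([L](g))=N\rho_B(g)$, so $\|\rho_B([L](g))\|\ge\|\rho_B(g)\|>r\|\rho_B(g)\|$ whenever $\rho_B(g)\neq 0$, contradicting (3) (only $N=0$ would do, which is impossible for $K\ge 2$). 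The alternative you offer -- that the inclusion $K_0(C_n)\to K_0(B)$ becomes $K$-divisible far out in the system -- is incompatible with the very containment $G_{0,T}\subset\iota_{n*0}(K_0(C_n))$ you need from Theorem \ref{TclassM1-K}: if $G_{01}$ contains an element not divisible by $K$ in $K_0(B)$, the image of $K_0(C_n)$ cannot lie in $K\cdot K_0(B)$, and nothing in Lemma \ref{G-ind} or the large multiplicities of \ref{4.8} (which concern the ordered group $H$, not $G$) gives such divisibility. A related gap: identity of $[L]$ on $G\cap K_0(B,\Z/k\Z)$ is not automatic from being the identity on $E_n\oplus D_n$, because $K_0(B,\Z/k\Z)$ receives $K_0(B)\otimes\Z/k\Z$, including reductions of classes carried by $C_n$; so the scaling on $K_0$ must be arranged so that $g-[L](g)$ is killed in every relevant coefficient group, which again needs an exact arithmetic identity, not just a tracial approximation.

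The paper's proof supplies exactly this missing arithmetic by exploiting the UHF tensor factor rather than the tracial decomposition of Theorem \ref{TclassM1-K}. One takes $B=B_1\otimes U$ with $B_1=\varinjlim(T_i\oplus C_i)$ as in Theorem \ref{Tmodel2}, where $\rho_{T_i}=0$, $K_1(C_i)=0$, $\ker\rho_{B_1}=\bigcup(\psi_{i,\infty})_{*0}(K_0(T_i))$ and the $T_i$ have arbitrarily small trace. Choosing $i,n$ so that $\mathcal F$ and all of $G$ are (approximately) represented inside $(T_i'\oplus C_i')\otimes M_m$ by elements of the form $x\otimes 1_m$ with $m>2MK/r$ (here $M$ annihilates all relevant coefficient groups), one writes $m=M_0+l$ with $KM\mid M_0$ and $0\le l<KM$, and defines $L$ to be the identity on the $T_i'\otimes M_m$ part and the corner compression $E_l(\cdot)E_l$ on the $C_i'\otimes M_m$ part, with $R$ the $(1,1)$-corner map on the $C$-part. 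The exact identity $g=[L](g)+M_0[R](g)$ then yields (4) since $K\mid M_0$, yields (2) on coefficient groups since $M\mid M_0$ kills $M_0[R](g)$ there, yields (3) since $\rho_B([L](g))=(l/m)\rho_B(g)$ on the $C$-carried part and $\rho_B$ vanishes on the $T$-part, and yields (5) from $d_\tau(e_{T_i'})<r/2$ plus $l/m<r/2$; identity on $G_{00}$ and on $G\cap K_1(B)$ comes from $G\cap\ker\rho_B\subset K_0(T_i\otimes M_{m(n_0)})$ and $K_1(C_i)=0$. Without importing this $\otimes 1_m$ structure (or some equivalent exact divisibility device), your route through Theorem \ref{TclassM1-K} cannot reconcile (3) with (4), so as written the proposal has a genuine gap.
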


\begin{proof}

 Consider ${\rm Ell}(A_1).$
{{By}} Theorem \ref{Tmodel2},  there is {an} inductive
system $B_1=\varinjlim(T_i\oplus C_i, \psi_{i, i+1})$ (where $T_i:=B_i\oplus D_i$ in Theorem \ref{Tmodel2}) {{such}} that

(i) $\rho_{T_i}=0: K_0(T_i)\to {{\Aff (T(T_i))}}$ and $C_i\in {\mathcal C}_0$
with $K_1(C_i)=\{0\}$,

(ii) For the strictly positive element $e_{T_i}\in T_i$,  $\lim \tau(\phi_{i, \infty}(e_{T_i}))= 0$ uniformly on $\tau\in T(B_1)$,

(iii) $\ker(\rho_{B_1})=\bigcup_{i=1}^\infty (\psi_{i, \infty})_{*0}(K_0(T_i))$, and

(iv) ${\rm Ell}(B_1)={\rm Ell}(A_1)$.

Put $B=B_1\otimes U$. Then ${\rm Ell}(A)={\rm Ell}(B)$. Let $\mathcal P {{\subset}} \underline{K}(B)$ be a finite subset, and let $G$ be the subgroup generated by {{$\mathcal P$,}} which we may assume {{to contain}} $G_0$. Then there is a positive integer $M'$ such that $G \cap K_*(B, \mathbb Z/ k\mathbb Z)={\{0\}}$ if $k>M'$. Put $M=M'!$. Then $Mg=0$ for any $g\in G \cap K_*(B, \mathbb Z/ k\mathbb Z)$, {{$k=1, 2, ... .$}}

Let $\ep>0$, {{a finite subset  $\mathcal F\subset B$,}} and $0< r<1$ be given. Choose a finite subset $\mathcal G\subset B$ and  $0<\ep'<\ep$
such that $\mathcal F\subset \mathcal G$  and for any $\mathcal G$-$\epsilon'$-multiplicative map $L: B\to B$,
the map $[L]_{\mathcal P}$ is well defined, and $[L]$ is
a homomorphism on $G$.

Since $B=B_1\otimes U,$ we may write
$U=\varinjlim(M_{m(n)}, \imath_{n, n+1}),$
where $m(n)|m(n+1)$ and $\imath_{n, n+1}: M_{m(n)}\to M_{m(n+1)}$ is defined
by $a\mapsto a\otimes 1_{m(n+1)}.$  Choosing a sufficiently large $i_0$ and $n_0$, we may assume that
$[\psi_{i_0, \infty}](\underline{K}{{((T_{i_0}\oplus C_{i_0})\otimes M_{m(n_0)})}})\supset G.$
In particular, we may assume, by (i) and (iii) above, that $\rho_{T_i\otimes M_{m(n_0)}}=0$ {{and}}
$G\cap {\rm ker}\rho_{B_1\otimes M_{m(n_0)}}\subset (\psi_{i_0, \infty})_{*0}(K_0(T_i)\otimes M_{m(n_0)}).$ Let $G'\subset \underline{K}((T_{i_0}\oplus { {C_{i_0}}})\otimes M_{m(n_0)})$ be such that $[\psi_{i_0, \infty}]({ G}'){{\supset}}{ G}.$

One may assume that, for each $f\in\mathcal G$, {there exists $i> i_0, n_0$ such that}
\begin{equation}\label{diag-f}
f=(f_0\oplus f_1)\otimes 1_m
\in (T_i'\oplus C_i')\otimes M_m  
\end{equation}
for some $f_0\in T_i'$, ${{f_1}}\in C_i'$, and $m>2MK/r,$ where
$m= m(n)/m(i),$
$T_i'=\psi_{i, \infty}'(T_i\otimes M_{m(i)}),$  $C_i'=\psi_{i, \infty}'(C_i\otimes M_{m(i)}),$ and
where $\psi'_{i, \infty}=\psi_{i, \infty}\otimes \imath_{i,\infty}.$
Moreover, one may assume that $\tau(1_{T_i'})<r/2$ for all $\tau\in T(A_1)$. 

Choose a large $n$ {{such}} that $m={M_0}+l$ with ${M_0}$ divisible by $KM$ and $0\leq l<KM$. Then define {{a map}} $L: (T_i'\oplus C_i')\otimes M_m \to (T_i'\oplus C_i')\otimes M_m$ to be
$$
{L((f_{i, j} \oplus g_{i, j})_{m\times m})=(f_{i,j})_{m\times m}\oplus E_l(g_{i,j})_{m\times m}E_l,}
$$
where $E_l={\rm diag}(\underbrace{1_{(C_i')^{\sim}}, 1_{(C_i')^{\sim}},...,1_{(C_i')^{\sim}}}_l, \underbrace{0,0,...,0}_{M_0}).$
{{Note that $L$}}
is {{also}} a \morp\, from $(T_i'\oplus C_i')\otimes M_m$ to $B$, where we identify $B$ with $B\otimes M_m$. {{(Note that $E_l\notin C'_i\otimes M_m$ but $E_l(g_{ij})E_l\in C'_i\otimes M_m$).}}
We then
 extend $L$ to a completely positive linear map  from $B$ to $B$. 
 Also define $R: {(T_i'\oplus C_i')\otimes M_m \to T_i'\oplus C_i'}$ to be
 {{\begin{equation}
R
\left({{\left(
\begin{array}{cccc}
f_{1,1}\oplus g_{1,1} & f_{1,2}\oplus g_{1,2}&\cdots & f_{1,m}\oplus g_{1,m}\\
f_{2,1}\oplus g_{2,1} & f_{2,2}\oplus g_{2,2}&\cdots &f_{{2,m}}\oplus g_{2,m}\\
 & &{{\ddots}}  &\\
f_{m,1}\oplus g_{m,1} & f_{m,2}\oplus g_{m,2}&\cdots & f_{m,m}\oplus g_{m,m}
\end{array}
\right)}}\right)=g_{1,1},
\end{equation}
where $f_{j,k}\in T'_i$ and $g_{j,k}\in C'_i$,}}
~and extend it to a {\morp}\, $B\to B$,
where ${T_i'\oplus C_i'}$ is regarded as a corner of ${(T_i'\oplus C_i')}\otimes M_m\subset B$. Then $L$ and $R$ are $\mathcal G$-$\epsilon'$-multiplicative. Hence $[L]|_\mathcal{P}$ is well defined. Moreover, $$d_\tau(e_0)=d_\tau(L(e_B))<d_\tau(e_{T_i'})+\frac{l}{m}<\frac{r}{2}+\frac{MK}{2MK/r}=r\rforal \tau\in T(B),$$ where $e_B$ and $e_{T_i'}$ are strictly positive {{elements}} in $B$ and {{$T_i',$}} respectively.

Note that, for any $f$ in the form  \eqref{diag-f}, if $f$ is written in the form ${{(f_{jk}\oplus g_{jk})}}_{m\times m}$, then {{$g_{jj}=g_{11}$ and $g_{jk}=0$ for $j\not= k$}}. Hence one has
$$f=L(f)+{\overline{R}}(f),$$
{where $\overline{R}(f)$ may be written as}
$$
{\overline{R}(f)=\mathrm{diag}\{\underbrace{0,0,...,0}_l, \underbrace{(0\oplus g_{1,1}), ..., (0\oplus g_{1,1})}_{M_0}\}}.
$$
Hence for any $g\in G$,
$$g=[L](g)+ {M_0}[R](g).$$
Then, if $g\in\ (G_{0, 1})_+\subset (G_0)_+$, one {{has}}
$$g-[L](g)={M_0}[R](g)=K((\frac{M_0}{K})[R](g)).$$
{{Also}} if $g\in G\cap K_i(B, \mathbb Z/ k\mathbb Z)$ ($i=0,1$), one also has
$$g-[L](g)={M_0}[R](g).
$$
Since $Mg=0$  and {$M|M_0$}, one has $g-[L](g)=0$.



{Since $L$ is {{the}} identity on $\psi_{i, \infty}'(T_i\otimes M_{m(i)})$ and $i>i_0,$ by (iii),
{{$[L]$}} is the identity map on $G\cap \ker \rho_{B}.$  Since $K_1(S_i)=0$ for all $i,$
$L$ induces the identity map on $G\cap K_1(B).$ It follows that}
 $L$ is the desired map.
\end{proof}

%


%

%
%
%

%


\begin{lem}\label{Lstrictpositive}
Let $C=A(F_1, F_2, \bt_0, \bt_1)\in {\cal C}_0$ and let $N_0\ge 1$ be an integer.
There exists $\sigma>0$ satisfying the following condition:
For any  order preserving \hm\, $\kappa: K_0(\td C)\to \R$ such that,
for any $x\in K_0(\td C)_+\setminus \{0\}$ {{with}} $N_0\kappa(x)>1$ and $\kappa({ {[1_{\td C}]}})=1,$
there exists $t\in T(C)$ such
that
\beq
t(h)\ge {{\sigma \int_{s\in [0,1]} T(\lambda(h)(s))d\mu(s)}}{{\tforal h\in C_+\tand}}\\
{{\kappa(x)}}=\rho_{C}(x)(t) {{\tforal}} x\in K_0(\td C),
\eneq
where $\lambda: C\to C([0,1], F_2)$ is the natural embedding  and
$T(b)=\sum_{j=1}^k {\rm tr}_j (\psi_j(b))$ for all $b\in F_2,$
where $F_2=\bigoplus_{j=1}^k M_{r(j)},$ $\psi_j: F_2\to M_{r(j)}$ is the projection map,
${\rm tr}_j$ is the normalized trace on $M_{r(j)},$ and $\mu$ is
{{Lebesgue}}
measure.
\end{lem}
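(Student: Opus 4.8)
The plan is to understand the structure of $C=A(F_1,F_2,\beta_0,\beta_1)\in{\cal C}_0$ and of the traces on $C$, and then to produce, for each state $\kappa$ of $K_0(\widetilde C)$ subject to the normalization and the lower bound $N_0\kappa(x)>1$ on $K_0(\widetilde C)_+\setminus\{0\}$, a tracial state $t\in T(C)$ with $\rho_C(x)(t)=\kappa(x)$ that dominates a fixed multiple of the canonical ``Lebesgue'' tracial functional $t_0(h):=\int_{[0,1]}T(\lambda(h)(s))\,d\mu(s)$. The point of the hypothesis $N_0\kappa(x)>1$ is quantitative: it keeps $\kappa$ uniformly away from the boundary of the state space of $(K_0(\widetilde C),K_0(\widetilde C)_+,[1_{\widetilde C}])$, which will translate into a uniform lower bound $\sigma>0$ independent of $\kappa$.

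First I would recall the description of $\mathrm{Aff}(T_0(C))$ and of $T(C)$ from 4.23, specialized here: an affine function on $T_0(C)$ is a tuple $(g_1,\dots,g_l,x_1,\dots,x_p)$ with the compatibility conditions \eqref{Aff-bd} at the endpoints, and $T(C)$ is determined by $\mathrm{Sp}(C)=(\sqcup_j\{\eta_j(t):t\in(0,1)\})\sqcup\{\theta_1,\dots,\theta_p\}$ via the bijection between $\mathrm{Sp}(C)$ and $\partial_e T(C)$ (Lemma 2.2 of \cite{Thm-K-theory}). A homomorphism $\kappa:K_0(\widetilde C)\to\R$ with $\kappa([1_{\widetilde C}])=1$ which is order preserving is exactly a state of $K_0(\widetilde C)$, and by the Blackadar--Rørdam / Goodearl--Handelman machinery (or directly, since $K_0(\widetilde C)$ is finitely generated with $K_0(\widetilde C)=\Z\oplus\ker$ and $\rho_{\widetilde C}$ identifies the relevant part) such a $\kappa$ is realized by some $\tau_\kappa\in T(\widetilde C)$, hence by restriction by an element of the face $T(C)\subset T(\widetilde C)$ after rescaling; the constraint $\pi_{\mathbb C}^C(p)=\pi_{\mathbb C}^C(q)$ in the definition of $\rho_C$ means only the $K_0(C)$-part is seen, and $K_0(C)_+=\{0\}$ for $C\in{\cal C}_0$, so $\kappa|_{K_0(C)}$ imposes finitely many affine equalities on the coordinates $(x_1,\dots,x_p)$ of the corresponding trace. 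So the set $S_\kappa:=\{t\in T(C):\rho_C(x)(t)=\kappa(x)\ \forall x\in K_0(\widetilde C)\}$ is a nonempty closed face-section of $T(C)$.

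Next I would produce the domination. The functional $t_0$ above is, up to normalization, the trace coming from integrating the point-evaluations $\eta_j(s)$ against Lebesgue measure; it corresponds to a specific interior point of $T_0(C)$ with all coordinates $x_i$ roughly proportional to $1/[n,i]$ (the formula \eqref{Aff-bd} forces the endpoint values once the $x_i$ are chosen). The claim is: there is $\sigma>0$, depending only on $C$ and $N_0$, so that $S_\kappa$ contains a point $t$ with $t\ge\sigma\,t_0$ pointwise on $C_+$. To get this, note $t_0$ itself lies in $S_{\kappa_0}$ for its own $\kappa_0$; given a general admissible $\kappa$, the difference $\kappa-\kappa_0$ is a small perturbation controlled by $N_0$, and the affine slice $S_\kappa$ is a translate of $S_{\kappa_0}$ inside the compact convex $T(C)$. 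Since $N_0\kappa(x)>1$ forces $\kappa$ to lie in the interior of the state space with margin $\ge 1/N_0$, a compactness argument on the (finite-dimensional) space of admissible $\kappa$'s yields a uniform $\sigma$ such that $S_\kappa\cap\{t:t\ge\sigma t_0\}\ne\emptyset$. Concretely I would: (i) write $t_0$ in coordinates, (ii) show the coordinates $x_i$ of any $t\in S_\kappa$ are bounded below by $c/N_0$ for a constant $c=c(C)$, using that $\theta_i$ contributes to $[1_{\widetilde C}]$ with positive multiplicity and that $N_0\kappa>1$ on generators, (iii) use \eqref{Aff-bd} to get a corresponding lower bound on the $g_j$, (iv) conclude $t\ge\sigma t_0$ with $\sigma$ built from these bounds and the matrix sizes $\{n,i\},[n,j]$.

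The main obstacle I expect is step (ii): converting the K-theoretic inequality $N_0\kappa(x)>1$ on $K_0(\widetilde C)_+\setminus\{0\}$ into a genuine \emph{uniform} lower bound on the spectral-multiplicity coordinates of the corresponding trace, and checking that the face $S_\kappa$ is genuinely nonempty for every such $\kappa$ (not just generically) — this requires care with the torsion in $K_0(\widetilde C)$ and with the fact that $\rho_C$ is defined on $K_0(C)$ while the normalization lives on $K_0(\widetilde C)$. Once that bound is in hand the domination by $\sigma t_0$ and the identification $\kappa(x)=\rho_C(x)(t)$ are routine bookkeeping with the description of $\mathrm{Aff}(T_0(C))$ from 4.23 and the formula for $T$ in terms of the projections $\psi_j$.
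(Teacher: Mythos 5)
There is a genuine gap at the heart of your argument: you never actually produce, for a single admissible $\kappa$, a tracial functional realizing $\kappa$ that dominates $\sigma t_0$. Your steps (ii)--(iv) cannot do it, because a trace realizing $\kappa$ may be singular with respect to Lebesgue measure on the interval parts of the spectrum (for instance a combination of the endpoint/quotient evaluations $\theta_i$ and finitely many interior point evaluations), and no lower bound on finitely many coordinates of such a trace gives the pointwise inequality $t(h)\ge\sigma\int T(\lambda(h)(s))\,d\mu(s)$ for all $h\in C_+$ --- test it on $h$ supported away from the atoms of that trace. Consequently the compactness argument over the admissible $\kappa$'s is circular: to run it you would already need, for each fixed $\kappa$ in the (closed) admissible set, the nonemptiness of $S_\kappa\cap\{t:\,t\ge\sigma_\kappa t_0\}$ for some $\sigma_\kappa>0$, and that is precisely the content of the lemma. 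The realization step is also glossed: Blackadar--R{\o}rdam produces a trace on $\td C$, which may have a component through the quotient trace $\tau_\C$, and ``rescaling'' it into $T(C)$ while keeping $\kappa([1_{\td C}])=1$ and the pairing on all of $K_0(\td C)$ is not available.

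The paper's proof supplies exactly the missing construction, and it is worth internalizing because the margin $N_0\kappa(x)>1$ is used differently from what you propose. One subtracts the Lebesgue pairing \emph{first}: with $p_1,\dots,p_s\in M_m(\td C)$ minimal projections generating $K_0(\td C)_+$ (3.15 of \cite{GLN}) and the uniform bound $T(p)\le mk$ for projections of size $m$, the hypothesis $N_0\kappa>1$ on these generators shows that $\Gamma:=\kappa-\sigma_0\,T\circ(\bt_0^\sim\circ\pi_e)_{*0}$ is still a positive homomorphism on $K_0(\td C)$ for an explicit $\sigma_0$ depending only on $m$, $k$ and $N_0$. Since $\pi_{e*0}:K_0(\td C)\to K_0(F_1^\sim)$ is an order embedding (3.5 of \cite{GLN}), Goodearl--Handelman (Theorem 3.2 of \cite{GH}) extends $\Gamma$ to a positive functional on $K_0(F_1^\sim)$, i.e.\ to nonnegative weights $\af_i$ on the simple summands, and one then \emph{defines} $t$ as $\sigma$ times the Lebesgue trace plus $\sum_i\af_i\,\bar t_i\circ q_i\circ\pi_e^\sim$. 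This trace realizes $\kappa$ on $K_0(\td C)$ by construction and has the domination by $\sigma t_0$ built in, with $\sigma$ explicit rather than obtained by a compactness argument. So the role of $N_0\kappa(x)>1$ is to absorb the subtraction of the Lebesgue component, not to bound the coordinates of an arbitrary trace in $S_\kappa$; without this (or an equivalent) idea your outline does not close.
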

\begin{proof}
Let us write $F_1=\bigoplus_{i=1}^lM_{R(i)}.$  Denote by $q_i: F_1\to M_{R(i)}$ the projection map
and $\bar t_i$ the tracial state of $M_{R(i)},$
$i=1,2,...,l.$   Let $\pi_e: C\to F_1$   {{be}} the quotient map and
$T: K_0(F_2)\to \R$ be defined by $T(x)={{\sum_{j=1}^k \rho_{M_{r(j)}}\circ \psi_{j*0}(x)({\rm tr}_j)}}$
for all $x\in K_0(F_2).$

Define \hm s $\bt_0': \C\to F_2$
by $\bt_0'(1_\C):=1_{F_2}-\bt_0(1_{F_1})$ and $\bt_1'(1_\C):=1_{F_2}-\bt_1(1_{F_1})$
(at least one of them is not zero as $C$ is not unital).
We may write $\td C=A(F_1^\sim, F_2, \bt_0^\sim , \bt_1^\sim),$
where $F_1^\sim=F_1\oplus \C$ and  $\bt_i^\sim=\bt_i\oplus \bt_i',$ $i=0,1.$
Denote by  $q_{l+1}: F_1^\sim \to \C$  the projection map and
$\pi_e^\sim: \td C\to F_1^\sim$  {{the}} quotient map which extends $\pi_e.$
{{ Recall {{that}} $T([\bt_0^\sim (\pi_e^\sim (1_{\td C}))])=k$ and for any projection $p\in M_m(\tilde{C})$, $T(p)\leq mT([\bt_0^\sim (\pi_e^\sim (1_{\td C}))])=mk.$}}

Let $p_1, p_2,...,p_s\in M_m(\td C)$ be a set of minimal projections for some integer $m\ge 1$
such that they generate $K_0(\td C)_+$ (see 3.15 of \cite{GLN}).
There is $\sigma_{0}>0$ such that ${{\sigma_{0}}}m{{k}}<1/2.$ 
Choose $\sigma:=\sigma_0/2N_0.$ Since $N_0\kappa(x)>1$ for all $x\in K_0(\td C)_+\setminus \{0\},$
for any $p\in \{p_1,p_2,...,p_s\},$  {{we have}}
\beq
N_0\kappa([p])-\sigma_0T\circ (\bt_0\circ \pi_e)(p)>0.
\eneq
Define $\Gamma: K_0(\td C)\to \R$  by
\beq
\Gamma(x)=\kappa(x)-\sigma_0 T\circ  (\bt_0^\sim \circ \pi_e)_{*0}(x)\rforal x\in K_0(\td C).
\eneq
By 3.5  of \cite{GLN},  $\pi_{e*0}:K_0(\td C)\to K_0(F_1^\sim)$ is an order embedding.
It follows from Theorem 3.2 of \cite{GH} that there is an order preserving \hm\,
$\Gamma^\sim: {{K_0(F_1^\sim)}}\to
\R$ such that $\Gamma^\sim\circ \pi_{e*0}=\Gamma.$
Put $\af_j:=\Gamma^\sim([e_j])\ge 0,$
where $e_j:=q_j\circ \pi_e^\sim (1_{\td C}),$ $j=1,2,...,l+1.$
Define, for $({{f}},b)\in C^\sim={{\{}}C([0,1], F_2)\oplus F_1^\sim: {{f}}(0)=\bt_0^\sim(b)\andeqn {{f}}(1)=\bt_1^\sim(b)\},$
\beq
t(({{f}},b))=\sigma{{\sum_{j=1}^k\int_{s\in (0,1)} {\rm tr}_j(f)(s) dm(s)}}+\sum_{i=1}^{l+1}\af_i {\bar t}_i(q_i(\pi_e^\sim(b))).
\eneq
For any projection $p=(p, \pi_e^\sim (p))\in M_N({\tilde C})$ (for some $N\ge 1$), {{we have}}
\beq
\rho_{C}([p])(t)=t(p, \pi_e^\sim(p))=\sigma{{\sum_{j=1}^k\int_{s\in (0,1)} {\rm tr}_j(p)(s) dm(s)}}+\sum_{i=1}^{l+1}\af_i {\bar t}_i(q_i(\pi_e^\sim(p)))\\
=\sigma_0T\circ  (\bt_0^\sim \circ \pi_e)_{*0}([p])+\Gamma^\sim([\pi_e^\sim(p)])=\kappa([p]).
\eneq
Moreover, if $h=({{f}}, b)\in C_+,$  {{we have}}
\beq
t(h)\ge \sigma{{\sum_{j=1}^k\int_{s\in (0,1)} {\rm tr}_j(f)(s) dm(s)=\sigma \int_{s\in [0,1]}T(\lambda(h))d\mu(s)}}.
\eneq
\end{proof}

\begin{lem}\label{LHahn-B-1}
{{Let $C\in {\cal C}_0$ and let $G_1\subset \Aff(\DT)$ be a countable subgroup
such that $G_1\cap \Aff_+(\DT)=\{0\},$ where $\DT$ is a metrizable Choquet simplex.
Let $\eta: K_0({\tilde C})\to G_1+\Z1_{\DT}$ be an order preserving \hm,
where $1_\DT\in \Aff(\DT)$ is the constant function with value 1 (with
$\Aff_+(\DT)$ as the (strictly) positive cone of $\Aff(\DT)${{--see \ref{DTtilde}}}).
Then there is a morphism $\eta^\sim: {\rm Cu}^\sim({{\tilde C}})\to G_1\sqcup {\rm LAff}^\sim_+(\DT)$
in ${\bf Cu}$ such that $\eta^\sim|_{K_0(\td C)}=\eta.$}}
\end{lem}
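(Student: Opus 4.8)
The plan is to build the morphism $\eta^\sim$ by specifying it separately on the two pieces of ${\rm Cu}^\sim(\tilde C) = K_0(\tilde C) \sqcup {\rm LAff}^\sim_+(\DT)$ — here I am using that $C\in{\cal C}_0$ has stable rank one (Proposition 3.3 of \cite{GLN}), so by Theorem 6.2.3 of \cite{Rl} (as invoked in the proof of \ref{TWtraceD}) the Cuntz semigroup $\widetilde{{\rm Cu}}$ of $\tilde C$ decomposes in this way, with the compact (i.e.\ $K_0$) part being exactly $K_0(\tilde C)$ and the "soft" part being ${\rm LAff}^\sim_+(T(C))$. Since $C$ is an Elliott--Thomsen building block, $T(C)$ may be identified with a compact metrizable Choquet simplex; but the target in the statement is a \emph{fixed} simplex $\DT$ together with a prescribed order-preserving map $\eta$ to $G_1 + \Z 1_\DT \subset \Aff(\DT)$. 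On the $K_0(\tilde C)$ piece I would simply set $\eta^\sim|_{K_0(\tilde C)} = \eta$, landing in $G_1 \subset G_1\sqcup {\rm LAff}^\sim_+(\DT)$ (the compact elements of the target), which is forced by the requirement $\eta^\sim|_{K_0(\tilde C)}=\eta$ and is legitimate precisely because $G_1\cap \Aff_+(\DT)=\{0\}$ means $\eta$ carries no "positive" information that would have to be matched by a genuine lower-semicontinuous function.

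The substance is defining $\eta^\sim$ on the soft part and checking it glues with $\eta$ to a ${\bf Cu}$-morphism. First I would use \ref{Lstrictpositive} (the lemma just proved): the order-preserving homomorphism $\eta$ on $K_0(\tilde C)$, suitably normalized on $[1_{\tilde C}]$, produces a trace $t\in T(C)$ compatible with $\eta$ in the sense that $\rho_C(x)(t) = \eta(x)$ for all $x\in K_0(\tilde C)$, and moreover $t$ dominates a positive multiple $\sigma$ of the Lebesgue-type trace $h\mapsto \int_0^1 T(\lambda(h)(s))\,d\mu(s)$. This lower bound is exactly what is needed to guarantee that the induced map on positive elements is faithful, hence that it sends nonzero soft Cuntz classes to nonzero elements of ${\rm LAff}^\sim_+(\DT)$. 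Concretely, I would send a soft class $\widehat{\langle a\rangle}\in {\rm LAff}_+(T(C))$ to the function $\xi\mapsto d_{t}$-type evaluation — that is, compose with the affine map $T(C)\to$ point (since $\DT$ here receives a \emph{constant}, i.e.\ $t$ is a single trace determining the image in the one-dimensional cone $\R_+\subset {\rm LAff}^\sim_+(\DT)$ when $\DT$ is a point, or more generally pull back along a chosen continuous affine surjection). Actually the cleanest formulation: take the canonical map ${\rm Cu}(\tilde C)\to {\rm LAff}_+(\widetilde{T(C)})$ (the map $\langle a\rangle\mapsto \widehat{\langle a\rangle}$ of \ref{Dfep}), post-compose with the trace evaluation at $t$ to land in $\R_+$, and then embed $\R_+ = \R_+\cdot 1_\DT\hookrightarrow {\rm LAff}^\sim_+(\DT)$. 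One then extends from ${\rm LAff}_+$ to ${\rm LAff}^\sim$ additively.

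The remaining work — and the main obstacle — is verifying that the two prescriptions assemble into a morphism in the category ${\bf Cu}$: order preservation, preservation of suprema of increasing sequences, preservation of the "way-below" relation $\ll$, and above all \emph{compatibility at the interface} between compact and soft elements (the condition that if a compact $x$ satisfies $x \le y$ with $y$ soft in $\widetilde{{\rm Cu}}(\tilde C)$, then $\eta(x)\le \eta^\sim(y)$ as functions, and the dual condition). This is where the faithfulness lower bound from \ref{Lstrictpositive} is essential: it forces strict inequalities between distinct values to be preserved so that the order structure of $\widetilde{{\rm Cu}}(\tilde C)$ — which is governed by comparison of $K_0$-data together with lower-semicontinuous dimension functions — maps correctly. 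I expect this verification to be routine but notation-heavy, following the pattern of the Cuntz-semigroup computations in \cite{Rl} and in the analogous constructions in \cite{eglnp}; I would organize it by reducing, via the structure of ${\cal C}_0$-algebras, to checking the conditions on a generating set of compact elements (the minimal projections $p_1,\dots,p_s$ of \ref{Lstrictpositive}) and on the dense set of soft elements coming from $C([0,1],F_2)$, where everything is computed by the explicit trace formula for $t$.
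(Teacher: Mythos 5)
There is a genuine gap, and it is in the definition of $\eta^\sim$ on the non-compact part. You propose to send a soft class $\la a\ra$ to the \emph{constant} function $\widehat{\la a\ra}(t)\cdot 1_\DT$ obtained by evaluating at a single trace $t\in T(C)$. This cannot work unless $\eta$ happens to take only constant values: if $p$ is a projection with $\eta([p])\in G_1+\Z 1_\DT$ non-constant on $\DT$ (which the hypotheses allow and which the intended applications, where $\eta$ encodes a general pairing into $\Aff(\DT)$, require), then for any small positive $b$ the class $\la p\oplus b\ra$ is non-compact, so your recipe assigns it the constant $(\hat p(t)+d_t(b))\cdot 1_\DT$, while additivity of a ${\bf Cu}$-morphism forces $\eta^\sim(\la p\oplus b\ra)=\eta([p])+\eta^\sim(\la b\ra)$, which is non-constant. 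So your map is not even additive, and the mixed compact/soft order relations (where comparison in the target $G_1\sqcup{\rm LAff}^\sim_+(\DT)$ is pointwise on $\DT$) fail for the same reason. The displayed compatibility ``$\rho_C(x)(t)=\eta(x)$'' also does not typecheck: the left side is a scalar and the right side a function on $\DT$; in effect your construction proves only the degenerate case where $\DT$ is a point, and the vague alternative ``pull back along a chosen continuous affine surjection'' is exactly the missing construction. A secondary problem is your starting decomposition: ${\rm Cu}^\sim(\td C)=K_0(\td C)\sqcup{\rm LAff}^\sim_+(T(\td C))$ (Robert's 6.2.3) is used in the paper only for \emph{simple} algebras in ${\cal D}$; for the non-simple building block $\td C$ the paper instead works with the concrete picture of ${\rm Cu}^\sim(\td C)$ inside $K_0(\td C)\sqcup LSC([0,1],\Z^{l})\oplus\Z^{k+1}$ (3.6 of \cite{GLII}), i.e.\ integer-valued rank functions on the spectrum, not real-valued functions on traces.

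What is actually needed is a family of traces affinely parametrized by $\DT$, equivalently a strictly positive, order-unit preserving linear map $\gamma$ from the ambient space $C([0,1],\R^l)\oplus\R^{k+1}$ (which contains $\rho(K_0(\td C))$, using injectivity of $\rho$ on $K_0(\td C)$) into $\Aff(\DT)$ with $\gamma|_{K_0(\td C)}=\eta$. The paper builds this by writing $\Aff(\DT)=\lim_n(\R^{a(n)},\lambda_n)$ (Lemma 5.1 of \cite{Tsang}), lifting $\eta$ to order-preserving maps $\eta_n:K_0(\td C)\to\R^{a(n)}$ (possible because $K_0(\td C)_+$ is finitely generated), and applying Lemma \ref{Lstrictpositive} \emph{coordinate-wise} to each state $q_i\circ\eta_n$; the uniform lower bound $\gamma_{n,i}(\hat h)\ge(\sigma/2)\int T(\hat h)\,d\mu$ is what makes the limit map strictly positive, so that its canonical extension to the lower semicontinuous rank functions lands in ${\rm LAff}^\sim_+(\DT)$ and glues with $\eta$ to a morphism in ${\bf Cu}$. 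Your single application of Lemma \ref{Lstrictpositive} recovers one coordinate of this family; the essential missing idea is the passage from one trace to an $\Aff(\DT)$-valued extension.
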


\begin{proof}
Write $C=A(\bt_0, \bt_1, F_1, F_2),$
where $F_1=\bigoplus_{i=1}^{k}M_{R(i)},$ $F_2=\bigoplus_{j=1}^{l} M_{r(i)},$
and $\bt_i: F_1\to F_2$ are \hm s.
Recall that $\Aff(T(C))$ is identified as a subspace of
$C([0,1], \R^l)\oplus \R^k$ and {{${\rm Cu}^\sim(\td C)$}} is identified with
a subgroup of
\beq
K_0({{\tilde C}})\sqcup LSC([0,1], \Z^{{ l}})\oplus \Z^{{k+1}}
\eneq
(see  3.6 of \cite{GLII}).
Since $K_0(\td C)_+$ is finitely generated (see 3.15 of  \cite{GLN}) by  $g_1,g_2,...,g_s,$
there exists an integer $N_0\ge 1$  such that
\beq
N_0\eta(x)>1_\DT\rforal x\in {{K_0(\td C)_+\setminus\{0\}}}.
\eneq
Let $\sigma>0$ be given by Lemma \ref{Lstrictpositive} for $2N_0.$

Recall that the map $\rho: K_0({{\tilde C}})\to \Aff(T({{\tilde C}}))$ is injective (see 3.5 of \cite{GLN} for example).
By Lemma 5.1 of \cite{Tsang}, we may write $\Aff(\DT)=\lim_{n\to\infty}(\R^{a(n)}, \lambda_n),$
where $a(n)\ge 1$ are integers and $\lambda_n: \R^{a(n)}\to \R^{a(n+1)}$ is an order preserving  map
which also {{preserves}} the canonical order unit.
Let $G_2:=\eta(K_0(C)).$ Note that $G_2$ is a finitely generated subgroup of $G_1.$
Therefore there is a sequence of subgroups $G_{n,2}\subset \R^{a(n)}$
such that
$G_2=\lim_{n\to\infty} (G_{n,2}, \lambda_n)$ and ${\lambda_n}|_{{G_{n,2}}}$ is injective.
Let $G_{n,2}^\sim=G_{n,2}+\Z\cdot 1$ and $G_2^\sim:=G_2+\Z \cdot 1_{\DT}.$

Since $K_0({\tilde C})_+$ is finitely generated, one obtains  (for all large $n$) an
order preserving map $\eta_n: K_0({\tilde C})\to \R^{a(n)}$ such
that $\lambda_{n, \infty}\circ \eta_n=\eta,$ where
$\lambda_{n, \infty}$ is induced by the inductive limit system.
There exists $n_1\ge 1$ such that, for all $n\ge n_1,$
\beq
2N_0\eta_n(x)>1\rforal x\in K_0(\td C)_+\setminus \{0\}.
\eneq
Write $\R^{a(n)}=\bigoplus_{i=1}^{a(n)}\R_i.$
Define $q_i: \R^{a(n)}\to \R_i$ {{to}} be the projection.
Consider the order preserving map $q_i\circ \eta_n: K_0({\tilde C})\to \R$ which
{{preserves}} the order unit.
Since $\rho$ is injective, we may view $K_0({\tilde C})$ as {{an order}} subgroup of $C([0,1], \R^l)\oplus \R^{k+1}.$
By Lemma \ref{Lstrictpositive},
there is an order preserving  {{map}} $\gamma_{n,i}: C([0,1], \R^l)\oplus {{\R^{k+1}}}\to \R_i$
such that ${\gamma_{n,i}}|_{K_0({\tilde C})}=q_i\circ \eta_n$ and
\beq\label{LHB-nn1}
\gamma_{n,i}(\hat{h})\ge (\sigma/2)\int_{[0,1]}T(\hat{h})(t) d\mu,
\eneq
where $h\in {\tilde C}_+\setminus \{0\},$ {{which}} we identify  {{with}} the corresponding element in $C([0,1], F_2),$
and $\hat{h}$ is the associated affine function
(for all large $n\ge n_1$).

Define $\gamma_n: C([0,1], \R^l)\oplus \R^{{k+1}}\to
\R^{a(n)}$ by
$\gamma_n(g)={{(\gamma_{n,1}(g), \gamma_{n,2}(g),..., \gamma_{n,a(n)}(g))}}$ for all   {{$g$.}} 
Note {{that}} ${\gamma_n}|_{K_0(\td C)}=\eta_n$ for all $n.$
Define
$\gamma:C([0,1], \R^l)\oplus \R^{k+1} \to \Aff(\DT)$
by $\gamma(g):=\lambda_{n, \infty}\circ \gamma_n.$
 Then $\gamma$ is {{an}} order preserving map
which preserves the order unit. It is linear and therefore continuous (with {{the supremum}} norm).
The condition $\lambda_{n, \infty}\circ \eta_n=\eta$ implies
that $\gamma|_{K_0(\td C)}=\eta.$
Note {{also that}} the condition \eqref{LHB-nn1} implies  that, for each $f\in (C([0,1], \R^{{l}})\oplus \R^{{k+1}})_+\setminus \{0\},$
\beq
\gamma(f)(t)>0\rforal t\in \DT.
\eneq
Then $\gamma^\sim$ extends to an order preserving affine map
from $LSC([0,1], \Z^{{l}})\oplus \Z^{{k+1}}$ to ${\rm LAff}^\sim_+(\DT).$
Define $\eta^\sim:{{ {\rm Cu}^\sim(\td C)}} \to G_1\sqcup {\rm LAff}_+^\sim (\DT)$ by
$\eta^\sim|_{K_0(\td C)}=\eta$ and {{$\eta^\sim|_{LSC([0,1], \Z^{{ l}})\oplus \Z^{{k+1}}}=
\gamma^\sim|_{LSC([0,1], \Z^{{ l}})\oplus \Z^{{k+1}}}.$}}
It is then straightforward to verify that   {{$\eta^\sim$}}
 is a map in ${\bf Cu}.$

\end{proof}

\begin{thm}\label{TExtCtoD}
{{Let $A\in {\cal D}$  {{be}} with continuous scale and $C\in {\cal C}_0$.}}
Suppose that there is a strictly positive \hm\, $\af: K_0({\tilde C})\to K_0({\tilde A})$ such
that $\af([1_{\tilde C}])=[1_{\tilde A}]$ and $\af(K_0(C))\subset K_0(A).$
Then there exists a \hm\, $h: C\to A$ such that $h_{*0}=\af.$
%
\end{thm}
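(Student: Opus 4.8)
The plan is to realize the homomorphism $h: C\to A$ via the Cuntz semigroup existence theorem of Robert (Theorem 1.0.1 of \cite{Rl}), exactly in the spirit of the arguments already used in \ref{4.25}, \ref{Tmodel2} and the proof of \ref{TWtraceD}. Since $C\in {\cal C}_0$ has stable rank one (Proposition 3.3 of \cite{GLN}) and $A\in {\cal D}$ has stable rank one with ${\rm Cu}(A)=\LAff_+(\td T(A))$ and ${\rm Cu}^\sim(A)=K_0(A)\sqcup \LAff_+^\sim(\td T(A))$ (by 11.5, 11.8 and 7.3 of \cite{eglnp}, together with Theorem 6.2.3 of \cite{Rl}), it suffices to construct a morphism $\Gamma: {\rm Cu}^\sim(C)\to {\rm Cu}^\sim(A)$ in the category ${\bf Cu}$ which on $K_0(\tilde C)$ restricts to $\af$, which is compatible with the tracial data (sends strictly positive elements appropriately and respects the pairing), and which sends the class of a strictly positive element of $C$ to something no larger than a strictly positive element of $A$ (so that the resulting homomorphism lands in $A$ rather than needing a larger hereditary subalgebra). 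Robert's theorem then produces $h: C\to A$ with ${\rm Cu}^\sim(h)=\Gamma$, and in particular $h_{*0}=\af$.

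The key steps I would carry out, in order, are as follows. First, I would record the structure of ${\rm Cu}^\sim(\tilde C)$ for $C\in {\cal C}_0$: writing $C=A(F_1,F_2,\bt_0,\bt_1)$, one has ${\rm Cu}^\sim(\tilde C) = K_0(\tilde C)\sqcup LSC([0,1],\Z^l)\oplus \Z^{k+1}$ as in 3.6 of \cite{GLII}, and the positive cone $K_0(\tilde C)_+$ is finitely generated by 3.15 of \cite{GLN}. Second, since $A$ has continuous scale, $T(A)$ is compact and serves as a base $\DT$ for $\td T(A)$, and ${\rm LAff}_+^\sim(\td T(A))$ is identified with $K_0(A)\sqcup{\rm LAff}_+^\sim(\DT)$ in the notation of \ref{LHahn-B-1}. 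The composed map $\af$ followed by $\rho_A$ gives an order preserving homomorphism $\eta:= \rho_A^\sim\circ\af: K_0(\tilde C)\to \Aff(T(A))$; because $A\in{\cal D}$ is stably projectionless and $\af(K_0(C))\subset K_0(A)$, and by \ref{Ct=0} there is $t_o\in T(A)$ with $\rho_A(x)(t_o)=0$ for all $x\in K_0(A)$, the image of $\af|_{K_0(C)}$ under $\rho_A$ lands in a subgroup of $\Aff(T(A))$ meeting $\Aff_+(T(A))$ only in $\{0\}$. Third, I would invoke the Hahn--Banach type extension result \ref{LHahn-B-1} (applied with $\DT=T(A)$, $G_1=\overline{\rho_A(K_0(A))}$ or a suitable countable subgroup containing $\rho_A(\af(K_0(C)))$ with $G_1\cap\Aff_+(\DT)=\{0\}$) to produce a morphism $\eta^\sim: {\rm Cu}^\sim(\tilde C)\to G_1\sqcup{\rm LAff}_+^\sim(\DT)$ in ${\bf Cu}$ extending $\eta$. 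Fourth, I would assemble $\Gamma$ from $\af$ on the $K_0$ part and from $\eta^\sim$ on the soft/${\rm LAff}$ part, checking it is a well-defined ${\bf Cu}$-morphism (continuity, order, suprema, the compact-containment structure at the $K_0$ level), that it is compatible with $\af$, and that it maps $\la e_C\ra$ into $\LAff_+^\sim(\td T(A))$ dominated by $\la e_A\ra$; the latter uses strict positivity of $\af$ together with the continuous scale of $A$ (so $\widehat{\la e_A\ra}\equiv 1$ on $T(A)$) and the freedom to rescale via a strictly positive element. Fifth, apply Theorem 1.0.1 of \cite{Rl} to get $h$.

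The main obstacle I anticipate is the bookkeeping at the boundary between the ``hard'' (projection) part $K_0(\tilde C)$ and the ``soft'' part of ${\rm Cu}^\sim$: one must ensure that $\Gamma$ respects compact containment $\ll$ across this boundary, i.e., that classes which are compact in ${\rm Cu}^\sim(\tilde C)$ map to classes with the right compactness/order behaviour in ${\rm Cu}^\sim(A)$, and that the positivity inequality \eqref{LHB-nn1}-type estimate (forcing $\gamma(f)(t)>0$ for nonzero positive $f$) is genuinely available here --- this is exactly where \ref{Lstrictpositive} and \ref{LHahn-B-1} do the real work, so the obstacle is really in verifying their hypotheses (order-unit preservation, the bound $N_0\eta(x)>1_\DT$ on the finitely many generators of $K_0(\tilde C)_+$) rather than in any new construction. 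A secondary point needing care is that $\af$ is only assumed to be a strictly positive homomorphism with $\af([1_{\tilde C}])=[1_{\tilde A}]$ and $\af(K_0(C))\subset K_0(A)$; I must check that ``strictly positive'' in the sense of \ref{range4.1}/the relevant earlier definition is exactly what is needed to guarantee $\eta(x)=\rho_A^\sim(\af(x))>0$ on all of $T(A)$ for $x\in K_0(\tilde C)_+\setminus\{0\}$ with $\pi_\C$-rank positive, so that \ref{LHahn-B-1} applies; this should follow since any such $x$ has positive image under the tracial state $\tau_\C^{\tilde A}$ factoring through $\pi_\C$ together with the vanishing of $\rho_A$ on $K_0(A)$ at $t_o$ and strict positivity elsewhere, but the argument must be spelled out to avoid circularity with the stably projectionless hypothesis.
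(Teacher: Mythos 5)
Your proposal follows essentially the same route as the paper's proof: extend $\af$ to a ${\bf Cu}$-morphism using Lemma \ref{LHahn-B-1} (whose hypotheses are met because $\rho_A(K_0(A))\cap\Aff_+(T(A))=\{0\}$ for $A\in{\cal D}$), use the identification ${\rm Cu}^\sim(A)=K_0(A)\sqcup\LAff_+^\sim(T(A))$ and the continuous scale of $A$ (so $d_\tau(e_A)=1$) to dominate the class of a strictly positive element of $C$ by $\la e_A\ra$, and then apply Theorem 1.0.1 of \cite{Rl} to produce $h$ with $h_{*0}=\af$. The compatibility and boundary checks you flag as obstacles are exactly what Lemmas \ref{Lstrictpositive} and \ref{LHahn-B-1} supply, so your outline is correct and matches the paper's argument.
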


\begin{proof}
{{Since $A$ is {{a}} stably projectionless ${\cal Z}$-stable simple {{\CA\,}} with stable rank one {{(all {{\CA s}} in ${\cal D}$ {{have}} stable rank one, see 11.11 of \cite{eglnp})}},
by  6.2.3 of \cite{Rl} (see also  7.3  of \cite{eglnp} and 6.11 of  \cite{RS}),
\beq
{\rm Cu}^\sim(A)= K_0(A)\sqcup {\rm LAff}_+^\sim (T(A)).
\eneq
By Lemma \ref{LHahn-B-1}, $\af$ extends to a morphism  $\af^\sim: {\rm Cu}^\sim ({{\tilde C}})\to {\rm Cu}^\sim({{\tilde A}})$
in ${\bf Cu}.$   Let $e_C\in C$ be a strictly positive element with $\|e_C\|=1.$ Then
$\af^\sim(\la e_C\ra) \le [1_{\tilde C}].$ Let $e_A\in A$ be a strictly positive element. Since $A$ has
continuous scale, $d_\tau(e_A)=1$ for all $\tau\in T(A).$
Therefore, by 7.3 of \cite{eglnp} (see also 6.10 of \cite{RS}),
$\af^\sim(\la e_C\ra)\le \la e_A\ra$ in ${\rm Cu}^\sim(A).$
Since $A$ has stable rank one, by
Theorem 1.0.1 of \cite{Rl},  {{there}} is
a \hm\, $h: C\to A$ such that ${\rm Cu}^\sim(h)=\af^\sim{{|_{{\rm Cu}^{\sim}(C)}}}.$ In particular, $h_{*0}=\af.$}}
\end{proof}
%

\section{Reduction}

This section is a nonunital version of the corresponding results in {{\cite{EGLN}}}.
Most of the results are taken from \cite{EGLN} with some modification.

\begin{lem}[Lemma 3.1 of \cite{EGLN}]\label{0.5}
Let $A$ be a non-unital simple  separable amenable quasidiagonal C*-algebra satisfying the UCT. Assume that {{$A\cong A\otimes Q$.}}

Let a finite subset $\mathcal G$ of ${\tilde{A}}\otimes Q$ and $\ep_1, \ep_2>0$ be given. Let $p_1, p_2, ..., p_s\in  M_m(\tilde{A}\otimes Q)$
(for some integer $m\ge 1$)
be projections such that
$[1], [p_1], [p_2], ..., [p_s]\in K_0(\tilde{A}\otimes Q)$ are $\Q$-linearly independent. (Recall that $K_0({\tilde{A}}\otimes Q)\cong K_0(({\tilde A}\otimes Q)\otimes Q)\cong K_0(\tilde{A}\otimes Q)\otimes \Q$.) There {{are}}
a $\mathcal G$-$\ep_1$-multiplicative completely positive {{linear}} map $\sigma: \tilde{A}\otimes Q\to Q$ with $\sigma(1)$ a projection satisfying
$$\mathrm{tr}(\sigma(1))<\ep_2$$
(where $\mathrm{tr}$ denotes the unique {{tracial}} state on $Q$), and $\delta>0$,
such that, for any $r_1, r_2, ..., r_s\in\Q$ with
$$|{r_i}|<\dt,\  i=1, 2,..., s,$$
there is a $\mathcal G$-$\ep_1$-multiplicative completely positive  {{linear}} map $\mu: \tilde{A}\otimes Q\to Q $, with $\mu(1)=\sigma(1)$, such that
$$[\sigma(p_i)]-[\mu(p_i)]=r_i,\quad i=1,2, ...,s.$$

\end{lem}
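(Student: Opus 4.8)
\textbf{Proof proposal for Lemma \ref{0.5}.}

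The plan is to exploit quasidiagonality together with $Q$-stability to manufacture the map $\sigma$ by a standard finite-dimensional approximation, and then to produce $\mu$ as a perturbation of $\sigma$ obtained by ``rotating'' the images of the projections $p_i$ inside a small matrix corner of $Q$. First I would invoke quasidiagonality of $A$ (hence of $\tilde A$, which is quasidiagonal since $A$ is, being an essential ideal of finite codimension): there is a sequence of \cpc s $\psi_n : \tilde A \to M_{k(n)}$ that are asymptotically multiplicative and asymptotically isometric. Tensoring with $\mathrm{id}_Q$ and using $Q \cong Q\otimes Q$, I get approximately multiplicative maps $\tilde A \otimes Q \to M_{k(n)} \otimes Q$; composing with a unital embedding $M_{k(n)}\to Q$ (possible since $Q$ is UHF and we may arrange $k(n)$ to divide the relevant supernatural number, or just absorb $M_{k(n)}$ into $Q$ after another tensor factor) yields, for $n$ large, a $\mathcal G$-$\ep_1$-multiplicative \cpc\ $\psi : \tilde A \otimes Q \to Q$ with $\psi(1)=1$. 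Then I cut down: choose a projection $e \in Q$ with $\mathrm{tr}(e) < \ep_2$ and set $\sigma(x) = v^* \psi(x) v$ where $v : eQe \hookrightarrow Q$ realizes $eQe \cong M_l \otimes Q'$ appropriately; more precisely, using $Q\cong Q\otimes Q$ write $\psi = \psi' \otimes \mathrm{id}$ and set $\sigma(x) = \psi'(x)\otimes e$ for a projection $e\in Q$ with $\mathrm{tr}(e)<\ep_2$. This is $\mathcal G$-$\ep_1$-multiplicative and $\sigma(1)=1\otimes e$ is a projection with trace $<\ep_2$.

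Next I would control the $K_0$-data. The images $[\sigma(p_i)] \in K_0(Q) = \Q$ are the rational numbers $\mathrm{tr}(e)\cdot[\psi'(p_i)]$; what matters is only that the $p_i$ live in a finite-dimensional piece of the picture and that $K_0(Q)=\Q$ is divisible. To build $\mu$, I will perturb $\sigma$ within a corner: pick mutually orthogonal, mutually equivalent projections $f_1,\dots,f_M \in \sigma(1)Q\sigma(1)$ with $\sum f_j = \sigma(1)$, $M$ large; this is possible since $\sigma(1)Q\sigma(1)\cong Q$ is UHF. For $\vec r$ with $|r_i|<\delta$ small, I want to replace $\sigma(p_i)$ by a projection $\mu(p_i)$ whose class differs by exactly $r_i$. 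The mechanism: since the $p_i$ are $\Q$-linearly independent in $K_0(\tilde A\otimes Q)$ and $K_0(Q)=\Q$ is divisible, for $\delta$ small enough each $r_i$ is realized (after scaling by a suitable common divisor, using that within $Q$ every positive rational is a trace of a projection) as the $K_0$-difference of two projections sitting in a small subalgebra $M_N\otimes \mathbb{1}\subset Q$ that commutes (up to the tolerance $\ep_1$) with the range of $\sigma$. Concretely, I would take $\mu$ to agree with $\sigma$ composed with an inner perturbation that, on the finite-dimensional model $\psi'$, shifts the rank of the approximating projection for $p_i$ by the integer $N r_i$ inside a matrix block of size $N$; the map $\mu$ is then $\mathcal G$-$\ep_1$-multiplicative by the same estimates as for $\sigma$ (one only changes a small corner), has $\mu(1)=\sigma(1)$, and satisfies $[\sigma(p_i)]-[\mu(p_i)] = r_i$ by construction. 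Here the $\Q$-linear independence of $[1],[p_1],\dots,[p_s]$ guarantees that the assignment $p_i \mapsto \mu(p_i)$ can be made consistently (the perturbations do not interfere with one another in $K_0$), and $\delta$ is chosen after $\sigma$ (i.e.\ after $M$ and $N$) so that $Nr_i$ stays in the available range of ranks.

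The main obstacle I expect is the simultaneous and consistent realization step: ensuring that a single $\mathcal G$-$\ep_1$-multiplicative map $\mu$ (not just a formal $K_0$-element) can be produced whose $K_0$-classes on all $p_i$ are prescribed, while keeping $\mu(1)=\sigma(1)$ fixed and the multiplicativity tolerance unchanged. This is exactly where $\Q$-linear independence of the projection classes is used — it lets one pass from ``the $r_i$ are realizable one at a time'' to ``they are realizable all at once by one map'' — and where quasidiagonality plus $Q\cong Q\otimes Q$ does the heavy lifting, since it reduces everything to a matrix-algebra computation where changing the rank of an approximating projection is elementary. The bookkeeping of which finite-dimensional block absorbs each perturbation, and the order of choosing $\ep_1$-tolerances, $M$, $N$, and finally $\delta$, is the delicate part; the rest is routine. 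I would also remark that the argument is essentially that of Lemma 3.1 of \cite{EGLN}, the only change being that one works with $\tilde A$ (keeping track of the unit separately) rather than a unital $A$, and this causes no difficulty because $A$ is an essential ideal of finite codimension in $\tilde A$ and $[1]$ is included among the $\Q$-linearly independent classes.
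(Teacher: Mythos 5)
There is a genuine gap at the heart of your construction of $\mu$. The paper's proof is essentially a citation: it runs the proof of Lemma 3.1 of \cite{EGLN} verbatim, and the only new content is (a) the observation that the one place where that proof uses simplicity is in exhibiting the algebra as the closure of an increasing sequence of unital amenable RFD subalgebras, which for the non-simple algebra $\tilde A\otimes Q$ is supplied instead by Blackadar--Kirchberg: $A$ simple, separable, amenable, quasidiagonal is strong NF, hence $\tilde A\otimes Q$ is strong NF, hence locally RFD; and (b) the reduction of $m>1$ to $m=1$ by dividing the classes by $m$ in $K_0(\tilde A\otimes Q)\cong K_0(\tilde A\otimes Q)\otimes\Q$. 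Your proposal bypasses this locally RFD structure and works only with the asymptotically multiplicative cpc maps coming from plain quasidiagonality, and this is exactly where it breaks down: the values $\mu(p_i)$ are not free parameters but are determined (up to small error) by the single linear map $\mu$, so you cannot ``shift the rank of the approximating projection for $p_i$'' for each $i$ independently and then declare the result to be a $\mathcal G$-$\ep_1$-multiplicative map; moreover an ``inner perturbation'' of $\sigma$ changes no $K_0$ class at all, so the mechanism you name cannot produce $[\sigma(p_i)]-[\mu(p_i)]=r_i\neq 0$.

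The substantive content of the lemma is precisely the step you defer: showing that \emph{every} small rational vector $(r_1,\dots,r_s)$ is simultaneously realizable by one genuine cpc map $\mu$ with $\mu(1)=\sigma(1)$ and the same multiplicativity tolerance. In the EGLN argument this is done by approximating the data inside an honest RFD subalgebra, using actual finite-dimensional $*$-representations (not merely almost-multiplicative cpc maps), and adjusting the multiplicities with which these representations occur, compensating inside the small corner of trace $<\ep_2$ so that $\mu(1)$ is unchanged; the $\Q$-linear independence of $[1],[p_1],\dots,[p_s]$ is what removes the constraints that would otherwise force relations among the $[\mu(p_i)]$ and $[\mu(1)]$. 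Your remark that ``$\Q$-linear independence lets one pass from realizable one at a time to realizable all at once'' states the conclusion rather than proving it, and the bookkeeping you call delicate is in fact the whole proof. Finally, your claim that working with $\tilde A$ ``causes no difficulty'' misses the actual issue the paper has to address: the RFD exhaustion in \cite{EGLN} is derived using simplicity, which fails for $\tilde A\otimes Q$, and the strong NF route through \cite{BK2} and \cite{BK1} is the needed substitute.
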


\begin{proof}
Let us first consider the case $m=1.$
The proof  in this case is exactly the same as that of Lemma 3.1 of \cite{EGLN}. The only place in that proof
{{mentioning}} simplicity is the lines shortly after equation (3.1), where one claims
that the algebra is the closure of an increasing sequence of  unital amenable RFD \CA s.
We will replace this part as follows: Since $A$ is simple,  separable, amenable and quasidiagonal,
by Corollary 5.5 of \cite{BK2}, $A$ is a strong NF-algebra.  Since $Q$ is a strong NF-algebra,
by  {{Corollary}} 7.1.6 of \cite{BK1}, ${\tilde A}\otimes Q$   is a strong NF-algebra.  It follows from {{Corollary}}
6.16 of \cite{BK1} that, indeed, ${\tilde A}\otimes Q$ is the closure of an increasing sequence of  unital amenable RFD \CA s.
The rest of the proof  of this case remains   the same.

If $m>1,$ one {{notes}} that there are $p_i'\in {\tilde A}\otimes Q$ such that $[p_i']=(1/m)[p_i],$ $i=1,2,...,s.$
Then, $[1], [p_1'],...,[p_s']$ are $\Q$-linearly independent.  {{Replace}} $r_i$ by $r_i/m,$ $i=1,2,...,s.$
If $\sigma([p_i'])-\mu{{([p_i'])}}=r_i/m,$ $i=1,2,...,s,$ then
$\sigma{{([p_i])}}-\mu([p_i])=r_i,$ $i=1,2,...,s.$

\end{proof}

\begin{cor}[Lemma 3.1 of \cite{EGLN}]\label{0.5c}
Let $A$ be a non-unital simple  separable amenable quasidiagonal C*-algebra satisfying the UCT. Assume that $A\cong A\otimes Q.$

Let a finite subset $\mathcal G$ of ${\tilde{A}}$
and $\ep_1, \ep_2>0$ be given. Let $p_1, p_2, ..., p_s\in M_m(\tilde{A})$
(for some integer $m\ge 1$)
be projections such that
$[1], [p_1], [p_2], ..., [p_s]\in K_0(\tilde{A})$ are $\Q$-linearly independent.
There {{are}}  a $\mathcal G$-$\ep_1$-multiplicative completely positive {{linear}} map $\sigma: \tilde{A}\to Q$ with $\sigma(1)$ a projection satisfying
$$\mathrm{tr}(\sigma(1))<\ep_2$$
(where $\mathrm{tr}$ denotes the unique {{tracial}} state on $Q$), and $\delta>0$,
such that, for any $r_1, r_2, ..., r_s\in\Q$ with
$$|{r_i}|<\dt,\  i=1, 2,..., s,$$
there is a $\mathcal G$-$\ep_1$-multiplicative completely positive {{linear}} map $\mu: \tilde{A}\to Q $, with $\mu(1)=\sigma(1)$, such that
$$[\sigma(p_i)]-[\mu(p_i)]=r_i,\quad i=1,2, ...,s.$$

\end{cor}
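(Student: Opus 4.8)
The final statement, Corollary \ref{0.5c}, is an immediate consequence of Lemma \ref{0.5}. The plan is to deduce it by tensoring with $Q$ and exploiting $A \cong A \otimes Q$.

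First I would set up the reduction. Since $A \cong A \otimes Q$, we have $\tilde A \cong \widetilde{A \otimes Q}$ as well (one should be slightly careful: $\tilde A$ is not literally $\tilde A \otimes Q$, but since $A \otimes Q \cong A$ we have $\widetilde{A\otimes Q} \cong \tilde A$). More usefully, $\tilde A \otimes Q \cong \widetilde{A \otimes Q} \otimes Q$ (up to the standard identification), and since $A \otimes Q \cong A$ this gives a unital embedding-type picture relating $\tilde A$ and $\tilde A \otimes Q$. I would fix an isomorphism $\Phi\colon \tilde A \to \tilde A \otimes Q$ coming from $A \cong A\otimes Q$ extended unitally (this uses the well-known fact that a $*$-isomorphism between non-unital algebras extends uniquely to a unital $*$-isomorphism of the unitizations). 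Then $\Phi$ carries the given finite set $\mathcal G \subset \tilde A$ to a finite set $\mathcal G' = \Phi(\mathcal G) \subset \tilde A \otimes Q$, and it carries the projections $p_1,\dots,p_s \in M_m(\tilde A)$ to projections $\Phi(p_i) \in M_m(\tilde A \otimes Q)$ with $[\Phi(1)]=[1]$, $[\Phi(p_i)]$ still $\Q$-linearly independent in $K_0(\tilde A \otimes Q)$, because $\Phi_{*0}$ is an isomorphism $K_0(\tilde A) \to K_0(\tilde A \otimes Q)$.

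Next I would apply Lemma \ref{0.5} to the algebra $A$ (it satisfies the hypotheses of that lemma verbatim: non-unital, simple, separable, amenable, quasidiagonal, UCT, $A \cong A \otimes Q$), with the finite set $\mathcal G'$, the constants $\ep_1,\ep_2$, and the projections $\Phi(p_1),\dots,\Phi(p_s)$. This produces a $\mathcal G'$-$\ep_1$-multiplicative c.p.c. map $\sigma'\colon \tilde A \otimes Q \to Q$ with $\sigma'(1)$ a projection of trace $<\ep_2$, a $\delta>0$, and, for each admissible tuple $(r_i)$ with $|r_i|<\delta$, a $\mathcal G'$-$\ep_1$-multiplicative c.p.c. map $\mu'\colon \tilde A \otimes Q \to Q$ with $\mu'(1)=\sigma'(1)$ and $[\sigma'(\Phi(p_i))] - [\mu'(\Phi(p_i))] = r_i$. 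Finally I set $\sigma = \sigma' \circ \Phi$ and $\mu = \mu' \circ \Phi$ as maps $\tilde A \to Q$. Since $\Phi$ is a $*$-homomorphism (indeed isomorphism), $\sigma$ and $\mu$ are $\mathcal G$-$\ep_1$-multiplicative, $\sigma(1) = \sigma'(1)$ is a projection with $\mathrm{tr}(\sigma(1))<\ep_2$, $\mu(1)=\sigma(1)$, and $[\sigma(p_i)] - [\mu(p_i)] = [\sigma'(\Phi(p_i))] - [\mu'(\Phi(p_i))] = r_i$. This is exactly the assertion of the corollary, with the same $\delta$.

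I do not anticipate a genuine obstacle here; the only point requiring a little care is the bookkeeping around unitizations — making sure the isomorphism $A \cong A\otimes Q$ really does induce an isomorphism $\tilde A \to \tilde A\otimes Q$ compatible with the distinguished unit and hence with $K_0$ of the unitizations (so that $\Q$-linear independence of $[1],[p_1],\dots,[p_s]$ is preserved). One could alternatively avoid even fixing $\Phi$ explicitly and instead run the argument of Lemma \ref{0.5} directly on $\tilde A$, observing that $\tilde A$ is itself a strong NF algebra (being an extension $0 \to A \to \tilde A \to \C \to 0$ with both ends strong NF, using Corollary 7.1.6 of \cite{BK1}), hence the closure of an increasing sequence of unital amenable RFD algebras; but the tensoring-with-$\Phi$ route is shorter and keeps the corollary a true corollary.
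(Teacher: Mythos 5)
There is one concrete error in your main route: the isomorphism $\Phi\colon \tilde A\to \tilde A\otimes Q$ that you "fix" does not exist. The isomorphism $A\cong A\otimes Q$ does extend unitally, but only to an isomorphism $\tilde A\cong \widetilde{A\otimes Q}$, and $\widetilde{A\otimes Q}=A\otimes Q+\C\cdot 1$ is a \emph{proper} C*-subalgebra of $\tilde A\otimes Q$: the quotient of $\tilde A$ by $A$ is $\C$, while the quotient of $\tilde A\otimes Q$ by $A\otimes Q$ is $Q$. Equivalently, $\tilde A$ has a character and $\tilde A\otimes Q$ has none (a character would restrict to a nonzero homomorphism from the simple infinite-dimensional algebra $Q$ to $\C$), so no such $\Phi$ can exist; and on $K$-theory there is no isomorphism either, since $K_0(\tilde A)=K_0(A)\oplus\Z$ whereas $K_0(\tilde A\otimes Q)\cong K_0(A\otimes Q)\oplus\Q$. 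Thus the step where you invoke "$\Phi_{*0}$ is an isomorphism" to transfer the $\Q$-linear independence of $[1],[p_1],\dots,[p_s]$ is not available as stated.

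The argument is easily repaired, and the repaired argument is exactly the paper's. Replace $\Phi$ by the canonical unital embedding $\iota\colon \tilde A\to \tilde A\otimes Q$, $a\mapsto a\otimes 1_Q$; then all you need is that $\iota_{*0}$ is injective, so that $[1],[\iota(p_1)],\dots,[\iota(p_s)]$ remain $\Q$-linearly independent in $K_0(\tilde A\otimes Q)$. This is the point the paper makes explicitly: the split exact sequence $0\to K_0(A\otimes Q)\to K_0(\tilde A\otimes Q)\to K_0(Q)=\Q\to 0$, together with $K_0(A)\cong K_0(A\otimes Q)$ (from $A\cong A\otimes Q$), exhibits $K_0(\tilde A)=K_0(A\otimes Q)\oplus\Z$ as a subgroup of $K_0(\tilde A\otimes Q)$. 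With $\iota$ in place of $\Phi$, your recipe $\sigma=\sigma'\circ\iota$, $\mu=\mu'\circ\iota$ (i.e., restricting the maps from Lemma \ref{0.5} to $\tilde A\otimes 1$) is precisely how the paper deduces the corollary, applying Lemma \ref{0.5} to $B=\tilde A\otimes Q$ with $\mathcal G\subset\tilde A\subset\tilde A\otimes Q$ and $p_i\in M_m(\tilde A)\subset M_m(\tilde A\otimes Q)$. Your closing alternative (rerunning the proof of Lemma \ref{0.5} directly on $\tilde A$ via strong NF approximations) is not needed and would amount to reproving the lemma rather than deducing the corollary.
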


\begin{proof}
Let $B=\tilde{A}\otimes Q.$  One has the following split short exact sequence
$$
0\to  A\otimes Q\to {\tilde A}\otimes Q\to Q\to 0.
$$
This gives the split short exact sequence
$$
0\to K_0(A\otimes Q)\to K_0({\tilde A}\otimes Q)\to \Q\to 0.
$$
Since $A\cong A\otimes Q,$ $K_0(A)=K_0(A\otimes Q)$ and $K_0(\tilde A)=K_0(A\otimes Q)\oplus \Z$ is a subgroup
of $K_0(\tilde{A}\otimes Q).$
 {{Apply}} Lemma \ref{0.5} to $B,$ {{and}} then choose $\sigma|_{\tilde{A}}$ and
$\mu|_{\tilde{A}}.$

\end{proof}

\begin{rem}\label{Rred-r1}
Let $A\cong A\otimes Q$ be a separable stably projectionless  simple \CA.
Suppose that $K_0(A)\not=\{0\}$.
Then there exists {{$x=[p]-k[1_{\tilde A}]\in K_0(A)\setminus \{0\}$}}, where $k\in \N$, and $p\in M_{n}({\tilde A})$. If $[p]=r[1_{\tilde A}]$ for some rational number $r\in \Q,$
then $x=(r-k)[1_{\tilde A}].$ Since $x\not=0,$ $r\not=k.$ But, then either $x=(r-k)[1_{\tilde A}]$ or $-x:=(k-r)[1_{\tilde A}]$ is a non-zero
positive element in $K_0(A).$ This contradicts the fact that $K_0(A)_+=\{0\}.$ In other words, $[p]$ and {$[1_{\tilde A}]$} are $\Q$-linearly independent.  {{Put}} $p_1:=p.$
Choose $r_1\not=0$ and let $[\sigma(p_1)]-[\mu(p_1)]=r_1.$ Then $[\mu](x)\not=[\sigma](x).$
In other words, at least one of the maps $\mu|_A$ and $\sigma|_A$ is not zero.

\end{rem}

\begin{lem}[Lemma 3.3 of \cite{EGLN}]\label{0.6}
Let $A$ be a non-unial  simple  separable  amenable quasidiagonal C*-algebra satisfying the UCT.
Assume that $A\cong A\otimes Q.$

Let ${\mathcal G}$ be a finite subset of $A$, let $\ep_1, \ep_2>0,$ and let $p_1, p_2,..., { {p_s}}\in  M_m({\tilde{A}})$
(for some integer $m$)
be projections such that $[1_A], [p_1], [p_2],...,[p_s]\in K_0(\tilde{A})$ are $\Q$-linearly independent.
There exists  $\dt>0$ satisfying the following condition.

Let $\psi_k: Q^l \to Q^r,$ $k=0,1,$ be unital homomorphisms, where  $l, r\in \{1, 2, ...\}.$
Set
$$\D=\{x\in \Q^l: (\psi_0)_{*0}(x)=(\psi_1)_{*0}(x)\}\subset \Q^l.$$
{There exists
a}  ${\mathcal G}$-$\ep_1$-multiplicative completely positive {{linear}} map $\Sigma: \tilde{A}\to Q^l$, such that $\Sigma(1_{\tilde A})$ is a projection, with the following properties:
$$\tau(\Sigma(1_{\tilde A}))<\ep_2,\quad \tau \in  T(Q^l),$$
$$ [\Sigma(1_{\tilde{A}})],\ [\Sigma(p_j)]\in  \D, \quad j=1,2,...,s,$$
 and, for any $r_1, r_2,...,r_s\in \Q^r$ satisfying
$$
|r_{i,j}|<\dt,
$$
where $r_i=(r_{i,1},r_{i,2},...,r_{i,r}),$ $i=1,2,...,s,$
there is a ${\mathcal G}$-$\ep_1$-multiplicative completely positive {{linear}} map $\mu: \tilde{A}\to Q^r$, with $\mu(1_{\tilde A})$ a projection, such that
$$ [\psi_0\circ \Sigma(p_i)]-[\mu(p_i)]
=r_i, \quad i=1,2,...,s,\tand
[\mu(1_{\tilde A})]=[\psi_0\circ \Sigma(1_{\tilde A})].
$$
\end{lem}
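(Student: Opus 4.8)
\textbf{Proof proposal for Lemma \ref{0.6}.}

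The plan is to bootstrap from Corollary \ref{0.5c} (the single-target version) and then package the finitely many target summands together while arranging that the relevant $K_0$-classes land in the ``compatibility'' subgroup $\D$. First I would apply Corollary \ref{0.5c} to $A$ with the given data $\mathcal G$, $\ep_1$, $\ep_2/2$ (say), and the projections $p_1,\dots,p_s$; this produces a $\mathcal G$-$\ep_1$-multiplicative \cpc\, $\sigma_0:\tilde A\to Q$ with $\sigma_0(1_{\tilde A})$ a projection of small trace, together with a threshold $\dt_0>0$, such that small rational perturbations of the classes $[\sigma_0(p_i)]$ are realizable by a companion map $\mu_0:\tilde A\to Q$ with $\mu_0(1_{\tilde A})=\sigma_0(1_{\tilde A})$. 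Since $Q\cong Q\otimes Q$, I can regard each coordinate algebra $Q$ in $Q^l$ as a copy of $Q$ and amplify $\sigma_0$ diagonally; the subtlety is that the diagonal amplification $x\mapsto(\sigma_0(x),\dots,\sigma_0(x))$ does \emph{not} automatically have $[\Sigma(p_j)]\in\D$, because $\D$ is cut out by the condition $(\psi_0)_{*0}=(\psi_1)_{*0}$ on $\Q^l$, and a constant tuple need not be $(\psi_0,\psi_1)$-invariant.

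To fix this, the second step is to correct the classes. Because $K_0(\tilde A)$ is (after tensoring with $\Q$) a $\Q$-vector space and $[1],[p_1],\dots,[p_s]$ are $\Q$-linearly independent, I can use the perturbation freedom in Corollary \ref{0.5c}: modifying $\sigma_0$ in each coordinate by an independent small rational amount, I can steer the tuple of traces $\big({\rm tr}(\sigma(p_i))\big)$ coordinatewise, and hence steer the class $[\Sigma(p_i)]\in\Q^l$, into $\D$. Concretely, $\D$ is a $\Q$-subspace of $\Q^l$ of finite index-complement, and the set of achievable perturbations of $([\sigma(p_i)])_i$ is an open subset of $(\Q^l)^s$ near the starting point; choosing $\ep_2$-small enough projections $\sigma(1_{\tilde A})$ keeps everything inside the allowed window while landing $[\Sigma(1_{\tilde A})]$ and each $[\Sigma(p_j)]$ in $\D$. (Note $[\Sigma(1_{\tilde A})]\in\D$ is automatic once $\Sigma(1_{\tilde A})$ is chosen to have the same trace in each coordinate and $\psi_0,\psi_1$ are unital, since both $(\psi_k)_{*0}$ send the class of a rank-one-in-each-summand tuple to the same thing — so only the $p_j$ require genuine correction.) This step also fixes the output threshold $\dt$: it should be taken smaller than $\dt_0$ rescaled by the norms of $(\psi_0)_{*0}$ and by $m$, so that perturbations of size $<\dt$ in $\Q^r$ pull back through $\psi_0$ to perturbations of size $<\dt_0$ in each coordinate of $\Q^l$.

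The third step is to produce $\mu$ for a given admissible perturbation $r=(r_1,\dots,r_s)$ with $|r_{i,j}|<\dt$. Here I compose: let $\tilde r_i\in\Q^l$ be obtained by distributing $r_i$ back across the $l$ coordinates of $\Sigma$ in a way compatible with $\psi_0$ (using that $(\psi_0)_{*0}:\Q^l\to\Q^r$ is surjective on the relevant classes, or more simply putting all of the correction in one coordinate and checking the sizes), and apply the perturbation clause of Corollary \ref{0.5c} in that coordinate to get a \cpc\, $\mu':\tilde A\to Q^l$ with $\mu'(1_{\tilde A})=\Sigma(1_{\tilde A})$ and $[\Sigma(p_i)]-[\mu'(p_i)]=\tilde r_i$. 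Then set $\mu:=\psi_0\circ\mu'$; it is $\mathcal G$-$\ep_1$-multiplicative (composing with a \hm\, preserves multiplicativity defect), $\mu(1_{\tilde A})$ is a projection equal to $\psi_0(\Sigma(1_{\tilde A}))$, and $[\psi_0\circ\Sigma(p_i)]-[\mu(p_i)]=(\psi_0)_{*0}(\tilde r_i)=r_i$ by construction. I expect the main obstacle to be exactly the bookkeeping in the second step: one must simultaneously (i) keep the corrective rational perturbations small enough that Corollary \ref{0.5c} still applies, (ii) land all of $[\Sigma(1_{\tilde A})],[\Sigma(p_1)],\dots,[\Sigma(p_s)]$ inside $\D$ while these classes are only constrained up to the $\Q$-linear independence of $[1],[p_1],\dots,[p_s]$, and (iii) do it uniformly so that a single $\dt$ works for all admissible $r$. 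Verifying that the linear-independence hypothesis really gives enough ``wiggle room'' to hit $\D$ — i.e. that the projection of the perturbation cone onto $\Q^l/\D$ is open around $0$ — is the crux, and it is precisely where the hypothesis that $[1],[p_i]$ are $\Q$-linearly independent (rather than merely distinct) gets used.
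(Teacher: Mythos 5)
Your step one is fine, but steps two and three contain genuine errors, and the "crux" you flag at the end is in fact a non-issue. The subtlety you build your second step around does not exist: since $\psi_0,\psi_1$ are \emph{unital}, the induced maps $(\psi_0)_{*0},(\psi_1)_{*0}\colon \Q^l\to\Q^r$ are group homomorphisms between $\Q$-vector spaces (hence $\Q$-linear) which both send $(1,\dots,1)$ to $(1,\dots,1)$, so they agree on every constant tuple $t\,(1,\dots,1)$. The diagonal map $\Sigma=(\sigma,\dots,\sigma)$ has exactly such classes, $[\Sigma(p_j)]=[\sigma(p_j)]\,(1,\dots,1)$ and $[\Sigma(1_{\tilde A})]=[\sigma(1_{\tilde A})]\,(1,\dots,1)$, so they lie in $\D$ automatically --- the very computation you use for $[\Sigma(1_{\tilde A})]$ applies verbatim to the $p_j$. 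Consequently your "correction" step is unnecessary; worse, as designed it cannot work: Corollary \ref{0.5c} only moves the tuple $([\sigma(p_i)])_i$ within a $\delta_0$-ball, and an open set around a point need not meet a prescribed proper $\Q$-subspace at all (openness says nothing about hitting $\D$, and $\Q^l/\D$ is a $\Q$-vector space, so there is no finite-index phenomenon to exploit). The $\Q$-linear independence of $[1],[p_1],\dots,[p_s]$ is needed to invoke Lemma \ref{0.5}/\ref{0.5c} at all, not to create wiggle room toward $\D$.

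The second gap is in your construction of $\mu$ and of $\dt$. In the statement, $\dt$ is quantified \emph{before} $\psi_0,\psi_1,l,r$, so it may not be "rescaled by the norms of $(\psi_0)_{*0}$"; moreover $(\psi_0)_{*0}$ need not be surjective, and "putting all of the correction in one coordinate" only yields targets proportional to one column of $(\psi_0)_{*0}$, so the preimages $\tilde r_i$ you need may not exist, let alone with size controlled independently of $\psi_0$. The argument the paper cites (Lemma 3.3 of \cite{EGLN}, with Lemma \ref{0.5} and Corollary \ref{0.5c} in place of Lemma 3.1 there) avoids any pullback: take $\dt=\delta_0$ and $\Sigma$ the diagonal of $\sigma$; note that for each coordinate $j$ of $Q^r$ one has $[\pi_j\circ\psi_0\circ\Sigma(p_i)]=[\sigma(p_i)]$ and $[\pi_j\circ\psi_0\circ\Sigma(1_{\tilde A})]=[\sigma(1_{\tilde A})]$; then apply the perturbation clause of Corollary \ref{0.5c} separately for each $j$ with the numbers $r_{1,j},\dots,r_{s,j}$ (all of modulus $<\delta_0$) to get $\mu_j\colon\tilde A\to Q$ with $\mu_j(1_{\tilde A})=\sigma(1_{\tilde A})$ and $[\sigma(p_i)]-[\mu_j(p_i)]=r_{i,j}$, and set $\mu:=(\mu_1,\dots,\mu_r)\colon\tilde A\to Q^r$. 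This gives $[\psi_0\circ\Sigma(p_i)]-[\mu(p_i)]=r_i$ and $[\mu(1_{\tilde A})]=[\sigma(1_{\tilde A})]\,(1,\dots,1)=[\psi_0\circ\Sigma(1_{\tilde A})]$, with $\dt$ depending only on $A,\mathcal G,\ep_1,\ep_2$ and the $p_i$, as required.
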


\begin{proof}
The proof is {{exactly}} the same as that {{of}} Lemma 3.3 of \cite{EGLN}  but using Lemma \ref{0.5} (and {{Corollary  \ref{0.5c}}}) above instead of
Lemma 3.1 in \cite{EGLN}.

\end{proof}

\begin{lem}[Lemma 3.4 of \cite{EGLN}]\label{0.7}

Let $A$ be a non-unital simple separable  amenable quasidiagonal C*-algebra satisfying the UCT.
Assume that $A\cong A\otimes Q.$

Let ${\mathcal G}\subset A$ be a finite subset, let $\ep_1, \ep_2>0$ and let $p_1, p_2,..., p_s \in M_m(\tilde{A})$
be
projections such that $[1_{\tilde A}], [p_1], [p_2],...,[p_s]\in K_0({\tilde A})$ are $\Q$-linearly independent.
Then there exists $\dt>0$ satisfying the following condition.

 Let $\psi_k: Q^l\to Q^r$, $k=0, 1$, be  unital homomorphisms, where $l, r\in\{1, 2, ...\}.$
Set $$\D=\{x\in \Q^l: (\psi_0)_{*0}(x)=(\psi_1)_{*0}(x)\}\subset \Q^l.$$
{There exists
a}
${\mathcal G}$-$\ep_1$-multiplicative completely positive {{linear}}  map $\Sigma: \tilde{A}\to Q^l$, such that $\Sigma(1_{\tilde A})$ is a projection, with the following properties:
$$\tau(\Sigma(1_{\tilde A}))<\ep_2,\quad \tau \in  T(Q^l),$$
$$[\Sigma(1_{\tilde A})],\  [\Sigma(p_i)]\in  \D,\quad  i=1,2,...,s,$$
and, for any $r_1, r_2,...,r_s\in \Q^l$ satisfying
$$
|r_{i,j}|<\dt,
$$
where $r_i=(r_{i,1},r_{i,2},...,r_{i,l}),$ $i=1,2,...,s,$
there is a ${\mathcal G}$-$\ep_1$-multiplicative completely positive {{linear}} map $\mu: {\tilde A}\to  Q^l$, with $\mu(1_{\tilde A})=\Sigma(1_{\tilde A})$, such that
\beq\nonumber
[\Sigma(p_i)]-[\mu(p_i)]=r_i,\quad {{i}}=1,2,...,s.
\eneq

\end{lem}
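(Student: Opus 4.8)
\textbf{Proof proposal for Lemma \ref{0.7}.}
The plan is to reduce Lemma \ref{0.7} to Lemma \ref{0.6} (Lemma 3.3 of \cite{EGLN}, our version) by a ``transport of structure'' argument along the two maps $\psi_0,\psi_1\colon Q^l\to Q^r$, exactly as in the proof of Lemma 3.4 of \cite{EGLN}. The difference between the two statements is that in Lemma \ref{0.6} the perturbing map $\mu$ lands in $Q^r$ and the prescribed change $[\psi_0\circ\Sigma(p_i)]-[\mu(p_i)]$ is measured in $Q^r$, whereas here we want $\mu$ landing in $Q^l$ with the change $[\Sigma(p_i)]-[\mu(p_i)]$ measured in $Q^l$ and subject to the compatibility constraint that $[\Sigma(1_{\tilde A})],[\Sigma(p_i)]\in\DT=\{x\in\Q^l:(\psi_0)_{*0}(x)=(\psi_1)_{*0}(x)\}$.

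First I would fix the data ${\mathcal G}$, $\ep_1,\ep_2$, $p_1,\dots,p_s$ and apply Lemma \ref{0.6} to obtain a number $\dt_0>0$; I would then take $\dt>0$ to be $\dt_0$ shrunk by a factor depending only on $l,r$ and on the matrix norms of $(\psi_0)_{*0},(\psi_1)_{*0}$ (these maps are given by integer ``multiplicity matrices'' since $\psi_0,\psi_1$ are unital homomorphisms between finite direct sums of copies of $Q$, so this dependence is harmless and uniform). Given $\psi_0,\psi_1$ and the target tolerances $r_1,\dots,r_s\in\Q^l$ with $|r_{i,j}|<\dt$, I would first produce the map $\Sigma\colon\tilde A\to Q^l$ from Lemma \ref{0.6}: it is ${\mathcal G}$-$\ep_1$-multiplicative, $\Sigma(1_{\tilde A})$ is a projection with $\tau(\Sigma(1_{\tilde A}))<\ep_2$, and $[\Sigma(1_{\tilde A})],[\Sigma(p_i)]\in\DT$. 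The point of the $\DT$-membership is precisely that $[\psi_0\circ\Sigma(x)]=[\psi_1\circ\Sigma(x)]$ for $x\in\{1_{\tilde A},p_1,\dots,p_s\}$, so the ``base point'' in $K_0(Q^r)$ does not depend on which of $\psi_0,\psi_1$ we use. Next I would invoke Lemma \ref{0.6} again (its $\mu$-conclusion) with the $\Q^r$-tolerances $\tilde r_i:=(\psi_0)_{*0}(r_i)$, whose entries are bounded by $\dt\cdot\|(\psi_0)_{*0}\|<\dt_0$, to get a ${\mathcal G}$-$\ep_1$-multiplicative map $\mu_0\colon\tilde A\to Q^r$ with $[\mu_0(1_{\tilde A})]=[\psi_0\circ\Sigma(1_{\tilde A})]$ and $[\psi_0\circ\Sigma(p_i)]-[\mu_0(p_i)]=\tilde r_i$. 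Finally one uses the quasidiagonality/RFD structure (as in Lemma \ref{0.5}, via \cite{BK1,BK2}: ${\tilde A}\otimes Q$ is a strong NF-algebra, hence a limit of unital amenable RFD algebras) to ``lift'' $\mu_0$ through the map $\psi_0$ — i.e.\ to produce a genuine ${\mathcal G}$-$\ep_1$-multiplicative map $\mu\colon\tilde A\to Q^l$ with $\mu(1_{\tilde A})=\Sigma(1_{\tilde A})$ and $(\psi_0)_{*0}([\Sigma(p_i)]-[\mu(p_i)])=\tilde r_i=(\psi_0)_{*0}(r_i)$, after which a small further correction inside the kernel of $(\psi_0)_{*0}$ (the construction being flexible enough, by Lemma \ref{0.5}, to realize any sufficiently small rational $K_0$-value on a fixed tiny projection) arranges the exact equality $[\Sigma(p_i)]-[\mu(p_i)]=r_i$ in $\Q^l$.

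I expect the main obstacle to be bookkeeping rather than any new idea: one must track the various $\Q$-linear independence hypotheses through the passage $r_i\mapsto(\psi_0)_{*0}(r_i)$ and back, and verify that the corrections needed to pass from ``equality after $(\psi_0)_{*0}$'' to ``equality in $\Q^l$'' can be absorbed while keeping all maps ${\mathcal G}$-$\ep_1$-multiplicative and keeping $\mu(1_{\tilde A})=\Sigma(1_{\tilde A})$ fixed. The key technical input making this work — and the only place simplicity and quasidiagonality enter — is Lemma \ref{0.5} (our nonunital replacement for Lemma 3.1 of \cite{EGLN}), which lets us freely prescribe small rational $K_0$-perturbations by approximately multiplicative maps into $Q$; everything else is the formal argument of Lemma 3.4 of \cite{EGLN} carried over verbatim, with $A$ unital replaced by $\tilde A$ and $1_A$ replaced by $1_{\tilde A}$ throughout.
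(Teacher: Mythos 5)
Your reduction of Lemma \ref{0.7} to Lemma \ref{0.6} breaks down at the ``lifting through $\psi_0$'' step, and this is a genuine gap rather than bookkeeping. Lemma \ref{0.6} hands you $\Sigma:\tilde A\to Q^l$ as a black box together with, for pushed-forward tolerances, a map $\mu_0:\tilde A\to Q^r$; but nothing in Lemma \ref{0.5} (or in the strong NF/RFD structure behind it) lets you pull an approximately multiplicative map into $Q^r$ back along the homomorphism $\psi_0:Q^l\to Q^r$, nor pull back its $K_0$ data compatibly with the unit. Worse, your final ``small correction inside $\ker(\psi_0)_{*0}$'' is exactly the statement being proved: realizing an arbitrary prescribed small vector $[\Sigma(p_i)]-[\mu(p_i)]\in\Q^l$ while keeping $\mu(1_{\tilde A})=\Sigma(1_{\tilde A})$ \emph{as a projection} requires access to the internal structure of $\Sigma$, because the perturbation property of Lemma \ref{0.5}/Corollary \ref{0.5c} is attached to the specific map $\sigma$ constructed there, not to an arbitrary \cpc\ with the right $K_0$ values; Lemma \ref{0.6} gives you neither that structure nor even equality of projections (only $[\mu_0(1_{\tilde A})]=[\psi_0\circ\Sigma(1_{\tilde A})]$ in $K_0$). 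There is also a quantifier problem: in Lemma \ref{0.7} the number $\dt$ must be chosen \emph{before} $l,r,\psi_0,\psi_1$ are given, so you may not shrink it by a factor depending on the matrices of $(\psi_k)_{*0}$ (these are row-stochastic with entries in $[0,1]\cap\Q$, not integer matrices, so no shrinking would in fact be needed, but the dependence as you set it up is not permitted).

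The intended proof (the paper follows Lemma 3.4 of \cite{EGLN} verbatim, with Lemma \ref{0.5}/Corollary \ref{0.5c} replacing Lemma 3.1 there) needs no detour through $Q^r$ at all. Let $\sigma:\tilde A\to Q$ and $\dt>0$ be given by Corollary \ref{0.5c} for ${\mathcal G},\ep_1,\ep_2$ and $p_1,\dots,p_s$; this $\dt$ is the one in the statement, independent of $l,r,\psi_0,\psi_1$. Given $\psi_0,\psi_1$, set $\Sigma=\sigma\oplus\cdots\oplus\sigma$ ($l$ copies). Since $\psi_0,\psi_1$ are unital, $(\psi_k)_{*0}:\Q^l\to\Q^r$ is given by a nonnegative rational matrix whose rows sum to $1$, so both maps agree on constant vectors; hence $[\Sigma(1_{\tilde A})]$ and $[\Sigma(p_i)]$, being constant vectors, lie in $\D$ automatically, and $\tau(\Sigma(1_{\tilde A}))=\mathrm{tr}(\sigma(1_{\tilde A}))<\ep_2$ for every $\tau\in T(Q^l)$. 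Given $r_i\in\Q^l$ with $|r_{i,j}|<\dt$, apply Corollary \ref{0.5c} in each coordinate $j$ to obtain $\mu_j:\tilde A\to Q$ with $\mu_j(1_{\tilde A})=\sigma(1_{\tilde A})$ and $[\sigma(p_i)]-[\mu_j(p_i)]=r_{i,j}$, and set $\mu=(\mu_1,\dots,\mu_l)$; then $\mu(1_{\tilde A})=\Sigma(1_{\tilde A})$ exactly and $[\Sigma(p_i)]-[\mu(p_i)]=r_i$. You correctly identified Lemma \ref{0.5}/Corollary \ref{0.5c} as the only substantive input, but it must be used coordinatewise on $Q^l$ directly; routing through Lemma \ref{0.6}, $Q^r$, a lift, and a kernel correction cannot be completed with the tools available.
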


\begin{proof}
The proof is  exactly the same as that of Lemma 3.4 of \cite{EGLN} {{also}} using Lemma \ref{0.5} (and Corollary  \ref{0.5c})
instead of Lemma 3.1 of \cite{EGLN}.

\end{proof}




\begin{lem}\label{T06}
Let $A$ be a non-unital simple separable  amenable 
\CA\,
with $ T(A)= T_{\mathrm{qd}}(A)\not=\emptyset$
which satisfies the UCT. Assume that $A\otimes Q\cong A$ and $A$ {{have}} continuous scale.

For any $\sigma>0$, $\ep>0,$ and any finite subset
${\mathcal F}$ of $A$,
there exist a finite set of
${{{\mathcal P}\subset K_0(A)}}$
and $\dt>0$ with the following property.

Denote by $G\subseteq K_0(A)$ the subgroup
generated by $\mathcal P$.
Let $\kappa: G\to K_0(C)$ be a
 homomorphism which extends to
 a positive \hm\, $\kappa^\sim : G+\Z\cdot [1_{\td A}]\to {{K_0( C)}}$
 such that $\kappa^\sim([1_{\td A}])=[1_C],$
where  {{$C=C([0,1], Q)$}}, and let $\lambda:  T(C)\to  T(A)$ be a continuous affine map
such that
\begin{equation}\label{07-1-1}
|\tau(\kappa(x)) -{ {\rho_A}}(\lambda(\tau))(x)|<\dt,\quad  x\in {\mathcal P},\ \tau\in T(C).
\end{equation}
Then
there {{is an}} ${\mathcal F}$-$\ep$-multiplicative completely positive {{linear}} map  $L: A\to C$ such that
\begin{equation}\label{07-1}
| \tau\circ L(a)-\lambda(\tau)(a)|<\sigma,\quad a\in\mathcal F,\ \tau\in  T(C).
\end{equation}
\end{lem}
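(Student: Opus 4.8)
\textbf{Proof proposal for Lemma \ref{T06}.}
The plan is to follow the strategy of Lemma 3.5 of \cite{EGLN}, adapting it to the non-unital, continuous-scale setting using the perturbation lemmas already established above. First I would reduce to the case where $A$ is quasidiagonal: since $T(A)=T_{\mathrm{qd}}(A)$ and $A$ is separable, amenable, satisfies the UCT and $A\cong A\otimes Q$, we may invoke the quasidiagonality results (cf. \cite{TWW}) to see that $A$ itself is quasidiagonal, so Lemmas \ref{0.5}, \ref{0.5c}, \ref{0.6} and \ref{0.7} all apply. The idea is then: the continuous affine map $\lambda\colon T(C)\to T(A)$ composed with the quasidiagonal approximation of each trace of $A$ by matricial traces (going through $Q$) yields, for a suitable finite $\mathcal G\supseteq\mathcal F$ and small $\ep_1$, a $\mathcal G$-$\ep_1$-multiplicative c.p.c.\ map $\Phi_0\colon A\to C([0,1],Q)$ which \emph{almost} realizes $\lambda$ on traces. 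The residual discrepancy is measured by $[\Phi_0]$ on the finite set $\mathcal P\subset K_0(A)$, and this is where the prescribed $K_0$-data $\kappa$ must be matched up: one needs a correction map whose $K_0$-class on $\mathcal P$ agrees with $\kappa$ while contributing an arbitrarily small trace, and this is exactly supplied by the perturbation mechanism of Lemma \ref{0.7} (applied with $\psi_0,\psi_1$ being the two endpoint evaluations $Q\to Q$, so $C([0,1],Q)$ appears as a mapping-torus-type algebra with $\mathfrak D$ the relevant rational kernel).

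In more detail, the key steps in order would be: (1) fix $\sigma,\ep,\mathcal F$; choose a finite subset $\mathcal P\subset K_0(A)$ large enough to detect, via $\rho_A$, the traces up to $\sigma/8$ on $\mathcal F$, and choose $\delta>0$ and an auxiliary finite set $\mathcal G\supseteq\mathcal F$, $\ep_1<\ep$ according to Lemma \ref{0.7} applied to $A$ with these data (here we use that $A\cong A\otimes Q$ so that $K_0(A)$ is a $\Q$-vector space and we may pick $\Q$-linearly independent projective generators of $G+\Z[1_{\tilde A}]$ over $\Q$, as in Remark \ref{Rred-r1}); (2) using $T(A)=T_{\mathrm{qd}}(A)$, produce for each $t\in[0,1]$ an approximately multiplicative c.p.c.\ map $A\to Q$ tracially close to $\lambda(\mathrm{ev}_t)$, and patch these together continuously in $t$ (using that $[0,1]$ is contractible and $Q$ is strongly self-absorbing) to obtain a $\mathcal G$-$\ep_1$-multiplicative $\Phi_0\colon A\to C([0,1],Q)$ with $|\tau\circ\Phi_0(a)-\lambda(\tau)(a)|<\sigma/4$ for $a\in\mathcal F$, $\tau\in T(C)$; (3) compare $[\Phi_0]|_{\mathcal P}$ with $\kappa|_{\mathcal P}$: by \eqref{07-1-1} the two classes are $\delta$-close as rational vectors, so Lemma \ref{0.7} provides a second $\mathcal G$-$\ep_1$-multiplicative c.p.c.\ map $\mu\colon \tilde A\to Q^{l}$ ($l$ finite, coming from a finite subdivision of $[0,1]$) realizing precisely the required $K_0$-correction with $\tau(\mu(1_{\tilde A}))<\sigma/8$; (4) form $L:=\Phi_1\oplus(\text{a copy of }\mu\text{ pulled back into }C([0,1],Q))$ where $\Phi_1$ is a slight compression of $\Phi_0$ absorbing the extra projection $\mu(1_{\tilde A})$ (possible because $C([0,1],Q)$ has trivial $K_1$ and cancellation of projections), so that $[L]|_{\mathcal P}=\kappa|_{\mathcal P}$ and $L$ is still $\mathcal F$-$\ep$-multiplicative; (5) check the trace estimate: since the $\mu$-part contributes at most $\sigma/8$ on traces and the $\Phi_1$-part is within $\sigma/4+\sigma/8$ of $\lambda(\tau)$ on $\mathcal F$, we get $|\tau\circ L(a)-\lambda(\tau)(a)|<\sigma$ for all $a\in\mathcal F$, $\tau\in T(C)$, which is \eqref{07-1}.

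The main obstacle I anticipate is step (2)–(3): coordinating the quasidiagonal approximations so that they vary \emph{continuously} along the interval $[0,1]$ while simultaneously keeping precise control of the induced $K_0$-map on $\mathcal P$, so that the discrepancy with $\kappa$ falls inside the tolerance $\delta$ furnished by Lemma \ref{0.7}. In the unital case of \cite{EGLN} this is handled by first working on a fine partition $0=t_0<t_1<\cdots<t_n=1$, realizing the matricial traces on each $Q^{\,l}$ with $l=n+1$, and then interpolating; the extra bookkeeping here is that $A$ is non-unital so one must pass to $\tilde A$ for the projections $p_i$ while ensuring the correction map restricts correctly to $A$ (as in the passage from Lemma \ref{0.5} to Corollary \ref{0.5c}, and as noted in Remark \ref{Rred-r1}), and one must use the continuous-scale hypothesis to guarantee $T(A)$ (hence $T(C)$) is compact so that the finitely many estimates on $\mathcal F$ and $\mathcal P$ can be made uniform in $\tau$. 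Once continuity in $t$ and the uniform estimates are in place, the rest is a routine assembly exactly paralleling Lemma 3.5 of \cite{EGLN}.
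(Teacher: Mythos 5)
There is a genuine gap, and it sits exactly where you flagged "the main obstacle": your step (2) asserts that the pointwise quasidiagonal approximations can be "patched together continuously in $t$ (using that $[0,1]$ is contractible and $Q$ is strongly self-absorbing)" to get a map into $C([0,1],Q)$ \emph{before} any $K_0$-correction. That patching is not a consequence of contractibility or self-absorption: two $\mathcal G$-$\delta$-multiplicative maps $\Psi_{j-1},\Psi_j\colon A\to Q$ attached to nearby points $t_{j-1},t_j$ need not be connectable by an $\mathcal F$-$\ep$-multiplicative path unless their partial $K_0$-data agree on $\mathcal P$ (and their traces are comparable, with a definite lower bound on $f_{1/2}(e)$). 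The paper's proof supplies this mechanism: it works on a fine partition, notes that the classes $[\Psi_j^\sim(p_i)]\in K_0(Q)=\Q$ at different nodes differ only by small rationals $r_{i,j}$, corrects them by Lemma \ref{0.5} inside a corner of trace $<\sigma_1/4$ so that \emph{consecutive} maps have identical $K_0$-data on $\mathcal P$, and only then interpolates on each subinterval by the uniqueness/homotopy result (Lemma 7.2 of \cite{eglnkk0}), finally concatenating. Your plan inverts this order (continuous map first, one global $K_0$-fix afterwards), and the first step cannot be carried out as stated; the segment-by-segment correction followed by stable uniqueness is the missing idea, not a routine assembly.

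A second, related misreading: the conclusion of the lemma is only the trace estimate \eqref{07-1}; the map $L$ is not required to induce $\kappa$ on $\mathcal P$, so your steps (3)–(4) aim at a target the lemma does not ask for. The hypothesis \eqref{07-1-1} is used for something else entirely: since $\kappa([p_i])$ is a fixed class in $K_0(C([0,1],Q))\cong\Q$, its trace pairing is \emph{constant} in $\tau\in T(C)$, and \eqref{07-1-1} then forces $\lambda_*(\widehat{p_i})$ to be nearly constant along $[0,1]$; this is precisely what bounds the endpoint discrepancies $r_{i,j}$ within the tolerance $\delta_2$ of Lemma \ref{0.5}. Reorienting the role of $\kappa$ this way (trace control rather than a $K_0$-class to be realized), and replacing the "contractibility" patching by the Lemma \ref{0.5} correction plus Lemma 7.2 of \cite{eglnkk0} interpolation, is what the proof actually needs; Lemma \ref{0.7} and the mapping-torus picture you invoke are not used here (they enter later, in Theorem \ref{Treduction1}).
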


\begin{proof}
Let $\ep,$ $\sigma$ and ${\mathcal F}$ be given. We may assume that every element of $\mathcal F$ has norm at most one.

Fix a strictly positive element  $e\in A$ with $\|e\|=1.$
Choose
$$
0<d<\inf\{\tau(f_{1/2}(e)):\tau\in T(A)\}.
$$
This is possible since $T(A)$ is compact.

Let $\delta_1$ (in place of $\delta$), $\mathcal G$, and $\mathcal P_1$
(in place of ${\cal P}$) be as assured by Lemma 7.2 of \cite{eglnkk0}
for $\mathcal F$
and $\ep,$  as well as $d.$
We may assume
that ${\cal F}
\subset {\cal G}$ and $f_{1/2}(e), f_{1/4}(e)\in {\cal G}.$

We may assume
${\cal P}_1=\{x_1,x_2,...,{ {x_{s'}}}\},$ where
$x_i=[q_i]-[{\bar q}_i],$
where $q_i\in M_m({\tilde{A}})$ is a projection and ${\bar q}_i\in M_m(\C \cdot 1_{\td A})$ is a scalar matrix.
Choose ${\cal P}^\sim =\{1_{\tilde A}, p_1, p_2,...,p_s\}$ such that $x_i$ is in the subgroup  $G_0$ generated by
$\{[1_{\tilde A}], [p_1], [p_2],...,[p_s]\}$ (in $K_0({\tilde A})$).
 Deleting one or more of $p_1, p_2, ..., p_s$ (but not $1_{\tilde A}$), we may assume that the set
$\{[1_A], [p_1],...,[p_s]\}$ is $\Q$-linearly independent.
Define ${\cal P}=\{[p_i]-[{\bar p}_i]: 1\le i\le s\},$
where ${\bar p}_i\in M_m(\C \cdot 1_{\td A})$ is a scalar matrix such that
$\pi_{\C}^A(p_i)={\bar p}_i$ ($1\le i\le s$), where $\pi_{\C}^A: \td A\to \C$ is the quotient map.

Note {{that}} $p_i={\bar p}_i+y_i,$ where ${\bar p_i}\in M_m(\C\cdot 1_{\tilde A})$ is a scalar projection and $y_i\in A,$
$i=1,2,...,s.$   Let $c_i$ be the rank of ${\bar p}_i,$ $i=1,2,...,s.$
We may assume that $G_0\cap K_0(A)$ is generated by ${\cal P}$ (by enlarging ${\cal P}$ if necessary).
Then $G_0$ is the subgroup of $K_0(\tilde A)$ generated by $[1_{\td A}]$ and $G.$
Therefore we may assume {{that}}
{{$[{\bar p}_i]=c_i[1_{\td A}]$}}
for some integer {{$1\le c_i\le m,$ }}  $i=1,2,...,s.$
Choose 
{{$M=1+m$}}  and
choose $0<\sigma_1<\min\{\sigma, d/32\}$ such that ${1-\sigma_1\over{1+\sigma_1}}>{127\over{128}}.$

Let $0<\dt_2<1$ (in place of $\dt$) be as assured by Lemma \ref{0.5} for
$\ep_1=\dt_1,$ $\ep_2=\sigma_1/4,$ ${\mathcal G}$, and $\{p_1,p_2,...,p_s\}.$
Write $y_i=(y^{(i)}_{j,k})_{m\times m},$ where $y^{(i)}_{j,k}\in A.$
Choose ${\cal G}_1=\{y^{(i)}_{j,k}: j,k, i\}$ and ${\cal G}_2={\cal G}\cup {\cal G}_1.$

Put $\dt_3=\min\{\dt_1/M, {\dt_2/32M}, d/128 \}.$
We choose $0<\dt<\dt_3$ and a finite subset ${\cal G}_3\supset {\cal G}_2$
such that, any ${\cal G}_3$-$\dt$-multiplicative \morp\, $L: A\to B$
(any unital \CA\, $B$),  $[L(p_i)]$ is well defined and
\beq\label{August 21-2020}
\|[L(p_i)]-L(p_i)\|<\dt_3,\,\,\,i=1,2,...,s.
\eneq
Let us show that $\mathcal P$ and $\delta$ are as desired.

Let $\kappa$ and $\lambda$ be given satisfying \eqref{07-1-1}. We will write
$\kappa$ for $\kappa^\sim$ for convenience.
{{{{Then}} recall  $\kappa([1_{\td A}])=[1_C].$}}
Note  that
\beq\label{829number}
\lambda(\tau)({\bar p}_i)=c_i=\tau(\kappa({\bar p}_i)),\,\,\, i=1,2,....
\eneq
{{So, as $\kappa$ is positive, we may identify $\kappa([p_i])$ with a
projection in $M_m(C)$ as $M_m(C)$ has stable rank one.}}
{{Hence, by \eqref{07-1-1} and by \eqref{829number}, for all $\tau\in { {T(C)}},$}}
\beq\label{2020-829-nl1}
{{|\tau(\kappa([p_i])) -{ {\rho_A}}(\lambda(\tau))([p_i])|<\dt,\,\,\,i=1,2,...,s.}}
\eneq
Let $\lambda_*: \Aff( T(A))\to \Aff( T(C))$ be defined by
$\lambda_*(f)(\tau)=f(\lambda(\tau))$ for all $f\in \Aff( T(A))$ and $\tau\in  T(C).$
Identify $\partial_e{( T(C))}$ with $[0,1]$, and put $\eta=\min\{\dt, \sigma_1/12\}.$
 Choose a partition
$$
0=t_0<t_1<t_2<\cdots <t_{n-1}<t_n=1
$$
of the interval $[0, 1]$ such that 
\begin{equation}\label{T06-2}
|\lambda_*(\hat{g})(t_j) - \lambda_*(\hat{g})(t_{j-1})| < \eta/m^2,\quad g\in {\mathcal G_3},\  j=1,2,...,n.
\end{equation}

Since $ T(A)= T_{{\rm qd}}(A),$  there are unital ${\mathcal G}_3$-$\dt$-multiplicative completely positive {{linear}} maps
$\Psi_j:  A\to Q$, $j=0, 1, 2,  ..., n$, such that
\begin{equation}\label{T06-3}
|\mathrm{tr}\circ \Psi_j(g)-\lambda_*(\hat{g})(t_j)|<\eta/m^2,\quad g\in {\mathcal G}_3.
\end{equation}
Denote by $\Psi_j^\sim: {\tilde A}\to Q$ the unitization (which maps $1_{\tilde A}$ to $1_Q$).
Recall that $[p_i]=m_i[1_{\tilde A}]+x_i,$ $i=1,2,...,s.$
It then follows from {{\eqref{August 21-2020},}}
\eqref{T06-3}, and {{\eqref{2020-829-nl1},}}
that, for each $i=1,2,...,s,$ and each $j=1, 2, ..., n$,
\beq\label{T06-4}
 |{\rm tr}([\Psi_j^\sim(p_i)])-{\rm tr}([\Psi_0^\sim(p_i)])| &<& |{\rm tr}(\Psi_j^\sim(p_i))-{\rm tr}(\Psi_0^\sim(p_i))|+{{2\dt_3}}\\
  &<& {{2 \dt_3}}+
  2\eta +|\lambda_*(\hat{p_i})(t_j) - \lambda_*(\hat{p_i})(t_{0})| \nonumber\\
&<&{{2\dt_3}}+
2\eta+2\dt
+|{\rm tr}\circ \pi_{t_j}(\kappa([p_i]))-{\rm tr}\circ\pi_0(\kappa([p_i])| \nonumber \\
&=&{{2 \dt_3}}+2\eta+2\dt
\leq  {{6\dt_3}}
\leq \delta_2.
\eneq
(Here, as before, $\pi_t$ is the point evaluation at $t\in [0,1]$.)
We also have, by (\ref{T06-2}) and (\ref{T06-3}), that
\beq\label{T06-4+n}
|\mathrm{tr}(\Psi_j(g))-\mathrm{tr}(\Psi_{j+1}(g))|<3\eta,\quad  g\in {\cal G}_3,\ j=1, 2, ..., n.
\eneq
Recall that $\lambda(\tau)(\widehat{f_{1/2}(e)})\ge d$ for all $\tau\in T(C).$
So we  also have, by \eqref{T06-2},
\beq\label{T06-4+nn}
{\rm tr}(\Psi_j(f_{1/2}(e)))>{31\over{32}}d.
\eneq
Consider the differences
\begin{equation}\label{1228-1}
r_{i,j}:={\rm tr}([\Psi_j^\sim(p_i)])-{\rm tr}([\Psi_0^\sim(p_i)]),\quad i=1,2,...,s,\ j=1,2, ...,n.
\end{equation}
Recall that $[\Psi_j^\sim(p_i)]$ and $[\Psi_0^\sim(p_i)]$ are in $K_0(Q).$
By \eqref{T06-4}, $|r_{i, j}|<\delta_2$. Applying Lemma \ref{0.5} we obtain a projection $e\in Q$ with $\mathrm{tr}(e)<\sigma_1/4$ and ${\mathcal G}$-$\dt_1$-multiplicative  unital completely positive {{linear}} maps
$\psi_0^\sim, \psi_j^\sim: {\tilde A}\to eQe$, $j=1,2,...,n,$ such that
\beq\label{T06-5-NN}
[\psi_0^\sim(p_i)]-[\psi_j^\sim(p_i)]=r_{i,j},\quad i=1,2,...,s,\  j=1,2,...,n.
\eneq

Consider the direct sum maps
$$\Phi_j'^\sim:=\psi_j^\sim\oplus \Psi_j^\sim: {{\tilde{A}}}\to (1\oplus e)\mathrm{M}_2(Q)(1\oplus e),\quad j=0, 1,2,...,n.$$
Since $\delta\leq \delta_1$, these {maps} are $\mathcal G$-$\delta_1$-multiplicative.  By  (\ref{1228-1}) and (\ref{T06-5-NN}),
\beq\label{T06-7}
[\Phi_j'^\sim(p_i)]=[\Phi_0'^\sim(p_i)],\quad i=1,2,...,s,\  j=1,2,...,n.
\eneq
Define $s: \Q\to \Q$ by $s(x)=x/(1+\mathrm{tr}(e))$, $x\in \Q.$
Choose a (unital) isomorphism $$S: (1\oplus e)\mathrm{M}_2(Q)(1\oplus e)\to Q$$ such
that $S_{*0}=s.$

Consider the composed maps, still $\mathcal G$-$\delta_1$-multiplicative, and now unital,
$$\Phi_j:=S\circ \Phi_j': A\to Q, \quad j=0,1, 2, ..., n.$$
By (\ref{T06-7}),
$$[\Phi_j]|_{\mathcal P}=[\Phi_{j-1}]|_{\mathcal P},\quad j=1, 2, ..., n,$$
and by (\ref{T06-4+n}) and the fact that $\mathrm{tr}(e)<\sigma_1/4,$
\begin{equation}\label{sm-tr-ed-pts}
|\mathrm{tr}\circ\Phi_j(a)-\mathrm{tr}\circ\Phi_{j-1}(a)|<3\eta+\sigma_1/4\leq \sigma/2,\quad a\in {\mathcal F},\ j=1, 2, ...,n.
\end{equation}
Moreover,  by \eqref{T06-4+nn} and by the choice of $\sigma_1,$
\beq
{\rm tr} (\Psi_j(f_{1/2}(e))\ge d/2,\,\,\, j=1,2,...,n.
\eneq
{{Applying Lemma 7.2 of \cite{eglnkk0}}}
successively for $j=1, 2, ..., n$ (to the pairs $(\Phi_0, \Phi_1)$, $(\mathrm{Ad}\, u_1\circ\Phi_1, \mathrm{Ad}\, u_1\circ\Phi_2)$, ..., $(\mathrm{Ad}\, u_{n-1}\circ\cdots\circ\mathrm{Ad}\, u_1\circ\Phi_{n-1}, \mathrm{Ad}\, u_{n-1}\circ\cdots\circ\mathrm{Ad}\, u_1\circ\Phi_{n})$),  {{one obtains,}}
for each $j$, a unitary $u_j\in Q$ and a unital
${\mathcal F}$-$\ep$-multiplicative completely positive {{linear}} map $L_j: A\to {\rm C}([t_{j-1}, t_{j}], Q)$ such that
\begin{equation}\label{prop-1}
\pi_0\circ L_1=\Phi_0, \quad \pi_{t_1}\circ L_1={\rm Ad}\, u_1\circ \Phi_1,
\end{equation}
and
\begin{equation}\label{prop-2}
\pi_{t_{j-1}}\circ L_j=\pi_{t_{j-1}}\circ L_{j-1},\quad \pi_{t_{j}}\circ L_j={\rm Ad}\, u_j\circ\cdots\circ\mathrm{Ad}\, u_1\circ \Phi_j, \quad j=2, 3, ..., n.
\end{equation}
Furthermore, in view of \eqref{sm-tr-ed-pts}, we may choose the maps $L_j$ such that
\begin{equation}\label{prop-tr}
|{\rm tr}\circ \pi_t\circ L_j(a)-\lambda({\rm tr}\circ \pi_t)(a)|<\sigma,\quad  t\in [t_{j-1}, t_{j}],\ a\in\mathcal F,\ j=1, 2, ..., n.
\end{equation}
%
Define $L: A\to {\rm C}([0,1], Q)$ by
$\pi_t \circ L= \pi_t \circ L_j,\quad t\in [t_{j-1}, t_{j}],\ j=1,2,...,n.$
Since $L_j$, $j=1, 2, ..., n$, are $\mathcal F$-$\ep$-multiplicative (use \eqref{prop-1} and \eqref{prop-2}), we have that $L$ is a
$\mathcal F$-$\ep$-multiplicative completely positive {{linear}} map $A \to \mathrm{C}([0, 1], Q)$.
It follows from \eqref{prop-tr} that $L$ satisfies \eqref{07-1}, as desired.
\end{proof}

\begin{NN}\label{R14}
Let $A$ be a separable stably projectionless simple \CA\, with continuous scale.
{{Recall that $\tau_\C^A$ is
the tracial {{state}} of $\td A$
which vanishes
on $A$ and
$T(\tilde A)=\{ s\cdot  t_\C^A+(1-s)\cdot \tau: \tau\in T(A),\, \, 0\le s\le 1\}$ (see \ref{range4.1}).}}
For each projection $p\in M_m(\tilde A),$ one may write
$p={\bar p}+a,$ where ${\bar p}$ is a scalar matrix in $M_m(\C\cdot 1_{\tilde A})$ and $a\in M_m(A)_{s.a.}.$
Let ${\bar p}$ have rank $k(p).$ { {For}} each $\tau\in T(A),$  {{define}} $r_{\tilde A}(\tau)([p])=k(p)+\tau(a)$ and $r_{\tilde A}(\tau_\C^A)([p])=k(p).$ {{This gives a map $r_{\tilde A}: T({\tilde A}) \to {\rm Hom}(K_0({\tilde A}), \R)$. Let $r_{ A}: T({ A}) \to {\rm Hom}(K_0({ A}), \R)$ be defined by $r_{A}(\tau)=r_{\tilde A}(\tau)|_{K_0(A)}$ for any $\tau\in T(A)$ and $r^{\sim}_{ A}: T({ A}) \to {\rm Hom}(K_0({\tilde A}), \R)$ be defined by $r^{\sim}_{A}=r_{\tilde A}|_{T(A)}$.}}



Suppose that $C$ {{is}} another  separable stably projectionless simple
\CA\, with continuous scale and
suppose that there is an isomorphism
\beq
\Gamma: (K_0(A),  T(A), {{\rho_A}})\cong (K_0(C), T(C), \rho_C).
\eneq
Recall that {{this}} means $\Gamma|_{K_0(A)}$ is a group
isomorphism, $\Gamma|_{T(A)}$ is an affine homeomorphism,  and ${{\rho_A}}(\Gamma^{-1}(\tau))(x)=\rho_C(\tau)(\Gamma(x))$
for $x\in K_0(A)$ and $\tau\in T(C).$

Then $\Gamma$ extends to an order isomorphism
$$\Gamma^\sim:
(K_0(\tilde A), K_0({\tilde A})_+, [1_{\tilde A}],  T(\tilde A), r_{\tilde A})\to (K_0({\tilde C}), K_0({\tilde C})_+,
[1_{\tilde C}], T({\tilde C}), r_{{\tilde C}})$$
by defining $\Gamma^\sim([1_{\tilde A}])=[1_{\tilde C}]$ and $\Gamma^\sim(\tau_\C^A)=\tau_\C^C.$
To see this,  we note  that $\Gamma^\sim |_{K_0(\tilde A)}$ is an isomorphism and $\Gamma^\sim _{T(\tilde A)}$
is an affine homeomorphism.
{{If}} $y:=m\cdot [1_{\tilde A}]+x\ge 0$ for some positive integer  $m$ and $x\in K_0(A),$
then there is a projection $p\in M_K(A)$ for some integer $K\ge 1$ such that $[p]=y.$
Assume that $y\not=0.$ Then $p\not=0.$
Choose $a\in { {(pAp)}}_+^{\bf 1}\setminus \{0\}.$   Then $a\le p.$ It follows that $\tau(a)>0$ for all $\tau\in T(A).$
Therefore $\tau(p)>0$ for all $\tau\in T(A).$    This also means
that $m+r_A(\tau)(x)>0$ for all $\tau\in T(A).$

On the other hand, $\pi_\C^A(p)\not=0,$
where $\pi_\C^A: {\tilde A}\to \C$ is the quotient map. It follows that $\tau_\C^A(p)>0.$
This implies that $t(p)>0$ for all $t\in T(\tilde A).$  One checks that, for $\tau\in T(C),$
\beq
r_A^\sim ((\Gamma^\sim)^{-1}(\tau))(y)={ {r_A^\sim}}(\Gamma^{-1}(\tau)(m+x))
=m+r_A(\Gamma^{-1}(\tau))(x)> 0.
\eneq
Also
\beq
r_A^\sim((\Gamma^\sim)^{-1}{{(\tau_\C^C))}}(y)=r_A(\tau_\C^A)(y)=m>0.
\eneq
This implies that
\beq
r_C^\sim(t)(\Gamma^\sim(y))=r_A^\sim((\Gamma^\sim)^{-1})(y)>0.
\eneq
Therefore $\Gamma^\sim$ is an order isomorphism.
\end{NN}

\begin{lem}\label{Lcontinuous}
Let $A$ be a non-unital  but $\sigma$-unital  simple \CA\, with strict comparison for positive {{elements}}
which has almost stable rank one.
Suppose that ${\rm{QT}}(A)={\rm{T}}(A),$ $A={\mathrm{Ped}(A)}$ and
{{the canonical map}} $\imath: W(A)\to {\rm LAff}_{b,+}({\overline{{ T(A)}}^\mathrm{w}})$
is surjective.
Fix a strictly positive element $a\in A.$
Then $A$ has an approximate identity $\{e_n\}$ such that
$\overline{e_nAe_n}$ has continuous scale of each $n.$    Moreover,
$e_nae_n\sim e_n$ for all $n.$
\end{lem}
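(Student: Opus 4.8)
The plan is to construct the approximate identity $\{e_n\}$ so that each cut-down $\overline{e_nAe_n}$ has a \emph{continuous} scale, using the characterization of continuous scale via the scale function being continuous on the trace space together with the surjectivity of $\imath : W(A) \to \mathrm{LAff}_{b,+}(\overline{T(A)}^{\mathrm w})$. First I would record the standard facts available to us: since $A = \mathrm{Ped}(A)$ and $A$ is $\sigma$-unital simple with $\mathrm{QT}(A)=\mathrm T(A)$, one has $0\notin\overline{T(A)}^{\mathrm w}$ is \emph{not} automatic (indeed $A$ need not have continuous scale itself), but for each element $b\in\mathrm{Ped}(A)_+$ the function $\tau\mapsto \tau(b)$ is continuous and bounded on $\overline{T(A)}^{\mathrm w}$, while $\tau\mapsto d_\tau(b)$ is lower semicontinuous. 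The point of continuous scale for a hereditary subalgebra $B=\overline{bAb}$ is precisely that $\tau\mapsto d_\tau(b)$ (which is the scale function of $B$, after normalizing) is continuous on $\overline{T(B)}^{\mathrm w}$; this is Theorem 5.4 of \cite{eglnp}, which is the tool I would invoke.

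So the heart of the construction is: given the fixed strictly positive $a\in A_+^{\bf 1}$, I want to produce positive elements $e_1\le e_2\le\cdots$ with $e_n\to 1$ strictly, $e_n\in\mathrm{Ped}(A)=A$, and with $\tau\mapsto d_\tau(e_n)$ continuous on $\overline{T(A)}^{\mathrm w}$ for every $n$. The natural candidate is to choose, for each $n$, a continuous bounded strictly positive affine function $g_n\in\mathrm{LAff}_{b,+}(\overline{T(A)}^{\mathrm w})$ with $g_n\nearrow \widehat{\langle a\rangle}$ pointwise (the latter being the — possibly discontinuous — scale function $\tau\mapsto d_\tau(a)$), and then use the surjectivity of $\imath : W(A)\to\mathrm{LAff}_{b,+}$ to realize $g_n = \widehat{\langle b_n\rangle}$ for some $b_n\in M_\infty(A)_+$; strict comparison plus $g_n\le g_{n+1}\le\widehat{\langle a\rangle}$ gives $\langle b_n\rangle\le\langle b_{n+1}\rangle\le\langle a\rangle$ in $W(A)$, and almost stable rank one lets me transport these Cuntz subequivalences into genuine inclusions of hereditary subalgebras $\overline{b_nAb_n}\subset\overline{b_{n+1}Ab_{n+1}}\subset\overline{aAa}=A$ with the $b_n$ realized inside $A$ (using, e.g., the R\o rdam-type lemma, Lemma 3.3 of \cite{eglnp}, or Proposition 3.3 of \cite{Rlz}). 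One then sets $e_n := b_n$ (arranged so that $b_n\nearrow$ a strictly positive element of $A$). Continuity of $\tau\mapsto d_\tau(e_n)=g_n(\tau)$ is built in by construction, so Theorem 5.4 of \cite{eglnp} gives that $\overline{e_nAe_n}$ has continuous scale.

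For the last assertion $e_nae_n\sim e_n$: since $\langle e_n\rangle\le\langle a\rangle$ in $W(A)$ and both $e_n$ and $e_nae_n$ lie in the hereditary subalgebra $\overline{e_nAe_n}$, one has $\langle e_n a e_n\rangle\le\langle e_n\rangle$ trivially; conversely, because $a$ is strictly positive in $A$ and $e_n\in A$, $f_\varepsilon(e_n)$ is below $e_nae_n$ up to Cuntz equivalence for suitable $\varepsilon$ (concretely, $f_\varepsilon(e_n) a f_\varepsilon(e_n)$ is full in $\overline{e_n A e_n}$ and $\langle e_n\rangle = \sup_\varepsilon\langle f_\varepsilon(e_n)\rangle \le \langle e_n a e_n\rangle$ by strict comparison, comparing the lower semicontinuous functions $d_\tau$), hence $\langle e_n\rangle\le\langle e_nae_n\rangle$ and equality of Cuntz classes follows. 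The main obstacle I expect is the second paragraph: ensuring that the functions $g_n$ can be chosen \emph{continuous}, bounded, strictly positive, increasing to $\widehat{\langle a\rangle}$, \emph{and} with the realizing elements $b_n$ assembled into an honest approximate identity of $A$ (not merely a sequence of hereditary subalgebras whose union is dense). This requires care because $\widehat{\langle a\rangle}$ itself may fail to be continuous, so the $g_n$ must approximate it from below in $\mathrm{LAff}_{b,+}$; one uses that $\mathrm{LAff}_{b,+}(\overline{T(A)}^{\mathrm w})$ is the class of suprema of increasing sequences of continuous affine functions (Definition \ref{DTtilde}), together with lower semicontinuity of $d_\tau$ and a diagonal/interpolation argument to match the levels up with a strictly positive element of $A$. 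The surjectivity hypothesis on $\imath$ and strict comparison are exactly what make each chosen level realizable and the comparisons coherent.
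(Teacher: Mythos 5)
There is a real gap, and it sits exactly where you flagged it. Realizing your continuous levels $g_n$ via the surjectivity of $\imath$ only determines elements $b_n$ up to Cuntz equivalence, and Cuntz data cannot pin down the \emph{position} of $b_n$ relative to the fixed strictly positive element $a$. Even after using almost stable rank one to conjugate so that $\overline{b_nAb_n}\subset\overline{b_{n+1}Ab_{n+1}}$, nothing prevents the whole nested family from sitting inside a proper hereditary subalgebra $\overline{bAb}$ with $\widehat{\la b\ra}=\widehat{\la a\ra}$ (Cuntz equivalence $b\sim a$ gives an isomorphism of hereditary subalgebras, not the equality $\overline{bAb}=A$), so the union need not be dense and, more to the point, $b_nx\to x$ need not hold for all $x\in A$. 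Thus setting $e_n:=b_n$ does not produce an approximate identity, and your closing appeal to ``a diagonal/interpolation argument'' is precisely the missing content: what is needed is an interpolation statement producing $e_n$ with $f_{\ep_n}(a)\le e_n\le f_{\ep_{n+1}}(a)$ \emph{and} $\overline{e_nAe_n}$ of continuous scale. That statement is not a routine consequence of surjectivity of $\imath$ plus strict comparison; it is exactly Lemma 7.2 of \cite{eglnp}, which is what the paper invokes. Once the $e_n$ are sandwiched between cut-downs $f_{\ep_n}(a)\le e_n\le f_{\ep_{n+1}}(a)$ with $\ep_n\to 0$, the approximate identity property is automatic because $\{f_{\ep_n}(a)\}$ is one, and the final assertion follows from the sandwich by the short computation $e_n\sim e_n^3\le e_nf_{\ep_{n+1}}(a)e_n\lesssim e_nae_n\le \|a\|\,e_n^2\lesssim e_n$.

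Two smaller remarks. First, your use of strict comparison to get $\la b_n\ra\le\la b_{n+1}\ra\le\la a\ra$ needs strict pointwise inequalities $g_n<g_{n+1}<\widehat{\la a\ra}$, which requires a little extra care (rescaling the approximating affine functions) but is fixable. Second, for the assertion $e_nae_n\sim e_n$ you do not need strict comparison at all: since $a$ is strictly positive in $A$, for any $e\in A_+$ one has $ef_\ep(a)e\le (2/\ep)\,eae$, so $e^2=\lim_\ep ef_\ep(a)e\in \mathrm{Her}(eae)$ and hence $eae$ is strictly positive in $\mathrm{Her}(e)$; this (or the paper's one-line sandwich computation) replaces your supremum-plus-comparison argument.
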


\begin{proof}
Let $a\in A_+$ with $\|a\|=1$ be a strictly positive element. We may assume that $a$ is not Cuntz equivalent to
a projection as $A$ is not unital.
For $\ep_1=1/2,$  by Lemma 7.2 of \cite{eglnp}, there are $0<\ep_2<1/4$ and $e_1\in A$ such that
$0\le f_{\ep_1}(a)\le e_1\le  f_{\ep_2}(a)$
and $\overline{e_1Ae_1}$ has continuous scale  (see also Lemma 5.3 of \cite{eglnp}, for example).
By {{an}} induction and repeatedly applying Lemma 7.2 of \cite{eglnp},
one obtains a sequence {{$\{e_n\}\subset  A_+^1$}} such that
\beq
0\le f_{\ep_n}(a)\le e_n\le f_{\ep_{n+1}}(a),
\eneq
{{$\overline{e_nAe_n}$ has continuous scale}}
and $0<\ep_n<1/2^n,$ $n=1,2,....$
Since $\{f_{\ep_n}(a)\}$ forms an approximate identity, one then verifies that $\{e_n\}$ also forms
an approximate identity for $A.$

To see the last part of the lemma, we note that
\beq
e_n\sim e_ne_ne_n\le e_nf_{\ep_{n+1}}(a)e_n
\lesssim e_nae_n
\le e_n.
\eneq
It follows that $e_n\sim e_nae_n$ for all $n.$
\end{proof}

The following is a non-unital version of 2.2 of {{\cite{Wccross}}}.

{{In the next statement, as in 5.3 of \cite{eglnkk0},  ${\cal S}$ is a fixed class of non-unital separable amenable $C^*$-algebras $C$ such that
$T(C)\not=\emptyset$ and $0 \notin \overline{ T(C)}^w$. A simple $C^*$-algebra A is said to be in  the class ${\cal R}$, if $A$ is separable, has
continuous scale  and $T(A)\not=\emptyset$.}}


\begin{thm}[cf. {{Theorem 2.2 of \cite{Wccross} and}} Theorem 5.4 of \cite{eglnkk0}]\label{fdim}

Let $A$ be a stably  projectionless separable simple \CA\,
with continuous scale and
with $\mathrm{dim}_{\mathrm{nuc}} A=m<\infty.$

Fix a
 positive element $e\in A_+$ with $0\le e\le 1$
such that $\tau(e),\,\tau(f_{1/2}(e))\ge r_0>0$ for all $\tau\in  T(A).$
Let $C=\overline{\bigcup_{n=1}^{\infty} C_n}$ be a non-unital simple \CA\, with continuous scale,
where $C_n\subseteq C_{n+1}$ and $C_n\in {\cal S},$ which also satisfies condition (1) in
Definition 5.3 of \cite{eglnkk0}.
Suppose that there is an affine homeomorphism $\Gamma:   T(C)\to  T(A)$
and suppose that there
{{is a  sequence}}
of \cpc s $\sigma_n: A\to C$ with {{${\rm im}(\sigma_n)\subset C_n$}} and
a sequence of  injective \hm s
$\rho_n: C_n\to A$
such that
\beq\label{TWv-1}
&&\lim_{n\to\infty}\|\sigma_n(ab)-\sigma_n(a)\sigma_n(b)\|=0,\quad a, b\in A,\\\label{TWv-2}
&&\lim_{n\to\infty}\sup\{|t\circ \sigma_n(a)-\Gamma(t)(a)|: t\in  T(C)\}=0,\quad a\in A,{ {\tand}}\\\label{TWv-2+}
&&\lim_{n\to\infty}\sup\{|\tau(\rho_n\circ \sigma_n(a))-\tau(a)|:\tau\in  T(A)\}=0,\quad a\in { {A.}}
\eneq

Then $A$ has the following property: For any finite set $\mathcal F\subseteq A$ and any $\ep>0$, there are a projection $p\in \mathrm{M}_{4(m+2)}(\widetilde{A})$, a {{\SCA}}
$S\subseteq p\mathrm{M}_{4(m+2)}(A)p$ with $S\in\mathcal {\cal S},$ and
an  ${\cal F}$-$\ep$-multiplicative \cpc\, $L: A\to S$ such that
\begin{enumerate}
\item $\|[p, 1_{4(m+2)} \otimes a]\| < \ep$\,\, {{for all}} $a\in\mathcal F$,
\item $p(1_{4(m+2)} \otimes a) p\in_\ep S$ \,\, {{for all}} $a\in\mathcal F$,
\item $\|L(a)-p(1_{4(m+2)} \otimes a) p\|<\ep$ \,\, {{for all}} $a\in \mathcal F,$
\item $p\sim e_{11}$ in $\mathrm{M}_{4(m+2)}(\widetilde{A})$,
\item $\tau(L(e)),\, \tau(f_{1/2}(L(e)))>7r_0/32(m+2)$ for all $\tau\in  T(\mathrm{M}_{4(m+2)}(A)),$
\item ${{(1_{4(m+2)}-p)\mathrm{M}_{4(m+2)}(A)(1_{4(m+2)}-p)}} \in \mathcal R$, and
\item $t(f_{1/4}(L(e)))\ge (3r_0/8)\lambda_s(C_1)\tforal t\in  T(S)$ \,\,\, {{(see \ref{Dlambdas} for $\lambda_s$).}}
\end{enumerate}
\end{thm}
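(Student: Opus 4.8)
The plan is to follow the strategy of Theorem 5.4 of \cite{eglnkk0} (which in turn adapts Winter's dimension-reduction argument from \cite{Wccross}), making the necessary modifications to keep track of the element $e$ and the constant $r_0$ through the construction. First I would use the hypothesis $\dim_{\mathrm{nuc}}A=m$ to obtain, for the given $\mathcal F$ and $\ep$, a finite-dimensional approximation: an $\mathcal F$-$\ep'$-multiplicative \cpc\ decomposition through $\bigoplus_{i=0}^{m} F_i$ with each piece approximately order-zero, where $\ep'\ll\ep$ is chosen small enough that all subsequent estimates (in particular items (1)--(3) and (5)) go through. The maps $\sigma_n$ and $\rho_n$ enter here to replace the finite-dimensional pieces by subalgebras sitting inside $C_n$ and then transported back into $A$ via $\rho_n$; the point of \eqref{TWv-1}--\eqref{TWv-2+} is that this replacement is approximately multiplicative and approximately trace-preserving, so that the tracial estimates on $e$ survive the passage. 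This yields a \cpc\ $L$ whose image lies (after the identification $\rho_n\circ\sigma_n\approx\mathrm{id}$) in a subalgebra of $\mathrm M_{4(m+2)}(A)$ of the desired type.

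The next step is the cutting-by-a-projection construction that produces $p\in\mathrm M_{4(m+2)}(\widetilde A)$ with $p\sim e_{11}$. The idea, exactly as in 5.4 of \cite{eglnkk0}, is to build $p$ out of the supports of the order-zero pieces arranged along $4(m+2)$ ``rows'' so that the complement $(1_{4(m+2)}-p)\mathrm M_{4(m+2)}(A)(1_{4(m+2)}-p)$ is again a continuous-scale simple algebra — hence in $\mathcal R$ — giving item (6); the factor $4$ (rather than $2$) over $m+2$ is what leaves enough room in the complement. Conditions (1) and (2) are then the standard near-commutation and near-containment statements for $p$ against the finite set $1_{4(m+2)}\otimes\mathcal F$, and (3) says $L$ approximates the compression $p(1_{4(m+2)}\otimes a)p$; all three follow from choosing $\ep'$ small. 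The subalgebra $S$ into which $L$ maps should be taken inside $p\mathrm M_{4(m+2)}(A)p$ and, because it is constructed from the images of the $C_n$-pieces through $\rho_n$, it lies in the class $\mathcal S$ (each summand has $T(\cdot)\neq\emptyset$ and $0\notin\overline{T(\cdot)}^w$); since $C$ satisfies condition (1) of Definition 5.3 of \cite{eglnkk0}, one may arrange that the relevant traces on $S$ are controlled by $\lambda_s(C_1)$, which is where item (7) comes from.

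The tracial estimates (5) and (7) are the part requiring genuine care. For (5), I would track $e$ through the decomposition: since $\tau(e),\tau(f_{1/2}(e))\ge r_0$ for all $\tau\in T(A)$, and since the finite-dimensional approximation together with $\rho_n\circ\sigma_n\approx\mathrm{id}_{T(A)}$ only perturbs these traces by an amount controlled by $\ep'$, the compressed element $L(e)$ retains at least a definite fraction of $r_0$ on every trace of $\mathrm M_{4(m+2)}(A)$. The specific constant $7r_0/32(m+2)$ arises from: (a) the factor $1/(m+2)$ coming from the $m+2$ order-zero pieces in the $\dim_{\mathrm{nuc}}$ decomposition (only one of which carries a given row), (b) a further loss from the cutting by $p$, and (c) slack $7/32<1/4$ absorbing the $\ep'$-errors and the function $f_{1/2}$; I would choose $\ep'$ so that the clean bound $r_0/4(m+2)$ degrades to $7r_0/32(m+2)$. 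For (7), the element $f_{1/4}(L(e))$ must be bounded below on $T(S)$: here one uses that the traces of $S$ pull back (via the structure of the $C_n$-pieces and condition (1) of Definition 5.3 of \cite{eglnkk0}) to traces on $A$ scaled by at least $\lambda_s(C_1)$, combined with the lower bound $3r_0/8$ on $\tau(f_{1/4}(L(e)))$ over $T(\mathrm M_{4(m+2)}(A))$ coming from $f_{1/2}(e)\ge r_0$ and the approximation. The main obstacle I anticipate is precisely the bookkeeping of constants in (5) and (7) simultaneously with item (6): one must split the ``mass'' of the identity $1_{4(m+2)}$ so that $p$ is large enough to carry $L$ and retain the $e$-estimates, yet small enough that the complement stays in $\mathcal R$; reconciling these while respecting the single fixed $\mathcal F$-$\ep$-budget is the delicate point, and it is handled by the same careful choice of parameters as in the proof of Theorem 5.4 of \cite{eglnkk0}, which I would follow closely.
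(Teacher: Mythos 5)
Your top-level plan---rerun the argument of Theorem 5.4 of \cite{eglnkk0} (Winter's Theorem 2.2 of \cite{Wccross}) while tracking $e$ and $r_0$---is indeed the paper's approach, but the mechanism you describe for the central step is not the one that works, and as written this is a genuine gap. In the actual proof $p$ is \emph{not} assembled from the supports of the order-zero pieces ``arranged along $4(m+2)$ rows''. One first transports each order-zero summand $\phi_j^{(l)}\psi_j^{(l)}$ into $C_i$ via $\sigma_i$ (weak stability of the order-zero relations), and then the crux is a comparison/intertwining step, Lemma 5.1 of \cite{eglnkk0} (resting on strict comparison, available because finite nuclear dimension gives ${\cal Z}$-stability, and on the tracial hypothesis \eqref{TWv-2+}): it yields contractions $s^{(l)}\in M_4(A)$ with $s^{(l)}(1_4\otimes\phi_j^{(l)}\psi_j^{(l)}(a))\approx (e_{11}\otimes\rho_i\sigma_i\phi_j^{(l)}\psi_j^{(l)}(a))s^{(l)}$; the extra $(m+2)$-nd colour, the scalar summand carried by $1_{\widetilde A}-e_j$ for an approximate unit of full spectrum, needs a separate unitary-conjugation argument (A.16 of \cite{eglnkk0}). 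Summing the colours gives a partial isometry $v\in M_{m+2}\otimes M_4\otimes \widetilde A$ with $vv^*=e_{11}\otimes e_{11}\otimes 1_{\widetilde A}$, and $p:=v^*v$. This single identity delivers (1)--(4) at once, since $p(1_{4(m+2)}\otimes a)p\approx v^*(e_{11}\otimes e_{11}\otimes\rho_i\sigma_i(a))v=:L(a)$, $S:=v^*(e_{11}\otimes e_{11}\otimes\rho_i(C_i))v\cong C_i\in{\cal S}$, and $p\sim e_{11}$ by construction. None of this intertwining appears in your outline; a projection built from the supports of the pieces would not be equivalent to $e_{11}$, so (4) would fail. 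Moreover (6) follows from $p\sim e_{11}$ together with continuity of the scale of $A$ (so $\tau(1_{4(m+2)}-p)$ is continuous), not from the factor $4$ ``leaving room'': the $4$ comes from the $M_4$ in Lemma 5.1 of \cite{eglnkk0}.

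Your accounting for (5) and (7) likewise uses the wrong mechanisms. For (5) the only loss is the compression to a corner equivalent to $e_{11}$ inside $M_{4(m+2)}$: by \eqref{TWv-2+} one may assume $\tau(\rho_i\sigma_i(e)),\,\tau(f_{1/2}(\rho_i\sigma_i(e)))\ge 15r_0/16$ on $T(A)$, and dividing by $4(m+2)$ gives $15r_0/64(m+2)>7r_0/32(m+2)$; there is no separate ``$1/(m+2)$ from the colours plus a further loss from cutting''. For (7), traces of $S\cong C_i$ do not ``pull back to traces on $A$ scaled by $\lambda_s(C_1)$''---a trace of a subalgebra need not extend at all. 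The actual argument is that for $t\in T(S)$ the functionals $t\circ L$ are, by \eqref{TWv-1}, \eqref{TWv-2+} and continuity of the scale, weak* close to tracial states of $A$, whence $t(f_{1/4}(L(e)))\ge 7r_0/8$ for large $i$; the factor $\lambda_s(C_1)$ in the statement is then obtained only because $\lambda_s(C_1)\le 1$. Supply the intertwining step (including the separate treatment of the scalar colour) and replace these two tracial mechanisms, and your sketch becomes the paper's proof.
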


\begin{proof}
{{This is a slight modification of Theorem 5.4 of \cite{eglnkk0} which {{is a variation of}}
that of Proposition 2.2
of \cite{Wccross}.
The proof is {{almost}}
the same as that of Theorem 5.4 of \cite{eglnkk0}
which itself is a repetition of the original proof of Proposition 2.2 of \cite{Wccross}.}}

Since $A$ has finite nuclear dimension, one has that $A\cong A\otimes \mathcal Z$ (\cite{Winter-Z-stable-02} for the unital case and \cite{T-0-Z} for the non-unital case). Therefore, $A$ has strict comparison for positive elements (Corollary 4.7 of {{\cite{Rrzstable}}}).

The proof is essentially the same as that of Theorem 2.2 of \cite{Wccross}. We give the proof in the present very much analogous situation for the convenience of the reader.
Let  $e\in A_+$ with
$\|e\|=1$, {{$\tau(e) > r_0$}}, and $\tau(f_{1/2}(e))>r_0$ for all $\tau\in  T(A).$

 Let $(e_n)$ be an
(increasing)
 approximate unit for  $A$. Since $A\in\mathcal R$, {{and}}
 $A$ is also assumed to be projectionless, one may assume that $\mathrm{sp}(e_n)=[0, 1]$. Since $\mathrm{dim}_\mathrm{nuc}(A)\leq m$, by Lemma {{5.2 of \cite{eglnkk0}}}, there is a system of $(m+1)$-decomposable completely positive approximations
$$
\xymatrix{
\widetilde{A} \ar[r]^-{\psi_j} & F_j^{(0)}\oplus F_j^{(1)}\oplus\cdots\oplus F_j^{(m)} \oplus \C \ar[r]^-{\phi_j} & \widetilde{A},\quad j=1, 2, ...
}
$$
such that 
\beq
&&\phi_j(F_j^{(l)})\subseteq A, \quad l=0, 1, ..., m,\andeqn\\
\label{defn-phi-j}
&&\phi_j|_\C(1_\C) = 1_{\widetilde{A}}-e_{j},
\eneq
where $e_j$ is an element of $(e_n)$.

Write
$$\phi_{j}^{(l)} = \phi_j|_{F_j^{(l)}}\quad\mathrm{and}\quad {\phi_{j}^{(m+1)}}= \phi_j|_\C,\quad l=0, 1, ..., m.$$
{{Set $F_j^{(m+1)}=\C$. Let $\psi_j^{(l)}=\pi_l\circ \psi_j$ for $j=0,1,2,\cdots, m+1$, where $\pi_l: \oplus_{k=0}^{m+1}F_j^{(k)} \to F_j^{(l)}$ is the projection.}} As in Lemma {{5.2 of \cite{eglnkk0}}}, one may assume that
\begin{equation}\label{refine}
\lim_{j\to\infty}\|\phi^{(l)}_j\psi^{(l)}_j(1_{\widetilde{A}})a - \phi_j^{(l)}\psi^{(l)}_j(a)\|=0,\quad l=0 ,1 , ..., m, \ a\in A.
\end{equation}
Note that $\phi_{j}^{(l)}: F_j^{(l)} \to A$ is of order zero,
and
the relation for
an
order zero map is weakly stable
(see  (${\mathcal P}$) and (${\mathcal P} 1$) of {{2.5 of \cite{KW}}}{{).}}
On the other hand, if $i$
is
large enough, then $ \sigma_i\circ\phi_{j}^{(l)}$ satisfies the relation for order zero
to within an arbitrarily small
tolerance, since $\sigma_i$ will be sufficiently multiplicative. It
follows that there are order zero maps $$\tilde{\phi}_{j, i}^{(l)}: F_j^{(l)} \to {{C_i}}$$
such that
$$\lim_{i\to\infty}\| \tilde{\phi}_{j, i}^{(l)}(c) - \sigma_i(\phi_{j}^{(l)}(c))\|=0,\quad c\in F_{j}^{(l)}.$$
We will identify ${{C_i}}$ with  $S_i=\rho_i({{C_i}})\subseteq A$, $\sigma_i: A \to {{C_i}}$ with $\rho_i\circ \sigma_i: A \to S_i\subseteq A$, and $\tilde{\phi}_{j, i}^{(l)}$ with $\rho_i\circ \tilde{\phi}_{j, i}^{(l)}$. There is
a positive linear map (automatically order zero)
$$\tilde{\phi}_{j, i}^{(m+1)}: \C\ni 1 \mapsto 1_{\widetilde{A}}-\sigma_i(e_j)\in \tilde{S_i}=\textrm{C*}(S_i, 1_{\widetilde{A}})\subseteq \widetilde{A},\quad i\in\mathbb N.$$
Note that
\begin{equation}\label{defn-phi-j-p}
 \tilde{\phi}_{j, i}^{(m+1)}(\lambda) = \sigma_i(\phi_{j}^{(m+1)}(\lambda)),\quad \lambda \in F_{j}^{(m+1)}=\C,
\end{equation}
where one still uses $\sigma_i$ to denote the induced map $\widetilde{A} \to \widetilde{S_i}$.

Note that for each $l=0, 1, ..., m$,
$$\lim_{i\to\infty}\|{f(\tilde{\phi}_{j, i}^{(l)})(c)} - \sigma_i ({f(\phi_{j}^{(l)})(c)})\| = 0,\quad c\in (F_j^{(l)})_+, \  f\in\mathrm{C}_0((0, 1])_+,$$
{{(see 4.2 of \cite{WZ-OR0} for the definition of $f(\psi)$ where $\psi$ is an order zero map)}}  and hence, from (\ref{TWv-2+}),
$$\lim_{i\to\infty}\sup_{\tau\in{ T(A)}}| \tau({f(\tilde{\phi}_{j, i}^{(l)})(c)} - {f(\phi_{j}^{(l)})(c)})| = 0,\quad c\in (F_j^{(l)})_+, \  f\in\mathrm{C}_0((0, 1])_+.$$
Also note that
$$\limsup_{i\to\infty} \|f(\tilde{\phi}_{j, i}^{(l)})(c)\| \leq \| f(\phi_j^{(l)})(c)\|, \quad c\in (F_j^{(l)})_+, \  f\in\mathrm{C}_0((0, 1])_+.$$

Applying {{Lemma 5.1 of \cite{eglnkk0}}}
to $(\tilde{\phi}_{j, i}^{(l)})_{i\in\mathbb N}$ and $\phi_j^{(l)}$ for each $l=0, 1, ..., m$, we obtain contractions $$s^{(l)}_{j, i} \in \mathrm{M}_4(A) \subseteq \mathrm{M}_4(\widetilde{A}),\quad i\in\mathbb N,$$
such that,   {{for all $c\in F_j^{(l)},$}}
\beq
&&\lim_{i\to\infty}\|s_{j, i}^{(l)}(1_4\otimes \phi_j^{(l)}(c)) - (e_{1, 1}\otimes\tilde{\phi}_{j, i}^{(l)}(c)) s_{j, i}^{(l)}\| = 0
\andeqn\\
&&\lim_{i\to\infty}\|(e_{1, 1} \otimes {\tilde \phi}_{j,i}^{(l)}(c))s_{j,i}^{(l)}(s_{j, i}^{(l)})^*- e_{1, 1} \otimes {\tilde \phi}_{j,i}^{(l)}(c)\| = 0.
\eneq

Note that $\mathrm{sp}(e_j)=[0,1].$  Put $C_0=C_0((0,1]).$ Define
\beq
\Delta_j(\hat{f})=\inf\{\tau(f(e_j)): \tau\in  T(A)\}\rforal f\in (C_0)_+\setminus\{0\}.
\eneq
Since $A$ is assumed to have continuous scale, $ T(A)$ is compact and $\Delta_j(\hat{f})>0$
for all $f\in {{(}}C_0)_+\setminus\{0\}.$
For $l=m+1$, since $\mathrm{sp}(e_j) = [0, 1]$, by considering $\Delta_j$
for each $j,$ since $i$ is chosen after $j$ is fixed,  by applying  {{A.16 of \cite{eglnkk0},}}
one obtains unitaries $$s^{(m+1)}_{j, i} \in \widetilde{A},\quad i\in\mathbb N,$$ such that $$\lim_{i\to\infty}\|s_{j, i}^{(m+1)}e_j-\sigma_i(e_j)s_{j, i}^{(m+1)}\|=0,$$ and hence
$$\lim_{i\to\infty}\|s_{j, i}^{(m+1)}(1_{\widetilde{A}}-e_j)-(1_{\widetilde{A}}-\sigma_i(e_j))s_{j, i}^{(m+1)}\|=0.$$
By \eqref{defn-phi-j} and \eqref{defn-phi-j-p}, one has
\begin{equation*}
\lim_{i\to\infty}\|s_{j, i}^{(m+1)}\phi_j^{(m+1)}(c) - \tilde{\phi}_{j, i}^{(m+1)}(c) s_{j, i}^{(m+1)}\| = 0,\quad c\in F_j^{(m+1)}=\C.
\end{equation*}

Considering the
element
$e_{1, 1}\otimes s_{j, i}^{(m+1)} \in \mathrm{M_4}\otimes \widetilde{A}$, and still denoting it by $s_{j, i}^{(m+1)}$,
{{one has}}
\begin{equation*}
\lim_{i\to\infty}\|s_{j, i}^{(m+1)}(1_4\otimes \phi_j^{(m+1)}(c)) - (e_{1, 1}\otimes\tilde{\phi}_{j, i}^{(m+1)}(c)) s_{j, i}^{(m+1)}\| = 0,\quad c\in {{F_j^{(m+1)}=\C}}
\end{equation*}
and $$(e_{1, 1} \otimes {\tilde \phi}_{j,i}^{(m+1)}(c))s_{j,i}^{(m+1)}(s_{j, i}^{(m+1)})^* = e_{1, 1} \otimes
{\tilde \phi}^{(m+1)}_{j,i}(c). $$
Therefore,
{{\beq\label{pre-s-1}
&&\hspace{-0.5in}\lim_{i\to\infty}\|s_{j, i}^{(l)}(1_4\otimes\phi_j^{(l)}(c)) - (e_{1, 1} \otimes \tilde{\phi}_{j, i}^{(l)}(c)) s_{j, i}^{(l)}\| = 0,\quad c\in F_j^{(l)},\ l=0, 1, ..., m+1,\andeqn\\
\label{pre-s-1-1}
&&\hspace{-0.5in}\lim_{i\to\infty}\|(e_{1, 1}\otimes {\tilde \phi}_{j,i}(c))s_{j,i}^{(l)}(s_{j, i}^{(l)})^*- {\tilde \phi}_{j,i}(c)\| = 0, \quad c\in F_j^{(l)},\ l=0, 1, ..., m+1.
\eneq}}
Let $\tilde{\sigma}_i: \widetilde{A} \to
{{\widetilde C_i}} $ and $\tilde{\rho}_i:
{{\widetilde C_i}} \to\widetilde{A}$ denote the unital maps induced by
$\sigma_i: {A} \to {{C_i}}$ and $\rho_i:  {{C_i}}\to {A}$, respectively.

Consider the contractions
$$s_j^{(l)} := (s_{j, i}^{(l)})_{i\in\mathbb N}\in (\mathrm{M}_4\otimes \widetilde{A})_\infty,\quad l=0, 1, ..., m+1, \quad j=1, 2, ....$$
By \eqref{pre-s-1} and \eqref{pre-s-1-1}, these satisfy
$$s_j^{(l)}(1_4\otimes { {\iota}}(\phi_j^{(l)}(c))) = (e_{1, 1}\otimes \bar\rho\bar\sigma({\phi}_j^{(l)}(c))) s_{j}^{(l)}\andeqn$$
$$(e_{1,1}\otimes \bar{\rho}\circ \bar{\sigma}(\phi_j^{(l)}(c)))s_j^{(l)}(s_j^{(l)})^* =
 (e_{1,1}\otimes \bar{\rho}\circ {\bar{\sigma}}(\phi_j^{(l)}(c))),$$
where
$$\bar{\sigma}: { {\widetilde{A}}}
\to  {{\prod_{n=1}^\infty}} {{\widetilde{C_n}}}/{{\bigoplus_{n=1}^\infty \widetilde{C_n}}}~~~~\mbox{and}~~~~ \bar\rho: {{\prod_n^{\infty} \widetilde{C_n}/\bigoplus_n^\infty \widetilde{C_n} }}\to (\widetilde{A})_\infty$$
are the homomorphisms induced by $\tilde{\sigma}_i$ and $\tilde{\rho}_i$,
and the map { {$\iota: \widetilde{A}\to ({\widetilde{A}})_{\infty}$ is the canonical embedding. Let}}
$$\bar\iota: (\widetilde{A})_\infty \to ((\widetilde{A})_\infty)_\infty$$ {{be}} the embedding induced by the canonical embedding {{$\iota$}} 
{{and let}} $$\bar\gamma: (\widetilde{A})_\infty \to ((\widetilde{A})_\infty)_\infty$$ denote the homomorphism induced by
the composed map
$$\bar{\rho}\bar{\sigma}: {{\widetilde{A}}} 
\to (\widetilde{A})_\infty,$$
For each $l=0, 1, ..., m+1$, let
\beq
\bar{\phi}^{(l)}: \prod_j F_j^{(l)}/\bigoplus_j F_j^{(l)} \to A_\infty\andeqn
\bar{\psi}^{(l)}: A \to \prod_j F_j^{(l)}/\bigoplus_j F_j^{(l)}
\eneq	
denote the maps induced by $\phi_j^{(l)}$ and $\psi_j^{(l)}$. 

Consider the  contraction
$$\bar{s}^{(l)} = (s_j^{(l)}) \in (\mathrm{M}_4\otimes \widetilde{A}_\infty)_\infty.$$
Then
$$
{\bar{s}^{(l)}}(1_4\otimes \bar\iota\bar{\phi}^{(l)}\bar\psi^{(l)}(a)) = (e_{1, 1} \otimes \bar\gamma\bar\phi^{(l)}\bar\psi^{(l)}(a)) {\bar{s}^{(l)}},\quad a\in \widetilde{A},\andeqn$$
$$
\hspace{-0.7in}(e_{1, 1} \otimes \bar\gamma\bar\phi^{(l)}\bar\psi^{(l)}(a)) {\bar{s}^{(l)}}({\bar{s}^{(l)}})^* = (e_{1, 1} \otimes \bar\gamma\bar\phi^{(l)}\bar\psi^{(l)}(a)).
$$

By \eqref{refine}, one has
$$\bar\phi^{(l)}\bar\psi^{(l)}(1_{\widetilde A})\iota(a) = \bar\phi^{(l)}\bar\psi^{(l)}(a),\quad a\in A.$$
{{Hence $[\bar\phi^{(l)}\bar\psi^{(l)}(1_{\widetilde A}),\, \iota({ {b}})]=0,$ and,
$(\bar\phi^{(l)}\bar\psi^{(l)}(1_{\widetilde A})\iota({ {b}}))^{1/2}=(\bar\phi^{(l)}\bar\psi^{(l)}(1_{\widetilde A}))^{1/2}\iota({ {b}}^{1/2})$ { {for any $b\in A_+.$}}
It follows}}
$$((\bar\phi^{(l)}\bar\psi^{(l)}(1_{\widetilde A}))^{\frac{1}{2}}\iota(a) \in \textrm{C*}(\bar\phi^{(l)}\bar\psi^{(l)}(A))~{ {\mbox{for}~ a\in A,}}$$
and hence
\begin{eqnarray}\label{twrist}
\hspace{-0.5in}\bar{s}^{(l)}(1_4 \otimes (\bar{\iota}\bar{\phi}^{(l)}\bar{\psi}^{(l)}(1_{\widetilde{A}}))^{\frac{1}{2}})(1_4\otimes \bar{\iota}\iota(a)) &= & \bar{s}^{(l)}(1_4\otimes \bar{\iota}\bar{\phi}^{(l)}\bar{\psi}^{(l)}(1_{\widetilde{A}})^{\frac{1}{2}}\iota(a)) \nonumber \\
& = & (e_{1, 1} \otimes \bar{\gamma}(\bar{\phi}^{(l)}\bar{\psi}^{(l)}(1_{\widetilde{A}}))^{\frac{1}{2}}\iota(a)) \bar{s}^{(l)} \nonumber \\
& = & (e_{1, 1} \otimes \bar{\gamma}\iota(a)(\bar{\phi}^{(l)}\bar{\psi}^{(l)}(1_{\widetilde{A}}))^{\frac{1}{2}}) \bar{s}^{(l)} \nonumber \\
& = & {{(e_{1, 1} \otimes \bar{\gamma}(\iota(a)))(e_{1, 1}\otimes \bar{\gamma}(\bar{\phi}^{(l)}\bar{\psi}^{(l)}(1_{\widetilde{A}}))^{\frac{1}{2}})\bar{s}^{(l)}.}}
\end{eqnarray}
Set
\vspace{-0.1in}\begin{eqnarray*}
\bar{v} & = & \sum_{l=0}^{m+1} e_{1, l} \otimes ((e_{1, 1} \otimes \bar{\gamma}\bar{\phi}^{l}\bar{\psi}^{(l)}(1_{\widetilde{A}}))^{\frac{1}{2}}\bar{s}^{(l)}) \\
& = & \sum_{l=0}^{m+1} e_{1, l} \otimes (\bar{s}^{(l)}(1_4\otimes \bar{\iota}\bar{\phi}^{l}\bar{\psi}^{(l)}(1_{\widetilde{A}}))^{\frac{1}{2}}) \in \mathrm{M}_{m+2}(\C) \otimes \mathrm{M}_4(\C)\otimes ({\widetilde{A}}_\infty)_\infty.
\end{eqnarray*}
Then
\begin{eqnarray*}
\bar{v}\bar{v}^* & = & \sum_{l=0}^{m+1} e_{1, 1} \otimes (e_{1, 1} \otimes \bar{\gamma}\bar{\phi}^{l}\bar{\psi}^{(l)}(1_{\widetilde{A}})) =  e_{1, 1} \otimes e_{1, 1} \otimes \bar{\gamma}(1_{\widetilde{A}}).
\end{eqnarray*}
Thus, $\bar{v}$ is {{a}} partial isometry.
Moreover, for any $a\in \widetilde{A}$,
\begin{eqnarray*}
\bar{v}(1_{(m+2)}\otimes 1_4 \otimes\bar{\iota}\iota(a)) & = & \sum_{l=0}^{m+1} e_{1, l} \otimes (\bar{s}^{(l)}(1_4\otimes \bar{\iota}\bar{\phi}^{l}\bar{\psi}^{(l)}(1_{\widetilde{A}})^{\frac{1}{2}})(1_4\otimes \bar{\iota}\iota(a)))\\
& = & \sum_{l=0}^{m+1} e_{1, l} \otimes (e_{1, 1} \otimes \bar{\gamma}(\iota(a)))(e_{1, 1} \otimes \bar{\gamma}(\bar{\phi}^{(l)}\bar{\psi}^{(l)}(1_{\widetilde{A}})^{\frac{1}{2}}\bar{s}^{(l)}) \quad\quad\textrm{(by \eqref{twrist})}\\
& = & (e_{1, 1}\otimes e_{1, 1} \otimes\bar{\gamma}(\iota(a))) \sum_{l=0}^{m+1} e_{1, l} \otimes e_{1, 1} \otimes \bar{\gamma}(\bar{\phi}^{(l)}\bar{\psi}^{(l)}(1_{\widetilde{A}})^{\frac{1}{2}}\bar{s}^{(l)}) \\
& = & (e_{1, 1}\otimes e_{1, 1} \otimes \bar{\gamma}(\iota(a)))\bar{v}.
\end{eqnarray*}
Hence, {{we obtain}}
\begin{equation*}
\bar{v}^*\bar{v} (1_{m+2}\otimes 1_4 \otimes \bar\iota\iota(a))  =  \bar{v}^*(e_{1, 1}\otimes e_{1, 1} \otimes \bar\gamma\iota(a)) \bar{v} =  (1_{m+2}\otimes 1_4\otimes\bar\iota\iota(a)) \bar{v}^*\bar{v},\quad a\in \widetilde{A}.
\end{equation*}
Then, for any finite set $\mathcal G \subseteq \widetilde{A}$ and any  $\dt>0,$ there are
{{$i \in \mathbb N$}} and ${{v_i}}\in \mathrm{M}_{m+2}(\C)\otimes \mathrm{M}_{4}(\C) \otimes \widetilde{A}$ such that
\beq\label{53-200110-10}
&&{{v_iv_i^*}}= e_{1, 1}\otimes e_{1, 1}\otimes \tilde{\rho}_i(1_{\tilde{S_i}})= e_{1, 1}\otimes e_{1, 1}\otimes 1_{\widetilde{A}},\\
&&\|[{{v_i^*v_i}}, 1_{m+2} \otimes1_4\otimes  a]\| < \dt \rforal
a\in \mathcal G,\\
&&\|{{v_i^*v_i}}(1_{m+2}\otimes 1_4\otimes  a) - {{v_i}}^*(e_{1, 1}\otimes e_{1, 1}\otimes\tilde{\rho}_i\tilde{\sigma}_i(a)){{v_i}}\|< \dt
\rforal a\in \mathcal G\andeqn\\\label{53-200110-11}
&& \tau(\rho_i\circ \sigma_i(e)),\, \tau(f_{1/2}(\rho_i\circ \sigma_i(e)))\ge 15r_0/16\rforal \tau\in  T(A).
\eneq

Define
${{\kappa_i}}: \widetilde{S}_i \to \mathrm{M}_{m+2}\otimes \mathrm{M}_{4} \otimes \widetilde{A}$
by
$${{\kappa_i}}(s)={v_i}^*(e_{1, 1}\otimes e_{1, 1} \otimes{{\rho_i(s)}}){ v_i}.$$
Note that
$${{\kappa_i}}(S_i) \subseteq \mathrm{M}_{m+2}\otimes \mathrm{M}_{4}\otimes A.$$

Then ${{\kappa_i}}$ is an embedding; and on setting $p_i=1_{{\kappa_i}(\tilde{S}_i)}=v_i^*v_i$, {{we have}}
\begin{enumerate}
\item[(i)] ${p_i} \sim e_{1, 1}\otimes e_{1, 1}\otimes 1_{\widetilde A}$,
\item[(ii)] $\|[{p_i}, 1_{m+2}\otimes 1_4 \otimes a]\| <\dt$, $a\in \mathcal G$,
\item[(iii)] ${p_i}(1_{m+2}\otimes 1_4 \otimes a){p_i} \in_{\dt} {{\kappa_i}}(\tilde{S}_i)$, $a\in \mathcal G$.
\end{enumerate}
%
%
Note that $A$ is ${\cal Z}$-stable (by \cite{Winter-Z-stable-02}) and hence has strict comparison
(by \cite{Rlz}).  Let ${{e'}} 
\in (1_{4(m+2)}-{p_i})\mathrm{M}_{4(m+2)}(A)(1_{4(m+2)}-{p_i})$
be a strictly positive element.
By (i), ${\mathrm d}_\tau({{e'}})=\tau(1_{4(m+2)}-p_i)=\tau(1_{4(m+2)}-e_{1, 1}\otimes e_{1, 1}\otimes 1_{\widetilde A})$
for all $\tau\in T(A)$, where $\tau$ is naturally extended to $\tilde{A}.$
Since $A$ and $M_{4(m+2)}(A)$ have continuous scale, $\tau\mapsto {\text{d}}_\tau$
is continuous on $T(A).$ Hence
$(1_{4(m+2)}-{p_i})\mathrm{M}_{4(m+2)}(A)(1_{4(m+2)}-{p_i})$ also
has continuous scale (see 5.4 of \cite{eglnp}) and
is still in the reduction class $\mathcal R$ (so condition (6) holds).
%

Define $L_i: A\to {{\kappa_i(S_i)}}$ by  $L_i(a)={v_i}^*(e_{1, 1}\otimes e_{1, 1} \otimes{\rho}_i(\sigma_i(a))){v_i}$
for all $a\in A.$  Then
\begin{enumerate}
\item[(iv)] $\|L_i(a)-{p_i}(1_{4(m+2)}\otimes a){p_i}\|<\dt\rforal a\in {\cal G}$ and
\item[(v)] $\tau(L_i(e)),\, \tau(f_{1/2}(L_i(e)))\ge \displaystyle{15r_0\over{64(m+2)}}\rforal \tau\in  T(\mathrm{M}_{4(m+2)}(A)).$
\end{enumerate}
{{Let $\tau_i\in  T(\kappa_i(S_i)).$
Then  $\tau_i\circ L_i$ is a positive linear functional. Let ${{\bar t}}$ be a weak *-limit of $\{\tau_i\circ L_i\}.$
Note that, for any $1/2>\ep>0,$ since $A$ has continuous scale, there is $e_A\in A$  with $\|e_A\|=1$
such that $\tau(e_A)>1-\ep/2$ for all $\tau\in T(A).$
By \eqref{TWv-2+} (see also \eqref{53-200110-10}),  we may assume that
$\tau_i\circ L_i(e_A)>1-\ep$ for all large $i.$ It follows that ${{\bar t}}(e_A)\ge 1-\ep.$ Hence $\|{{\bar t}}\|\ge 1-\ep$
for any $1/2>\ep>0.$ It follows that ${{\bar t}}$ is a state of $A.$  Then, by \eqref{TWv-1} and \eqref{TWv-2+},
${{\bar t}}$ is a tracial state of $A.$
Therefore,}} with sufficiently small $\dt$ and large ${\cal G}$ (and sufficiently large $i$), {{also by}}  \eqref{53-200110-11},
we may assume that
\beq\label{lambdas-2}
t(f_{1/4}(L_i(e)))\ge 7r_0/8\rforal t\in  T({{\kappa_i(S_i)}}).\quad 
\eneq

{{Recall {{that}} $\kappa_i(S_i)\cong C_i.$ {{Fix}} (a large $i$) above and set $S=\kappa_i(S_i)$ and $L=L_i.$}}
{{Recall also {{that}} $\lambda_s(C_1)\le 1.$
So \eqref{lambdas-2} also implies that
\beq\label{Claim518}
t(f_{1/4}(L(e)))\ge (3r_0/8)\lambda_s(C_1)\rforal t\in  T(S)).\quad 
\eneq
}}

%
%
The conclusion of the theorem follows from (i),(ii), (iii), (iv), (v),
and \eqref{Claim518}.
\end{proof}



\begin{thm}[c.f. Theorem 5.7 of \cite{eglnkk0}]\label{TTTAD}
Let $A$ be a stably projectionless  separable simple \CA\,  with continuous scale and
with $\mathrm{dim}_{\mathrm{nuc}} A=m<\infty$.

Suppose that every hereditary \SCA\, $B$ of $A$ with continuous scale   has the following properties:
Let $e_B\in B$ be a strictly positive element with $\|e_B\|=1$ and $\tau(e_B)>1-1/64$
for all $\tau\in  T(B).$
Let $C$ be a non-unital simple \CA\, which is an inductive limit { {$C=\overline{\cup_{n=1}^\infty C_n}$, where $C_n\subset C_{n+1}$ and
$C_n\in {\cal C}_0$,}}
with continuous scale such that
$T(C)\cong T(B).$
For each affine homeomorphism $\gamma: { T(B)\to  T(C)},$
there
exist sequences
of \cpc s $\sigma_n: B\to C$  {{with ${\rm im}(\sigma_n)\subset C_n$}} and injective
\hm s $\rho_n: C_n\to B$
such that
\beq\label{TTWv-1}
&&\lim_{n\to\infty}\|\sigma_n({{xy}})-\sigma_n({{x}})\sigma_n({{y}})\|=0\tforal x, y\in B,\\\label{TTWv-2}
&&{{\lim_{n\to\infty}\sup\{|t\circ \sigma_n({ {b}})-\gamma^{-1}(t)({ {b}})|: t\in T(C)\}=0\tforal { {b\in B}},\tand}}\\\label{TTWv-3}
&&\lim_{n\to\infty}\sup\{|\tau (\rho_n\circ \sigma_n(b))-{\tau(b)}|:\tau\in  T(B)\}=0{{\tforal b\in B.}}
\eneq
%
Suppose also that every hereditary \SCA\, $A$ is tracially approximately divisible.
Then $A\in {\cal D}.$
\end{thm}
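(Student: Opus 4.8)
The plan is to verify the hypotheses of Theorem \ref{fdim} and then run the standard "stable $\mathcal{D}$-ness" argument. First I would fix a finite subset $\mathcal{F}\subset A$ and $\ep>0$ together with a target $b\in A_+\setminus\{0\}$ (the element against which the "small" part must be subordinate) and a strictly positive element $e\in A_+$ with $\|e\|=1$. Since $A$ has continuous scale, $T(A)$ is compact and $d:=\inf\{\tau(f_{1/2}(e)):\tau\in T(A)\}>0$, so there is $r_0>0$ with $\tau(e),\tau(f_{1/2}(e))\ge r_0$ for all $\tau\in T(A)$. Passing to a hereditary \SCA\ $B=\mathrm{Her}(e_0)$ of $A$ with continuous scale (as allowed by the hypothesis — indeed every hereditary \SCA\ with continuous scale is covered, and by \ref{Lcontinuous}-type arguments we may arrange $\tau(e_B)>1-1/64$), and choosing a simple inductive limit $C=\overline{\cup C_n}$ of algebras in $\mathcal{C}_0$ with continuous scale and $T(C)\cong T(B)$ via some affine homeomorphism $\gamma$, the standing hypothesis of the theorem furnishes the sequences $\sigma_n\colon B\to C$ (with image in $C_n$) and injective $\rho_n\colon C_n\to B$ satisfying \eqref{TTWv-1}–\eqref{TTWv-3}. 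These are exactly the data \eqref{TWv-1}–\eqref{TWv-2+} required to invoke Theorem \ref{fdim}.

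Next I would apply Theorem \ref{fdim} to $A$ (or rather to $B$, which again has continuous scale, is stably projectionless, and has finite nuclear dimension since it is a hereditary \SCA): for the given $\mathcal{F}$ and $\ep$ I obtain a projection $p\in M_{4(m+2)}(\widetilde{A})$, a \SCA\ $S\subseteq pM_{4(m+2)}(A)p$ with $S\in\mathcal{C}_0'$ (a full hereditary \SCA\ of an algebra in $\mathcal{C}_0$ — conclusion (6) plus the fact that $S$ sits inside $\kappa_i(S_i)\cong C_i\in\mathcal{C}_0$), and an $\mathcal{F}$-$\ep$-multiplicative \cpc\ $L\colon A\to S$ with $\|L(a)-p(1\otimes a)p\|<\ep$, $\|[p,1\otimes a]\|<\ep$, $p\sim e_{11}$, $(1-p)M_{4(m+2)}(A)(1-p)\in\mathcal{R}$, and the trace lower bounds (5) and (7). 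The complementary corner $A_0:=(1-p)M_{4(m+2)}(A)(1-p)$ is a reduction-class algebra; composing with the compression $\phi(a):=(1-p)(1\otimes a)(1-p)$ and conjugating $M_{4(m+2)}(A)\cong A\otimes M_{4(m+2)}$ back into $A$ via the tracial approximate divisibility hypothesis (this is the device that turns a "$4(m+2)$-amplified" decomposition into a genuine decomposition inside $A$), I would produce the pair of mutually orthogonal \cpc s $\phi\colon A\to A$, $\psi\colon A\to S$ with $\phi(A)\perp S$, $\|x-(\phi(x)+\psi(x))\|<\ep$ on $\mathcal{F}\cup\{e\}$. The constant $\mathfrak{f}_e$ in \ref{DD0} is delivered by conclusion (7): $t(f_{1/4}(\psi(e)))\ge (3r_0/8)\lambda_s(C_1)>0$, uniform in $t\in T(S)$.

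The remaining point is the comparison requirement $c\lesssim b$ where $c$ is a strictly positive element of $\overline{\phi(A)A\phi(A)}$. Here I would use that $A$ has strict comparison (it is $\mathcal{Z}$-stable by finite nuclear dimension, via \cite{Winter-Z-stable-02}, \cite{T-0-Z}), and that, because $A_0$ is a small corner, $d_\tau(c)$ is uniformly small over $T(A)$ — tracial approximate divisibility is what lets us make the "leftover" corner as small as we please in the Cuntz semigroup, so that $d_\tau(c)<\inf_\tau d_\tau(b)$ for all $\tau$, whence $c\lesssim b$. Combining all of this gives exactly the defining properties of $\mathcal{D}$ in \ref{DD0}, and since $A$ is separable and already in the setup ($A=\mathrm{Ped}(A)$ because it has continuous scale — see \ref{DWtrace}/\ref{DD0} and \cite{Lincs1}), we conclude $A\in\mathcal{D}$.

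The main obstacle I expect is the bookkeeping in the last paragraph: controlling the size of the leftover corner $A_0$ in the Cuntz semigroup uniformly in trace, and correctly transporting the $4(m+2)\times 4(m+2)$-amplified decomposition back inside $A$ using tracial approximate divisibility while preserving both the approximate-multiplicativity and the trace bounds from Theorem \ref{fdim}. One must be careful that the embedding $S\hookrightarrow A$ (after conjugation) still maps a strictly positive element to a "full enough" element and that the constant $\mathfrak{f}_e$ survives the dilution by the divisibility step — this is the kind of estimate that is routine in spirit but where the constants ($r_0$, $\lambda_s(C_1)$, the divisibility parameter) must be tracked with some care.
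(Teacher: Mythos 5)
There is a genuine gap in the last step, and it comes from misreading which part of the decomposition in Theorem \ref{fdim} is small. In Theorem \ref{fdim} the projection $p$ satisfies $p\sim e_{11}$ in $M_{4(m+2)}(\widetilde A)$, so the piece captured by $L$ into $S$ has trace only about $1/(4(m+2))$, while the complementary corner $(1_{4(m+2)}-p)M_{4(m+2)}(A)(1_{4(m+2)}-p)$ carries trace about $1-1/(4(m+2))$. Your map $\phi(a)=(1-p)(1\otimes a)(1-p)$ therefore produces a \emph{large} remainder: the strictly positive element $c$ of $\overline{\phi(A)A\phi(A)}$ has $d_\tau(c)$ close to $1$, so it cannot satisfy $c\lesssim b$ for an arbitrary $b\in A_+\setminus\{0\}$, no matter how strict comparison is used. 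Tracial approximate divisibility does not "make the leftover corner as small as we please in the Cuntz semigroup"; it only allows one to replace $A$ (up to a genuinely Cuntz-small piece $e_0\lesssim b_0$) by a $4(m+2)$-fold matrix amplification of a hereditary subalgebra. So your attempt to verify Definition \ref{DD0} directly from \ref{fdim} fails at exactly the comparison clause \eqref{DNtrdiv-2}.

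The paper's route differs in two essential ways. First, tracial approximate divisibility is applied \emph{before} \ref{fdim}: one writes $B\approx_{\ep}\overline{e_0Be_0}\oplus M_{4(m+2)}(A_0)$ with $e_0\lesssim b_0$, arranges $A_0$ to have continuous scale (via \ref{Lcontinuous}), and then applies Theorem \ref{fdim} to $A_0$, so the algebra $D_1\in{\cal C}_0$ and the leftover corner already sit inside $B$ and no post hoc conjugation is needed. Second, and more importantly, the resulting decomposition is only "tracially definite": one gets $\|x-(\phi(x)+L_1(x))\|<\ep$ on ${\cal F}$ with $d_\tau(\phi(e))\le 1-\eta$ and $\tau(L_1(e))\ge r_0\eta$, i.e.\ the ${\cal C}_0$ piece captures a fixed fraction $\eta\sim 1/(m+2)$ but the remainder stays large. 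The conclusion $A\in{\cal D}$ is then obtained by invoking Theorem 5.5 of \cite{eglnkk0}, a bootstrap result (applied uniformly to all hereditary subalgebras of $A$ with continuous scale) that upgrades such definite-fraction decompositions to membership in ${\cal D}$. Your proposal is missing this ingredient, or an equivalent iteration argument; without it the step from \ref{fdim} to \ref{DD0} does not go through.
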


\begin{proof}
The proof is exactly the same as that of Theorem 5.7 {of \cite{eglnkk0}.}
By \cite{T-0-Z} (see also \cite{Winter-Z-stable-02}),
$A'\otimes {\cal Z}\cong A'$ for every hereditary \SCA\, $A'$ of $A.$ It follows from \cite{Rlz}
that $A$ has almost stable rank one.
Let $B$ be a hereditary  \SCA\, with continuous scale. Then $B$ has finite nuclear dimension (see \cite{WZ-ndim}).
By \cite{T-0-Z} again, $B$ is ${\cal Z}$-stable.
It follows from 6.6 of \cite{ESR-Cuntz}
that the map from ${ {\mathrm{Cu}(B)}}$ to ${\mathrm{LAff}}_+({{\tilde T}}(B))$ is surjective.
Note that
{{the map}} from ${{W}(B)}$ to ${\mathrm{LAff}}_{b,+}( T(B))$ is {{also}} surjective.
We will apply {{Theorem
\ref{fdim} above}}
and Lemma 5.5 of \cite{eglnkk0}.

{{Since $B$ has continuous scale, we may choose a strictly positive element $e\in B$ with $\|e\|=1$
and $e'\in B_+$ with $\|e'\|\le 1$ such that }}
$f_{1/2}(e)e'=e'f_{1/2}(e)=e'$
and
$d_\tau(f_{1/2}(e'))> 1-1/128(m+2)$  for all $\tau\in  T(B).$ 
Let $1>\ep>0,$ ${\cal F}\subset B$ be a finite { {subset}}
and let $b\in B_+\setminus \{0\}.$
Choose $b_0\in B_+\setminus \{0\}$ {{such that}} $64(m+2)\la b_0\ra \le {\la b \ra}$ in
$\mathrm{Cu}(A).$
Since we assume that ${{B}}$ is tracially approximately divisible (see 10.1 of {{\cite{eglnp},}} or 5.6 of \cite{eglnkk0}),
{{there}} are
$e_0\in B_+$ and a   hereditary \SCA\,
$A_0$ of ${{B}}$ such that $e_0\perp M_{4(m+2)}(A_0),$ $e_0\lesssim b_0$ and
$$
{\mathrm{dist}}(x, B_{1,d})<\ep/64(m+2)\rforal x\in {\cal F}\cup \{e\},
$$
where $B_{1,d}\subseteq  B_s:= \overline{e_0Be_0}\oplus \mathrm{M}_{4(m+2)}(A_0)\subseteq  B$ and
\beq\label{TTTAD-1}
B_{1,d}=\{ x_0\oplus (\overbrace{x_1\oplus x_1\oplus \cdots \oplus x_1}^{4(m+2)}):x_0\in \overline{e_0Be_0},\, x_1\in A_0\}.
\eneq
\Wlog, we may further assume that ${\cal F}\cup \{e'\}\subseteq B_{1,d}.$
{{Let $P:
B_s\to
\mathrm{M}_{4(m+2)}(A_0)$  be a projection map and $P^{(1)}: \mathrm{M}_{4(m+2)}(A_0)
\to A_0=A_0\otimes e_{11}$ be defined by $P^{(1)}(a)=({ {1_{\tilde A_0}}}\otimes e_{11})a({ {1_{\tilde A_0}}}\otimes e_{11})$
for $a\in M_{4(m+2)}(A_0),$
 where $\{e_{ij}\}_{4(m+2)\times 4(m+2)}$ is a system of matrix unit.}}
{{Put ${\cal F}_0=\{P(x): x\in {\cal F}\}.$}}
Therefore,  we  may assume, \wilog,  that
$\|e_0x-xe_0\|<\ep/64(m+2),$
and there is $e_1\in M_{4(m+2)}(A_0)$ with $0\le e_1\le 1$ such that
$\|e_1x-xe_1\|<\ep/64(m+2)$  and
{{$\|e_1P(x)-P(x)\|<\ep/(64(m+2)$
{{for all}} $x\in {\cal F}\cup \{e, e',f_{1/2}(e), f_{1/4}(e), f_{1/2}(e')\}.$}}
Moreover,  since  the map from $W(A)_+$ to ${\mathrm{LAff}}_{b,+}( T(A))$
is surjective,  by \ref{Lcontinuous},
\wilog, we may assume
that $A_0$ has continuous scale.

\Wlog, we may further assume that ${\cal F}\cup \{e\}\subseteq B_{1,d}.$
Write
$$
x=x_0+\overbrace{x_1\oplus  x_1\oplus\cdots\oplus x_1}^{4(m+1)}.
$$
Let ${\cal F}_1=\{x_1: x\in \mathcal F\cup \{e\}\}
{{\subset A_0}}.$
Note that we may write $\overbrace{x_1\oplus  x_1\oplus\cdots\oplus x_1}^{4(m+2)}=x_1\otimes 1_{4(m+2)}.$
{{Note that}} ${ \mathrm{dim}}_{\mathrm{nuc}}A_0=m$ (see \cite{WZ-ndim}).
Also, $A_0$ is a non-unital separable simple \CA\,
which has continuous scale.
We may then apply {{Theorem
%
\ref{fdim}}}
to $A_0$ with {{${\cal S}={\cal C}_0.$}}
%
By  {{\ref{4.25},}}
$C=\overline{\bigcup_{n=1}^{\infty} C_n},$ where $C_n\in {\cal C}_0,$
$C_n\subset  C_{n+1},$ {{and satisfies the condition (1) in 5.3 of \cite{eglnkk0}.}}
Moreover, $\lambda_s(C_n)\ge 1/2$ for all $n.$
Put $r_0=(1-1/64(m+2)).$
Choose $\eta_0=7/32(m+2)$ and $\lambda=3/16.$
Thus, by applying  {{Theorem
\ref{fdim},}}
we have {{the following estimates,}} with $\phi_1(b)=(E-p)b(E-p)$ for all $b\in M_{4(m+2)}(A_0),$
where $E=1_{M_{4(m+2)}({\widetilde{A_0}})},$ and $p\in M_{4(m+2)}({\widetilde{A_0}})$
 is a projection given by
 {{Theorem
\ref{fdim},}}
and $L: A_0\to D_1\subset pM_{4(m+2)}(A_0)p$ is an ${\cal F}_1$-$\ep$-multiplicative {{\cpc,}}
\beq
&&\hspace{-0.4in}\|x\otimes 1_{4(m+2)}-(\phi_1(x\otimes 1_{4(m+2)})+L(x))\|<\ep/4\rforal x\in {\cal F}_1,\\
&&\hspace{-0.4in}{d_\tau}(\phi_1(P(e)))\le 1-1/4(m+2)\rforal \tau\in  T(M_{4(m+2)}(A_0)),\\
&&\hspace{-0.65in}\tau'(\phi_1(P(e))), \tau'(f_{1/2}(\phi_1(P(e)))\ge r_0-\ep/4\rforal \tau'\in { T}((1-p)M_{4(m+1)}(A_0)(1-p)),\\\label{751831+}
&&\hspace{-0.4in}D_1\in {\cal C}_0, D_1 {\subseteq} pM_{4(m+2)}(A_0)p,\\
&&\hspace{-0.4in}\tau{{(L(P^{(1)}(e)))}}\ge r_0\eta_0\rforal \tau\in   T( M_{4(m+2)}(A_0))\andeqn\\\label{571831}
&&\hspace{-0.4in}t{{(f_{1/4}(L({{P^{(1)}(e)}})))}}\ge r_0\lambda\rforal t\in { T}(D_1).
\eneq
Let  $B_1=(1-p)M_{4(m+1)}{{(A_0)}}(1-p)\oplus \overline{e_0Be_0}$ and
$\phi: {B} \to B_1$ be defined by $\phi(a)=\phi_1(e_1ae_1)+e_0ae_0$ for $a\in A.$
{{Define $L_1: B\to  D_1$ by $L_1(b)=L(P^{(1)}(e_1^{1/2}be_1^{1/2})).$ Then both $\phi$
and $L_1$ are}}  ${\cal F}$-$\ep$-multiplicative.
Put $\eta=\eta_0/2<{\eta_0\over{1+\ep/64(m+2)}}.$
Then, in addition to  \eqref{571831} and \eqref{751831+},
\beq\nonumber
&&\|x-(\phi(x)+L_1(x))\|<\ep\rforal x\in {\cal F},\\\nonumber
&&{d_\tau}(\phi(e))\le 1-\eta\rforal \tau\in  T(B),\\\nonumber
&&\tau'(\phi(e)), \tau'(f_{1/2}(\phi(e))\ge r-\ep\rforal \tau'\in { T}(B_1),\\\nonumber
&&\tau(L_1(e))\ge r_0\eta\rforal \tau\in   T(B).
\eneq
 Note {{that}} these hold
for every such $B.$
Thus,  the
hypotheses
of  Theorem 5.5  of \cite{eglnkk0}
are satisfied.  We then   apply Theorem 5.5 of \cite{eglnkk0}.
%
\end{proof}

%
%
%


\begin{thm}[cf.~Theorem 4.4 of \cite{EGLN}]\label{Treduction1}
Let $A$ be a separable stably projectionless simple \CA\, of finite nuclear dimension which satisfies the UCT.
Assume that  $A$ has continuous scale, $T(A)=T_{qd}(A)\not=\emptyset,$  ${\rm Cu}(A)={\rm LAff}_+(T(A))$.
Suppose that  there exists {{a}} simple \CA\, $C$ with continuous scale such that
$C=\lim_{n\to\infty}({{C_n, \iota_n}}),$ where each $C_n=C_n'\otimes Q$ and $C_n'\in {\cal C}_0,$ and
 such that
\beq
(K_0(A\otimes Q),  T(A\otimes Q), r_{A\otimes Q})
\cong
(K_0(C), T(C), r_C).
\eneq
Then, $A\otimes Q\in  {\cal D}.$
\end{thm}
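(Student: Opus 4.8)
The strategy is to verify that $A\otimes Q$ satisfies the hypotheses of Theorem~\ref{TTTAD}, whose conclusion is precisely that $A\otimes Q\in {\cal D}$. Since $A$ has finite nuclear dimension, so does $A\otimes Q$ (tensoring with $Q$ preserves finite nuclear dimension), and $A\otimes Q$ is a separable stably projectionless simple \CA\, with continuous scale (continuity of the scale is inherited under tensoring with a UHF algebra, as $T(A\otimes Q)=T(A)$ and the dimension functions are continuous). The two things that need checking are: (i) every hereditary \SCA\, $B$ of $A\otimes Q$ with continuous scale admits, for each affine homeomorphism onto the trace simplex of a suitable inductive limit $C$ of algebras in ${\cal C}_0$, the approximating sequences $\sigma_n$ and $\rho_n$ as in \eqref{TTWv-1}--\eqref{TTWv-3}; and (ii) every hereditary \SCA\, of $A\otimes Q$ is tracially approximately divisible.

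For (ii), tracial approximate divisibility: since $A$ has finite nuclear dimension, $A\cong A\otimes {\cal Z}$ by \cite{Winter-Z-stable-02} and \cite{T-0-Z}, hence $A\otimes Q\cong A\otimes Q\otimes {\cal Z}$, and every hereditary \SCA\, is again ${\cal Z}$-stable; ${\cal Z}$-stable algebras are tracially approximately divisible (this is the non-unital analogue used in the proof of Theorem~\ref{TTTAD} — one invokes the relevant result, e.g.\ from \cite{eglnp} or \cite{eglnkk0}). So this hypothesis is automatic and poses no difficulty.

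For (i), this is where the given model algebra $C$ enters decisively. First I would note that $A\otimes Q=A\otimes Q\otimes Q$ has $T(A\otimes Q)=T_{\mathrm{qd}}(A\otimes Q)$: indeed, since $A\otimes Q\cong (A\otimes Q)\otimes Q$, every tracial state is a $Q$-tracial state (precompose with the embedding $a\mapsto a\otimes 1$ and average over $Q$), so the quasidiagonality hypothesis on traces is met, and $A\otimes Q$ satisfies the UCT. Given a hereditary \SCA\, $B\subset A\otimes Q$ with continuous scale, one has $B\cong B\otimes Q$ up to stable isomorphism with $A\otimes Q$; moreover $(K_0(B), T(B), r_B)\cong (K_0(A\otimes Q), T(A\otimes Q), r_{A\otimes Q})$ via the embedding (Proposition~\ref{Prhowd}), which by hypothesis is isomorphic to $(K_0(C), T(C), r_C)$. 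Using the model algebra $C=\lim(C_n,\iota_n)$ with $C_n=C_n'\otimes Q$, $C_n'\in{\cal C}_0$, and the fact that $A\otimes Q$ (and $B$) has $T=T_{\mathrm{qd}}$, UCT, ${\cal Z}$-stability and ${\rm Cu}=\LAff_+$, one constructs $\sigma_n: B\to C_n$ satisfying \eqref{TTWv-1} and \eqref{TTWv-2} by the Lin--type existence theorem Lemma~\ref{T06} (applied with $C([0,1],Q)$-blocks, which is exactly the shape of the building blocks in ${\cal C}_0$ tensored with $Q$): Lemma~\ref{T06} produces the $\mathcal F$-$\ep$-multiplicative maps with the prescribed tracial behaviour, given the compatibility of $K_0$-data with the trace pairing, which is guaranteed by the $K$-theoretic isomorphism. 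The injective homomorphisms $\rho_n: C_n\to B$ realizing \eqref{TTWv-3} are produced via the Cuntz-semigroup classification: since $B$ has stable rank one (it is ${\cal Z}$-stable stably projectionless) and ${\rm Cu}^\sim(B)=K_0(B)\sqcup \LAff_+^\sim(T(B))$, Theorem~1.0.1 of \cite{Rl} gives homomorphisms $C_n\to B$ inducing the prescribed Cuntz morphisms, which one arranges so that $\rho_n\circ\sigma_n$ approximately recovers traces on $B$.

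The main obstacle I expect is \textbf{matching the $K_0$-pairing data through the reduction to hereditary subalgebras and through Lemma~\ref{T06}}: one must check carefully that the order isomorphism $\Gamma^\sim$ of Remark~\ref{R14}-type (extended to the unitizations, intertwining $[1_{\widetilde B}]$ and the trace evaluation maps $r$) is genuinely compatible with the continuous affine homeomorphism $T(C)\to T(B)$ coming from the hypothesis, so that the numerical inequality \eqref{07-1-1} of Lemma~\ref{T06} can actually be verified for the relevant finite subsets of $K_0$; and one must ensure the approximating system one builds is genuinely an inductive-limit-type approximation (the maps $\sigma_n$ landing in $C_n$ with $\rho_n$ injective on $C_n$, intertwining everything up to the tolerances) rather than merely a loose approximation. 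Once these compatibilities are in hand, Theorem~\ref{TTTAD} applies verbatim and yields $A\otimes Q\in {\cal D}$.
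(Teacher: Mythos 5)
Your overall skeleton matches the paper's: reduce to Theorem \ref{TTTAD}, use $Q$-absorption of hereditary subalgebras (via ${\rm Cu}(A)=\LAff_+(T(A))$ and \cite{Rlz}) for tracial approximate divisibility, build approximately multiplicative maps into the model $C$ and homomorphisms back. However, there is a genuine gap at the central step, the construction of the maps $\sigma_n$ into $C_n$. You assert that Lemma \ref{T06} does this because the building blocks are ``$C([0,1],Q)$-blocks, which is exactly the shape of the building blocks in ${\cal C}_0$ tensored with $Q$.'' That is not so: $C_n=C_n'\otimes Q$ with $C_n'\in{\cal C}_0$ is a mapping torus $\{(f,a)\in C([0,1],Q^r)\oplus Q^{l}: f(0)=\psi_0(a),\ f(1)=\psi_1(a)\}$, not a full path algebra, and Lemma \ref{T06} only produces maps into $C([0,1],Q)$. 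The entire difficulty of the theorem is to make the path map hit the boundary fibers \emph{exactly}, i.e.\ to produce an endpoint map $\tilde A\to Q^{l+1}$ (using $T(A)=T_{\rm qd}(A)$) whose compositions with $\psi_0,\psi_1$ agree, both in $K_0$ and up to unitary conjugation, with the evaluations at $0$ and $1$ of the interval map. This requires (a) the quasidiagonality-based small-perturbation lemmas \ref{0.5}, \ref{0.6}, \ref{0.7}, which exploit $\Q$-linear independence of projection classes to correct the $K_0$-discrepancies $[\pi_0\circ\Psi(p_i)]-[\psi_0\circ\Phi'(p_i)]$ by adding tracially small summands lying in $\D=\ker((\psi_0)_{*0}-(\psi_1)_{*0})$, and (b) the stable uniqueness result (Lemma 7.2 of \cite{eglnkk0}) to connect the corrected endpoint maps to the Lemma \ref{T06} path by short homotopies on $[-r',0]$ and $[1,1+r']$, after which one renormalizes and cuts down to get $\bar L:A\to C_1$. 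You flag ``matching the $K_0$-pairing data'' as an obstacle but treat it as a compatibility check for Lemma \ref{T06}; in fact it is the bulk of the argument, and without the correction-plus-gluing mechanism the maps $\sigma_n$ simply cannot be made to land in $C_n$.

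Two smaller points. For the backward maps $\rho_n$, the paper uses $K_1(C)=\{0\}$ and Corollary 7.8 of \cite{Linalmost} to get injective homomorphisms $C_k\to A$ approximately realizing the prescribed tracial map; your route via Theorem 1.0.1 of \cite{Rl} is plausible in spirit (a Cu-morphism compatible with the affine map on traces does control traces of images, since $\tau(c)=\int_0^\infty d_\tau((c-t)_+)\,dt$), but you would still need to construct the compatible morphism of ${\rm Cu}^\sim$ respecting the $K_0$-part, which is not automatic. Second, the verification of \eqref{TTWv-1}--\eqref{TTWv-3} must be carried out for \emph{every} hereditary subalgebra $B$ with continuous scale, not just for $A\otimes Q$ itself; the paper handles this by noting $B\cong B\otimes Q$ and that the whole construction applies verbatim to $B$, a point your write-up mentions only in passing.
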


\begin{proof}
Since $A$ is simple, the assumption
$ T(A)= T_{{\rm qd}}(A)\not={{\emptyset}}$
immediately implies that  $A$ is both stably finite and quasidiagonal.
We may assume that $A\otimes Q\cong A.$
{{Let $B\subset A$ be a hereditary \SCA\, with continuous scale.
We may write $B={\rm Her}(b)$ for some $b\in A_+.$
Since $A\cong A\otimes Q,$ there is $a_0\in A_+$ such that
$d_\tau(a_0\otimes 1_U)=d_\tau(b)$ for all $\tau\in T(A).$
Since we assume that ${\rm Cu}(A)=\LAff_+(T(A)),$ $\la a_0\otimes 1_U\ra=\la b\ra.$
{{By Theorem 1.2 of}} \cite{Rlz}, $B\cong {\rm Her}(a_0\otimes 1_U).$
However, ${\rm Her}(a_0\otimes 1_U)=\overline{a_0Aa_0}\otimes Q.$
It follows that $B\cong \overline{a_0Aa_0}\otimes U\cong B\otimes Q.$}}

By  {{Theorem \ref{4.25},}}
together with the assumption $A\cong A\otimes Q$, there is a simple C*-algebra $C=\lim_{n\to\infty}(C_n, \imath_n)$
with continuous scale,
where each $C_n$ is the tensor product of a C*-algebra in $\mathcal C_0$ with $Q$ 
and $\imath_n$ is injective, such that
$$
(K_0(A),  T(A), r_A)\cong (K_0(C),   T(C), r_C)
$$
{{is}} given by $\Gamma.$
It follows that there is an order isomorphism
$$
{{\Gamma^\sim:}}\,\, (K_0({\tilde A}), K_0(\tilde A)_+, [1_{\tilde A}],  T(\tilde A), r_{\tilde A})\cong (K_0(\tilde C), K_0(\tilde C)_+, [1_{\tilde C}],
 T(\tilde C), r_{\tilde C}).
$$
We will continue to {{write}} $\imath_n$ and $\imath_{n,\infty}$
for the { {inclusions}} of $\imath_n: {\tilde C}_n\to {\tilde C}_{n+1}$ and $\imath_{n,\infty}: {\tilde C}_n\to {\tilde C},$
$n=1,2,....$
Let us {{write}}
$\Gamma_{\Aff}$  and $\Gamma_{\Aff}^\sim$ for the corresponding maps from $\Aff( T(A))$ to $\Aff( T(C))$
and from $\Aff( T(\tilde A))$ to $\Aff( T(\tilde C){{).}}$
 Since $A$ and $C$ have continuous {{scale,}} $T(A)$ and $T(C)$ are compact.

Let a finite subset  ${\mathcal F}$ of $ A$ and $\ep>0$ be given.
Fix a strictly positive element  $e\in A$ with $\|e\|=1.$
Choose
\beq\label{14f12(e)-n0}
0<d<\inf\{\tau(f_{1/2}(e)):\tau\in T(A)\}.
\eneq
This is possible since $T(A)$ is compact.
Let $0<\sigma<{{\min\{d, \ep\}/2^{10}}}.$
Since $A$ has continuous scale, one may choose $e_1=f_{\ep'}(e)$ for some $1/4>\ep'>0$ such that
\beq\label{14app-0}
\tau(e_1)>1-\sigma/128\rforal \tau\in T(A).
\eneq
\Wlog, we may assume that $e, f_{1/2}(e),$ $f_{1/4}(e)$ and $e_1\in {\cal F}.$


Let the finite set $\mathcal P$ of $K_0(A),$
the finite subset $\mathcal G_1$ (in place of ${\cal G}$) of $A$, and
$\dt_0>0$ (in place of $\dt$) be as assured by Lemma  7.2 of \cite{eglnkk0}
for ${\mathcal F}$ and $\ep/4$.
We may also assume that, for any ${\cal G}_1$-$\dt_0$-multiplicative \morp\, $L$ from $A,$
\beq\label{14n20526-n1}
\|f_{1/2}(L(e))-L(f_{1/2}(e))\|<d/2^{10}\andeqn \|f_{\ep'}(L(e))-L(f_{\ep'}(e))\|<d/2^{10}.
\eneq
Choose a finite subset ${\cal P}_0$ of  projections in $M_m({\tilde A})$ (for some integer $m\ge 1$)
such that ${\cal P}\subset \{[p]-[q]: p, q\in {\cal P}_0\}.$
We may assume that $1_{\tilde A}\in {\mathcal P}.$
Write  $\mathcal P_0=\{1_{\tilde A}, p_1,p_2,...,p_s\}.$  Deleting some elements (but not $1_A$), we may assume that the set
$${ {{\cal P}_0=}}\{[1_{\tilde A}], [p_1], [p_2],...,[p_s]\}\subset K_0({\tilde A})$$ is $\Q$-linearly independent.

Choose $\dt_0'=\dt_0/4m^2.$
We may also assume, \wilog, that  $L^\sim$ is ${\cal G}_1\cup {\cal P}_0$-$\dt_0$-multiplicative,
if $L$ is a ${\cal G}_1$-$\dt_0'$-multiplicative \morp\, from $A$ to a \CA\, $B,$ and
$L^\sim: M_m({\tilde A})\to M_m({\tilde B})$ is the usual extension of the unitization of $L.$

Put ${\cal G}={\cal F}\cup {\cal G}_0$
and $\dt=\min\{\ep/8, \dt_0'/2, d/2^{10}\}.$
We may further  assume, \wilog, that
every element of $\mathcal G$ has norm at most one. 


Let $\dt_1>0$ (in place of $\delta$) be as assured by Lemma \ref{0.6} for ${\mathcal G}$, $\dt$ (in place of $\ep_1$), and
$\sigma/64$ (in place of $\ep_2$) and for ${\cal P}_0.$
We may assume that $\delta_1\leq  \dt.$


Let $\dt_3>0$ (in place of $\dt$) be as assured by Lemma \ref{0.7}  for ${\cal G},$ $\dt_1/8$ (in place of $\ep_1$)
and
$\min\{\dt_1/32,  \sigma/256\}$ (in place of $\ep_2$) (and for ${\cal P}_0$).

Let ${\cal P}_1$ (in place of ${\cal P}$) and
$\dt_2>0$ (in place of $\dt$)  be as assured by Lemma \ref{T06} for
$\dt_1/8$ (in place of $\ep$),
$\min\{\dt_1/32,\sigma/256\}$ (in place of $\sigma$), and ${\cal G}$
(in place of ${\cal F}$).
Replacing $\mathcal P$ and $\mathcal P_1$ by their union, we may assume that ${\cal P}={\cal P}_1.$
Note that we still use notation ${\cal P}_0$ for the related set of projections.


{{By
Lemma 2.8 and  Lemma 2.9 of \cite{EN-K0-Z},}}
there are  unital positive linear maps
$$
{{\gamma: \Aff( T(\tilde A))\to \Aff( T({\tilde C_{n_1}}))}}
$$ 
for some $n_1\ge 1$ such that
\begin{equation}\label{TT-1}
\|(\imath_{n_1, \infty})_{\Aff}\circ{{\gamma}}(\hat{f})-\Gamma^\sim_{\Aff}(\hat{f})\|<
{{\min\{\sigma/128, \delta_2, \delta_3/2\},}}
\quad f\in {\mathcal F}\cup {\mathcal P}_0,
\end{equation}
{{(recall {{that}} $\hat{f}$ is the element in $\Aff( T(\tilde A))$ corresponding to $f\in A_+\subset \tilde{A}_+$).}}

 We may assume, \wilog,
that there are projections  $p_1',p_2',...,p_s'\in M_m({\tilde{C_{n_1}}})$ such that
${{\Gamma^{\sim}}}([p_i])=\imath_{n_1,\infty}([p_i']),$ $i=1,2,...,s.$
To simplify notations, assume that
$n_1=1.$
Let ${\bar G_0}$ denote the subgroup of $K_0({\tilde A})$
generated by ${\mathcal P},$ and $G_0=K_0(A)\cap {\bar G}_0.$  Since ${\bar G}_0$ is free abelian,
there is a homomorphism $\Gamma': {\bar G}_0\to K_0(\tilde C_1)$
such that
\begin{equation*}
(\imath_{1, \infty})_{*0}\circ \Gamma'{{|_{G_0}}}=\Gamma|_{G_0}\andeqn (\imath_{1, \infty})_{*0}\circ \Gamma'=\Gamma^\sim |_{{\bar G}_0}.
\end{equation*}
We may assume (since $\mathcal P$ is a basis for ${\bar G}_0$) that
\begin{equation}\label{lft-proj}
\Gamma'([p_i])=[p_i'],\quad i=1,2,...,s.
\end{equation}
Since the pair $(\Gamma_{\Aff}^\sim , \Gamma^\sim |_{K_0(\tilde A)})$ is compatible, as a consequence of \eqref{TT-1} and \eqref{lft-proj} we have
\begin{equation}\label{TT-3}
\|\widehat{p_i'}- {{\gamma}}(\widehat{p_i})\|_{\infty}<\min\{\delta_2, \delta_3/2\},
\quad i=1,2,....,s.
\end{equation}
Write
$$C_1=(\psi_0, \psi_1, Q^r, Q^l)=\{(f,a)\in {\rm C}([0,1], Q^r)\oplus Q^l: f(0)=\psi_0(a)\andeqn f(1)=\psi_1(a)\},$$
where $\psi_0, \psi_1: Q^l\to Q^r$ are homomorphisms.
Note  {{that}} since we assume that $K_0(C_1)_+=\{0\},$ $C_1$ is {{stably projectionless}}.

Set $1^{\bar r}:=\id_{Q^r}$ and $1^{\bar l}:=\id_{Q_l}.$
Define $\psi_0^\sim: Q^l\oplus \C\to Q^r$ by
$\psi_0^\sim(a,{{c}})=\psi_0(a)+{{(1^{\bar r}-\psi_0(1^{\bar l}))}}c$
for all $(a,c)\in Q^l\oplus \C$ ($a\in Q^l$ and $c\in \C$), and
define $\psi_1^\sim: Q^l\oplus \C\to Q^r$ by
$\psi_1^\sim(a,c)=\psi_1(a)+{{(1^{\bar r}-\psi_1(1^{\bar r}))}}c$
for all $(a, c)\in Q^l\oplus \C.$ It is understood that if $\psi_0$ is unital, $\psi_0^\sim=\psi_0,$ and
if $\psi_1$ is unital, $\psi_1^\sim=\psi_1.$
We then identify
\beq\nonumber
{\widetilde C}_1=(\psi_0^\sim, \psi_1^\sim, Q^r, Q^l)=\{(f,b)\in {\rm C}([0,1], Q^r)\oplus (Q^l\oplus \C):
f(0)=\psi_0^\sim(b)\andeqn f(1)=\psi_1^\sim(b)\}.
\eneq
%
Denote by $$\pi_{\mathrm{e}}: C_1 \to Q^l,\ (f, a)\mapsto a,\andeqn
\pi_{\mathrm{e}}^\sim: {\widetilde C}_1\to Q^l\oplus \C,\, (f,b)\to b$$
the canonical quotient map, and by  $j: C_1\to {\rm C}([0,1], Q^r)$ the canonical {{maps}}
$$j((f,a))=f, \quad (f,a)\in {{C_1.}}$$ 

For convenience, in what follows,
we also consider ${\tilde C}_1\otimes Q.$ We will continue to
use $\psi_0^\sim$ for the extension $\pi_0^\sim(a, x):=\psi_0(a)+{{(1^{\bar r}-\psi_0(1^{\bar l}))}}x$
for all $(a,x)\in Q^l\oplus Q$ ($a\in Q^l$ and $x\in Q$), and
define $\psi_1^\sim: Q^{l+1}\to Q^r$ by
$\psi_1^\sim(a,{{x}})=\psi_1(a)+{{(1^{\bar r}-\psi_1(1^{\bar r}))}}x$
for all $(a, x)\in Q^{l+1}.$
We also identify
$$
{\widetilde{C_1}}\otimes Q=\{(f,b)\in {\rm C}([0,1], Q^r)\oplus Q^{l+1}:
f(0)=\psi_0^\sim(b)\andeqn f(1)=\psi_1^\sim(b)\}.
$$
We also continue to {{write}} $\pi_e^\sim$ for the extension $\pi_e^\sim: {\widetilde{C}_1}\otimes Q\to Q^{l+1}.$
{{Denote by 
$\gamma^*:  T({\tilde C}_1)\to  T(\tilde A)$  the continuous affine {{map}}
dual to $\gamma$}}.
Let $\pi_{\C}^{C_1}: \widetilde{C}_1\to \C$  be the quotient map and $\tau_\C^{C_1}$ be the trace of $\widetilde{C}_1$
which factors through $\C.$
We will also {{write}} $\pi_\C^{C_1}$ for the extension $\pi_\C^{C_1}: \widetilde{C}_1\otimes Q\to Q$ and
$\tau_\C^{C_1}$ for the tracial state of ${\widetilde{C}}_1\otimes Q$ vanishing on $C_1.$

{{It follows from (\ref{TT-1}) {{that}}
\beq\label{TT-1+Aug29}
|\gm^*(\tau_\C^{C_1})(a)|<\min\{\sigma/128, \dt_2, \dt_3/2\}~~\mbox{for all}~~a\in {{{\cal F}.}}
\eneq}}
Note that, for any $\tau\in  T({\widetilde{C_1}}),$
$\tau=s\tau_\C+(1-s)\tau',$ where $\tau'\in  T(C_1)$ and $0\le s\le 1.$


Denote by  $\theta_1,\theta_2,...,\theta_l$  the extreme tracial states of $C_1$ factoring through $\pi_{\mathrm{e}}: C_1 \to Q^l$.
Put $\theta_{l+1}=\tau_\C.$

For any finite subset ${\cal G}'\supset {\cal G},$  and $0<\eta<\min\{\dt_1/8, \dt_3/8\},$
by the assumption $ T(A)= T_{\mathrm {qd}}(A),$ there exists a unital
${\mathcal G}'$-$\eta$-multiplicative completely positive {{linear}} map $\Phi: {\tilde A}\to Q^{l+1}$
(in fact, we can define each component $\pi_j\circ \Phi: {\tilde A}\to Q $ of $\Phi$ separately)
such that
\begin{equation}\label{TT-4}
|\mathrm{tr}_j\circ \Phi(a)-{{\gamma^*}}(\theta_j)(a)|< \min\{13\delta_1/32, \delta_3/4, \sigma/32\},\quad a\in {\mathcal G}\cup {\cal P}_0,\ j=1,2,..., l,l+1.
\end{equation}
where $\mathrm{tr}_j$ is the tracial state supported on the $j$th direct summand of $Q^l,$
$j=1,2,...,l,$ and ${\rm tr}_{l+1}$ is the tracial state of $\C$
(recall also that we also {{write}} $\mathrm{tr}\circ \Phi$
for $\mathrm{tr}\otimes {\rm Tr}_m\circ(\Phi\otimes \id_m),$ where
${\rm Tr}_m$ is the non-normalized trace on $M_m$).
We also assume that $\Phi|_{A}$ maps $A$ to $Q^l$ (namely, $\pi_{l+1}\circ\Phi: {\tilde A}\to Q $ can be defined to be the homomorphism taking $A$ to $0\in Q$ and $1_{\tilde A}$ to $1\in Q$)
and $\Phi(1_{\tilde A})=(\overbrace{1,1,...,1}^{l+1}).$
We may also assume that
\beq\label{TT-4n}
\mathrm{tr}_j(f_{1/2}(\Phi(e)))\ge 63d/64,\,\,j=1,2,...,l.
\eneq
Moreover,
we may also assume that
\begin{equation}\label{pert-p}
|\mathrm{tr}_j([\Phi(p_i)]) - \mathrm{tr}_j(\Phi(p_i))| < \delta_3/4,\quad i=1, 2, ..., s,\ j=1, 2, ..., l+1.
\end{equation}
Set
\beq\nonumber
&&\D_0:=(\pi_{\mathrm{e}})_{*0}(K_0(C_1))=\ker((\psi_0)_{*0}-(\psi_1)_{*0})\subseteq \Q^{l}\andeqn\\
&&\D:=(\pi_{\mathrm{e}}^\sim)_{*0}(K_0({\tilde C_1}\otimes Q))=\ker((\psi_0^\sim)_{*0}-(\psi_1^\sim)_{*0})\subseteq \Q^{l+1}.
\eneq
It follows from 
\eqref{TT-4} that 
\begin{equation}\label{trace-infty}
|\tau(\Phi(a)) -  (\pi_{\mathrm{e}}^\sim)_{\Aff}(\gamma (\hat{a}))(\tau)| < \min\{13\delta_1/32, \delta_3/4, \sigma/32\},\quad a\in \mathcal G,\ \tau\in  T(Q^{l+1}),
\end{equation}
where $(\pi_{\mathrm{e}}^\sim)_{\Aff}: \Aff( T({\widetilde{C_1}}\otimes Q)) \to \Aff( T(Q^{l+1}))$ is the map induced by $\pi_{\mathrm{e}}^\sim$.
By \eqref{trace-infty} for $a\in\mathcal P_0\,$
together with \eqref{pert-p} and \eqref{TT-3},
\begin{equation*}
|\tau([\Phi(p_i)]) - \tau\circ (\pi_{\mathrm{e}})_{*0}\circ \Gamma'([p_i])|<\delta_3,\quad\tau\in  T(Q^{l+1}),\ i=1, 2, ...,s.
\end{equation*}
Therefore, applying Lemma \ref{0.7}, with $r_i=[\Phi(p_i)]-(\pi_{\mathrm{e}})_{*0}\circ \Gamma'([p_i]){{\in \Q^{l+1}}}${{(note that $|r_{ij}|<\dt_3$ for $j=1,2,\cdots,l+1$ and $i=1,2,\cdots, s$)}}, we obtain ${\mathcal G}$-$\dt_1/8$-multiplicative completely positive {{linear}} maps
$\Sigma_1, \mu_1 : {\tilde A}\to Q^{l+1}$, with ${{\Sigma_1}}({{1_{\tilde A}}})=\mu_1({{1_{\tilde A}}})$ a projection, such that
\begin{equation}\label{pert-1-1}
 \tau(\Sigma_1(1_{\tilde A}))<\min\{\dt_1/32,\sigma/256\},\quad \tau\in  T(Q^{l+1}),
\end{equation}
\begin{equation}\label{pert-1-2}
[\Sigma_1({\mathcal P}_0)]\subseteq \D,\quad\mathrm{and}
\end{equation}
\begin{equation} \label{pert-1-3}
{[}\Sigma_1(p_i){]}-[\mu_1(p_i)]{{=r_i}}=[\Phi(p_i)]-(\pi_{\mathrm{e}})_{*0}\circ \Gamma'([p_i]),\quad  i=1,2,...,s.
\end{equation}

Consider the (unital) direct sum map
\begin{equation}\label{defn-Phi-P}
\Phi':=\Phi\oplus\mu_1: {\tilde A}\to (1\oplus\Sigma_1(1_{\tilde A}))\mathrm{M}_2(Q^{l+1})(1\oplus\Sigma_1(1_{\tilde A})).
\end{equation}
Note that $\Phi'$, like $\mu_1$ and $\Phi$, is $\mathcal G$-$\delta_1/8$-multiplicative. 
It follows from \eqref{pert-1-3} that
{{\beq \label{TT-7-1}
&&\hspace{-0.4in}[\psi_0(\Phi'(p_i))] = (\psi_0^\sim)_{*0}([\mu_1(p_i)]+[\Phi(p_i)])=(\psi_0^\sim)_{*0}({[}\Sigma_1(p_i){]}+(\pi_{\mathrm{e}}^\sim)_{*0}\circ \Gamma'([p_i]))
\andeqn\\
\label{TT-7-2}
&&\hspace{-0.4in}{[}\psi_1(\Phi'(p_i)){]}= (\psi_1^\sim)_{*0}([\mu_1(p_i)]+[\Phi(p_i)])=(\psi_1^\sim)_{*0}({[}\Sigma_1(p_i){]}+(\pi_{\mathrm{e}}^\sim)_{*0}\circ \Gamma'([p_i]))
\eneq}}
 {{for $i=1, 2, ..., s.$}}
It follows from \eqref{TT-7-1} and \eqref{TT-7-2}, in view of \eqref{pert-1-2} and the fact ({{using}} \eqref{lft-proj}) that $(\pi_{\mathrm{e}})_{*0}\circ \Gamma'([p_i]))\in (\pi_{\mathrm{e}}^\sim )_{*0}(K_0({\tilde{C_1}}\otimes Q))=\D,$
that $[\Phi'(p_i)]\in\mathbb D$, $i=1, 2, ..., s$, i.e.,
\beq\label{TT-8-nouse}
[\psi_0(\Phi'(p_i))]={[}\psi_1(\Phi'(p_i)){]},\quad i=1,2,...,s.
\eneq

Set $B={\rm C}([0,1], Q^r),$ and (as before) write  $\pi_t: B\to  Q^r$ for  the point evaluation at $t\in [0,1].$
Since ${{1_{\tilde A}}}\in\mathcal P_0$, by \eqref{pert-1-2}, $[\Sigma_1(1_{\tilde A})]\in\mathbb D.$ {{Hence}}  there is
a projection $e_0\in B$ such that $\pi_0(e_0)=\psi_0^\sim (\Sigma_1(1_{\tilde{A}}))$ and $\pi_1(e_0)=\psi_1^\sim(\Sigma_1(1_{\tilde A})).$
It then follows from \eqref{pert-1-1} (applied just for $\tau$ factoring through {{$\psi_0^\sim$}}---alternatively, for $\tau$ factoring through {{$\psi_1^\sim$}}) that
\begin{equation}\label{small-trace-n-2}
\tau(e_0) < \min\{\dt_1/32,\sigma/256\},\quad \tau\in T(B).
\end{equation}

Let $j^*:  T(B)\to  T({\widetilde C_1})$ denote the continuous affine map dual to the canonical unital map $j: {\widetilde C_1}\to B.$
Let $\gamma_1:  T(B)\to T(\widetilde{A})$ be defined by $\gamma_1:={{\gamma^*}}\circ j^*$, and let $\kappa: {\bar G}_0\to K_0(B)$
be defined by $\kappa:=j_{*0}\circ \Gamma'.$
Then, by (\ref{lft-proj}) and (\ref{TT-3}),  {{for all $\tau\in  T(B),$}}
\beq\label{TT-8+}
|\tau(\kappa([p_i]) - \gamma_1(\tau)(p_i)| & = & | \tau(j_{*0}(\Gamma'([p_i])) - ({{\gamma^*}}\circ j^*)(\tau)(p_i)| \nonumber \\
 & = & |j^*(\tau)([p_i']) - {{\gamma}}(\widehat{p_i})(j^*(\tau)) |<\dt_2,\,\,\,1\le i\le k.
\eneq
%

The estimate \eqref{TT-8+} ensures that we can apply Lemma \ref{T06} with $\kappa$ and $\gamma_1$
(note that $\Gamma'([1_{\tilde A}])=[1_{{\tilde{C_1}}}]$ and hence $\kappa([1_{\tilde A}])=[1_{B}]$) 
{{to obtain}} a  ${\mathcal G}$-$\dt_1/8$-multiplicative completely positive {{linear}} map
$\Psi': A\to B$ such that
\begin{equation}\label{TT-9-nouse-1}
|\tau\circ \Psi'(a)-\gamma_1(\tau)(a)|<\min\{\dt_1/32, \sigma/256\},\quad a\in {\mathcal G},\ \tau\in  T(B).
\end{equation}

Let $\Psi^{'\sim}: {\tilde A}\to B$ {{be}} the unitization of $\Psi'.$
Amplifying $\Psi^{'\sim}$ slightly (by first identifying $Q^r$ with $Q^r\otimes Q$ and then considering
$H_0(f)(t)=f(t)\otimes(1+e_0(t))$ for $t\in[0, 1]$), we obtain a unital $\mathcal G$-$\delta_1/8$-multiplicative  completely positive {{linear}} map $\Psi: {\tilde A} \to (1\oplus e_0)\mathrm{M}_2(B)(1\oplus e_0)$
such that (by \eqref{TT-9-nouse-1} and \eqref{small-trace-n-2}), for all $a\in {\cal G}$ {{and $\tau\in T(B)$,}}
\begin{equation}\label{TT-9-use-n-1}
|\tau\circ \Psi(a)-\gamma_1(\tau)(a)| < 2\min\{\dt_1/32,\sigma/256\} =\min\{\dt_1/16, \sigma/128\}.
\end{equation}
By \eqref{14f12(e)-n0},  as $\gamma_1(\tau)\in  T(A)$ {{and $\sigma<d/2^{10}$,}}
\beq\label{12f12(e)-n10}
\tau(\Psi(f_{1/2}(e)))\ge d(2^9-1)/2^9\rforal \tau\in  T(B).
\eneq

\noindent
Note that, for any element $a \in C_1$,
\begin{equation}\label{const-trace}
\tau(\psi_0(\pi_{\mathrm{e}}(a)))=\tau(\pi_0(j(a)))\quad\mathrm{and}\quad \tau(\psi_1(\pi_{\mathrm{e}}(a))) = \tau(\pi_1(j(a))),\quad \tau\in T(Q^r).
\end{equation}
(Recall that $j: {\tilde C}_1 \to B$ is the canonical map.)
Therefore (by \eqref{const-trace}), for any $a\in\mathcal G,$
\begin{eqnarray}\label{trace-infty-1}
| \tau(\psi_0(\Phi(a))) - \gamma(\hat{a})(\tau\circ\pi_0\circ j) |&=&| \tau(\psi_0(\Phi(a))) -  \gamma(\hat{a})(\tau\circ\psi_0\circ\pi_{\mathrm{e}}) | \nonumber \\
&=& | \tau\circ\psi_0(\Phi(a))) - (\pi_{\mathrm{e}})_{\Aff}(\gamma(\hat{a}))(\tau\circ\psi_0)| \nonumber \\
& < & \min\{13\delta_1/32, \sigma/32\} \quad\quad \textrm{(by \eqref{trace-infty})}
\end{eqnarray}
{{for all $\tau\in T(Q^r).$}} The same argument shows that
\begin{equation}\label{trace-infty-1-1}
| \tau(\psi_1(\Phi(a))) - \gamma(\hat{a})(\tau\circ\pi_1\circ j) |<\min\{13\delta_1/32, \sigma/32\},\quad a\in\mathcal G,\ \tau\in T(Q^r).
\end{equation}


Then, for any $\tau\in T(Q^r)$ and any $a\in\mathcal G$, we have
\begin{eqnarray}\label{trace-mt-boundary}
&&\hspace{-0.4in} |\tau\circ \psi_0\circ \Phi'(a)-\tau\circ \pi_0\circ \Psi(a)|
= | \tau\circ\psi_0(\Phi(a)\oplus\mu_1(a)) -  \tau\circ \pi_0\circ \Psi(a) | \nonumber \\
&&<
|\tau\circ \psi_0(\Phi(a)\oplus\mu_1(a))-\gamma_1(\tau\circ\pi_0)(a)| +\min\{\dt_1/16,\sigma/128\}
 \quad\textrm{(by \eqref{TT-9-use-n-1})}\nonumber \\
&& <  |\tau\circ \psi_0(\Phi(a))-\gamma_1(\tau\circ\pi_0)(a)| + \min\{3\dt_1/32, 3\sigma/256\}
\,\,\,\quad\quad\quad\textrm{(by  {{{\eqref{pert-1-1}}}})}\nonumber \\ \nonumber
&&= |\tau\circ \psi_0(\Phi(a))-  \gamma(\hat{a})(\tau\circ\pi_0\circ j) | + \min\{3\dt_1/32, 3\sigma/256\}  \\ \nonumber
&&<  \min\{13\delta_1/32, \sigma/32\} + \min\{3\dt_1/32, 3\sigma/256\} \quad\quad\quad\quad\quad\quad\quad { {\textrm{(by \eqref{trace-infty-1})}}} \\ \label{trace-mt-boundary-n-1}
&& \leq 13\delta_1/32 + 3\delta_1/32= \delta_1/2. 
\end{eqnarray}
The same argument, using \eqref{trace-infty-1-1} instead of \eqref{trace-infty-1}, shows that
\begin{eqnarray}\label{trace-mt-boundary-n-1-1}
 |\tau\circ \psi_1\circ \Phi'(a)-\tau\circ \pi_1\circ \Psi(a)| & < & \min\{13\delta_1/32, \sigma/32\} + \min\{3\dt_1/32, 3\sigma/256\} \\
 & \leq & \delta_1/2 {{\rforal  \tau\in T(Q^r)\andeqn  a\in\mathcal G.}} \nonumber
\end{eqnarray}
(\eqref{trace-mt-boundary-n-1} and \eqref{trace-mt-boundary-n-1-1}---the $\sigma$ estimates---will be used later to verify \eqref{TT-13+1} and (\ref{TT-13+2}).)

\noindent
Noting that $\Psi$ and $\Phi'$ are $\delta_1/8$-multiplicative on $\{{{1_{\tilde A}}}, p_1, p_2, ..., p_s\}$,
we may assume ($1\le i\le s$)
$$ |\tau([\Psi(p_i)]) - \tau(\Psi(p_i)) |<\delta_1/4\quad\mathrm{and}\quad |\tau([\Phi'(p_i)]) - \tau(\Phi'(p_i)) |<\delta_1/4
\rforal \tau\in   T(Q^r).$$
Combining these inequalities with  \eqref{trace-mt-boundary-n-1} and \eqref{trace-mt-boundary-n-1-1},
we have
\beq\label{TT-11}
|\tau([\pi_0\circ \Psi(p_i)]) - \tau([\psi_0\circ \Phi'(p_i)])|<\dt_1,\quad i=1,2,...,s,\ \tau\in T(Q^r).
\eneq
Therefore (in view of \eqref{TT-11}), applying Lemma \ref{0.6} with {{$r'_i=[\pi_0\circ \Psi(p_i)] - [\psi_0\circ \Phi'(p_i)]\in \Q^r$}}, we obtain
unital ${\mathcal G}$-$\dt$-multiplicative completely positive {{linear}} maps
$\Sigma_2: {\tilde A}\to Q^{l+1}$ and $\mu_2: {\tilde A}\to Q^r$, taking $1_{\tilde A}$ into projections,
such that
\begin{equation}\label{prop-2-n-1}
[\psi_k\circ \Sigma_2(1_{{\tilde{A}}})]=[\mu_2(1_{{\tilde{A}}})],\quad k=0, 1,
\end{equation}
\begin{equation}\label{prop-2-n-2}
{[} \Sigma_2(\mathcal P){]}\subseteq  (\pi_{\mathrm{e}})_{*0}(K_0({\tilde C}_1))=\D,\quad i=1, 2, ..., s,
\end{equation}
\begin{equation}\label{prop-2-3}
 \tau(\Sigma_2(1_{{\tilde{A}}}))< \sigma/64,\quad \tau \in  T(Q^{l+1}), 
\end{equation}
and, taking {{\eqref{prop-2-n-2} into account,}}
\begin{equation}\label{prop-2-4}
{[}\psi_0\circ \Sigma_2(p_i){]}-[\mu_2(p_i)]  =  {[}\psi_1\circ \Sigma_2(p_i){]}-[\mu_2(p_i)]{{=r'_i}}
 =  [\pi_0\circ \Psi(p_i)]-[\psi_0\circ \Phi'(p_i)],
\end{equation}
where $i=1, 2, ..., s.$  It should be also noted that, since $\tau\circ \psi_k\in  T(Q^{l+1}),$ $k=0,1,$
\beq\label{14mu2-1}
\tau(\mu_2(1_{\tilde A}))<\sigma/64\rforal \tau\in  T(Q^r).
\eneq

Consider the four $\mathcal G$-$\delta$-multiplicative direct sum maps (note that $\Phi'$ and $\Psi$ are $\mathcal G$-$\delta_1/8$-multiplicative, and $\delta_1\leq 8\delta$), from ${\tilde A}$ to $\mathrm{M}_3(Q^r)$,
\beq\label{14f12(e)-21}
&&\Phi_0:=(\psi_0\circ \Phi')\oplus (\psi_0\circ \Sigma_2),\quad \Phi_1:=(\psi_1\circ \Phi')\oplus (\psi_1\circ \Sigma_2)\andeqn\\\label{14f12(e)-22}
&&\Psi_0:=(\pi_0\circ \Psi) \oplus \mu_2,\quad\quad \quad\,\,\,\,\,\,\Psi_1:=(\pi_1\circ \Psi) \oplus \mu_2.
\eneq
%
%
%
We then have that, for each $i=1, 2, ..., s$,
\begin{eqnarray*}
[\Psi_0(p_i)]-[\Phi_0(p_i)] & = & ([(\pi_0\circ \Psi)(p_i)] + [\mu_2(p_i)]) - ([(\psi_0\circ \Phi')(p_i)] + [(\psi_0\circ \Sigma_2)(p_i)])  \\
& = & ([(\pi_0\circ \Psi)(p_i)] - [(\psi_0\circ \Phi')(p_i)] ) - ([(\psi_0\circ \Sigma_2)(p_i)] - [\mu_2(p_i)] )\\
& =& 0 \quad\quad \textrm{(by \eqref{prop-2-4})}, \\
\andeqn\\
&& \hspace{-1.6in}[\Psi_1(p_i)]-[\Phi_1(p_i)] =
 ([(\pi_1\circ \Psi)(p_i)] + [\mu_2(p_i)]) - ([(\psi_1\circ \Phi')(p_i)] + [(\psi_1\circ \Sigma_2)(p_i)]) \\
& &\hspace{-1in}=  ([(\pi_1\circ \Psi)(p_i)] - [(\psi_1\circ \Phi')(p_i)] ) - ([(\psi_1\circ \Sigma_2)(p_i)] - [\mu_2(p_i)] ) \\
& &\hspace{-1in}=  ([(\pi_0\circ \Psi)(p_i)] - [(\psi_1\circ \Phi')(p_i)] ) - ([(\psi_1\circ \Sigma_2)(p_i)] - [\mu_2(p_i)] )\quad \textrm{($\pi_0$ and $\pi_1$ are homotopic)} \\
&&\hspace{-1in}{{= ([(\pi_0\circ \Psi)(p_i)] - [(\psi_0\circ \Phi')(p_i)] ) - ([(\psi_1\circ \Sigma_2)(p_i)] - [\mu_2(p_i)] )=0}}
\quad \textrm{(by \eqref{TT-8-nouse})}.
\end{eqnarray*}
Note also that, by construction,
\beq\label{14unital}
\Psi_i(1_{\tilde A})=\Phi_i(1_{\tilde A})=1\oplus\pi_i(e_0),\quad i=0, 1.
\eneq

Summarizing the calculations in the preceding paragraph, we have
\begin{equation}\label{TT-13}
[\Phi_i]|_{\mathcal P}=[\Psi_i]|_{\mathcal P},\quad i=0,1.
\end{equation}
On the other hand, for any $a\in\mathcal F\subseteq\mathcal G$ and any $\tau\in T(Q^r)$, we have
\beq\nonumber
|\tau(\Phi_0(a)) - \tau(\Psi_0(a))| & = & |\tau ((\psi_0\circ \Phi')(a)\oplus (\psi_0\circ \Sigma_2)(a)) - \tau((\pi_0\circ \Psi)(a) \oplus \mu_2(a))| \\\nonumber
 & < & |\tau ((\psi_0\circ \Phi')(a)) - \tau((\pi_0\circ \Psi)(a) )| + \sigma/32\quad\quad\textrm{(by \eqref{prop-2-3})} \\\nonumber
 & < & \min\{13\delta_1/16, \sigma/32\} + \min\{3\dt_1/16, 3\sigma/256\} +\sigma/32  \quad\quad\textrm{(by \eqref{trace-mt-boundary-n-1})}\\\label{TT-13+1}
 &\leq & 5\sigma/64.
\eneq
The same argument, using \eqref{trace-mt-boundary-n-1-1} instead of \eqref{trace-mt-boundary-n-1},  also shows that
\beq\label{TT-13+2}
|\tau(\Phi_1(a)) - \tau(\Psi_1(a))| < 5\sigma/64, \quad a\in\mathcal F,\ \tau\in T(Q^r).
\eneq

Since ${{1_{\tilde A}}} \in\mathcal P$, by \eqref{prop-2-n-2}, $[\Sigma_2({{1_{\tilde A}}})]\in\mathbb D$, and so there is  a projection $e_1\in B$ such that $\pi_0(e_1)=\psi_0(\Sigma_2({{1_{\tilde A}}}))$
and $\pi_1(e_1)=\psi_1(\Sigma_2({{1_{\tilde A}}})).$
It then follows from \eqref{prop-2-3} (applied just for $\tau$ factoring through $\psi_0$---alternatively, for $\tau$ factoring through $\psi_1$) that
\begin{equation}\label{small-2-n-proj}
\tau(e_1) < \sigma/64,\quad \tau\in T(B).
\end{equation}
Set $E_0'=1\oplus \pi_0(e_0)\oplus \pi_0(e_1),$ $E_1'=1\oplus \pi_1(e_0)\oplus \pi_1(e_1),$
and  $D_0=E_0' \mathrm{M}_2(Q^r)E_0',$  $D_1=E_1'\mathrm{M}_2(Q^r)E_1'.$
 We estimate that, using
 \eqref{12f12(e)-n10}, \eqref{14mu2-1},
 \eqref{14f12(e)-22}, \eqref{14mu2-1},
\beq\label{14f12(e)-12}
&&\tau_0(\Psi_0(f_{1/2}(e)){{)}}\ge 63d/64\,\,\,{\rm for}\,\,\, \tau_0\in T(D_0)\andeqn\\
&& \tau_1(\Psi_1(f_{1/2}(e)){{)}}\ge 63d/64\,\,\,{\rm for}\,\,\tau_1\in T(D_1).
\eneq
Then, by \eqref{TT-13+1} and \eqref{TT-13+2},
\beq\label{14f12(2)-30}
&&\hspace{-0.3in}\tau_0(\Phi_0({ {f_{1/2}(e)}}))\ge 62d/64\,\,\,
{\rm for}\,\, \tau_0\in T(D_0)
\andeqn\\
 &&\hspace{-0.3in}\tau_1(\Phi_1(f_{1/2}(e)))\ge 62d/64\,
 \,\,\,{\rm for}\,\,\tau\in T(D_1).
\eneq
 By the choice of ${\cal G}_1,$  $\dt_0$, and \eqref{14n20526-n1}, we have
 \beq\label{14f12(e)-31}
 &&\hspace{-0.3in}\tau_0(f_{1/2}(\Phi_0(e)))\ge 3d/4\,\,\,
 ,\,\,\tau_0(f_{1/2}(\Psi_0(e)))\ge 3d/4\,\,\,
{\rm for}\,\, \tau_0\in T(D_0)
\andeqn\\\label{14f12(e)-32}
 &&\hspace{-0.3in}\tau_1(f_{1/2}(\Phi_1(e))))\ge 3d/4\,
 \,\,  \tau_1(f_{1/2}(\Psi_1(e)))\ge 3d/4
 \,\,\,{\rm for}\,\,\tau\in T(D_1).
 \eneq

Pick a sufficiently small $r'\in (0, 1/4)$  {{such}} that 
\begin{equation}\label{small-pert-hor}
\| \Psi (a)((1+2r')t-r') - \Psi(a)(t) \| <\sigma/64,\quad a\in\mathcal G,\ t\in [\frac{r'}{1+2r'}, \frac{1+r'}{1+2r'}].
\end{equation}


It follows from Lemma 7.2 of \cite{eglnkk0}
(in view of (\ref{TT-13}),  \eqref{14f12(e)-31}, \eqref{14f12(e)-32},  \eqref{TT-13+1}, (\ref{TT-13+2})
and \eqref{14unital}) that there exist unitaries $u_0\in D_0$ and
 $u_1\in D_1,$
 and  unital ${\mathcal F}$-$\ep/4$-multiplicative
 completely positive {{linear}} maps
 $L_0:{\tilde A}\to {\rm C}([-r', 0], D_0)$ and
$L_1: {\tilde A}\to  {\rm C}([1, 1+r'], D_1)$, such that
\beq
&&\pi_{-r'}\circ L_0=\Phi_0,\,\,\, \pi_0\circ L_0={\rm Ad}\, u_0\circ {{\Psi_0}}, \label{small-var-trace--2} \\
&&\pi_{1+r'}\circ L_1=\Phi_1,\,\,\, \pi_{1}\circ L_1={\rm Ad}\, u_1\circ {{\Psi_1}}, \label{small-var-trace--1} \\
&&|\tau\circ \pi_t\circ L_0(a)-\tau\circ \pi_0\circ L_0(a)|<5\sigma/32,\quad t\in [-r', 0], \label{small-var-trace-0} \\
&&|\tau\circ \pi_t\circ L_1(a)-\tau\circ \pi_1\circ L_1(a)|<5\sigma/32,\quad t\in [1,1+r'], \label{small-var-trace-1}
\eneq
where $a\in {\mathcal F},$ $\tau\in  T(Q^r),$ and (as before) $\pi_t$ is the point evaluation at $t\in [-r', 1+r'].$ 

Write $E_3=1\oplus e_0\oplus e_1\in \mathrm{M}_3(\mathrm{C}([0,1], Q^r))$ and $B_1=E_3(\mathrm{M}_3(\mathrm{C}([0,1], Q^r)))E_3.$
There exists a  unitary $u\in B_1$ such
that $u(0)=u_0$ and $u(1)=u_1.$
Consider the projection $E_4\in \mathrm{M}_3(\mathrm{C}([-r',1+r'], Q^r))$ defined by    
$E_4|_{[-r,0]}=E_0',$ $E_4|_{[0,1]}=E_3$ and $E_4|_{[1,1+r]}=E_1'.$
Set
$$B_2=E_4(\mathrm{M}_3(\mathrm{C}([-r',1+r'], Q^r)))E_4.$$
Define a
unital
$\mathcal F$-$\ep/4$-multiplicative (note that $\mathcal F\subset \mathcal G$ and $\delta\leq \ep/8$) completely positive {{linear}} map $L': {\tilde A}\to B_2$ by
\begin{equation}\label{defn-new-L}
L'(a)(t) = \left\{\begin{array}{ll}
L_0(a)(t), & t\in [-r', 0), \\
{\rm Ad}\, u(t)\circ (\pi_t\circ\Psi\oplus \mu_2)(a), & t\in [0, 1] ,\\
L_1(a)(t) & t \in (1, 1+r'].
\end{array}\right.
\end{equation}

Note that for any $a\in\mathcal G$, and any $\tau\in T(Q^r)$, by \eqref{defn-new-L}, if $t\in[0, 1]$, then
\begin{eqnarray}\label{pre-realize-trace-1}
&&\hspace{-0.4in}|\tau(\pi_t(L'(a))) - \gamma_1(\pi_t^*(\tau))(a) | \nonumber \\
 & = & |\tau({\rm Ad}\, u(t)\circ ({ {\pi_t\circ}}\Psi\oplus \mu_2)(a)) - \gamma_1(\pi_t^*(\tau))(a)| \nonumber \\
& = & |\tau(\pi_t(\Psi(a))) + \tau(\mu_2(a)) - \gamma_1(\pi_t^*(\tau))(a)| \nonumber \\
& < & |(\pi_t^*(\tau))(\Psi(a))) - \gamma_1(\pi_t^*(\tau))(a)| + \sigma/64 \,\,\,\quad\quad\textrm{(by \eqref{prop-2-n-1} and \eqref{prop-2-3})} \nonumber \\
& < & \min\{\delta_1/16, \sigma/128\} + \sigma/64 \leq 3\sigma/128 \,\,\,\quad\quad\quad\textrm{(by \eqref{TT-9-use-n-1})},
\end{eqnarray}
where $\pi_t^*:  T(Q^r) \to  T(B)$ is the dual of $\pi_t: B\to Q^r$.
Furthermore, if $t \in [-r', 0]$, then for any $a\in\mathcal F$, and any $\tau\in T(Q^r)$,
\begin{eqnarray}\label{pre-realize-trace-2}
&&\hspace{-0.4in} |\tau(\pi_t(L'(a))) - \gamma_1(\pi_0^*(\tau))(a) | \nonumber \\
&=& |\tau(L_0(a)(t)) - \gamma_1(\pi_0^*(\tau))(a)| \nonumber \\
& < & |\tau(L_0(a)(0)) - \gamma_1(\pi_0^*(\tau))(a)| + 5\sigma/32\,\quad\quad\quad\quad\quad\quad\textrm{(by \eqref{small-var-trace-0})} \nonumber \\
& = & |\tau(\Psi_0(a))- \gamma_1(\pi_0^*(\tau))(a)| + 5\sigma/32 \,\,\quad\quad \quad\quad\quad \quad\quad\textrm{(by \eqref{small-var-trace--2})} \nonumber \\
& = & |\tau((\pi_0\circ \Psi)(a) \oplus \mu_2(a))- \gamma_1(\pi_0^*(\tau))(a)| + 5\sigma/32 \nonumber  \\
& < & |\tau((\pi_0\circ \Psi)(a)- \gamma_1(\pi_0^*(\tau))(a)| + \sigma/64 + 5\sigma/32 \quad\quad\textrm{(by \eqref{prop-2-n-1} and \eqref{prop-2-3})} \nonumber \\
& < & \min\{\delta_1/16, \sigma/128\} + \sigma/64 + 5\sigma/32 <23\sigma/128 \quad\quad\textrm{(by \eqref{TT-9-use-n-1})}.
\end{eqnarray}
Again, if $t \in [1, 1+r']$, then the same argument shows that for any $a\in\mathcal F$, and any $\tau\in T(Q^r)$,
\begin{equation}\label{pre-realize-trace-3}
|\tau(\pi_t(L'(a))) - \gamma_1(\pi_1^*(\tau))(a) | < 23\sigma/128.
\end{equation}


Let us modify $L'$ to a unital map from ${\tilde A}$ to $B$.
First, let us renormalize $L'$. Consider the isomorphism $\eta: {{K_0(Q^r)=\Q^r\to K_0(Q^r)=\Q^r}}$ defined by
$$
\eta(x_1,x_2,...,x_r)=({1\over{\mathrm{tr}_1(E_3)}}x_1, {1\over{\mathrm{tr}_2(E_3)}}x_2,...,{1\over{\mathrm{tr}_r(E_3)}}x_r),
$$
for all $(x_1,x_2,...,x_r)\in \Q^r,$
where (as before) $\mathrm{tr}_k$ is the tracial state supported on
the $k$th direct summand of $Q^r$.  
Then there is a (unital) isomorphism $\phi: B_2\to {\rm C}([-r', 1+r'], Q^r)$ such that $\phi_{*0}=\eta.$
Let us replace the map $L'$ {{with}} the map $\phi\circ L'$, and still denote it by $L'$. Note that it follows from \eqref{pre-realize-trace-1}, \eqref{small-trace-n-2}, and \eqref{small-2-n-proj} that for any $t\in [0, 1]$, any $a\in\mathcal F$, and any $\tau\in T(Q^r)$,
\begin{eqnarray}\label{pre-realize-trace-2-1}
&&\hspace{-0.4in} |\tau(\pi_t(L'(a))) - \gamma_1(\pi_t^*(\tau))(a) |
<  \sigma/16 + \tau(e_0) + \tau(e_1) \nonumber \\
& < &  3\sigma/128 + \min\{\delta_1/16, \sigma/64\}+ \sigma/64 \leq 7\sigma/128.
\end{eqnarray}
The same argument, using \eqref{pre-realize-trace-2} and \eqref{pre-realize-trace-3} instead of \eqref{pre-realize-trace-1}, shows that for any $a\in\mathcal F$,
\begin{eqnarray}
|\tau(\pi_t(L'(a))) - \gamma_1(\pi_0^*(\tau))(a) | & <  & 27\sigma/128,\quad t\in[-r', 0] \label{pre-realize-trace-2-2}, \\
|\tau(\pi_t(L'(a))) - \gamma_1(\pi_1^*(\tau))(a) | & < & 27\sigma/128, \quad t\in[1, 1+r'] \label{pre-realize-trace-2-3}.
\end{eqnarray}

Now, put
\begin{equation}\label{defn-new-new-L}
L''(a)(t) = L'(a)((1+2r')t-r'),\quad t\in[0, 1].
\end{equation}
This perturbation will not change the trace very much,
as for any $a\in\mathcal F$ and any $\tau\in  T(Q^r)$,  if $t\in[0, {r'}/(1+2r')]$, then
\begin{eqnarray*}
&  &\hspace{-0.4in} |\tau(L''(a)(t)) - \tau(L'(a)(t))| \\
&=& |\tau(L'(a)((1+2r')t-r')) - \tau(L'(a)(t))|  \,\,\,\quad\quad\quad\quad\quad\textrm{(by \eqref{defn-new-new-L})} \\
& = & |\tau(L_0(a)((1+2r')t-r')) - (\tau(\Psi(a)(t)) + \mu_2(a))|  \\
& < & |\tau(L_0(a)((1+2r')t-r')) - \tau(\Psi(a)(t)) | + \sigma/64 \quad\quad\textrm{(by \eqref{prop-2-n-1} {{and \eqref{prop-2-3})}}} \\
& < & |\tau(L_0(a)(0)) - \tau(\Psi(a)(t)) | + 5\sigma/32 + \sigma/64 \,\,\,\,\,\quad\quad\quad\textrm{(by \eqref{small-var-trace-0})} \\
& = &  |\tau(\Psi_0(a)(0)) - \tau(\Psi(a)(t)) | + 11\sigma/64  \,\,\,\quad\quad \quad\quad\quad\quad\textrm{(by \eqref{small-var-trace--2})} \\
& < & \sigma/64 + 11\sigma/64=3\sigma/16   \,\, \,\,\,\quad\quad\quad\quad\quad\quad\quad\quad\quad\quad\quad\textrm{(by \eqref{prop-2-n-1} {{and \eqref{prop-2-3})}}}.
\end{eqnarray*}
Furthermore, the same argument, now using \eqref{small-var-trace-1} and \eqref{small-var-trace--1}, shows that for any $a\in\mathcal F$, $\tau\in T(Q^r)$, and $t\in[(1+r')/(1+2r'), 1]$,
\begin{equation*}
|\tau(L_0(a)((1+2r')t-r')) - (\tau(\Psi(a)(t)) + \mu_2(a))| < 3\sigma/16,
\end{equation*}
and, if $t\in [{r'}/(1+2r'), (1+r')/(1+2r')]$, then
\begin{eqnarray*}
|\tau(L'(a)((1+2r')t-r')) - \tau(L'(a)(t))| & = & | \tau(\Psi (a)((1+2r')t-r') - \Psi(a)(t))| \\
& < & \sigma/64  \quad\quad\quad\quad\textrm{(by \eqref{small-pert-hor})}.
\end{eqnarray*}
Thus
\begin{equation}\label{small-pert-n-2}
|\tau(L''(a)(t)) - \tau(L'(a)(t))| < 3\sigma/16,\quad a\in\mathcal F,\ \tau\in T(Q^r),\ t\in[0, 1].
\end{equation}
Hence, {{by \eqref{small-pert-n-2}, \eqref{pre-realize-trace-2-1}, \eqref{pre-realize-trace-2-2}, and \eqref{pre-realize-trace-2-3},}}
\begin{eqnarray}\label{2nd-pert-trace}
&&\hspace{-0.6 in}|\tau(\pi_t(L''(a))) - \gamma_1(\pi_t^*(\tau))(a)|
\leq |\tau(L''(a)(t)) - \tau(L'(a)(t))| + |\tau(L'(a)(t))- \gamma_1(\pi_t^*(\tau))(a)| \nonumber \\
&&\hspace{1.34in}< 3\sigma/16 + 27\sigma/128 =51\sigma/128
\end{eqnarray}
%
Note  that  $L''$ is a unital map from ${{\tilde{A}}}$ to $B$. It is also $\mathcal F$-$\ep$-multiplicative, since $L'$ is.  
Consider the order isomorphism $\eta': {{K_0(Q^{l+1})=\Q^{l+1}\to K_0(Q^{l+1})=\Q^{l+1}}}$
defined by
$$
\eta'(y_1,y_2,...,y_l)=(a_1y_1, a_2y_2,...,a_ly_l)\,\,\, {{\rm for}}\,\,\, (y_1,y_2,...,y_l)\in {{\Q^{l+1}}},
$$
where
\begin{equation}\label{scaling-const}
a_j={1\over{{\rm tr}_j(1\oplus \Sigma_1({{1_{\tilde A}}}) \oplus \Sigma_2({{1_{\tilde A}}}))}},\quad  j=1,2,...,{{l+1}},
\end{equation}
and (as before) $\mathrm{tr_j}$ is the tracial state supported on the $j$th direct summand of ${{Q^{l+1}}}$.
There exists a unital \hm\, ${\tilde \phi}: (1\oplus \Sigma_1({{1_{\tilde A}}}) \oplus \Sigma_2({{1_{\tilde A}}}))\mathrm{M}_3({{Q^{l+1}}})(1\oplus \Sigma_1({{1_{\tilde A}}}) \oplus \Sigma_2({{1_{\tilde A}}}))
\to {{Q^{l+1}}}$ such that
\begin{equation}\nonumber
{\tilde \phi}_{*0}=\eta'.
\end{equation}
Therefore, by the constructions of $L''$, $L'$, $L_0$, and $L_1$ (\eqref{defn-new-new-L}, \eqref{defn-new-L}, \eqref{small-var-trace--2}, and \eqref{small-var-trace--1}), we may assume
that
\begin{equation}\label{mt-cond}\psi_0\circ {\tilde \phi}\circ (\Phi'\oplus \Sigma_2)=\pi_0\circ L''\quad
\textrm{and}\quad \psi_1\circ {\tilde \phi}\circ (\Phi'\oplus \Sigma_2)=\pi_1\circ L'',
\end{equation}
replacing $L''$ {{with}} ${\rm Ad}\, w\circ L''$  for a suitable unitary $w$ if necessary.

 Define $L: {{\tilde{A}\to \tilde{C}_1}}$ by
 $L(a)=(L''(a), {\tilde \phi}( \Phi'(a)\oplus \Sigma_2(a)))$, an element of ${{\tilde{C}_1}}$ by \eqref{mt-cond}. Since $L''$ and $\tilde{\phi}\circ(\Phi'\oplus\Sigma_2)$ are unital and ${\mathcal F}$-$\ep/4$-multiplicative (since $\Phi'$ and $\Sigma_2$ are $\mathcal G$-$\delta$-multiplicative, $\mathcal F\subset \mathcal G$, and $\delta\leq\ep/8$), so also is $L$.


Moreover, for any $a\in\mathcal F$, any $\tau\in T(Q^r)$, and any $t\in (0, 1)$, it follows from \eqref{2nd-pert-trace} that
\begin{equation}\label{approx-n-1}
|\tau(\pi_t(L(a))) - \gamma_1(\pi_t^*(\tau))(a)|  <  51\sigma/128.
\end{equation}
If $\tau\in T({{Q^{l+1}}})$, then, for any $a\in\mathcal F$,
\begin{eqnarray*}
&&\hspace{-0.4in} |\tau(\pi_{\mathrm{e}}(L(a))) - \gamma^*(\pi_{\mathrm{e}}^*(\tau))(a)| =
 |\tau( {\tilde \phi}( \Phi'(a)\oplus \Sigma_2(a))) - \gamma^*(\pi_{\mathrm{e}}^*(\tau))(a)| \\
& < & |\tau(\Phi'(a)\oplus \Sigma_2(a)) - \gamma^*(\pi_{\mathrm{e}}^*(\tau))(a)|+\sigma/32 \quad\quad\textrm{(by \eqref{scaling-const}, \eqref{small-trace-n-2} and \eqref{prop-2-3})}\\
& < & |\tau(\Phi'(a)) - \gamma^*(\pi_{\mathrm{e}}^*(\tau))(a)| + 3 \sigma/64  \,\quad \quad\quad\quad\quad\textrm{(by \eqref{prop-2-3})} \\
& < & |\tau(\Phi(a)) - \gamma^*(\pi_{\mathrm{e}}^*(\tau))(a)| +  \sigma/8  \,\, \quad\quad\quad\quad\quad\quad\textrm{(by \eqref{pert-1-1})} \\
& < & \sigma/32 + \sigma/8=5\sigma/32.
\quad\quad\quad\quad \quad\quad\quad\quad\quad\quad\textrm{(by \eqref{trace-infty})}.
\end{eqnarray*}
Since each  extreme trace of ${{\tilde{C}_1}}$ factors through either the evaluation map $\pi_t$ or the canonical quotient map $\pi_{\mathrm{e}}$,  by \eqref{approx-n-1},
\begin{equation}\label{approx-n-1-1}
|\tau(L(a)) - \gamma^*(\tau)(a)|  <  51\sigma/128,\quad \tau\in  T({{\tilde{C}_1}}),\ a\in\mathcal F.
\end{equation}
{{From (\ref{TT-1+Aug29}), we have
$|\gm^*(\tau_\C^{C_1})(a)|<\sigma/128$ for all $f\in{\cal F}$.  {{Combing with \eqref{approx-n-1-1}, we have}}
\beq
|\tau_\C^{C_1}(L(a))|<52\sigma/128~~\mbox{for all}~~a\in {\cal F}.
\eneq}}
{{That is $\|\pi_{\C}^{C_1}(L(a))\|<52\sigma/128$. For each $a\in {\cal F}$, {{put}}  $a'=L(a)-\ld 1_{\tilde{C}_1}$ where $\ld=\pi_{\C}^{C_1}(L(a))\in \C$.}} Choose an element
$e_{C_1}\in C_1$ with $0\le e_{C_1}\le 1$
such that
{{\beq \|e_{C_1}a'e_{C_1}-a'\|< \sigma/128 \rforal a\in {\cal F}.
\eneq}}
{{Then}}
\beq
\|e_{C_1}L(a)e_{C_1}-L(a)\|<{{\sigma/128+52\sigma/128=53\sigma/128}}
\rforal a\in {\cal F}.
\eneq
Define  ${\bar L}: A\to C_1$ by ${\bar L}(a)=e_{C_1}L(a)e_{C_1}$ for all $a\in A.$
(Note that  ${\bar L}$ is ${\cal F}$-$\ep/2$-multiplicative.)
Therefore,
for any $a\in {\mathcal F}$ and $\tau\in T(C)$, we have
\begin{eqnarray*}
 &&\hspace{-0.4in}|\tau(\imath_{1, \infty}({{\bar{L}(a)}}))-\Gamma_{\Aff}(\hat{a})(\tau)|
 \\
 &< &
 |\tau(\imath_{1, \infty}({\bar L}(a)))-\gamma^*(\imath_{1, \infty}(\tau))(a)|+|\gamma_*(\imath_{1, \infty}(\tau))(a)-\Gamma_{\Aff}(\hat{a})(\tau)| \\
 &=&|\tau(\imath_{1, \infty}({\bar L}(a)))-\gamma^*(\imath_{1, \infty}(\tau))(a)|+|(\imath_{1, \infty})_{\Aff}(\hat{a})(\tau)-\Gamma_{\Aff}(\hat{a})(\tau)| \\
 & < & 51\sigma/128 +{{53\sigma/128}}+ \sigma/128 ={{105}}\sigma/128.
  \quad\quad\quad\quad \textrm{(by \eqref{approx-n-1-1} and \eqref{TT-1})}
 \end{eqnarray*}
Recall $f_{\ep'}(e)\in {\cal F},$ combining the above inequality with \eqref{14app-0}, one has
\beq\label{14ee-1}
\tau(\imath_{1, \infty}({{\bar{L}}}(f_{\ep'}(e))))\ge (1-\sigma/128)-{{105}}\sigma/128>1-{{106}}\sigma/128\rforal \tau\in  T(C).
\eneq
Choose a strictly positive element $e_A\in A$ such that
$e_A\ge f_{\ep'}(e)$ and put $c_1=\imath_{1, \infty}(L(e_A)).$
Then, by \eqref{14ee-1},
\beq\label{14ee-2}
\tau(c_1)>1-{{106}}\sigma/128\rforal \tau\in  T(C).
\eneq
{{Let $H_1: A\to C$ be defined by $H_1= \imath_{1, \infty}\circ \bar{L}$. Then
\beq
|\tau(H_1(a))-\Gamma_{\Aff}(\hat{a})(\tau)|<105\sigma/128~~\mbox{for all}\,\, a\in {\cal F},\, \tau \in T(C_1).
\eneq}}

Since $\mathcal F$, $\ep$, and $\sigma$ are arbitrary, in this way we obtain a sequence of  completely positive {{linear}} maps $H_n: A\to C$ such that
\beq\label{14Last-1}
&&\lim_{n\to\infty}\|H_n(ab)-H_n(a)H_n(b)\|=0\rforal  a,b\in A,\andeqn\\
\label{14Last-2}
&&\lim_{n\to\infty}\sup\{|\tau\circ H_n(a)-\Gamma_{\Aff}(\hat{a})(\tau)|: \tau\in  T(C)\}
=0\rforal a\in A,
\eneq
and {{$\tau(H_n(e_A)) \to 1 $ uniformly on $\tau$ in $T(C)$.}}

{{Recall that $C_n=C_{0,n}\otimes Q,$ where $C_{0,n}\in {\cal C}_0.$
Write $Q=\overline{\cup_{n=1}^\infty M_{n!}}$ and each $1_{M_{n!}}$ is the identity of $Q.$
Choose a subsequence $\{m(n)\}\subset \{n!\}$ such that
$C=\overline{\cup_{n=1}^\infty {\bar C}_n},$ where
${\bar C}_n:=C_{0,n}\otimes M_{m(n)},$ $n=1,2,...,$ and
$\lambda_s({\bar C}_n)\nearrow 1$ (see 5.3 of \cite{eglnkk0}).
\Wlog, we may assume that, in \eqref{14Last-1} and \eqref{14Last-1},
 $H_n: A\to {\bar C}_n\to C,$ $n=1,2,....$}}

On the other hand,  $\Gamma^{-1}: (K_0(C), T(C), r_C)\cong (K_0(A), T(A),r_A)$
gives {{an affine homeomorphism $\lambda_T: T(A)\to T(C)$
such that $\Gamma_{\Aff}(\hat{a})(\lambda_T(\tau))=\tau(a)$ for all
$a\in A_{s.a.}$ and $\tau\in T(A).$}}
Since $K_1(C)=\{0\},$
{{by Corollary 7.8 of \cite{Linalmost}, there is a sequence of injective  \hm s
$h_k': C_k\to A$ such that, for any $c\in {\bar C}_m,$
\beq
\lim_{k\to\infty}\sup_{\tau\in T(A)}\{|\tau(h_k'\circ \iota_{m,k}(c))-\lambda_T(\tau)(\iota_{m, \infty}(c))|\}=0.
\eneq}}
%
It follows  that, by an appropriate choice of  a subsequence {{$\{k(n)\}$}} and {{defining}}
$h_n:={{h_{k(n)}'\circ \iota_{n, k(n)}}},$  one obtains
that
\beq\label{14Last-3}
\lim_{n\to\infty}\sup\{|\tau\circ h_n\circ H_n(a)-\tau(a)|: \tau\in  T(A)\}
=0\rforal  a\in A.
\eneq

%
%
%
%
%

{{Note that, as shown at {{the}} beginning of the proof,
for any non-zero hereditary \SCA\, $B$ of $A$ with continuous scale, $B\otimes Q\cong B.$
So all above work for any such $B.$  {{Note
that $B\cong B\otimes Q$ is tracially approximately divisible.}}
By \eqref{14Last-1}, \eqref{14Last-2},
and \eqref{14Last-3}, applying Theorem \ref{TTTAD}
to each hereditary \SCA\, $B$ of $A$ with continuous scale,
described in Theorem \ref{TTTAD},  we conclude}}
$A\otimes Q\in {\cal D}.$
%
%
%
\end{proof}
%


%
%
%
%
%


\begin{thm}\label{TLS-n}
Let $A$ be a separable simple
 \CA\, with continuous scale.
Suppose that $A\otimes U\in {\cal D}$ for some infinite dimensional UHF-algebra $U.$
Then $A\otimes B\in {\cal D}$ for any infinite dimensional UHF-algebra $B.$
\end{thm}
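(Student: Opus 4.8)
The plan is to reduce the general statement to the single hypothesis $A\otimes U\in{\cal D}$ by passing through the rationally-generalized-tracial-rank-one framework and the invariant. First I would observe that membership in ${\cal D}$ is already known to have strong structural consequences (stable projectionlessness, stable rank one, $QT=T$, ${\rm Cu}=\LAff_+(\td T)$, strict comparison, ${\rm Cu}^\sim = K_0\sqcup\LAff_+^\sim$), and, crucially, that for $A$ with continuous scale, $A\otimes B$ again has continuous scale for every UHF algebra $B$. So it suffices to verify the defining conditions of ${\cal D}$ for $A\otimes B$ using what we know about $A\otimes U$. The natural bridge is: if $B$ and $U$ are both infinite-dimensional UHF algebras, then $B\otimes U\cong U$ (an infinite UHF tensor an infinite UHF is again infinite UHF, and by Glimm any two UHF algebras with the same "supernatural number cover" absorb each other — more simply, $U$ absorbs $U$, and $B\otimes U$ is a UHF algebra containing $U$ unitally, so $A\otimes B\otimes U\cong A\otimes U\in{\cal D}$ if we arrange $B\otimes U\cong U$; in general one takes $U':=U\otimes B$ and notes $U'\cong U'\otimes U'$).

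The key steps, in order: (1) Fix infinite-dimensional UHF algebras $U$ and $B$. Using $A\otimes U\in{\cal D}$, deduce that $A\otimes U$ is stably projectionless, has stable rank one, strict comparison, $T(A\otimes U)=QT(A\otimes U)$, and ${\rm Cu}(A\otimes U)=\LAff_+(\td T(A\otimes U))$. (2) Show $A\otimes B$ is quasidiagonal/stably finite and $T(A\otimes B)=T_{\rm qd}(A\otimes B)$: indeed $A\otimes B$ sits unitally (after tensoring with $U$) inside $A\otimes B\otimes U\cong A\otimes U$ in a way compatible with traces, and tracial states of $A\otimes B$ are approximated by maps to matrix algebras through the UHF inductive limit structure of $B$; alternatively every $\tau\in T(A\otimes B)$ extends to $A\otimes B\otimes U$ whose traces are quasidiagonal since $A\otimes U\in{\cal D}$ is ${\cal Z}$-stable with $QT=T$. (3) Apply Theorem \ref{Treduction1} to $A\otimes B$: its hypotheses are that $A\otimes B$ is separable, stably projectionless, simple, finite nuclear dimension, UCT, continuous scale, $T=T_{\rm qd}\neq\emptyset$, ${\rm Cu}=\LAff_+(T)$, plus existence of a model simple \CA\ $C=\lim(C_n'\otimes Q,\iota_n)$ with $C_n'\in{\cal C}_0$ realizing the invariant $(K_0((A\otimes B)\otimes Q), T((A\otimes B)\otimes Q), r)$. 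The model $C$ is furnished by Theorem \ref{Tmodel2}/\ref{TrangeGG} applied to the scaled ordered group pairing of $(A\otimes B)\otimes Q$, which by Theorem \ref{TRangeM} is a legitimate scaled simple ordered group pairing. This gives $(A\otimes B)\otimes Q\in{\cal D}$. (4) Finally, bootstrap from $(A\otimes B)\otimes Q$ back to $A\otimes B$: since $(A\otimes B)\otimes B\cong A\otimes(B\otimes B)\cong A\otimes B$ (as $B\otimes B$ is an infinite UHF absorbing $B$ when $B$ has all primes with infinite multiplicity — or, if not, repeat with $B$ replaced by $B\otimes Q$ throughout and then observe $A\otimes B\otimes Q$ and $A\otimes B$ share the reduction-class properties), conclude via the analogous absorption argument that $A\otimes B\in{\cal D}$.

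The main obstacle I anticipate is step (4), namely passing from $(A\otimes B)\otimes Q\in{\cal D}$ (or $(A\otimes B)\otimes U\in{\cal D}$) back down to $A\otimes B\in{\cal D}$ without an extra tensor factor. Membership in ${\cal D}$ is not obviously inherited by "tensor de-factorization." The cleanest route is probably to note that $A\otimes B$ and $(A\otimes B)\otimes Q$ have the same Elliott invariant and both have continuous scale, then invoke the classification/uniqueness machinery: if one already knows $(A\otimes B)\otimes Q\in{\cal D}$ and wants $A\otimes B\in{\cal D}$, one can instead run Theorem \ref{Treduction1} directly on $A\otimes B$ (not on a tensor factor of it), because that theorem's conclusion is literally "$A\otimes Q\in{\cal D}$" for its input $A$ — so applying it to $A\otimes B$ gives $(A\otimes B)\otimes Q\in{\cal D}$, and then one needs the separate fact that for a simple \CA\ $D$ with continuous scale, $D\otimes Q\in{\cal D}$ together with $D\otimes B\in{\cal D}$ for some $B$ forces $D\in{\cal D}$. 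This last implication should follow from the fact that ${\cal D}$ is characterized (Theorem \ref{TTTAD} and the surrounding results) by tracial approximate divisibility plus the tracial-approximation-by-${\cal C}_0$-subalgebras property, both of which are "local" and transfer down a unital inclusion $D\hookrightarrow D\otimes U$ when combined with the known structure; pinning down exactly which earlier lemma delivers this transfer is the delicate point, but \cite{eglnp}'s characterization of ${\cal D}$ via almost stable rank one, strict comparison, $QT=T$, and ${\rm Cu}^\sim=K_0\sqcup\LAff_+^\sim$ — all of which $D$ inherits from $D\otimes U$ since these are stably-invariant and $D\otimes U\otimes{\cal K}\cong D\otimes{\cal K}$ after suitable cut-downs — should close the gap.
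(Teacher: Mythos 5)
Your route does not work under the hypotheses of the theorem, and the step you yourself flag as delicate is in fact the whole content of the statement. Theorem \ref{TLS-n} assumes only that $A$ is separable, simple, with continuous scale, and that $A\otimes U\in{\cal D}$; there is no amenability, no UCT, no finite nuclear dimension, and no quasidiagonality of traces. Your steps (2)--(3) route the argument through Theorem \ref{Treduction1} (and the model Theorem \ref{Tmodel2}), whose hypotheses include finite nuclear dimension, the UCT and $T=T_{\rm qd}\neq\emptyset$ for the input algebra; none of these is available for $A\otimes B$ here, and they cannot be extracted from $A\otimes U\in{\cal D}$ alone. Moreover, in the architecture of the paper, \ref{TLS-n} is exactly the classification-free lemma that is applied \emph{after} \ref{Treduction1} (in the proof of Theorem \ref{Treduction}) to pass from the single algebra $Q$ to an arbitrary infinite-dimensional UHF algebra; re-deriving it from \ref{Treduction1} both narrows its scope and defeats its purpose.

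Even granting those extra hypotheses, your step (4) is a genuine gap: membership in ${\cal D}$ is a tracial approximation property (Definition \ref{DD0}: approximately central decompositions $x\approx\phi(x)+\psi(x)$ with $\psi$ landing in a ${\cal C}_0$-subalgebra of almost full trace and the complement Cuntz-small), and it is not known to descend from $D\otimes Q$ (or $D\otimes U'$) to $D\otimes B$, nor is it characterized by the regularity data you list (stable rank one, strict comparison, $QT=T$, ${\rm Cu}^\sim=K_0\sqcup\LAff_+^\sim$); asserting such a characterization is essentially asserting a classification-type theorem that is not available, least of all without amenability. The paper's proof avoids all of this and stays elementary: fixing ${\cal F}$, $\ep$ and $a\in(A\otimes B)_+\setminus\{0\}$, it applies the ${\cal D}$-property of $A\otimes M_{R(1)}\otimes U$ to get $\phi$ and $\psi:A_1\to D_0$ with $D_0\in{\cal C}_0$; it then uses weak semiprojectivity of $D_0$ to replace the embedding $D_0\subset A\otimes M_{R(1)}\otimes U$ by a homomorphism into a finite stage $A\otimes M_{R(1)}\otimes M_{r(n_0)}$; finally, choosing a stage $M_{R(n_1)}$ of $B$ whose multiplicity over $M_{R(1)}$ is large, writing $R(n_1)=l\,r(n_0)R(1)+m$ with a negligible remainder $m$, it transplants $l$ copies of this picture into $A\otimes B$ and absorbs the remainder part, together with $\phi$, into the Cuntz-small corner dominated by $a$ via strict comparison, while the trace estimate \eqref{NT-2n} survives to give \eqref{LS-n21}. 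That divisibility-plus-semiprojectivity mechanism is the missing idea in your proposal.
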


\begin{proof}
Suppose that $A\otimes U\in {\cal D}.$  Then $A\otimes U$ has at least one
tracial state.
Since the map $a\mapsto a\otimes 1_U$ maps  $A$ into  $A\otimes U,$
$A$ must be stably finite. Moreover, if $\tau$ is a 2-quasitrace for $A,$ then
$\tau\otimes t_U$ is a trace  since $A\otimes U\in {\cal D}$ (see Proposition  9.1 of \cite{eglnp}), where $t_U$ is the unique tracial state on $U.$
It follows that $\tau$ is a trace. In other words, $QT(A)=T(A).$
Let $B$ be a unital infinite dimensional UHF-algebra.
Choose  a strictly positive element $e_A\in A$  with $\|e_A\|=1.$
We may assume that, as $A\otimes U$ has continuous scale,
\beq
d=\inf\{\tau(e_A\otimes 1_U): \tau\in T(A\otimes U)\}>1/2.
\eneq

Fix $\ep>0,$ a finite subset ${\cal F}\subset A\otimes B$ and $a\in (A\otimes B)_+\setminus\{0\}.$
Note that $A\otimes B$ is finite {{and}} ${\cal Z}$-stable, and has {{strict}} comparison for positive elements ({see
Cor.~4.6 of \cite{Rrzstable}}).
%
%
There is a non-zero element $a_0=a_{00}\otimes b_0\in (A\otimes B)_+$
for some $a_{00}\in A_+$ and $b_0\in B_+$
such that
$a_0\lesssim a$ in $A\otimes B.$
We may also assume that $b_0=b_{0,1,1}{{\oplus}}b_{0,1,2}\oplus b_{0,2}\in B_+$
where $b_{0,1,1},$ $b_{0,1,2}$ and $b_{0,2}$ are  mutually orthogonal nonzero positive elements.
{{Put $b_{0,1}=b_{0,1,1}\oplus b_{0,1,2}.$}}

As $A \otimes B$ is simple, there is an integer $N_0\ge 1$ such that
\beq\label{27-1}
{{e_A\otimes 1_B\lesssim N_0\la a_{00}\otimes b_{0,2}\ra .}}
\eneq
We write $B=\lim_{n\to\infty}(B_n, \psi_n),$ where $B_n=M_{R(n)}$  and
$\psi_n: B_n\to B_{n+1}$ is a unital embedding.   If $n>m,$ put
$\psi_{m,n}=\psi_{n-1} {{\circ\cdots \circ}} \psi_m: B_m\to B_n.$
{{Denote by}}
$\psi_{n, \infty}: B_n\to B$ the unital embedding induced by the inductive limit.
By Proposition 2.2 and Lemma 2.3 (b) of \cite{Rr2}, to simplify notation, without loss of generality,  {{replacing
$b_0$ by}} a smaller (in Cuntz relation) element, we may assume that {{$b_{0,1,1}, b_{0,1,2}, b_{0,2}\in B_n$}} for some
large $n.$ 
Since $B$ is simple, we may assume that $R(n)>4N_0$ for all $n.$
It follows from (\ref{27-1}) that we may assume
that the range projection of $b_0$ has rank at least two { {(as a matrix)}}.

By changing {{notations,}}
without loss of generality, we may further assume that
${\cal F}\subset A\otimes B_1$ and {{$b_{0,1,1}, b_{0,1,2},
b_{0,2}\in {B_1}.$}}

Since $A_1:=A\otimes M_{R(1)}\otimes U \in  {\cal D}$ has continuous scale,
 there are ${\cal F}$-$\ep/128$-multiplicative \cpc s $\phi: A_1\to A_1$ and  $\psi: A_1\to D_0$  for some
\SCA\, $D_0\subset A_1$ with $D_0\in {\cal C}_0,$
${{D_0}} \perp \phi(A_1),$ and
\beq\label{DNtr1div-1++}
&&\|x\otimes 1_U-(\phi(x\otimes 1_U)+\psi(x\otimes 1_U))\|<\ep/128\rforal x\in {\cal F}\cup \{e_A\otimes 1\},\\\label{DNtrdiv-2+}
&&\phi(e_A\otimes 1_U)\lesssim a_{0,1}:=a_{00}\otimes {{b_{0,1,1}}}\otimes 1_U,\\\label{DNtrdiv-4+}
&&t(f_{1/4}(\psi(e_A\otimes 1)))\ge  3/4
\rforal t\in T(D_0)
\eneq
(see also Proposition 2.10 of \cite{eglnp}).
{{Replacing $\phi$ by}} the map defined by $\phi'(x)=f_\eta(\phi(e_A\otimes 1_U))\phi(x)f_\eta(\phi(e_A\otimes 1_U))$
for all $x\in A_1$ for some sufficiently small $\eta,$ we may assume
that there is $e_0\in A_1$ such that $e_0\phi(x)=\phi(x)e_0=\phi(x)$
for all $x\in A_1,$ {{$e_0\perp D_0,$}} and $e_0\lesssim   a_{00}\otimes b_{0,1,1}
\otimes 1_U.$

Let ${\cal G}\subset D_0$ be a finite subset such that, for every $x\in {\cal F},$ there exists $x'\in {\cal G}$ such
that $\|\psi(x\otimes 1_U)-x'\|<\ep/128.$  We may also assume that ${\cal G}$ contains
a strictly positive element $e_D$ of $D_0$ with $\|e_D\|=1.$

Write $U=\overline{\cup_{n=1}^{\infty} M_{r(n)}},$ where
$\lim_{n\to\infty} r(n)=\infty$ and $M_{r(n)}\subset M_{r(n+1)}$ unitally.
{{For each $n,$ there are $a_n, b_n\in A\otimes M_{R(1)}\otimes M_{r(n)},$
$0\le a_n,\, b_n\le 1$ such that}}
\beq\nonumber
&&{{a_n\perp b_n,\,\,\,
\lim_{n\to\infty}\|a_n da_n-d\|=0\rforal d\in D_0,}}\\
&&{{\lim_{n\to\infty}\|b_n-e_0\|=0\andeqn \lim_{n\to\infty} \|b_n\phi(a)b_n-\phi(a)\|=0
\rforal  a\in A_1.}}
\eneq
{{Since $e_0\lesssim a_{00}\otimes b_{0,1}
\otimes 1_U, $  by Proposition 2.2 of \cite{Rr2}, for each $n,$ there exists $k(n)$ such
that $f_{1/n}(b_{k(n)})\lesssim a_{00}\otimes b_{0,1}
\otimes 1_U.$ Therefore, \wilog, {{replacing}} $b_n$ by $f_{1/n}(b_{k(n)}),$ we may assume
(in $A_1$)
\beq\label{Add-331-n1}
b_n\lesssim a_{00}\otimes b_{0,1,1}
\otimes 1_U.
\eneq}}
Put $C_n:=\overline{a_n(A\otimes M_{R(1)}\otimes M_{r(n)})a_n}$ and
$C_n':=\overline{b_n(A\otimes M_{R(1)}\otimes M_{r(n)})b_n}.$
Note that $C_n\perp C_n'.$

Since each $D_{0}$ is weakly semiprojective, we can choose $n_0$ large enough such that
there exists a unital \hm\, $h: D_{0}  \to A\otimes M_{R(1)}\otimes M_{r(n_0)}$ ($\subset  A\otimes M_{R(1)}\otimes U$) satisfying
\beq\label{NT-2}
\|h(x')-x'\|<\ep/64\rforal x'\in {\cal G}
\andeqn\\\label{NT-2n}
 t(f_{1/4}(h(\psi(e_A\otimes 1))))\ge 1/2
 \rforal t\in T(h(D_0)).
\eneq
Consider $\Phi': A\otimes M_{R(1)}\to A_1$ defined
by $\Phi'(a)=\phi(a\otimes 1_U)$ for $a\in A\otimes M_{R(1)}.$
Let $s_0: M_{R(1)}\otimes U\to M_{R(1)}\otimes M_{r(n_0)}$ be a \cpc\,
such that ${s_0}|_{M_{R(1)}\otimes M_{r(n_0)}}=\id_{M_{R(1)}\otimes M_{r(n_0)}}.$
Define $J:=\id_A\otimes s_0: A\otimes M_{R(1)}\otimes U\to A\otimes M_{R(1)}\otimes M_{r(n_0)}$ and
define $\Phi_0: A\otimes M_{R(1)}\to   C_{n_0}'\subset A\otimes M_{R(1)}\otimes M_{r(n_0)}$
by $\Phi_0(a)=b_n(J\circ \Phi'(a)) b_n$ for all $a\in A\otimes M_{R(1)}.$
Choose {{a larger}} $n_0$ if necessary,
we may also assume that
$\Phi_0$
is {{an}} ${\cal F}$-$\ep/64$-multiplicative \cpc\,
such that
\beq
\|\Phi_0(x)-\phi(x\otimes 1_U)\|<\ep/64\rforal x\in {\cal F}.
\eneq

We also assume that (viewing $x$ as an element in $A\otimes M_{R(1)}$)
\beq\label{LS-n10}
\|x\otimes 1_{M_{r(n_0)}}-(\Phi_0(x)+h\circ \psi(x\otimes 1_U))\|<\ep/32\rforal x\in {\cal F}.
\eneq
Note {{that}} $\Phi_0(A_1)\perp h(D_0).$
%
%
{{Moreover, by \eqref{Add-331-n1}, we have,   for all $\tau\in T(A_1),$}}
\beq\label{Add331-n2}
{{d_\tau(\Phi_0(e_A))\le d_\tau(a_{00}\otimes b_{0,1,1}\otimes 1_{M_{r(n_0)}})<d_\tau(a_{00}\otimes b_{0,1}\otimes 1_{M_{r(n_0)}}).}}
\eneq
{{Recall that $\Phi_0(e_A),\, a_{00}\otimes b_{0,1}\otimes 1_{M_{r(n_0)}}\in  A\otimes M_{R(1)}\otimes M_{r(n_0)}.$
Hence \eqref{Add331-n2}  holds for  any trace with the form $t\otimes Tr,$ where $t\in T(A\otimes M_{R(1)})$ and $Tr$
is the normalized tracial state on $M_{r(n_0)}.$}}

We may assume that $\psi_{1, n_1}: B_1\to B_{n_1}$ has
multiplicities at least $N\ge 1$  such that
\beq\label{NT-4}
{2r(n_0) R(1)\over{N}}<1.
\eneq
Write
\beq\label{NT-5}
R(n_1)=NR(1)=lr(n_0)R(1)+m,
\eneq
where $l\ge 1$ and $r(n_0)R(1)>m\ge 0$ are integers. It follows that
\beq\label{NT-5+}
{m\over{R(n_1)}} <{r(n_0)R(1)\over{R(n_1)}} < {r(n_0)\over{N}}<{1\over{2R(1)}}.
\eneq
Since $R(1)|R(n_1),$ we may write $m=m^{(r)} R(1).$
Define $\rho: M_{R(1)}\to M_{m}$ by $x\to x \otimes 1_{M_{m^{(r)}}},$ if $m>0.$
If $m=0,$ then we omit $\rho,$ or view $\rho=0.$

It follows from (\ref{NT-5+}) that, if $m\not=0,$ {{by}} viewing $M_m{ {=M_m\oplus 0_{R(n_1)-m}}} \subset M_{R(n_1)}\subset B,$
\beq\label{NT-5++}
d_\tau(e_A\otimes \rho(1_{M_{R(1)}}))<{1\over{2R(1)}}\rforal \tau\in T(A\otimes B).
\eneq	
Therefore, by \eqref{27-1} and the fact that $R(1)>4N_0,$
\beq\label{NT-5+++}
\iota(e_A\otimes \rho(1_{M_{R(1)}}) )\, \lesssim \, a_{00}\otimes b_{0,2}.
\eneq
Let $\imath_{1}: M_{r(n_0)R(1)}\to M_{lr(n_0)R(1)}$ be the embedding defined
by $a\mapsto a\otimes 1_{M_{l}}.$  Let  $\imath_{2}: M_{lr(n_0)R(1)}\to M_{R(n_1)}$ be defined
by the embedding which sends rank one projections to rank one projections.
Put $\imath_{3}=\imath_{2}\circ \imath_{1}.$  Define $\imath_{4}: A\otimes M_{R(1)}\otimes M_{r(n_0)}\to A\otimes M_{R(n_1)}$
by $\imath_{4}(a\otimes b)=a\otimes \imath_{3}(b)$ for all
$a\in A$ and $b\in M_{r(n_0)R(1)}.$
Note that, for all $a\in B_1=M_{R(1)},$  we may write (modulo an inner automorphism, and
if $m=0,$ $\rho=0$)
$$
\psi_{1,n_1}(a)=
\imath_{3}\circ \imath_{0}(a)\oplus \rho(a).
$$
Define $\imath: A\otimes B_{n_1}\to A\otimes B$ to  be the map given by $a\otimes b\mapsto a\otimes \psi_{1, \infty}(b).$

Put $E_1= \imath \circ \imath_{4}(h(D_0 )) .$ Then
$E_1\in {\cal C}_0.$
Let $s: B\to B_1$  be a \cpc\, such that $s|_{B_1}=\id_{B_1}.$
Let  $j:=\id_A\otimes s: A\otimes B\to A\otimes  B_1.$
Define $\Phi_1': A\otimes B\to A\otimes B$
by $\Phi_1':=\imath\circ  \iota_4\circ {{\Phi_0}}\circ j.$
Define $\Phi_1: A\otimes B\to A\otimes B$
by $\Phi_1(a)=\Phi_1'(a)\oplus \iota((\id_A\otimes \rho)(j(b)))$
for all $a\in A$ and $b\in B.$
Note that $\Phi_1$ is a ${\cal F}$-$\ep/2$-multiplicative map
and $\Phi_1(A\otimes B)\perp E_1.$
Put $\psi':A\otimes B\to D_0$ by
$\psi'(a\otimes b)=\psi(a\otimes s(b)\otimes 1_U)$ for $a\in A$ and $b\in B.$
Define $\Psi: A\otimes B\to E_1$ by
$\Psi:=\imath\circ \imath_4\circ h\circ \psi'.$
 Then $\Psi$ {{an}} ${\cal F}$-$\ep/2$-multiplicative \cpc.
Note that $\iota(x\otimes 1_{M_{r(n_0)}})=\iota\circ \imath_4(x)$
for all $x\in {\cal F}.$ Recall also $M_{R(n_1)}=M_{R(1)}\otimes M_N.$
Hence, by \eqref{LS-n10},  for all $x\in {\cal F}$ and (viewing $x\in A\otimes M_{R(1)}$),
{{we estimate that}}
\beq
x&=&\iota(x\otimes 1_{M_N})=\iota(x\otimes 1_{M_{lr(n_0)}})\oplus \iota(x\otimes 1_{m^{(r)}})\\
&=& \iota\circ \imath_4(\imath_0(x))\oplus \iota(\id_A\otimes \rho)(x)\\\label{LS-n20}
&\approx_{\ep/32}& (\Phi_1'(x)+\Psi(x)) \oplus \iota(\id_A\otimes \rho)(x)=\Phi_1(x)+\Psi(x).
\eneq
By \eqref{NT-2n}, {{we also have}}
\beq\label{LS-n21}
 t(f_{1/4}(\Psi(e_A\otimes 1)){{)}}\ge  1/2
 \rforal t\in T(E_1).
\eneq
By \cite{Rrzstable}, $A\otimes B$ has {{strict comparison.}}
By  the lines right below \eqref{Add331-n2}
and \eqref{NT-5+++} (viewing $a_{00}\otimes b_{0,1}\otimes 1_{M_{r(n_0)}}$  as  an element in $A\otimes M_{R(1)}\otimes M_{r(n_0)}{{\subset A\otimes B}}$), {{we conclude that}}
\beq
\Phi_1(e_A\otimes 1_B)&=&\Phi_1'(e_A\otimes 1_{B} )\oplus \iota((\id_A\otimes \rho)(e_A\otimes j(1_B)))\\
&\lesssim& \iota(a_{00}\otimes b_{0,1}\otimes 1_{M_{r(n_0)}})\oplus \iota(e_A\otimes \rho(1_{B_1}))\\
&\lesssim & (a_{00}\otimes b_{0,1})\oplus (a_{00}\otimes b_{0,2})
\lesssim  a_{00}\otimes b_0\lesssim a_0\lesssim a.
\eneq
Combining the last relation with \eqref{LS-n20} and \eqref{LS-n21},  {{we obtain}}
$A\otimes B\in {\cal D}.$
%
\end{proof}

\begin{thm}\label{Treduction}
Let $A$ be {{a}} separable simple 
{{stably projectionless}}  amenable \CA\, with continuous scale 
such that
$T(A)\not=\{0\}$ and   satisfying
the UCT.
Then $A\otimes U\in {\cal D}$  for any infinite dimensional UHF-algebra $U.$
\end{thm}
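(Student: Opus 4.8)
The plan is to reduce Theorem \ref{Treduction} to Theorem \ref{Treduction1} and Theorem \ref{TLS-n}. First I would fix an infinite dimensional UHF-algebra $U$; by Theorem \ref{TLS-n} it suffices to prove $A\otimes Q\in {\cal D}$ for the universal UHF-algebra $Q$ with $K_0(Q)=\Q$. Since $A$ has finite nuclear dimension (it is assumed amenable ${\cal Z}$-stable, or we invoke \cite{CETWW},\cite{CE} to pass between the two), $A$ is ${\cal Z}$-stable by \cite{Winter-Z-stable-02},\cite{T-0-Z}; hence $A$ is stably finite with $QT(A)=T(A)$, has strict comparison by \cite{Rrzstable}, and $A\otimes Q$ again has finite nuclear dimension and continuous scale. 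A key preliminary point is that $A\otimes Q$ is quasidiagonal and $T(A\otimes Q)=T_{\mathrm{qd}}(A\otimes Q)$: this follows from the Tikuisis--White--Winter theorem (every faithful amenable trace on a separable nuclear UCT C*-algebra is quasidiagonal) together with the fact that every tracial state of $A\otimes Q$ is a limit of (or directly) a trace factoring through $Q$; I would need to make this precise, using that $A\otimes Q\cong (A\otimes Q)\otimes Q$ and that tensoring with $\mathrm{tr}_Q$ produces quasidiagonal approximations into $Q$. I would also record that ${\rm Cu}(A\otimes Q)={\rm LAff}_+(\tilde T(A\otimes Q))$, which for a simple ${\cal Z}$-stable C*-algebra with strict comparison and stable rank one follows from \cite{ESR-Cuntz} (6.6) as used in the proof of Theorem \ref{TTTAD}.

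Next I would verify the hypotheses of Theorem \ref{Treduction1} for $A\otimes Q$. The remaining nontrivial input is the existence of a simple model C*-algebra $C=\lim_{n\to\infty}(C_n,\iota_n)$ with continuous scale, where $C_n=C_n'\otimes Q$, $C_n'\in {\cal C}_0$, $\iota_n$ injective, and with
\[
(K_0(A\otimes Q),\, T(A\otimes Q),\, r_{A\otimes Q})\cong (K_0(C),\, T(C),\, r_C).
\]
Here I would apply Theorem \ref{TrangeGG} (equivalently Theorem \ref{Tmodel2} / \ref{4.25}) to the scaled simple ordered group pairing attached to $A\otimes Q$: by Theorem \ref{TRangeM} that pairing is a genuine scaled simple ordered group pairing, and since $A\otimes Q$ is stably projectionless we have $K_0(A\otimes Q)_+=\{0\}$, so $\rho(K_0)\cap \Aff_+=\{0\}$ and the model produced by Theorem \ref{TrangeGG}/\ref{Tmodel2} is exactly an inductive limit of algebras of the form (simple pieces built from) ${\cal C}_0$ tensored with $Q$ — here one uses that tensoring the model of \ref{4.25} with $Q$ keeps it in the required form and that $Q\otimes Q\cong Q$ absorbs the UHF pieces $B_T$ appearing in \ref{Tmodel2} (since $K_0(B_T)$ is torsion and $K_0(A\otimes Q)$ is uniquely divisible, the torsion part is killed, simplifying the model to $C_n'\otimes Q$ with $C_n'\in {\cal C}_0$). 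One then matches the scale $\Sigma_A$ by passing to a hereditary subalgebra of $C\otimes{\cal K}$, as in the proof of \ref{TrangeGG}, so that $C$ has continuous scale. Finally I would upgrade the isomorphism of Elliott invariants to the isomorphism of $(K_0,T,r_{\bullet})$ data, which is just bookkeeping (Remark \ref{R14}).

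With all hypotheses in place, Theorem \ref{Treduction1} yields $A\otimes Q\in {\cal D}$. Then Theorem \ref{TLS-n} (with $U=Q$, which is an infinite dimensional UHF-algebra) gives $A\otimes B\in {\cal D}$ for every infinite dimensional UHF-algebra $B$, and in particular for the given $U$, completing the proof.

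The main obstacle I anticipate is not any single deep step but the verification that $A\otimes Q$ genuinely satisfies $T(A\otimes Q)=T_{\mathrm{qd}}(A\otimes Q)$ together with the continuous-scale and ${\rm Cu}$-surjectivity conditions in the precise form demanded by Theorem \ref{Treduction1}; the quasidiagonality of all traces is where one must be careful, invoking the Tikuisis--White--Winter theorem and the $Q$-absorption $A\otimes Q\cong (A\otimes Q)\otimes Q$ to realize each trace as a limit of traces factoring through $Q$, and then checking that $A\otimes Q$ still has continuous scale (which follows from strict comparison, compactness of $T(A)$, and Theorem 5.4/5.3 of \cite{eglnp}, \cite{GLII}, exactly as in the arguments used in \ref{4.25} and \ref{Tmodel2}). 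The construction of the model $C$ is essentially already contained in Theorems \ref{TrangeGG}, \ref{Tmodel2} and \ref{4.25}, so that part should be routine assembly; likewise the reduction from general $U$ to $Q$ is handled cleanly by Theorem \ref{TLS-n}.
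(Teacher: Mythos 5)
Your skeleton is the paper's: produce the model $C$ from Theorem \ref{TRangeM} and Theorem \ref{4.25}, get quasidiagonality of traces from \cite{TWW}, apply Theorem \ref{Treduction1} to conclude $A\otimes Q\in {\cal D}$, and finish with Theorem \ref{TLS-n} to pass to an arbitrary infinite dimensional UHF algebra. That part is fine, and your observation that the model-construction and the final UHF step are routine is accurate (indeed the proof of Theorem \ref{Treduction1} reconstructs the model from Theorem \ref{4.25} internally).

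The one genuine misstep is your opening claim that $A$ ``is assumed amenable ${\cal Z}$-stable'' and therefore has finite nuclear dimension. Theorem \ref{Treduction} assumes only that $A$ is amenable: neither ${\cal Z}$-stability nor finite nuclear dimension of $A$ is among the hypotheses, so the properties you then attribute to $A$ (finite nuclear dimension, $QT(A)=T(A)$, strict comparison) are unjustified as stated. This is exactly the point the paper's first reduction is designed to handle: it writes $U\cong U_1\otimes U_2$, replaces $A$ by $A\otimes U_1$, which is ${\cal Z}$-stable and hence of finite nuclear dimension by \cite{CE}, and only then invokes \cite{TWW}, Theorem \ref{TRangeM}, Theorem \ref{4.25} and Theorem \ref{Treduction1}, concluding with Theorem \ref{TLS-n}. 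Your version is repairable with the same idea applied one step later: every property you need is needed only for $A\otimes Q$, and there it holds, since $A\otimes Q$ is $Q$-stable, hence ${\cal Z}$-stable, hence of finite nuclear dimension by \cite{CE} (or \cite{CETWW}), with strict comparison by \cite{Rrzstable} and ${\rm Cu}(A\otimes Q)={\rm LAff}_+(\td T(A\otimes Q))$ via \cite{ESR-Cuntz}; moreover \cite{TWW} applies directly to $A\otimes Q$ (nuclear, UCT, simple, so all traces are faithful), so your extra discussion of traces factoring through $Q$ is unnecessary. So: rewrite the first paragraph so that finite nuclear dimension, comparison, the Cuntz semigroup identification and continuous scale are asserted and verified for $A\otimes Q$ (or, as in the paper, for $A\otimes U_1$), never for $A$ itself, and then apply Theorem \ref{Treduction1} with its ``$A$'' equal to $A\otimes Q$, using $(A\otimes Q)\otimes Q\cong A\otimes Q$.
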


\begin{proof}
Note that $U\cong U_1\otimes U_2$ for some infinite dimensional UHF-algebras $U_1$ and 
$U_2.$ Consider  $A_1=A\otimes U_1.$  Since $U_1$ is ${\cal Z}$-stable, 
so is $A_1.$ It follows from \cite{CE}  that $A_1$ has finite nuclear dimension.
Thus, to prove the theorem, we may assume that $A$ has finite nuclear dimension.
By \cite{TWW}, every tracial state of $A$ is quasidiagonal. Then, by
{{Theorem \ref{TRangeM},
\ref{4.25},
and
 \ref{Treduction1},}} $A\otimes Q\in {\cal D}.$  Thus,  by Theorem  \ref{TLS-n},
$A\otimes U\in {\cal D}$ for any infinite dimensional UHF-algebra $U.$
\end{proof}



%
%
%

%

 \providecommand{\href}[2]{#2}

ghgong@gmail.com,

hlin@uoregon.edu
      \end{document}